\newtheorem{thm}{Theorem}
\newtheorem{lem}[thm]{Lemma}
\newtheorem{cor}[thm]{Corollary}
\newtheorem{defi}[thm]{Definition}
\newtheorem{rem}[thm]{Remark}
\newtheorem{nota}[thm]{Notation}
\newtheorem{ack}[thm]{Acknowledgement}
\newtheorem*{tempo*}{Template}
\newtheorem*{rem*}{Remark}
\newtheorem{fact}{Fact}
\newcommand\be{\begin{equation}}
\newcommand\ee{\end{equation}} 
\def\bdefi{\begin{defi}\rm}
\def\edefi{\end{defi}}
\def\bnota{\begin{nota}\rm}
\def\enota{\end{nota}}
\def\FIVE{\Pi_{1}^{1}\text{-\textup{\textsf{CA}}}_{0}}
\def\ATR{\textup{\textsf{ATR}}}
\def\ZFC{\textup{\textsf{ZFC}}}
\def\IST{\textup{\textsf{IST}}}
\def\MU{\textup{\textsf{MU}}}
\def\T{\mathcal{T}}
\def\STP{\textup{\textsf{STP}}}
\def\FAN{\textup{\textsf{FAN}}}
\def\RWKL{\textup{\textsf{RWKL}}}
\def\H{\textup{\textsf{H}}}
\def\ef{\textup{\textsf{ef}}}
\def\ns{\textup{\textsf{ns}}}
\def\RCA{\textup{\textsf{RCA}}}
\def\({\textup{(}}
\def\){\textup{)}}
\def\WO{\textup{\textsf{WO}}}
\def\RCAo{\textup{\textsf{RCA}}_{0}^{\omega}}
\def\WKL{\textup{\textsf{WKL}}}
\def\WWKL{\textup{\textsf{WWKL}}}
\def\bye{\end{document}}
\def\P{\textup{\textsf{P}}}
\def\N{{\mathbb  N}}
\def\R{{\mathbb  R}}
\def\PC{\textup{\textsf{PC}}}
\def\m{{\mathbf  m}}
\def\FAN{\textup{\textsf{FAN}}}
\def\MUC{\textup{\textsf{MUC}}}
\def\st{\textup{st}}
\def\di{\rightarrow}
\def\asa{\leftrightarrow}
\def\ACA{\textup{\textsf{ACA}}}
\def\paai{\Pi_{1}^{0}\textup{-\textsf{TRANS}}}
\def\Paai{\Pi_{1}^{1}\textup{-\textsf{TRANS}}}
\def\QFAC{\textup{\textsf{QF-AC}}}
\def\SFF{\textup{\textsf{SFF}}}
\def\WFF{\textup{\textsf{WFF}}}
\def\UATR{\textup{\textsf{UATR}}}
\def\mSEP{\textup{\textsf{-SEP}}}
\def\con{\textup{\textsf{con}}}
\def\X{\textup{\textsf{X}}}
\def\HYP{\textup{\textsf{HYP}}}
\def\WO{\textup{\textsf{WO}}}
\def\CC{\textup{\textsf{CC}}}
\def\ADS{\textup{\textsf{ADS}}}
\def\RT{\textup{\textsf{RT}}}
\def\WT{\textup{\textsf{WT}}}
\def\EPA{\textup{\textsf{E-PA}}}
\def\EPRA{\textup{\textsf{E-PRA}}}
\def\MPC{\textup{\textsf{MPC}}}
\def\FF{\textup{\textsf{FF}}}
\def\ECF{\textup{\textsf{ECF}}}
\def\NUC{\textup{\textsf{NUC}}}
\def\LMP{\textup{\textsf{LMP}}}
\def\SCF{\textup{\textsf{SFF}}}
\def\SC{\textup{\textsf{SOC}}}
\def\SOC{\textup{\textsf{SOC}}}
\def\SOT{\textup{\textsf{SOT}}}
\def\TOT{\textup{\textsf{TOT}}}
\def\WCF{\textup{\textsf{WCF}}}
\def\HAC{\textup{\textsf{HAC}}}
\def\INT{\textup{\textsf{int}}}
\numberwithin{equation}{section}
\numberwithin{thm}{section}
\begin{document}

\title[Computability theory and Nonstandard Analysis]{Computability theory, Nonstandard Analysis, and their connections}
\author{Dag Normann}
\address{Department of Mathematics, The University 
of Oslo, P.O. Box 1053, Blindern N-0316 Oslo, Norway}
\email{dnormann@math.uio.no}
\author{Sam Sanders}
\address{Department of Mathematics, TU Darmstadt, Schlossgartenstrasse 7, D-64289 Darmstadt, Germany}
\email{sasander@me.com}
\subjclass[2010]{03B30, 03D65, 03F35}
\keywords{Nonstandard Analysis, higher-order computability theory, higher-order arithmetic, fan functionals}

\begin{abstract}
We investigate the connections between \emph{computability theory} and \emph{Nonstandard Analysis}.
In particular, we investigate the two following topics \emph{and} show that they are intimately related.  
\begin{enumerate}
\item[\textsf{(T.1)}] A basic property of \emph{Cantor space} $2^{\N}$ is \emph{Heine-Borel compactness}: for any open cover of $2^{\N}$, there is a \emph{finite} sub-cover.   
A natural question is: \emph{How hard is it to {compute} such a finite sub-cover}?  We make this precise by analysing the complexity of so-called fan functionals that given any $G:2^{\N}\di \N$, output a finite sequence $\langle f_0 , \dots, f_n\rangle $ in $2^{\N}$ such that the neighbourhoods defined from $\overline{f_i}G(f_i)$ for $i\leq n$ form a cover of $2^{\N}$.
\item[\textsf{(T.2)}]A basic property of Cantor space in \emph{Nonstandard Analysis} is Abraham Robinson's \emph{nonstandard compactness}, i.e.\ that every binary sequence is `infinitely close' to a \emph{standard} binary sequence.  We analyse the strength of this nonstandard compactness property of Cantor space, compared to the other axioms of Nonstandard Analysis and usual mathematics.
\end{enumerate}
Our study of  \textsf{(T.1)} yields exotic objects in computability theory, while \textsf{(T.2)} leads to surprising results in \emph{Reverse Mathematics}.  
We stress that \textsf{(T.1)} and \textsf{(T.2)} are highly intertwined, i.e.\ our study is `holistic' in nature in that
results in computability theory yield results in Nonstandard Analysis \emph{and vice versa}.
\end{abstract}
%

\maketitle
\thispagestyle{empty}

\section{Introduction}
We connect two seemingly unrelated fields, namely \emph{computability theory} and \emph{Nonstandard Analysis}.  
We assume basic familiarity with these fields, and the associated program \emph{Reverse Mathematics} (RM herefafter) founded by Friedman.  We refer to \cite{simpson2, simpson1} for an overview of, and \cite{stillebron} for an introduction to, RM.   
We do provide a brief introduction to Nonstandard Analysis and RM in Section \ref{base}. 
In a nutshell, we shall establish the following results.

\smallskip

Topic \textsf{(T.1)}: We study two new classes of functionals, namely the \emph{special fan functionals}, also called $\Theta$-functionals, and the (computationally weaker) \emph{weak fan functionals}, also called $\Lambda$-functionals. 
Intuitively speaking, a $\Theta$-functional computes a finite sub-cover for Cantor space from an uncountable cover, while a $\Lambda$-functional provides such a sub-cover `in the limit'.  
We show that $\Theta$ and $\Lambda$-functionals are easy to compute in Brouwer's intuitionistic\footnote{Brouwer's intuitionistic mathematics distinguishes itself from classical mathematics in that all (total) functions are continuous (see e.g.\ \cite{troeleke1}).  The functional $\exists^{2}$ is the textbook-example of a \emph{discontinuous} object (see also \cite{kohlenbach2}*{\S2}), and not available in intuititionistic mathematics.} mathematics but hard to compute in classical mathematics: the intuitionistic fan functional $\MUC$ computes $\Theta$ and $\Lambda$-functionals, but the `arithmetical comprehension' functional $\exists^{2}$ does not (and the same for \emph{any} type two functional); the classical $\exists^3$, which gives rise to \emph{full second-order arithmetic}, computes $\Theta$ and $\Lambda$-functionals.  
Thus, the latter's \emph{first-order strength} and \emph{computational hardness} diverge significantly.  
We also study the computational power of the combination of resp.\ $\Theta$ and $\Lambda$-functionals with the functional $\exists^{2}$; these combinations diverge in strength quite a lot:  
for instance, we show that the combination of any $\Theta$-functional and $\exists^{2}$ is equivalent to the higher-order version of $\ATR_{0}$, i.e.\ two relatively weak objects yield a much stronger one. 
By contrast, certain $\Lambda$-functionals do not compute `more' functions than $\exists^{2}$, i.e.\ the former are `far weaker' than $\ATR_{0}$.
 
\smallskip 
 
Topic \textsf{(T.2)}: we study the \emph{nonstandard counterparts\footnote{The principles $\STP$ and $\LMP$ are called the `nonstandard counterparts' of resp.\ $\WKL_{0}$ and $\WWKL_{0}$ in \cite{pimpson, keisler1, keisler2}. That e.g.\ $\paai$ is (or: should be) the nonstandard counterpart of arithmetical comprehension, follows from \cite{sambrouw}*{\S4.3}.  
In the latter, it is shown that the translation from \cite{brie} converts $\paai$ into arithmetical comprehension as in Feferman's mu functional.  Moreover, $\paai$ and $\Paai$ imply respectively $\ACA_{0}$ and $\FIVE$; the former also yield conservative extensions of the latter.\label{forgos}}} of the `Big Five' systems $\WKL_{0}$, $\ACA_{0}$, and $\FIVE$ of RM as follows: resp.\ the nonstandard compactness of Cantor space $\STP$ and the \emph{Transfer} axiom limited to $\Pi_{1}^{0}$-formulas $\paai$, and limited to $\Pi_{1}^{1}$-formulas $\Paai$.  While these Big Five systems are linearly ordered as $\FIVE\di \ACA_{0}\di \WKL_{0}$, we show the non-implications $\paai\not\di \STP\not\!\leftarrow \Paai$ for the respective nonstandard counterparts.  
We prove similar results for $\LMP$, the nonstandard counterpart of $\WWKL_{0}$.  By way of a surprise, we show that the combination of $\STP$ (resp. $\LMP$) with $\paai$, can (resp.\ cannot) prove $\ATR_{0}$ relative to the standard world.  
It should be noted that $\WKL_{0}$ and $\WWKL_{0}$ (and hence $\STP$ and $\LMP$) are `very close\footnote{There is no natural theorem between $\WKL_{0}$ and $\WWKL_{0}$ in the Reverse Mathematics `zoo' (\cite{damirzoo}) or the fine-grained \emph{Weihrauch degrees}, as discussed in Remark \ref{triplex}.}' in terms of logical strength. 

\smallskip

Surprising as this may seem to the uninitiated, topics \textsf{(T.1)} and \textsf{(T.2)} are intimately connected as follows: (non)computability results in \textsf{(T.1)} are obtained \emph{directly} from (non)implications in \textsf{(T.2)}, and vice versa.  In fact, $\Theta$ and $\Lambda$-functionals emerge naturally from $\STP$ and $\LMP$ when studying the computational content of Nonstandard Analysis.  Moreover, instances of the nonstandard axiom \emph{Transfer} give rise to (well-known) comprehension and choice functionals, such as the aforementioned $\exists^{2}$.  What is more, the fact that $\exists^{2}$ and any $\Theta$-functional together compute a realiser for $\ATR_{0}$ is proved (for the first time) via Nonstandard Analysis.  

\smallskip

With regard to the structure of the paper, we introduce RM and Nonstandard Analysis in Section~\ref{base}.  In Sections \ref{indie} and \ref{weak}, we introduce the special and weak fan functionals 
via specifications of their behaviour. Their basic computational properties are investigated in Sections \ref{indie}-\ref{weak}, namely that no type two functional (including the Suslin functional corresponding to $\FIVE$) can compute 
any $\Theta$ or $\Lambda$-functional in the sense\footnote{We always use Kleene's schemes S1-S9 as the meaning of `computable' in this paper, unless explicitly stated otherwise.} of Kleene's schemes S1-S9 (\cite{kleene1}; see also \cite{longmann}).  
We show that $\exists^3$, the functional corresponding to full second-order arithmetic, computes $\Theta$ 
 and $\Lambda$-functionals, while there is a $\Lambda$-functional which does not compute any $\Theta$-functional, 
even together with $\exists^{2}$.  

\smallskip

We establish in Section \ref{forgo} basic results in the RM of Nonstandard Analysis using well-known results in computability theory.  
In Section~\ref{fargp}, we establish part of the above results regarding $\STP$, $\paai$, and $\Paai$ from \textsf{(T.2)} by making heavy use of the results in Section \ref{prim}.  
For instance, negative results in \textsf{(T.1)} are used to obtain negative results in \textsf{(T.2)}.
Furthermore, we study the computational properties of $\Theta$ and $\Lambda$-functionals in detail in Section~\ref{hakkenoverdesloot}.  As we shall observe, there is a $\Lambda$-functional `closed on the hyperarithmetical', while there is no such $\Theta$-functional.  
This difference then gives rise to the following in Section \ref{weako}:  $\paai+\STP$ proves $\ATR$ relative to the standard world, while $\paai+\LMP$ does not.  
We discuss connections to Kohlenbach's generalisations of $\WKL$ in Section \ref{kokkon}.  
We summarise our results in Section \ref{kloi} and provide directions for further research.     

\smallskip

Finally, this paper connects computability theory and Nonstandard Analysis.  The first author contributed most results in the former, while the second author did so for the latter.  However, many questions were answered by translating them from one field to the other, solving them, and translating everything back, i.e.\ both authors contributed somehow to most of the paper.  As the reader will agree, our results are `holistic' in nature: results in computability theory give rise to results in Nonstandard Analysis \emph{and vice versa}.  In other words, the latter two fields turn out to be intimately connected, and this paper establishes some of these connections.  
This paper is the first of a series by the authors; the second (\cite{dagsamII}) and third (\cite{dagsamIII}) paper have also been published already.  

\section{Background: internal set theory and Reverse Mathematics}\label{base}
In this section, we introduce Nelson's axiomatic approach to Nonstandard Analysis \emph{internal set theory} (\cite{wownelly}), and it fragments based on Peano arithmetic from \cite{brie}. 
We also briefly sketch Friedman's foundational program \emph{Reverse Mathematics}.  
\subsection{Internal set theory and its fragments}\label{P}
\subsubsection{Internal set theory}\label{IIST}
In Nelson's \emph{syntactic} approach to Nonstandard Analysis (\cite{wownelly}), as opposed to Robinson's semantic one (\cite{robinson1}), a new predicate `st($x$)', read as `$x$ is standard' is added to the language of \textsf{ZFC}, the usual foundation of mathematics.  
The notations $(\forall^{\st}x)$ and $(\exists^{\st}y)$ are short for $(\forall x)(\st(x)\di \dots)$ and $(\exists y)(\st(y)\wedge \dots)$.  A formula is called \emph{internal} if it does not involve `st', and \emph{external} otherwise.   
The three external axioms \emph{Idealisation}, \emph{Standard Part}, and \emph{Transfer} govern the new predicate `st';  They are respectively defined\footnote{The superscript `fin' in \textsf{(I)} means that $x$ is finite, i.e.\ its number of elements are bounded by a natural number.} as:  
\begin{enumerate}
\item[\textsf{(I)}] $(\forall^{\st~\textup{fin}}x)(\exists y)(\forall z\in x)\varphi(z,y)\di (\exists y)(\forall^{\st}x)\varphi(x,y)$, for any internal $\varphi$.  
\item[\textsf{(S)}] $(\forall^{\st} x)(\exists^{\st}y)(\forall^{\st}z)\big((z\in x\wedge \varphi(z))\asa z\in y\big)$, for any $\varphi$.
\item[\textsf{(T)}] $(\forall^{\st}t)\big[(\forall^{\st}x)\varphi(x, t)\di (\forall x)\varphi(x, t)\big]$, where $\varphi(x,t)$ is internal, and only has free variables $t, x$.  
\end{enumerate}
The system \textsf{IST} is just \textsf{ZFC} extended with the aforementioned external axioms;  
$\IST$ is a conservative extension of \textsf{ZFC} for the internal language, as proved in \cite{wownelly}.    

\smallskip
Clearly, the extension from $\ZFC$ to $\IST$ can also be done for \emph{subsystems} of the former.   
Such extensions are studied in \cite{brie} for the classical and constructive formalisations of arithmetic, i.e.\ \emph{Peano arithmetic} and \emph{Heyting} arithmetic.  
In particular, the systems studied in \cite{brie} are \textsf{E-HA}$^{\omega}$ and $\textsf{E-PA}^{\omega}$, respectively \emph{Heyting and Peano arithmetic in all finite types and the axiom of extensionality}.       
We refer to \cite{kohlenbach3}*{\S3.3} for the exact definitions of the (mainstream in mathematical logic) systems \textsf{E-HA}$^{\omega}$ and $\textsf{E-PA}^{\omega}$.  
We introduce in Section \ref{PIPI} the system $\P$, the (conservative) extension of $\textsf{E-PA}^{\omega}$ with fragments of the external axioms of $\IST$.  

\smallskip
Finally, \textsf{E-PA}$^{\omega*}$ is the definitional extensions of \textsf{E-PA}$^{\omega}$ with types for finite sequences, as in \cite{brie}*{\S2}.  For the former system, we require some notation.  
\begin{nota}[Finite sequences]\label{skim}\rm
The systems $\textsf{E-PA}^{\omega*}$ and $\textsf{E-HA}^{\omega*}$ are definitional extensions of higher-order Peano and Heyting arithmetic with a dedicated type for `finite sequences of objects of type $\rho$', namely $\rho^{*}$.  
Since the usual coding of pairs of numbers goes through in both, there is an easy isomorphism between $0$ and $0^{*}$.  
We point out the difference between `$s^{\rho}$' and `$\langle s^{\rho}\rangle$', where the former is `the object $s$ of type $\rho$', and the latter is `the sequence of type $\rho^{*}$ with only element $s^{\rho}$'.  The empty sequence for the type $\rho^{*}$ is denoted by `$\langle \rangle_{\rho}$', usually with the typing omitted.  Furthermore, we denote by `$|s|=n$' the length of the finite sequence $s^{\rho^{*}}=\langle s_{0}^{\rho},s_{1}^{\rho},\dots,s_{n-1}^{\rho}\rangle$, where $|\langle\rangle|=0$, i.e.\ the empty sequence has length zero.
For sequences $s^{\rho^{*}}, t^{\rho^{*}}$, we denote by `$s*t$' the concatenation of $s$ and $t$, i.e.\ $(s*t)(i)=s(i)$ for $i<|s|$ and $(s*t)(j)=t(j-|s|)$ for $|s|\leq j< |s|+|t|$. For a sequence $s^{\rho^{*}}$, we define $\overline{s}N:=\langle s(0), s(1), \dots,  s(N)\rangle $ for $N^{0}<|s|$.  
For a sequence $\alpha^{0\di \rho}$, we also write $\overline{\alpha}N=\langle \alpha(0), \alpha(1),\dots, \alpha(N)\rangle$ for \emph{any} $N^{0}$.  By way of shorthand, $q^{\rho}\in Q^{\rho^{*}}$ abbreviates $(\exists i<|Q|)(Q(i)=_{\rho}q)$.  Finally, we shall use $\underline{x}, \underline{y},\underline{t}, \dots$ as short for tuples $x_{0}^{\sigma_{0}}, \dots x_{k}^{\sigma_{k}}$ of possibly different type $\sigma_{i}$.          
\end{nota}    
%
\subsubsection{The classical system $\P$}\label{PIPI}
We now introduce the system $\P$, a conservative extension of $\textsf{E-PA}^{\omega}$ with fragments of Nelson's $\IST$.  

\smallskip
To this end, we first introduce the base system $\textsf{E-PA}_{\st}^{\omega*}$.  
We use the same definition as \cite{brie}*{Def.~6.1}, where \textsf{E-PA}$^{\omega*}$ is the definitional extension of \textsf{E-PA}$^{\omega}$ with types for finite sequences as in \cite{brie}*{\S2}.  
The set $\T^{*}$ is defined as the collection of all the constants in the language of $\textsf{E-PA}^{\omega*}$.    
\bdefi\label{debs}
The system $ \textsf{E-PA}^{\omega*}_{\st} $ is defined as $ \textsf{E-PA}^{\omega{*}} + \T^{*}_{\st} + \textsf{IA}^{\st}$, where $\T^{*}_{\st}$
consists of the following axiom schemas.
\begin{enumerate}
\item The schema\footnote{The language of $\textsf{E-PA}_{\st}^{\omega*}$ contains a symbol $\st_{\sigma}$ for each finite type $\sigma$, but the subscript is essentially always omitted.  Hence $\T^{*}_{\st}$ is an \emph{axiom schema} and not an axiom.\label{omit}} $\st(x)\wedge x=y\di\st(y)$,
\item The schema providing for each closed term $t\in \T^{*}$ the axiom $\st(t)$.
\item The schema $\st(f)\wedge \st(x)\di \st(f(x))$.
\end{enumerate}
The external induction axiom \textsf{IA}$^{\st}$ states that for any (possibly external) $\Phi$:  
\be\tag{\textsf{IA}$^{\st}$}
\Phi(0)\wedge(\forall^{\st}n^{0})(\Phi(n) \di\Phi(n+1))\di(\forall^{\st}n^{0})\Phi(n).     
\ee
\edefi
Secondly, we introduce some essential fragments of $\IST$ studied in \cite{brie}.  
\bdefi[External axioms of $\P$]~
\begin{enumerate}
\item$\HAC_{\INT}$: For any internal formula $\varphi$, we have
\be\label{HACINT}
(\forall^{\st}x^{\rho})(\exists^{\st}y^{\tau})\varphi(x, y)\di \big(\exists^{\st}F^{\rho\di \tau^{*}}\big)(\forall^{\st}x^{\rho})(\exists y^{\tau}\in F(x))\varphi(x,y),
\ee
\item $\textsf{I}$: For any internal formula $\varphi$, we have
\[
(\forall^{\st} x^{\sigma^{*}})(\exists y^{\tau} )(\forall z^{\sigma}\in x)\varphi(z,y)\di (\exists y^{\tau})(\forall^{\st} x^{\sigma})\varphi(x,y), 
\]
\item The system $\P$ is $\textsf{E-PA}_{\st}^{\omega*}+\textsf{I}+\HAC_{\INT}$.
\end{enumerate}
\end{defi}
Note that \textsf{I} and $\HAC_{\INT}$ are fragments of Nelson's axioms \emph{Idealisation} and \emph{Standard part}.  
By definition, $F$ in \eqref{HACINT} only provides a \emph{finite sequence} of witnesses to $(\exists^{\st}y)$, explaining its name \emph{Herbrandized Axiom of Choice}.   

\smallskip
The system $\P$ is connected to $\textsf{E-PA}^{\omega}$ by Theorem \ref{consresultcor} below which expresses that we may obtain effective results as in \eqref{effewachten} from any theorem of Nonstandard Analysis which has the same form as in \eqref{bog}.  The scope of this theorem includes the Big Five systems of Reverse Mathematics (\cite{sambon}), the Reverse Mathematics zoo (\cite{samzooII}), and both classical and higher-order computability theory (\cite{samGH, sambon3, dagsamII}).  

\begin{thm}[Term extraction]\label{consresultcor}
If $\Delta_{\INT}$ is a collection of internal formulas and $\psi$ is internal, and
\be\label{bog}
\P + \Delta_{\INT} \vdash (\forall^{\st}\underline{x})(\exists^{\st}\underline{y})\psi(\underline{x},\underline{y}, \underline{a}), 
\ee
then one can extract from the proof a sequence of closed terms $t$ in $\mathcal{T}^{*}$ such that
\be\label{effewachten}
\textup{\textsf{E-PA}}^{\omega*} + \Delta_{\INT} \vdash (\forall \underline{x})(\exists \underline{y}\in t(\underline{x}))\psi(\underline{x},\underline{y},\underline{a}).
\ee
\end{thm}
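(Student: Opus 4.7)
The plan is to derive this term-extraction theorem as a direct application of the \emph{nonstandard} (Herbrandized) functional interpretation developed in~\cite{brie}. That interpretation associates to every formula $\varphi$ of the language of $\P$ an internal formula $|\varphi|^{\underline{x}}_{\underline{y}}$, where $\underline{x}$ marks Herbrandized witnesses and $\underline{y}$ marks challenges, in such a way that (i)~internal formulas are interpreted by themselves, (ii)~$(\exists^{\st}y)\varphi$ is interpreted by a \emph{finite sequence} of candidate witnesses at least one of which works, and (iii)~there is a soundness theorem: whenever $\P\vdash \varphi$, one can extract a closed term $\underline{t}\in\T^{*}$ with $\textup{\textsf{E-PA}}^{\omega*}\vdash (\forall \underline{y})\,|\varphi|^{\underline{t}}_{\underline{y}}$.

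The main body of the argument is the verification of the soundness theorem by induction on derivations, which requires checking each axiom of $\P$. The schemas of $\textup{\textsf{E-PA}}^{\omega*}_{\st}$ governing $\st$ on equality, application, and closed terms are immediate from the design of the interpretation; the external induction axiom $\textup{\textsf{IA}}^{\st}$ is witnessed by the recursor of the corresponding numerical type, so it reduces to ordinary induction on the witnessing term; idealisation $\textup{\textsf{I}}$ is built precisely so that its interpretation becomes a tautology once challenges may range over finite sequences; and $\HAC_{\INT}$ is interpreted essentially for free, since the interpretation of $(\forall^{\st}x)(\exists^{\st}y)\varphi$ already produces a functional delivering a finite sequence of candidate witnesses.

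Once soundness is in hand, the theorem follows by specialising to the formula in~\eqref{bog}: the interpretation of $(\forall^{\st}\underline{x})(\exists^{\st}\underline{y})\psi$ is, modulo shuffling of variables, exactly $(\forall \underline{x})(\exists \underline{y}\in t(\underline{x}))\psi$, which is the conclusion~\eqref{effewachten}. Any additional internal hypotheses $\Delta_{\INT}$ are handled uniformly, because internal formulas coincide with their own interpretation; hence they may be transported across the translation without change, which justifies appending $\Delta_{\INT}$ to the base theory in~\eqref{effewachten}.

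The main technical obstacle is the verification of soundness for $\textup{\textsf{I}}$ and $\HAC_{\INT}$, since the interaction of these external axioms with the finite-sequence bookkeeping is what dictates the precise shape of the interpretation. Fortunately, exactly this verification is carried out in detail in~\cite{brie}, so the proof here reduces to pointing to the soundness theorem there and specialising it to the syntactic shape in~\eqref{bog}.
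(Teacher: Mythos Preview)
Your proposal is correct and takes essentially the same approach as the paper. The paper's own proof is simply a citation to \cite{samGH}*{\S2} and \cite{sambon}*{\S2}, and as the authors remark immediately afterward, the statement is ``neither explicitly listed or proved in \cite{brie}''; what you have written is precisely the derivation those references supply, namely specialising the soundness theorem for the $D_{\st}$-interpretation (\cite{brie}*{Theorem~7.7}) to formulas of the shape $(\forall^{\st}\underline{x})(\exists^{\st}\underline{y})\psi$ with $\psi$ internal, and noting that internal side-hypotheses $\Delta_{\INT}$ pass through unchanged because they coincide with their own interpretation.
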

\begin{proof}
See \cite{samGH}*{\S2} or \cite{sambon}*{\S2}.  The proof is based on the functional interpretation $S_{\st}$ from \cite{brie}.
\end{proof}
Curiously, the previous theorem is neither explicitly listed nor proved in \cite{brie}.
For the rest of this paper, the notion `normal form' shall refer to a formula as in \eqref{bog}, i.e.\ of the form $(\forall^{\st}x)(\exists^{\st}y)\varphi(x,y)$ for $\varphi$ internal.  

\smallskip
Finally, the previous theorems do not really depend on the presence of full Peano arithmetic.  
We shall study the following weaker systems.   
\bdefi[Higher-order Reverse Mathematics]\label{rebs}~
\begin{enumerate}
\item Let \textsf{E-PRA}$^{\omega}$ be the system defined in \cite{kohlenbach2}*{\S2} and let \textsf{E-PRA}$^{\omega*}$ 
be its definitional extension with types for finite sequences as in \cite{brie}*{\S2}. 
\item $(\QFAC^{\rho, \tau})$ For every quantifier-free internal formula $\varphi(x,y)$, we have
\be\label{keuze}
(\forall x^{\rho})(\exists y^{\tau})\varphi(x,y) \di (\exists F^{\rho\di \tau})(\forall x^{\rho})\varphi(x,F(x))
\ee
\item The system $\RCAo$ is $\textsf{E-PRA}^{\omega}+\QFAC^{1,0}$.  
\end{enumerate}
\edefi
The system $\RCAo$ is Kohlenbach's `base theory of higher-order Reverse Mathematics' as introduced in \cite{kohlenbach2}*{\S2}.  
We permit ourselves a slight abuse of notation by also referring to the system $\textsf{E-PRA}^{\omega*}+\QFAC^{1,0}$ as $\RCAo$.
\begin{cor}\label{consresultcor2}
The previous theorem and corollary go through for $\P$ and $\textsf{\textup{E-PA}}^{\omega*}$ replaced by $\P_{0}\equiv \textsf{\textup{E-PRA}}^{\omega*}+\T_{\st}^{*} +\HAC_{\INT} +\textsf{\textup{I}}+\QFAC^{1,0}$ and $\RCAo$.  
\end{cor}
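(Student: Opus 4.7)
The plan is to inspect the proof of Theorem \ref{consresultcor} from \cite{samGH, sambon} axiom-by-axiom and verify that every step descends to the weaker setting. That proof proceeds via a term-extraction translation (a Shoenfield--Dialectica style interpretation adapted to Nelson's fragments, following \cite{brie}) which is entirely modular: each axiom scheme of $\P$ has a fixed interpretation clause, and the extracted terms are assembled from those clauses using recursors of the base theory $\textsf{E-PA}^{\omega*}$. So the task reduces to checking three things for $\P_{0}$ and $\RCAo$.

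First, I would observe that the external axioms $\T_{\st}^{*}$, $\HAC_{\INT}$, and $\textsf{I}$ appear verbatim in $\P_{0}$, so the corresponding interpretation clauses are unchanged. The additional axiom $\QFAC^{1,0}$ is internal, hence may be absorbed into the collection $\Delta_{\INT}$ in the hypothesis of the theorem, and needs no special treatment. The only structural differences between $\P$ and $\P_{0}$ are therefore the replacement of $\textsf{E-PA}^{\omega*}$ by $\textsf{E-PRA}^{\omega*}$ and the absence of $\textsf{IA}^{\st}$.

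Second, I would check that the closed terms $t$ produced by the translation lie in the restricted term language. In $\P$ the extracted terms live in $\mathcal{T}^{*}$ (essentially G\"odel's $T$ with finite-sequence types); for $\P_{0}$ one needs them to live in the primitive recursive term language of $\textsf{E-PRA}^{\omega*}$. This is fine because the interpretations of $\T_{\st}^{*}$, $\HAC_{\INT}$, and $\textsf{I}$ only invoke finite-sequence coding and bounded search, both available primitive recursively, and the clause for $\textsf{IA}^{\st}$, which is the only place where full primitive recursion in higher types is really used in the proof of Theorem \ref{consresultcor}, is now simply vacuous since $\P_{0}$ omits $\textsf{IA}^{\st}$. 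The remaining induction, namely the quantifier-free induction built into $\textsf{E-PRA}^{\omega*}$, is self-interpreting under these translations and is retained in $\RCAo$.

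Third, one verifies the conclusion \eqref{effewachten} holds in $\RCAo$ (i.e.\ $\textsf{E-PRA}^{\omega*}+\QFAC^{1,0}$). Since $\QFAC^{1,0}$ is internal, it persists through the translation as an axiom on the right-hand side, so any use of it inside the extracted proof is legitimate in $\RCAo$. The main obstacle, insofar as there is one, is purely bookkeeping: confirming that no auxiliary step of the interpretation silently appeals to induction on a formula more complex than quantifier-free, and that every recursor invoked has a primitive recursive analogue. Both points are immediate from the explicit shape of the proof in \cite{samGH}*{\S2}; no new idea is required.
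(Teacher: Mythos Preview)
Your proposal is correct and takes essentially the same approach as the paper: both argue that the term-extraction proof from \cite{brie} goes through modularly for the weaker base theory. The paper's proof is a one-liner observing that the argument of \cite{brie}*{Theorem~7.7} works for any fragment of $\textsf{E-PA}^{\omega*}$ containing \textsf{EFA} (i.e.\ $\textsf{I}\Delta_0+\textsf{EXP}$), since the exponential is all that is needed to manipulate finite sequences; your version spells out more of the bookkeeping (absence of $\textsf{IA}^{\st}$, $\QFAC^{1,0}$ internal, terms staying primitive recursive), but the underlying idea is the same.
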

\begin{proof}
The proof of \cite{brie}*{Theorem 7.7} goes through for any fragment of \textsf{E-PA}$^{\omega{*}}$ which includes \textsf{EFA}, sometimes also called $\textsf{I}\Delta_{0}+\textsf{EXP}$.  
In particular, the exponential function is (all what is) required to `easily' manipulate finite sequences.    
\end{proof}
We note that Ferreira and Gaspar present a system similar to $\P$ in \cite{fega}, which however is less suitable for our purposes.    

\smallskip

Finally, we discuss the exact connection between our systems of Nonstandard Analysis and computability theory provided by Theorem \ref{consresultcor}.  
The crucial point here is that in the syntactic theory of Nonstandard Analysis, the usual quantifiers $\exists$ and $\forall$ play the role of `uniform quantifiers' (see \cite{uhberger}) which are \emph{ignored} by the functional interpretation $S_{\st}$ from \cite{brie} used in the proof of Theorem \ref{consresultcor}, while the standard quantifiers $\exists^{\st}$ and $\forall^{\st}$ are given computational meaning.  
Indeed, the functional interpretation $S_{\st}$ applied to the proof of \eqref{bog} yields a term $t(\underline{x})$ in which the $(\forall^{\st}\underline{x})$ quantifier in \eqref{bog} describes the input variables, while the $(\exists^{\st}\underline{y})$ quantifier describes the output variables. 
This gives each of the nonstandard axioms a clear computational meaning entirely independent of Nonstandard Analysis per se, which may be of comfort to some who find Nonstandard Analysis alien. 
Those interested in this kind of development should consult \cite{sambrouw}.
The following remark provides more intuition regarding applications of Theorem \ref{consresultcor}.
\begin{rem*}[Using term extraction]\label{doeisnormaal}\rm
First of all, term extraction as in Theorem~\ref{consresultcor} is restricted to normal forms.  We now show that normal forms are `closed under implication', as follows. 
Let $\varphi, \psi$ be internal and consider the following implication between normal forms:
\be\label{nora}\tag{N1}
(\forall^{\st}x)(\exists^{\st}y)\varphi(x, y)\di (\forall^{\st}z)(\exists^{\st}w)\psi(z, w).  
\ee
Since standard functionals have standard output for standard input, \eqref{nora} implies
\be\label{nora2}\tag{N2}
(\forall^{\st}\zeta)\big[(\forall^{\st}x)\varphi(x, \zeta(x))\di (\forall^{\st}z)(\exists^{\st}w)\psi(z, w)\big].  
\ee
Bringing all standard quantifiers outside, we obtain the following normal form:
\be\label{nora3}\tag{N3}
(\forall^{\st}\zeta, z)(\exists^{\st} w, x)\big[\varphi(x, \zeta(x))\di \psi(z, w)\big],
\ee
as the formula in square brackets is internal.  Now, \eqref{nora3} is equivalent to \eqref{nora2}, but one usually weakens the latter as follows:  
\be\label{nora4}\tag{N4}
(\forall^{\st}\zeta, z)(\exists^{\st} w)\big[(\forall x)\varphi(x, \zeta(x))\di \psi(z, w)\big],
\ee
as \eqref{nora4} is closer to the usual mathematical definitions.  

\medskip
\noindent
Secondly, assuming \eqref{nora} is provable in $\P$, so is \eqref{nora4} and we obtain a term $t$ with
\be\label{nora5}\tag{N5}
(\forall \zeta, z)(\exists w\in t(\zeta, z))\big[(\forall x)\varphi(x, \zeta(x))\di \psi(z, w)\big]
\ee
being provable in $\EPA^{\omega*}$.  We now omit the term $t$ and bring all quantifiers inside again, yielding that $\EPA^{\omega*}$ proves:
\be\label{nora6}\tag{N6}
(\exists \zeta)(\forall x)\varphi(x, \zeta(x))\di  (\forall z)(\exists w)\psi(z, w).
\ee
Finally, we shall often shorten the below proofs by just providing normal forms and jumping straight from \eqref{nora} to \eqref{nora5} or \eqref{nora6} whenever possible.  
 \end{rem*}  

\subsubsection{Notations and conventions}
We introduce notations and conventions for $\P$.  

\smallskip
First of all, we mostly use the same notations as in \cite{brie}.  
\begin{rem}[Notations]\label{notawin}\rm
We write $(\forall^{\st}x^{\tau})\Phi(x^{\tau})$ and $(\exists^{\st}x^{\sigma})\Psi(x^{\sigma})$ as short for 
$(\forall x^{\tau})\big[\st(x^{\tau})\di \Phi(x^{\tau})\big]$ and $(\exists x^{\sigma})\big[\st(x^{\sigma})\wedge \Psi(x^{\sigma})\big]$.     
A formula $A$ is `internal' if it does not involve $\st$.  The formula $A^{\st}$ is defined from $A$ by appending `st' to all quantifiers (except bounded number quantifiers).    
\end{rem}
\smallskip
Secondly, we use the usual extensional notion of equality. 
\begin{rem}[Equality]\label{equ}\rm
The system $\textsf{E-PA}^{\omega*}$ includes equality between natural numbers `$=_{0}$' as a primitive.  Equality `$=_{\tau}$' and inequality $\leq_{\tau}$ for $x^{\tau},y^{\tau}$ is:
\be\label{aparth}
[x=_{\tau}y] \equiv (\forall z_{1}^{\tau_{1}}\dots z_{k}^{\tau_{k}})[xz_{1}\dots z_{k}=_{0}yz_{1}\dots z_{k}],
\ee
\be\label{aparth1}
[x\leq_{\tau}y] \equiv (\forall z_{1}^{\tau_{1}}\dots z_{k}^{\tau_{k}})[xz_{1}\dots z_{k}\leq_{0}yz_{1}\dots z_{k}],
\ee
if the type $\tau$ is composed as $\tau\equiv(\tau_{1}\di \dots\di \tau_{k}\di 0)$.
In the spirit of Nonstandard Analysis, we define `approximate equality $\approx_{\tau}$' as follows (with the type $\tau$ as above):
\be\label{aparth2}
[x\approx_{\tau}y] \equiv (\forall^{\st} z_{1}^{\tau_{1}}\dots z_{k}^{\tau_{k}})[xz_{1}\dots z_{k}=_{0}yz_{1}\dots z_{k}]
\ee
All the above systems include the \emph{axiom of extensionality} for all $\varphi^{\rho\di \tau}$ as follows:
\be\label{EXT}\tag{\textsf{E}}  
(\forall  x^{\rho},y^{\rho}) \big[x=_{\rho} y \di \varphi(x)=_{\tau}\varphi(y)   \big].
\ee
However, as noted in \cite{brie}*{p.\ 1973}, the so-called axiom of \emph{standard} extensionality \eqref{EXT}$^{\st}$ is problematic and cannot be included in $\P$ or $\P_{0}$.  
\end{rem}
Thirdly, $\P$ and $\P_{0}$ prove the \emph{overspill principle}, which expresses that no internal formula captures the standardness predicate exactly.  
\begin{thm}\label{doppi}
The systems $\P$ and $\P_{0}$ prove \emph{overspill}, i.e.\ for any internal $\varphi$:
\be\tag{\textsf{OS}}
(\forall^{\st}x^{\rho})\varphi(x)\di (\exists y^{\rho})\big[\neg\st(y)\wedge \varphi(y)  \big],
\ee
\end{thm}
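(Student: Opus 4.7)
The natural plan is to derive overspill from the idealisation axiom \textsf{I}, which is available in both $\P$ and $\P_{0}$. Fix an internal formula $\varphi$ of type $\rho$ and assume $(\forall^{\st}x^{\rho})\varphi(x)$. Apply \textsf{I} to the internal formula
\[
\psi(z^{\rho},y^{\rho})\equiv \varphi(y)\wedge z\neq_{\rho} y.
\]
If the premise of \textsf{I} holds, the conclusion is $(\exists y^{\rho})(\forall^{\st}z^{\rho})\big[\varphi(y)\wedge z\neq_{\rho} y\big]$; since there is at least one standard element of type $\rho$ (e.g.\ any closed term of that type, which is standard by axiom (2) of $\T^{*}_{\st}$), we may pull $\varphi(y)$ outside, and $(\forall^{\st}z^{\rho})(z\neq_{\rho} y)$ amounts precisely to $\neg\st(y)$. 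This yields the required $(\exists y^{\rho})[\neg\st(y)\wedge\varphi(y)]$.

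The entire content of the argument therefore reduces to verifying the premise of \textsf{I}, namely
\[
(\forall^{\st}x^{\rho^{*}})(\exists y^{\rho})\big[\varphi(y)\wedge(\forall z^{\rho}\in x)(z\neq_{\rho} y)\big].
\]
Given a standard finite sequence $x^{\rho^{*}}$ of length $n$, it suffices to exhibit a standard $y^{\rho}$ satisfying $\varphi$ and distinct from every entry of $x$, because the hypothesis $(\forall^{\st}x^{\rho})\varphi(x)$ then takes care of $\varphi(y)$ automatically. Writing $\rho\equiv(\sigma_{1}\di\dots\di\sigma_{k}\di 0)$, the closed terms $t_{m}:=\lambda \underline{v}^{\sigma_{1}\dots\sigma_{k}}.S^{m}0$ for $m\leq n$ furnish $n+1$ pairwise distinct standard elements of type $\rho$ (standard by axiom (2) of $\T^{*}_{\st}$, pairwise distinct by the extensional equality \eqref{aparth}), so by pigeonhole at least one $t_{m}$ is not among the entries of $x$.

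The only delicate step is this last one: we have to realise $\rho$ as inhabited by infinitely many \emph{standard} and pairwise distinguishable elements, relying on $\T^{*}_{\st}$ rather than on any form of external induction on the type structure. Once that is in hand, the rest is bookkeeping, and since the argument uses only $\textsf{I}$ and $\T^{*}_{\st}$ (both present in $\P_{0}$), it applies verbatim to both $\P$ and $\P_{0}$.
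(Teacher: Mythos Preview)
Your proof is correct and follows the standard route: overspill is derived from idealisation \textsf{I} applied to the internal formula $\psi(z,y)\equiv \varphi(y)\wedge z\neq_{\rho}y$. The paper itself does not give an argument but simply refers to \cite{brie}*{Prop.~3.3}, where essentially the same derivation appears.

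One small point of presentation: you call the $t_{m}:=\lambda\underline{v}.S^{m}0$ ``closed terms'' and invoke axiom~(2) of $\T^{*}_{\st}$, but $m$ here ranges up to $n=|x|$, an object-language variable, so the individual $t_{m}$ are not literally closed terms of the system. What makes the argument go through is that the \emph{map} $t:=\lambda m.\lambda\underline{v}.m$ is a closed term, hence standard by axiom~(2), and then each $t(m)$ is standard for standard $m$ by axiom~(3). With this adjustment your pigeonhole step is fine (and one can also bypass pigeonhole entirely by taking $y:=\lambda\underline{v}.\big(1+\max_{i<|x|}x(i)(\underline{0})\big)$, which is visibly standard and distinct from every entry of $x$). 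This is cosmetic rather than a gap.
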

\begin{proof}
See \cite{brie}*{Prop.\ 3.3}.  
\end{proof}
\begin{rem}[Using $\HAC_{\INT}$ and $\textsf{I}$]\label{simply}\rm
By definition, $\HAC_{\INT}$ produces a functional $F^{\sigma\di \tau^{*}}$ which outputs a \emph{finite sequence} of witnesses.  
However, $\HAC_{\INT}$ provides an actual \emph{witnessing functional} assuming (i) $\tau=0$ in $\HAC_{\INT}$ and (ii) the formula $\varphi$ from $\HAC_{\INT}$ is `sufficiently monotone' as in: 
$(\forall^{\st} x^{\sigma},n^{0},m^{0})\big([n\leq_{0}m \wedge\varphi(x,n)] \di \varphi(x,m)\big)$.    
Indeed, in this case one simply defines $G^{\sigma+1}$ by $G(x^{\sigma}):=\max_{i<|F(x)|}F(x)(i)$ which satisfies $(\forall^{\st}x^{\sigma})\varphi(x, G(x))$.  
To save space in proofs, we will sometimes skip the (obvious) step involving the maximum of finite sequences, when applying $\HAC_{\INT}$.  
We assume the same convention for terms obtained from Theorem \ref{consresultcor}, and applications of the contraposition of idealisation \textsf{I}.  
\end{rem}

\subsection{Introducing Reverse Mathematics}\label{RM}
Reverse Mathematics (RM) is a program in the foundations of mathematics initiated around 1975 by Friedman (\cites{fried,fried2}) and developed extensively by Simpson (\cite{simpson2}) and others.  We refer to \cite{simpson2, simpson1} for an overview of RM, and to \cite{stillebron} for a layman introduction; we do sketch some aspects of RM essential to this paper.  

\smallskip  
The aim of RM is to find the axioms necessary to prove a statement of \emph{ordinary} mathematics, i.e.\ dealing with countable or separable objects.   
The classical\footnote{In \emph{Constructive Reverse Mathematics} (\cite{ishi1}), the base theory is based on intuitionistic logic.} base theory $\RCA_{0}$ of `computable\footnote{The system $\RCA_{0}$ consists of induction $I\Sigma_{1}$, and the {\bf r}ecursive {\bf c}omprehension {\bf a}xiom $\Delta_{1}^{0}$-CA.} mathematics' is always assumed.  
Thus:
\begin{quote}
\emph{The aim of \emph{RM} is to find the minimal axioms $A$ such that $\RCA_{0}$ proves $ [A\di T]$ for statements $T$ of ordinary mathematics.}
\end{quote}
Surprisingly, once the minimal axioms $A$ have been found, we almost always also have $\RCA_{0}\vdash [A\asa T]$, i.e.\ not only can we derive the theorem $T$ from the axioms $A$ (the `usual' way of doing mathematics), we can also derive the axiom $A$ from the theorem $T$ (the `reverse' way of doing mathematics).  In light of these `reversals', the field was baptised `Reverse Mathematics'.  \smallskip
Perhaps even more surprisingly, in the majority\footnote{Exceptions are classified in the so-called Reverse Mathematics zoo (\cite{damirzoo}).
} 
of cases, for a statement $T$ of ordinary mathematics, either $T$ is provable in $\RCA_{0}$, or the latter proves $T\asa A_{i}$, where $A_{i}$ is one of the logical systems $\WKL_{0}, \ACA_{0},$ $ \ATR_{0}$ or $\FIVE$.  The latter together with $\RCA_{0}$ form the `Big Five' and the aforementioned observation that most mathematical theorems fall into one of the Big Five categories, is called the \emph{Big Five phenomenon} (\cite{montahue}*{p.\ 432}).  
Furthermore, each of the Big Five has a natural formulation in terms of (Turing) computability (see e.g.\ \cite{simpson2}*{I.3.4, I.5.4, I.7.5}).
As noted by Simpson in \cite{simpson2}*{I.12}, each of the Big Five also corresponds (sometimes loosely) to a foundational program in mathematics.  

\smallskip

Now, the logical framework for RM is \emph{second-order arithmetic}, i.e.\ only natural numbers and sets thereof are available.  For this reason higher-order objects such as $\R\di \R$-functions and topologies are not available directly.  For instance, continuous functions are represented in RM by so-called \emph{codes} (see e.g.\ \cite{simpson2}*{II.6.1} and \cite{mummy}), while \emph{discontinuous} functions are represented by sequences of such codes (\cite{simpson2}*{X.1}).  
Kohlenbach shows in \cite{kohlenbach4}*{\S4} that the use of codes to represent continuous functions 
does not affect the RM of $\WKL_{0}$.   
He has also introduced \emph{higher-order} RM in which discontinuous functions \emph{are} present (see \cite{kohlenbach2}*{\S2} and Definition \ref{rebs}).    
The authors show in \cite{dagsamVI} that the use of codes in measure theory does have a major impact on the logical strength of basic convergence theorems, and hence RM.  

\smallskip

Finally, we consider an interesting observation regarding the Big Five systems of Reverse Mathematics, namely that these five systems satisfy the strict implications:
\be\label{linord}
\FIVE\di \ATR_{0}\di \ACA_{0}\di\WKL_{0}\di \RCA_{0}.
\ee
By contrast, there are many incomparable logical statements in second-order arithmetic.  For instance, a regular plethora of such statements may be found in the \emph{Reverse Mathematics zoo} in \cite{damirzoo}.  The latter is intended as a collection of theorems which fall outside of the Big Five classification of RM.  
As detailed in Section~\ref{weako}, special fan functionals do not fit into the usual hierarchy of (higher-order) RM.

\section{Special fan functionals and their computational properties}\label{prim}
In this section, we study the relationship between the new \emph{special} and \emph{weak} fan functionals and existing functionals like $\exists^{2}$.
As a main result, we show that the latter (and in fact any type two functional) cannot compute any special or weak fan functional, in the sense of Kleene's S1-S9 (see \cites{kleene1, longmann} for the latter).    

\smallskip

As to their provenance, special fan functionals were first introduced in \cite{samGH}*{\S3} in the study\footnote{In a nutshell, the Gandy-Hyland functional $\Gamma$ is defined `in terms of itself' as follows: $\Gamma(Y^{2},s^{0^{*}} ):= Y(s *0*\lambda n.\Gamma(Y, s*n))$.  It is shown in \cite{samGH} that one can replace this definition by a primitive recursive one \emph{involving nonstandard numbers}.  In particular, one `only' needs to apply the definition of $\Gamma$ for $N$-many times, for nonstandard $N$, to obtain a primitive recursive functional.  One uses $\STP$ defined below to obtain this result, and applying term extraction as in Theorem \ref{consresultcor} then yields that $\Gamma$ is computable in terms of any $\Theta$-functional and other functionals.\label{hitherto}} of the Gandy-Hyland functional (\cite{gandymahat}).  
Special fan functionals are part of classical mathematics in that they can be defined in a (relatively strong) fragment of set theory (corresponding to full second-order arithmetic) by Theorem~\ref{import3} in Section~\ref{tokie}.  
Furthermore, special fan functionals may be obtained from the \emph{intuitionistic} fan functional, as shown in Section \ref{indie}.  
This result shows that the existence of a special fan functional has quite \emph{weak first-order strength} in contrast to its aforementioned considerable \emph{computational hardness} in classical mathematics.       

\smallskip

Finally, to show that special fan functionals are not an `isolated accident', we introduce the (strictly) weaker class of \emph{weak fan functionals} in Section~\ref{weak}.  
Intuitively speaking, special fan functionals are based on (a nonstandard version of) $\WKL_{0}$ from Section~\ref{RM} while weak fan functionals are based on (a nonstandard version of) the weaker $\WWKL_{0}$, also introduced in Section~\ref{weak}.  
It should be noted that our below definition of the fan functionals is \emph{different} from the (original) definition used in e.g.\ \cite{samGH}.   
That these definitions are equivalent is shown in \cite{dagsamII}*{\S2.6}.

\subsection{The special and intuitionistic fan functionals}\label{indie}
In this section, we define the class of \emph{special fan functionals}, also called $\Theta$-functionals, and show that the intuitionistic fan functional can compute special fan functionals. 
In particular, the name `special fan functional' derives from this relative computability result. 

\smallskip

Intuitively, any $\Theta$-functional outputs a finite sub-cover on input an uncountable cover of $2^{\N}$.  
We usually simplify the type of these fan functionals to `$3$'.  We reserve the symbol $\Theta$ to denote special fan functionals.  
It goes without saying that $\Theta$-functionals are not unique: just add extra binary sequences to the finite sub-cover.  

\medskip

We write `$f\in [\sigma]$' for $\overline{f}|\sigma|=_{0^{*}}\sigma$, where $\tau^{*}$ is the type of finite sequences of type $\tau$ objects.  For $w^{\tau^{*}}=\langle t_{0}, \dots, t_{k}\rangle$, we write $|w|=k+1$ and $w(i)=t_{i}$ for $i<|w|$. 
These `finite sequence' notations are discussed in detail in Notation \ref{skim}.
\bdefi[Special fan functionals]\label{dodier}
$\SFF(\Theta)$ is as follows for $\Theta^{2\di 1^{*}}$:
\be\label{kijkma}
(\forall G^{2})(\forall f^{1}\leq1)(\exists g\in \Theta(G))(f\in [\overline{g}G(g)]).
\ee
Any functional $\Theta$ satisfying $\SFF(\Theta)$ is referred to as a \emph{special fan functional}.
\edefi
Following \eqref{kijkma}, any functional $G^{2}$ gives rise to a `canonical cover' $\cup_{f\in 2^{\N}}[\overline{f}G(f)]$ of Cantor space, and $\Theta(G)$ is a finite sub-cover thereof, i.e.\ $\cup_{i\leq k}[\overline{f_{i}}G(f_{i})]$ also covers $2^{\N}$ in case $\Theta(G)=\langle f_{0}, \dots, f_{k}\rangle$.
Note that Cousin (\cite{cousin1}) and Lindel\"of (\cite{blindeloef}) make use of such canonical covers (for $\R^{n}$) rather than the modern/general notion of cover.
In light of \eqref{kijkma}, special fan functionals may be called `realisers for the Heine-Borel theorem or Cousin lemma for $C$'.

\smallskip

We stress that $G^{2}$ in $\SCF(\Theta)$ may be \emph{discontinuous} and that Kohlenbach has argued for the study of discontinuous functionals in higher-order RM (see Section~\ref{RM}).  As it turns out, $\Theta$-functionals are intimately connected to Tao's notion of \emph{metastability}, as explored in \cite{samFLO}.  

\smallskip

Secondly, we define the \emph{intuitionistic fan functional} $\Omega^{3}$ (see \cite{kohlenbach2}*{\S3} and \cite{troelstra1}*{2.6.6}).  Note that combining the latter with a discontinuous functional like $\exists^{2}$ leads to a contradiction.   
\be\tag{$\MUC(\Omega)$}
(\forall Y^{2}) (\forall f^{1}, g^{1}\leq_{1}1)(\overline{f}\Omega(Y)=\overline{g}\Omega(Y)\notag \di Y(f)=Y(g)).   
\ee
There are a number of equivalent formulations of the intuitionistic fan functional (e.g.\ outputting a supremum for every $Y^{2}$ on Cantor space rather than a modulus of uniform continuity), corresponding to the RM-equivalences from \cite{simpson2}*{IV.2.3}.    

\smallskip

As to the logical strength of $(\exists \Omega^{3})\MUC(\Omega)$, the latter yields a conservative extension of $\WKL_{0}$ by the following theorem, where `$\RCA_{0}^{2}$' is just the base theory $\RCA_{0}$ formulated with function variables (see \cite{kohlenbach2}*{\S2} for details and definitions).  
\begin{thm}\label{proto}
$\RCA_{0}^{\omega}+(\exists \Omega^{3})\MUC(\Omega)$ is a conservative extension of $\RCA_{0}^{2}+\WKL$.  
\end{thm}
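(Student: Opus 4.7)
The plan is to establish the result in two parts: a direct derivation of $\WKL$ from $\MUC$ within $\RCAo$, and a conservativity argument via Kohlenbach's \ECF{} interpretation.

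First, I would verify the inclusion $\RCAo + (\exists \Omega^{3})\MUC(\Omega) \vdash \WKL$. This is the standard argument indicated in \cite{kohlenbach2}*{\S3}: given an infinite binary tree $T \leq_{1} 1$, define $Y^{2}$ on Cantor space so that $Y(f)$ records the first level at which $\overline{f}n \notin T$ (with a default value when $f\in[T]$). The modulus $\Omega(Y)$ then bounds uniformly the depth at which any $f\leq_{1} 1$ can exit $T$, and a level-by-level application produces a path in $T$, i.e.\ $\WKL$. Hence $\RCA_{0}^{2}+\WKL$ is contained in $\RCAo+(\exists\Omega)\MUC(\Omega)$ when the latter is restricted to the second-order language.

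Second, for the conservativity direction proper, I would apply Kohlenbach's \ECF{} (elimination of continuous functionals) interpretation from \cite{kohlenbach2}, which replaces every object of type $\geq 2$ by a type-$1$ \emph{associate} coding a continuous functional on Baire space. Three properties of \ECF{} are needed: (a) \ECF{} interprets $\RCAo$ into $\RCA_{0}^{2}$; (b) \ECF{} is the identity (up to provable equivalence in $\RCA_{0}^{2}$) on formulas of the language of second-order arithmetic; and (c) the translation $[(\exists \Omega^{3})\MUC(\Omega)]^{\ECF}$ is equivalent to the statement that every \emph{continuous} $Y^{2}$ on Cantor space admits a modulus of uniform continuity. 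The statement in (c) is a standard form of the fan theorem over $\RCA_{0}^{2}$ and hence equivalent to $\WKL$ by \cite{simpson2}*{IV.2}. Combining (a)–(c), every sentence in the language of $\RCA_{0}^{2}$ provable in $\RCAo+(\exists\Omega)\MUC(\Omega)$ is already provable in $\RCA_{0}^{2}+\WKL$, which is conservativity.

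The main obstacle is the verification of (c): since $\Omega$ is of type $3$, \ECF{} must supply an associate for it of the appropriate higher type, and one must check that the resulting translation of $\MUC(\Omega)$ is genuinely equivalent to the continuous uniform-continuity principle on $2^{\N}$ (rather than a strictly weaker or stronger variant). Once this is pinned down, the equivalence with $\WKL$ is folklore second-order RM. Both ingredients are essentially developed in \cite{kohlenbach2}*{\S3}, and the overall conservation argument follows the template familiar from higher-order RM and the proof-theoretic/term-extraction machinery underlying Theorem \ref{consresultcor}.
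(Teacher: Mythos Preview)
Your proposal is correct and follows essentially the same route as the paper: both establish conservativity via the $\ECF$ interpretation and the observation that the $\ECF$-translation of $(\exists\Omega)\MUC(\Omega)$ is (equivalent to) $\WKL$. The only cosmetic differences are that the paper attributes $\ECF$ to \cite{troelstra1}*{\S2.6} (with Kohlenbach suggesting the adaptation) rather than \cite{kohlenbach2}, and that the paper does not separate out your ``first part'' (deriving $\WKL$ from $\MUC$) since this is absorbed into the equivalence of $[(\exists\Omega)\MUC(\Omega)]^{\ECF}$ with $\WKL$; your final aside about Theorem~\ref{consresultcor} is a red herring, as that term-extraction machinery plays no role here.
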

\begin{proof}
As suggested in \cite{kohlenbach2}*{\S3}, one can modify the proofs in \cite{troelstra1}*{\S2.6} to establish the conservation result in the theorem, but it seems worthwhile to discuss some details.    
Indeed, in the latter reference, the so-called $\ECF$-interpretation is defined which, intuitively speaking, replaces all higher-order functionals (of type two or higher) by type one \emph{codes} (in the sense of Reverse Mathematics).      

\smallskip

Now, the $\ECF$-interpretation of $(\exists \Omega^{3})\MUC(\Omega)$ expresses that there is a code $\alpha^{1}$ which yields a modulus of uniform continuity on Cantor space on input a code $\beta^{1}$ representing an (automatically continuous) type two functional.  
As follows from the discussion in \cite{longmann}*{p.\ 459}, the $\ECF$-interpretation of $(\exists \Omega^{3})\MUC(\Omega)$ is equivalent to weak K\"onig's lemma.  Alternatively, one can explicitly define the aforementioned code $\alpha^{1}$ and show that it has the required properties using the contraposition of $\WKL$, as done in \cite{troelstra1}*{2.6.6} and \cite{noortje}*{p.\ 101}.  
\end{proof}
We note that the $\ECF$-interpretation is related to function realizability as in Kleene-Vesley (\cite{KV}).
Next, recall that the fan theorem $\FAN$  is the classical contraposition of $\WKL$, as follows:
\be\tag{$\FAN$}
(\forall T \leq_{1}1)\big[ (\forall \beta\leq_{1}1)(\exists m)(\overline{\beta}m\not \in T)\di (\exists k^{0})(\forall \beta\leq_{1}1)(\exists i\leq k)(\overline{\beta}i\not\in T) \big]. 
\ee
We also introduce the `effective version' of the fan theorem as follows.
\bdefi[Effective fan theorem]
\be\tag{$\FAN_{\ef}(h)$}
(\forall T^{1}\leq_{1}1, g^{2})\big[ (\forall \alpha\leq_{1}1)(\overline{\alpha}g(\alpha)\not\in T)\di (\forall \beta\leq_{1}1)(\overline{\beta}h(g, T)\not\in T)   \big].
\ee
\edefi
Clearly, the existence of $h$ as in the effective fan theorem implies $\FAN$ in $\RCAo$.  
Furthermore, with a further minimum of the axiom of choice $\QFAC^{2,1}$, the latter also follows from the former.  
We have the following theorem. 
\begin{thm}\label{kinkel}
There are terms $s^{3\di 3},t^{3\di 3}$ such that $\textsf{\textup{E-PA}}^{\omega*}$ proves:
\be\label{ikkeltje}
(\forall \Omega^{3})(\MUC(\Omega)\di \SCF(t(\Omega))) \wedge (\forall \Theta^{3})(\SCF(\Theta)\di \FAN_{\ef}(s(\Theta))).
\ee
\end{thm}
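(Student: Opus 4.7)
\emph{The plan} is to exhibit explicit closed terms $t,s$ of G\"odel's system $T$ and verify each conjunct of \eqref{ikkeltje} directly in $\textsf{E-PA}^{\omega*}$.

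For $t$, given $\Omega^{3}$ with $\MUC(\Omega)$ and $g^{2}$, I would first compute $N := \Omega(g)$, which by $\MUC(\Omega)$ is a uniform modulus of continuity of $g$ on Cantor space: any two binary sequences agreeing on their first $N$ bits have the same $g$-value. For each $\sigma\in\{0,1\}^{N}$, evaluate $g$ on the canonical extension $\sigma*0^{\infty}$ and set
\[
M := \max\bigl(\{N\}\cup\{g(\sigma*0^{\infty}):\sigma\in\{0,1\}^{N}\}\bigr).
\]
Then define $t(\Omega)(g) := \bigl(M,\,\langle \sigma*0^{\infty} : \sigma\in\{0,1\}^{M}\rangle\bigr)$. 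Both $M$ and the enumeration of $\{0,1\}^{M}$ are obtained by bounded primitive recursion on $N$ and $M$, so $t$ is a closed term of system $T$. To verify $\SCF(t(\Omega))$, assume $\overline{\alpha}g(\alpha)\notin T$ for every $\alpha=\sigma*0^{\infty}$ with $\sigma\in\{0,1\}^{M}$. For arbitrary $\beta\leq_{1}1$, put $\sigma := \overline{\beta}M$; since $M\geq N$, uniform continuity forces $g(\beta)=g(\sigma*0^{\infty})$, and this value is $\leq M$ by the choice of $M$ (applied to the first $N$ bits of $\sigma$). Hence $\beta$ and $\sigma*0^{\infty}$ agree on their first $g(\beta)$ bits, so $\overline{\beta}g(\beta)=\overline{\sigma*0^{\infty}}\,g(\sigma*0^{\infty})\notin T$, and $i:=g(\beta)\leq M$ witnesses the conclusion of $\SCF$.

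For $s$, I would set $s(\Theta)(g,T) := \Theta(g)(1)$, a term that simply projects the first component and ignores $T$. Given the hypothesis $(\forall\alpha\leq_{1}1)(\overline{\alpha}g(\alpha)\notin T)$, the premise of $\SCF(\Theta)$ is met: for binary $\alpha\in\Theta(g)(2)$ it is the assumption itself, while for any non-binary $\alpha$ the string $\overline{\alpha}g(\alpha)$ is not binary and therefore cannot lie in $T$. Thus $\SCF(\Theta)$ yields for every $\beta\leq_{1}1$ some $i\leq\Theta(g)(1)$ with $\overline{\beta}i\notin T$; since $T$ is closed under initial segments, $\overline{\beta}\Theta(g)(1)\notin T$ as well, giving $\FAN_{\ef}(s(\Theta))$. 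The main obstacle is the first conjunct, specifically the choice of $M$: it must simultaneously serve as a modulus of continuity for $g$ \emph{and} as an upper bound for the $g$-values on the relevant canonical extensions, so that $g$ on $\sigma*0^{\infty}$ with $\sigma\in\{0,1\}^{M}$ faithfully reflects $g$ on every binary $\beta$ extending $\sigma$. Taking the max of $N$ and the finitely many $g(\sigma*0^{\infty})$ with $\sigma\in\{0,1\}^{N}$ achieves both properties in one sweep, after which the remaining verifications are routine.
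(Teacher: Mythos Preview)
Your proof is correct and follows essentially the same approach as the paper. The paper defines $\Theta(g)(1):=\max_{|\sigma|=\Omega(g),\,\sigma\leq 1}g(\sigma*00\dots)$ (without including $N$ in the max, which it does not need since it only uses the bound $g(\beta_0)\leq\Theta(g)(1)$ rather than your equality $g(\beta)=g(\sigma*0^\infty)$) and takes $\Theta(g)(2)$ to be the $\tau*00\dots$ with $|\tau|=\Theta(g)(1)$; the verification is the same as yours, and the second conjunct is declared ``immediate'' without the details you spell out.
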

\begin{proof}
The second part is immediate.  
For the first part, let $\Omega$ be as in $\MUC(\Omega)$ and define $\Theta(g)$ to consist of the finite sequence of binary sequences $\tau*00\dots$ where $|\tau|=k_{0}\wedge \tau\leq_{0^{*}}1$ for $k_{0}:=\max_{|\sigma|=\Omega(g)\wedge \sigma\leq_{0^{*}}1}g(\sigma*00\dots)$.
Since $g$ in uniformly continuous on $2^{\N}$ by $\MUC(\Omega)$, we clearly have $\SFF(\Theta)$.
\end{proof}
The previous proof seems to go through in constructive mathematics. 
\begin{cor}\label{corhoeker}
$\RCA^{\omega}_{0}+(\exists \Theta^{3})\SCF(\Theta)$ is a conservative extension of $\RCA_{0}^{2}+\WKL$.   
\end{cor}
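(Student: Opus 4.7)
The plan is to sandwich the system $\RCAo + (\exists \Theta^3)\SCF(\Theta)$ between $\textsf{RCA}_{0}^{2} + \WKL$ from below and $\RCAo + (\exists \Omega^3)\MUC(\Omega)$ from above, and then convert these two bounds into the desired conservation result via Theorem \ref{proto}.

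For the upper bound, I would invoke the first conjunct of \eqref{ikkeltje} in Theorem \ref{kinkel}: since $t$ is a closed term of $\textsf{E-PA}^{\omega*}$ satisfying $\MUC(\Omega) \di \SCF(t(\Omega))$, we obtain
\[
\RCAo + (\exists \Omega^3)\MUC(\Omega) \vdash (\exists \Theta^3)\SCF(\Theta).
\]
Hence every theorem (a fortiori every $L_2$-theorem) of $\RCAo + (\exists \Theta^3)\SCF(\Theta)$ is a theorem of $\RCAo + (\exists \Omega^3)\MUC(\Omega)$, and by Theorem \ref{proto} its $L_2$-fragment is contained in $\textsf{RCA}_{0}^{2} + \WKL$.

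For the matching lower bound, I would apply the second conjunct of \eqref{ikkeltje}: $\SCF(\Theta) \di \FAN_{\ef}(s(\Theta))$ yields that $\RCAo + (\exists \Theta^3)\SCF(\Theta)$ proves $(\exists h)\FAN_{\ef}(h)$. By the remark immediately following the definition of the effective fan theorem, this already implies $\FAN$ in $\RCAo$, and since $\FAN$ is the classical contraposition of $\WKL$, the two are equivalent over $\RCAo$. Because $\RCAo$ contains $\textsf{RCA}_{0}^{2}$, it follows that $\RCAo + (\exists \Theta^3)\SCF(\Theta) \vdash \textsf{RCA}_{0}^{2} + \WKL$, so every $L_2$-theorem of the latter is also a theorem of the former.

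Combining the two directions gives the required conservation. I do not anticipate any serious obstacle: the term-level implications in Theorem \ref{kinkel} do all the computational work, and the only delicate step is the passage from $\FAN_{\ef}$ to $\FAN$, which the excerpt has already flagged as immediate in $\RCAo$ (no choice principle is needed in this direction).
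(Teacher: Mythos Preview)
Your proposal is correct and is essentially the paper's primary argument: the authors simply write ``Immediate by combining the theorem [Theorem~\ref{kinkel}] and Theorem~\ref{proto}'', and your sandwich via $(\exists\Omega)\MUC(\Omega)$ for the conservativity direction together with $\FAN_{\ef}\Rightarrow\WKL$ for the extension direction is exactly how that combination unpacks. The paper also sketches an alternative route---verifying directly that the $\ECF$-translation of $(\exists\Theta)\SCF(\Theta)$ is equivalent to $\WKL$---but your argument is the one they give first.
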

\begin{proof}
Immediate by combining the theorem and Theorem \ref{proto}.  Alternatively, one readily verifies that the $\ECF$-translation of $(\exists \Theta)\SCF(\Theta)$ is equivalent to $\WKL$, just like for $(\exists \Omega)\MUC(\Omega)$.    
\end{proof}
We now discuss the connection of $\Theta$-functionals to the `classical' fan functional and Nonstandard Analysis.  
\begin{rem}\label{homaar}\rm 
First of all, the first part of Theorem \ref{kinkel} was first proved \emph{indirectly} in \cite{samGH}*{\S3} by applying Theorem~\ref{consresultcor} to the normal form of $\NUC\di \STP$, where 
\be\tag{$\STP$}
(\forall f\leq_{1}1)(\exists^{\st}g^{1}\leq_{1}1)(f\approx_{1}g)
\ee
\be\tag{$\NUC$}
(\forall^{\st}Y^{2})(\forall f^{1}, g^{1}\leq_{1}1)(f\approx_{1}g\di Y(f)=_{0}Y(g)), 
\ee
Note that $\NUC$ expresses that every type two functional is nonstandard uniformly continuous on Cantor space, akin to Brouwer's continuity theorem (\cite{brouw}), 
while $\STP$ expresses the nonstandard compactness of Cantor space as in \emph{Robinson's theorem} (see \cite{loeb1}*{p.\ 42}).   The implication $\NUC\di \STP$ is also proved in Theorem \ref{hirko} below, as it is needed for some related results.  As will become clear in Theorem~\ref{lapdog}, the normal form for $\STP$ gives rise to $\Theta$-functionals, while the normal form of $\NUC$ gives rise to the intuitionistic fan functional $\Omega$.  

\smallskip
Secondly, the `classical' fan functional $\Phi^{3}$ as in $\FF(\Phi)$ below, is obtained from the intuitionistic one by restricting the variable `$Y^{2}$' in $\MUC(\Omega)$ as `$Y^{2}\in C$', where the latter\footnote{Below, we also use `$C$' to denote Cantor space, but no confusion will arise between `$Y^{2}\in C$' and `$f^{1}\in C$' due to the different types.  
} formula expresses continuity as follows:
\be\label{ncont}
Y^{2}\in C\equiv (\forall f^{1})(\exists N^{0})(\forall g^{1})\big[~\overline{f}N=\overline{g}N\di Y(f)=Y(g)\big].  
\ee
\be\tag{$\FF(\Phi)$}
(\forall Y^{2}\in C) (\forall f, g\leq_{1}1)(\overline{f}\Phi(Y)=\overline{g}\Phi(Y)\notag \di Y(f)=Y(g)),   
\ee
By combining \cite{kohlenbach4}*{Prop.\ 4.4 and 4.7}, the `arithmetical comprehension' functional $\exists^{2}$ (also defined in Section \ref{tokie}) can compute (Kleene S1-S9) the classical fan functional, while the proof of Theorem \ref{kinkel} implies that the special fan functional \emph{restricted to $Y^{2}\in C$} can be computed from the classical fan functional.      
\end{rem}
By the previous remark, $\Theta$-functionals can be viewed as a generalisation of (a version of) the classical fan functional to \emph{discontinuous} functionals.  
Such a generalisation is natural in our opinion, as it is well-known that e.g.\ \emph{restricting} oneself to recursive reals and functions, as in the Russian school of recursive mathematics, yields many strange and counter-intuitive results (see \cite{beeson1}*{IV}).  In particular, since discontinuous functions are studied in mainstream mathematics since Riemann's \emph{Habilation} (\cite{kleine}*{p.~115}), it is reasonable to study the generalisations of known functionals to discontinuous inputs (assuming this is well-defined).  
Furthermore, $\Theta$-functionals can be viewed as a version of the classical fan functional \emph{with nonstandard continuity} instead of the epsilon-delta variety by Section \ref{kokkon}.  

\smallskip

In light of the previous observations regarding the classical and intuitionistic fan functionals, special fan functionals appear to be a rather weak objects.     
Looks can be deceiving, as we establish in Theorem \ref{import} that \textbf{no} type two functional can (Kleene S1-S9) compute a special fan functional.   
This includes the \emph{Suslin functional} which corresponds to the strongest Big Five system $\FIVE$ of RM.
Furthermore, the combination of a $\Theta$-functional and $\exists^{2}$, i.e.\ higher-order $\ACA_{0}$ to be introduced in Section \ref{tokie}, turns out to be quite strong, as shown in Sections \ref{hakkenoverdesloot} and~\ref{weako}.

\subsection{The special fan functional and comprehension functionals}\label{tokie}
We study the relationship between special fan functionals and comprehension functionals.  
In particular, we show that the former cannot be computed by the following comprehension functional (or \emph{any} type two functional):
\be\tag{$\exists^{2}$}
(\exists \varphi^{2})(\forall f^{1})\big[(\exists n)(f(n)=0)\asa \varphi(f)=0  \big].
\ee
where we follow the notation from \cite{kohlenbach2}.  To simplify some of the below theorems we reserve `$\exists^{2}$' for the unique functional $\varphi^{2}$ as in $(\exists^{2})$.
Furthermore, we make our notion of `computability' precise as follows.  
\begin{enumerate}
\item[(I)] We adopt $\ZFC$ set theory as the official metatheory for all results, unless explicitly stated otherwise.  
\item[(II)] We adopt Kleene's notion of \emph{higher-order computation} as given by his nine clauses S1-S9 (see \cites{longmann, kleene1}) as our official notion of `computable'.
\end{enumerate}
We assume basic familiarity with computability theory, but introduce aspects of higher-order computability theory as we need them.
We shall often use set theoretic notation when not explicitly working in $\EPA^{\omega}$.    
With these conventions in place, we can prove the following theorem.
\begin{thm}\label{import}
There is no functional $\Theta^3$ satisfying $\SCF(\Theta)$ computable in $\exists^2$.
\end{thm}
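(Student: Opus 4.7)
The plan is to argue by contradiction. Suppose $\Theta^{3}$ satisfies $\SCF(\Theta)$ and is Kleene computable in $\exists^{2}$: there is a fixed Kleene index $e$ with $\Theta(g) = \{e\}(g,\exists^{2})$ for every $g^{2}$. I will exploit the fact that type-$3$ functionals Kleene computable in $\exists^{2}$ exhibit a strong continuity in their type-$2$ argument---their outputs are determined by only countably many values of $g$---whereas the defining cover property of $\Theta$ demands that the output correctly predict the behaviour of $g$ on all of $2^{\N}$.

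\textbf{Continuity / support lemma.} The first technical step is a countable-support lemma: by transfinite induction on the well-founded computation tree of $\{e\}(g,\exists^{2})$, whenever this computation converges its value depends only on the restriction of $g$ to a countable set $S_{g}\subseteq \N^{\N}$ which is $\Delta^{1}_{1}$-in-$g$. The key observation is that each call to $\exists^{2}$ tests a single $\N\to\N$ function (built from $g$ and the current computation state) for a zero, so it can introduce only countably many new points into the support; the remaining Kleene schemes S1--S8 preserve this property.

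\textbf{Adversarial construction.} Using the support lemma, I would build a ``bad'' input $g^{\ast}$ via Kleene's recursion theorem, simultaneously specifying the values of $g^{\ast}$ on $S_{g^{\ast}}$ as the simulation of $\{e\}(g^{\ast},\exists^{2})$ unfolds, and driving it towards an output $\langle M^{\ast},\langle f_{0}^{\ast},\dots,f_{n^{\ast}}^{\ast}\rangle\rangle$ whose certifying heights $g^{\ast}(f_{i}^{\ast})$ are too short for the finite union $\bigcup_{i\le n^{\ast}}[\overline{f_{i}^{\ast}}\,g^{\ast}(f_{i}^{\ast})]$ to exhaust $2^{\N}$. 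Since $S_{g^{\ast}}$ is countable while $2^{\N}$ is not, there is a binary sequence $\beta^{\ast}\in 2^{\N}\setminus S_{g^{\ast}}$ available; assigning $g^{\ast}(\beta^{\ast})$ freely does not alter the simulation, by the support lemma. If the recursion-theoretic fixed point can be arranged so that $\beta^{\ast}$ differs from each $f_{i}^{\ast}$ at some position strictly below $g^{\ast}(f_{i}^{\ast})$, then $\beta^{\ast}\notin\bigcup_{i\le n^{\ast}}[\overline{f_{i}^{\ast}}\,g^{\ast}(f_{i}^{\ast})]$, violating $\SCF(\Theta)$ at $g^{\ast}$ and contradicting the choice of~$e$.

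\textbf{Main obstacle.} The genuine difficulty is the self-reference: the output $f_{i}^{\ast}$ and the heights $g^{\ast}(f_{i}^{\ast})$ controlling the cover are both produced by the computation and both lie inside $S_{g^{\ast}}$, so one cannot inspect the cover after the fact and then retrofit $g^{\ast}$ on the $f_{i}^{\ast}$. Instead, one must diagonalise against every possible simulation outcome simultaneously, using the countable-support lemma to guarantee enough residual freedom on $\N^{\N}\setminus S_{g^{\ast}}$ to plant the uncovered $\beta^{\ast}$, and Kleene's recursion theorem (or an equivalent Gandy-style selection) to close the loop. A secondary bookkeeping burden is to maintain the $\Delta^{1}_{1}$-in-$g^{\ast}$ definability of $S_{g^{\ast}}$ throughout, and to keep the entire construction within the ambit of Kleene computability relative to $\exists^{2}$, so that the resulting $g^{\ast}$ exists as a genuine type-$2$ object on which the assumed equation $\Theta(g^{\ast})=\{e\}(g^{\ast},\exists^{2})$ breaks.
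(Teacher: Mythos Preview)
Your countable-support idea is sound and is in fact implicitly used by the paper, but the adversarial recursion-theoretic construction you propose is both unnecessary and, as stated, incomplete: you correctly identify the self-reference obstacle (the $f_i^\ast$ and their heights are entangled with the support $S_{g^\ast}$), yet you never actually resolve it---``one must diagonalise against every possible simulation outcome simultaneously'' is a description of a difficulty, not a solution. There is no indication of how Kleene's recursion theorem would close the loop here, since the output sequence $\langle f_0^\ast,\dots,f_{n^\ast}^\ast\rangle$ is itself determined by $g^\ast\!\upharpoonright\! S_{g^\ast}$, and you have no control over whether those $f_i^\ast$ lie in or out of $S_{g^\ast}$.

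The paper avoids this entire difficulty with a single clean idea. Rather than diagonalise, it \emph{specifies} a concrete partial functional $h_0$ in advance, using Gandy selection: for hyperarithmetical $\alpha$, set $h_0(\alpha)=e_0+2$ where $e_0$ is a canonical hyperarithmetical index for $\alpha$. Let $g_0$ be any total extension of $h_0$. Your support lemma, specialised to this situation, says the computation $\{e\}(g_0,\exists^2)$ only ever queries $g_0$ at hyperarithmetical arguments (by induction on the computation tree: each query point is computable in $\exists^2$ from earlier answers, and $g_0=h_0$ on hyperarithmetical inputs, so the answers stay hyperarithmetical). Hence the output $\Theta(g_0)(2)=\langle\alpha_1,\dots,\alpha_k\rangle$ consists of hyperarithmetical functions. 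But then the neighbourhoods $C_{\overline{\alpha_i}g_0(\alpha_i)}$ have measures $2^{-(e_0(\alpha_i)+2)}$ with \emph{distinct} indices $e_0(\alpha_i)$, so even the union over \emph{all} hyperarithmetical $\alpha$ has measure at most $\sum_e 2^{-(e+2)}<1$. The finite cover therefore misses part of Cantor space, contradicting $\SCF(\Theta)$.

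The moral: your support lemma is the right ingredient, but you should use it to show that a \emph{fixed, well-chosen} $g_0$ forces hyperarithmetical output, and then arrange \emph{a priori} via a measure argument that no hyperarithmetical finite sequence can produce a cover. No fixed point is needed.
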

\begin{proof}
Assume that $\Theta$ satisfying $\SCF(\Theta)$ is computable in $\exists^{2}$. 
Let $h^{2}$ be any partial functional computable in $\exists^{2}$ which is also total on the class of hyperarithmetical functions; let $g^{2}$ be any total extension of $h$.
By assumption, $\Theta$ applied to $g$ will yield a hyperarithmetical finite sequence $\Theta(g)$.

\smallskip

We now define a particular $h^{2}_{0}$ using \emph{Gandy selection} (\cite{supergandy} and \cite{longmann}*{Theorem~5.4.5}): let $e_{0}$ be the `least' number $e$ such that $e$ is an index for $\alpha$ as a hyperarithmetical function in some fixed canonical indexing of the 
hyperarithmetical sets. By `least' we mean `of minimal ordinal rank', and then of minimal numerical value among those.
Define $h_{0}(\alpha) = e_{0} + 2$ for the aforementioned $e_{0}$ and let $g_{0}$ be a total extension of $h_{0}$.  
Then $\Theta(g_{0})$ will consist of a finite list $\langle\alpha_1 , . . . , \alpha_k\rangle$ of hyperarithmetical functions, and the union of the neighbourhoods determined by the $\overline{ \alpha_i}(g(\alpha_i))$ is not of measure 1. Thus these neighbourhoods cannot cover Cantor space. This contradicts the assumption on $\Theta$.
\end{proof}
 The previous argument is a modification of the proof of the non-computability of the fan functional originally to be found in \cite{gandymahat}. In a letter to Kreisel around 1960 (exact year unknown), Gandy gave a measure-theoretic argument even closer to the one presented here.
\begin{cor}\label{import2}
Let $\varphi^{2}$ be any type two functional.  There is no functional $\Theta^3$ as in $\SCF(\Theta)$ computable in $\varphi$.   
\end{cor}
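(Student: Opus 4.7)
The plan is to relativize the proof of Theorem \ref{import} from $\exists^{2}$ to the given type two functional $\varphi$. First I would reduce to the case $\exists^{2}\leq_{T}\varphi$; then rerun the argument of Theorem \ref{import} with ``hyperarithmetical'' replaced throughout by ``hyperarithmetical in $\varphi$''.

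For the reduction: any functional computable in $\varphi$ is also computable in the combined oracle $(\varphi, \exists^{2})$, so it suffices to prove non-computability in an arbitrary type two $\psi$ with $\exists^{2}\leq_{T}\psi$. This step is harmless and, crucially, secures the availability of relativized Gandy selection (the relativization of \cite{longmann}*{Theorem 5.4.5}), which needs the number quantifier in the oracle.

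Having made this reduction, I would replay the proof of Theorem \ref{import} with every occurrence of ``hyperarithmetical'' replaced by ``hyperarithmetical in $\psi$''. The class $\mathcal{H}_{\psi}$ of $\psi$-hyperarithmetical functions admits a canonical indexing; relativized Gandy selection produces a partial $h^{2}$, computable in $\psi$ and total on $\mathcal{H}_{\psi}$, given by $h(\alpha)=e_{0}+2$ where $e_{0}$ is the least index of $\alpha$ as an element of $\mathcal{H}_{\psi}$ (ordered first by ordinal rank, then by numerical value). Let $g_{0}$ be any total extension of $h$. If $\Theta$ were computable in $\psi$, then the analysis of Kleene S1--S9 computations yields that the list $\Theta(g_{0})(2)=\langle\alpha_{1},\dots,\alpha_{k}\rangle$ lies in $\mathcal{H}_{\psi}$, and by construction $g_{0}(\alpha_{i})=e_{i}+2$ for the respective indices $e_{i}$. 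The same non-covering / non-well-founded-tree argument as in Theorem \ref{import} then contradicts $\SCF(\Theta)$.

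The main technical delicacy, exactly as in the original theorem, is justifying the inclusion $\Theta(g_{0})(2)\subseteq\mathcal{H}_{\psi}$: one needs an induction on the well-founded tree of sub-computations witnessing $\{e\}^{\psi,g_{0}}(\,\cdot\,)$, verifying that every argument at which $g_{0}$ is queried already lies in $\mathcal{H}_{\psi}$ (on which $g_{0}$ coincides with the $\psi$-computable $h$). This closure-under-sub-computations step is standard for practitioners of S1--S9 but is where the real content of the proof sits.
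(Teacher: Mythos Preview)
Your proposal is correct and takes essentially the same approach as the paper: the paper's proof is the single sentence ``The proof relativises to any type two functional computing $\exists^2$,'' and you have simply unpacked what that relativisation entails, including the harmless reduction to $\exists^{2}\leq_{T}\psi$ (implicit in the paper's phrasing) and the sub-computation induction that justifies $\Theta(g_{0})(2)\subseteq\mathcal{H}_{\psi}$, which the paper takes for granted both here and already in Theorem~\ref{import}.
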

\begin{proof}
The proof of Theorem \ref{import} relativises to type $2$ functionals computing $\exists^2$.
\end{proof}
We now list some well-known type two functionals which will also be used below.  
\emph{Feferman's search operator} as in $(\mu^{2})$ (see e.g.\ \cite{avi2}*{\S8}) is equivalent to $(\exists^{2})$ over Kohlenbach's system $\RCAo$ by \cite{kooltje}*{\S3}:
\be\tag{$\mu^{2}$}
(\exists \mu^{2})\big[(\forall f^{1})\big( (\exists n^{0})(f(n)=0)\di f(\mu(f))=0   \big)  \big],
\ee
and gives rise to higher-order $\ACA_{0}$.  The \emph{Suslin functional} $(S^{2})$ and the related $(\mu_{1})$ (see \cite{avi2}*{\S8.4.1}, \cite{kohlenbach2}*{\S1}, and \cite{yamayamaharehare}*{\S3}) give rise to higher-order $\FIVE$: 
\be\tag{$S^{2}$}
(\exists S^{2})(\forall f^{1})\big[  (\exists g^{1})(\forall x^{0})(f(\overline{g}n)=0)\asa S(f)=0  \big].
\ee
\be\tag{$\mu_{1}$}
(\exists \mu_{1}^{1\di 1})(\forall f^{1})\big[  (\exists g^{1})(\forall x^{0})(f(\overline{g}n)=0)\di (\forall x^{0})(f(\overline{\mu_{1}(f)}n)=0)  \big].
\ee
On the other hand, full second-order arithmetic as given by $(\exists^3)$ suffices to compute special fan functionals, as we show in Theorem \ref{import3} just below.  
\be\tag{$\exists^3$}\label{hah}
(\exists \xi^{3})(\forall Y^{2})\big[  (\exists f^{1})(Y(f)=0)\asa \xi(Y)=0  \big].
\ee
Similar to the case for $\exists^{2}$, we reserve `$\exists^3$' for the unique functional $\xi^{3}$ from $(\exists^3)$.  
We do the same for other functionals, like $\mu^{2}, \mu_{1}^{2}, S^{2}, \dots$ introduced above.  
\begin{thm}\label{import3}
A functional $\Theta^{3}$ as in $\SCF(\Theta)$ can be computed from $\exists^3$.  
\end{thm}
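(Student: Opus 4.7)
The plan is to use $\exists^3$ to compute, for each given $g^{2}$, a specific finite cover of Cantor space by $g$-neighbourhoods of the form $[\overline{\alpha}g(\alpha)]$, together with a uniform depth.

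I would first work combinatorially. Set $T_g := \{s \in 2^{<\omega} : \neg(\exists \alpha \leq_{1} 1)(\overline{\alpha}g(\alpha) \preceq s)\}$. Each $\alpha \leq_{1} 1$ witnesses that all finite extensions of $\overline{\alpha}g(\alpha)$ lie outside $T_g$, so $T_g$ has no infinite branch, and K\"onig's lemma makes this binary tree finite; let $N_g$ denote its height. Membership in $T_g$ is $\Pi_{1}^{1}$-in-$g$, hence decidable by $\exists^3$, so $T_g$ and $N_g$ are both computable from $g$ and $\exists^3$. For each $s$ with $|s| = N_g$ one has $s \notin T_g$, and I would use $\exists^3$ to compute the least $k_s \leq N_g$ for which some $\alpha$ extending $\overline{s}k_s$ satisfies $g(\alpha) = k_s$.

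The pivotal step is to produce, for each such $s$, an actual binary sequence $\alpha_s$ extending $\overline{s}k_s$ with $g(\alpha_s) = k_s$, in a form computable from $g$ and $\exists^3$. The set $V_s := \{\alpha \leq_{1} 1 : \overline{\alpha}k_s = \overline{s}k_s \wedge g(\alpha) = k_s\}$ is nonempty and its characteristic function is $g$-decidable, placing it in the relativised $\Sigma_{1}^{1}$-in-$g$ class. Applying a basis-theorem argument, namely that every nonempty $\Sigma_{1}^{1}$-in-$g$ set contains a hyperarithmetic-in-$g$ element and that $\exists^3$ majorises the Suslin functional $S^{2}$ so as to compute any such element, I would enumerate indices of hyperarithmetic-in-$g$ reals using $\exists^3$ and test each for membership in $V_s$ by a single call to $g$; the first success yields an $\{g,\exists^3\}$-computable $\alpha_s$. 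I would then set $\Theta(g)(2) := \langle \alpha_s : |s| = N_g\rangle$ and $\Theta(g)(1) := N_g$, and verify $\SCF(\Theta)$ by noting that for any $T^{1} \leq_{1} 1$ with $\overline{\alpha_s}g(\alpha_s) \notin T$ for every $s$ of length $N_g$, and any $\beta \leq_{1} 1$, taking $s := \overline{\beta}N_g$ gives $\overline{\beta}k_s = \overline{s}k_s = \overline{\alpha_s}g(\alpha_s) \notin T$ with $k_s \leq N_g = \Theta(g)(1)$.

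The main obstacle is precisely the selection of $\alpha_s$. Since $V_s$ is in general neither open nor closed in Cantor space, the obvious greedy construction along the $\exists^3$-computable tree $\{\tau \in 2^{<\omega} : V_s \cap [\tau] \neq \emptyset\}$ only delivers a point in $\overline{V_s}$, which may fail to lie in $V_s$. Turning the basis-theorem enumeration of hyperarithmetic-in-$g$ reals and the membership test $\alpha \in V_s$ into an honest Kleene S1-S9 subroutine relative to $\exists^3$, and certifying termination of the search, is the real technical content of the argument; the remaining steps are then bookkeeping.
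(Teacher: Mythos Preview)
Your argument has a genuine gap at the selection step, and the proposed basis-theorem fix cannot be made to work. The Gandy basis theorem concerns $\Sigma^1_1$ sets of reals with type~$1$ parameters; here $g$ is a type~$2$ object, so ``$\Sigma^1_1$-in-$g$'' and ``hyperarithmetic-in-$g$'' are not notions to which that theorem applies. The fact that $\chi_{V_s}$ is $g$-computable imposes no complexity restriction on $V_s$ whatsoever: a level set of an arbitrary discontinuous $g^{2}$ can be an arbitrary subset of Cantor space. Concretely, let $A \subseteq 2^{\N}$ be nonempty and put $g(\alpha) := 0$ if $\alpha \in A$ and $g(\alpha) := 1$ otherwise. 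Then $\langle\rangle \notin T_g$, so $T_g = \emptyset$, $N_g = 0$, the unique $s$ of length~$0$ has $k_s = 0$, and $V_s = A$. Your algorithm is now committed to selecting an element of $A$ uniformly from its characteristic function and $\exists^3$; a uniform selector of this kind is exactly the choice functional $\kappa^{3}$, whose existence is not provable in $\mathsf{ZF}$ and which therefore cannot be obtained as a Kleene computation from the $\mathsf{ZF}$-definable functional $\exists^3$. Note that a correct $\Theta$ is \emph{not} forced to solve this problem for such $g$: since $g(\alpha) \leq 1$ always, taking $\Theta(g)(2) = \langle 00\ldots,\,100\ldots\rangle$ and $\Theta(g)(1) = 1$ already satisfies $\SCF(\Theta)$. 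It is your particular commitment to the minimal $k_s$ that traps you.

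The paper avoids selection from non-closed sets entirely by a transfinite construction that only ever picks the lexicographically least element of a \emph{closed} set. One sets $f_0 := \lambda x.0$ and, for ordinals $\alpha > 0$, lets $f_\alpha$ be the lex-least element of $2^{\N} \setminus \bigcup_{\beta < \alpha} C_{\overline{f_\beta}g(f_\beta)}$ as long as this complement is nonempty; the sequence is strictly increasing and must capture $\lambda x.1$ at some countable stage. A finite subcover is then extracted by walking backward from $\lambda x.1$ along a strictly decreasing sequence of these ordinals. The set $\{\overline{f_\alpha}g(f_\alpha)\}$ so produced is the closure of a non-monotone arithmetical inductive definition relative to $g$, hence $\Delta^1_2$ in $g$, and $\Theta(g)$ is arithmetical in it; this yields computability in $\exists^3$.
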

\begin{proof}
We first prove the existence of a functional $\Theta^{3}$ such that $\SCF(\Theta)$ in $\textsf{ZF}$, i.e.\ classical set theory without the axiom of choice.  
We then show how the construction can be realised as an algorithm relative to $\exists^3$.  
 
\smallskip 

First of all, we introduce some definitions.  Let $C$ be Cantor space $2^\N$ with the lexicographical ordering. If $\sigma$ is a finite binary sequence, we let $C_\sigma$ be the set of binary extensions of $\sigma$ in $C$.  We let $f^{1},g^{1}$ with indices vary over $C$ and we let $\alpha^{1}$, $\beta^{1}$ etc. vary over the countable ordinals. We let $h^{2}$ be a fixed total functional of type two, and our aim is to define $\Theta(h)$.
In particular, by recursion on $\alpha$ we will define an increasing sequence $\{f_\alpha\}_{\alpha < \aleph_1}$ from $C$.
 We put $f_0 := \lambda x.0$ and  
 \[\textstyle
 I(\alpha) := \bigcup_{\beta \leq \alpha}C_{\overline{ f_\beta}h(f_\beta)} \textup{ and } I(< \alpha) := \bigcup_{\beta < \alpha}C_{\overline{ f_\beta}h(f_\beta)}.
 \]
Secondly, consider $\alpha > 0$ and proceed as follows: 
\begin{enumerate}
\item[(I)] If $\lambda x.1 \in I( < \alpha)$, let $f_\alpha = f_\beta$ for the first $\beta$ such that $\lambda x.1 \in C_{\overline{f_\beta}h(f_\beta)}$.
\item[(II)] If not, let $f_\alpha$ be the least element not in $I(< \alpha)$.
\end{enumerate}
By construction, the sequence of $f_\alpha$'s will be strictly increasing until we capture $\lambda x.1$, which thus must happen after a countable number $\alpha^h$ of steps.
 Clearly, the least $\alpha$ such that $f \in I(\alpha)$ must be a successor ordinal for each $f$. Thus, let $\alpha_0 = \alpha^h$ be this ordinal for $f = \lambda x.1$, and let $g_0$ be the greatest strict lower bound of $C_{\overline{f_{\alpha_{0}}}h(f_{{\alpha_{0}}})}.$
 Let $\alpha_1$ be this ordinal for $f = g_0$ and let $g_1$ be the greatest strict lower bound of $C_{\overline{f_{\alpha_1}}h(f_{{\alpha_{1}}})}.$
Continue this process, defining a decreasing sequence $\alpha_0,\alpha_1, \dots$  until $\lambda x.0$ is captured, and we have a finite cover of $C$ of 
neighbourhoods of the form $C_{\overline{f_{\alpha_i}}h(f_{\alpha_i})}$ for $i \leq n$  for some $n$.
 We then define $\Theta(h)$ as the 
 finite sequence $\{f_{\alpha_i}: i  \leq n\}$.
 
 \smallskip 
 
Now observe that $\{\overline{f_{\alpha}}h(f_{\alpha}) : \alpha \leq \alpha^h\}$ is definable as the closure set of a non-monotonic arithmetical inductive definition relative to $h$, so this set will have complexity $\Delta^1_2$ relative to $h$. The extraction of $\Theta(h)$ is arithmetical in this set, so the graph of $\Theta$ is $\Delta^1_2$, and $\Theta$ is computable in $\exists^3$.  
A finer analysis is in Theorem~\ref{mooi}.  
\end{proof}
As it happens, Borel's construction from \cite{opborrelen}*{p.~52} can be applied to our notion of canonical cover, yielding a $\Theta$-functional in the same way as the previous proof. We will refer to the one constructed in the proof of Theorem \ref{import3} as \emph{Borel's $\Theta$}.
Moreover, one needs far less that $\exists^3$ to capture the construction from the proof, but it may be difficult to isolate a weaker `nice' functional in which the special fan functional is computable.  Furthermore, we can refine the previous result to `computation via a term of G\"odel's $T$' if we allow Feferman's mu operator as an additional parameter.  
Let $\SC(\xi)$ be $(\exists^3)$ without the leading existential quantifier.  We refer to \cite{barwise}*{C.7} for an introduction to inductive definitions, while the connection between the latter and $\Theta$-functionals is investigated in \cite{dagcie18}.
\begin{thm}\label{mooi}
There is a term $t^{(2\times 3)\di 3}$ of G\"odel's $T$ such that 
\begin{align}\label{pate}
(\forall  \mu^{2},\xi^{3})&\big[ [\MU(\mu) \wedge \SC(\xi) ] \di  \SCF(t(\mu, \xi))\big],  
\end{align}
and \eqref{pate} is provable in $\EPA^{\omega*}+\X$, where $\X$ expresses that sets may be defined via non-monotonic inductive definitions, and that such sets are $\Delta^1_2$ in the parameters.
\end{thm}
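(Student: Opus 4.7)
The plan is to reformulate the transfinite construction in the proof of Theorem~\ref{import3} so that its one genuinely impredicative ingredient (the non-monotonic arithmetical induction over the countable ordinals) is absorbed into axiom $\X$, while every subsequent step is realised by an explicit term of G\"odel's $T$ in the parameters $\mu$ and $\xi$. Given $h^{2}$, define a non-monotonic arithmetical operator $\Gamma_{h}$ whose closure set $A_{h}$ records the finite binary sequences $\overline{f_{\alpha}}h(f_{\alpha})$ (suitably coded together with enough bookkeeping to distinguish the stages at which they are produced) as $\alpha$ ranges up to $\alpha^{h}$: at each stage the partial closure determines arithmetically whether $\lambda x.1$ has already been covered, hence which of clauses (I)--(II) applies, and what value $\overline{f_{\alpha}}h(f_{\alpha})$ to add. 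Since both cases are arithmetical in the current stage, $\Gamma_{h}$ is arithmetical in $h$ and non-monotonic. Axiom $\X$ then furnishes $A_{h}$ and asserts that it is $\Delta^{1}_{2}$ in $h$.

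Next I would use $\mu$ and $\xi$ to produce the characteristic function of $A_{h}$. On its own, $\xi$ decides $\Sigma^{1}_{1}(h)$-predicates; by building type-two functionals out of $\mu$ and nesting one $\xi$-call inside another, one obtains explicit G\"odel $T$-terms deciding $\Pi^{1}_{1}$, $\Sigma^{1}_{2}$ and $\Pi^{1}_{2}(h)$-predicates uniformly in $h$. Applying this machinery to the $\Sigma^{1}_{2}(h)$ (equivalently $\Pi^{1}_{2}(h)$) definition of $A_{h}$ supplied by $\X$ yields a $T$-term $s(\mu,\xi)$ such that $s(\mu,\xi)(h)$ is the characteristic function of $A_{h}$. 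Once $A_{h}$ is available, the extraction of the cover $\{f_{\alpha_{i}}:i\leq n\}$ and of the bound $\max\{h(f_{\alpha_{i}}):i\leq n\}$ described in the proof of Theorem~\ref{import3}---iteratively locate the element of $A_{h}$ whose interval contains $\lambda x.1$, pass to the greatest strict lower bound of that interval, find the element of $A_{h}$ whose interval captures it, and continue until $\lambda x.0$ is caught---is purely arithmetical in $A_{h}$ and $h$, hence implementable by a $T$-term in $\mu$. Composing these pieces defines $t(\mu,\xi)$, and the verification of $\SCF(t(\mu,\xi))$ is precisely the correctness argument of Theorem~\ref{import3}, now formalised inside $\EPA^{\omega*}+\X$.

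The main obstacle is turning the informal ``iterated $\xi$'' procedure into a fixed closed term of G\"odel's $T$: each transition $\Sigma^{1}_{n}\to\Pi^{1}_{n}\to\Sigma^{1}_{n+1}$ requires assembling a new type-two functional out of $\mu$ and earlier $\xi$-calls, and one must check that these assemblies stay within primitive recursion at the appropriate types and uniform in $h$. A secondary technical point is that countable ordinals indexing the $f_{\alpha}$ are not directly representable in $\EPA^{\omega*}$; however, all transfinite content is absorbed into $A_{h}$ by $\X$, and only the finite cover survives to the output $t(\mu,\xi)(h)$, so no genuine obstruction arises there. The verification of $\X$-provability of \eqref{pate} then reduces to checking that each of the above steps is a theorem of $\EPA^{\omega*}+\X$, which follows because $\X$ provides both the existence of $A_{h}$ and the two analytical definitions needed for the $\xi$-based decision procedure.
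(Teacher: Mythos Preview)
Your proposal is correct and follows essentially the same route as the paper: both arguments isolate the non-monotonic arithmetical operator $\Gamma_{h}$ whose closure set is $\Delta^{1}_{2}(h)$ by axiom $\X$, use nested applications of $\xi$ (with $\mu$ handling numeric quantifiers) to obtain a $T$-term deciding the relevant $\Sigma^{1}_{2}$/$\Pi^{1}_{2}$ predicate, and then extract the finite cover arithmetically. The only cosmetic difference is that the paper phrases the decoding step as reading off the $\Sigma^{1}_{2}$ graph of $\Theta$ directly, whereas you first compute the characteristic function of $A_{h}$ and then extract; these are the same computation, and the paper in fact notes that $\Theta(h)$ is arithmetical in $\Gamma^{\infty}$ uniformly in $h$, which is precisely your formulation. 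Your worry about iterating $\xi$ through arbitrary $\Sigma^{1}_{n}$ is more than is needed---two nestings suffice for $\Delta^{1}_{2}$---and the paper also singles out that $\mu$ is genuinely required (not just convenient) because G\"odel's $T$ lacks unbounded search, a point you might make explicit.
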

\begin{proof}
First of all, let $\Theta$ be as constructed in the proof of Theorem \ref{import3} and recall that $C$ denotes Cantor space.  As observed in the proof of Theorem \ref{import3}, the graph of $\Theta$ is $\Sigma^1_2$, i.e.\ the formula $\Theta(h^{2}) = 
\langle f_1^{1} , . . . , f^{1}_k\rangle
 $ is equivalent to a $\Sigma_{2}^{1}$-formula with parameters as shown.
Assuming this claim, there is a primitive recursive predicate $S_0$ such that
\[
\Theta(h^{2}) = \langle f_1^{1} , . . . , f^{1}_k\rangle \asa  (\exists g^{1})( \forall z^{1})(\exists n^{0}) S_0\big(h,g,z,n, \langle f_1^{1} , . . . , f^{1}_k\rangle)
\]
and a primitive recursive predicate $S$ such that
\be\label{karju}
\Theta(h)(i) = j \asa (\exists g^{1})( \forall z^{1})( \exists r^1) S(h,i,j,g,z,r(0)),
\ee
where $\Theta(h)(i)$ refers to $f_i$ in the output.
Hence, there is a term $t$ in G\"odel's $T$ which agrees with the characteristic function of $S$.
The exact form of $S$ will depend on how finite sequences are coded, and we need access to the length $k$ of the sequence of functions $\langle f_1^{1} , . . . , f^{1}_k\rangle$ somehow. For this, Feferman's mu-operator is needed, since G\"odel's $T$ only provides bounded search. 

\smallskip

Secondly, we eliminate all quantifiers in \eqref{karju} via $\exists^3$ and obtain a term $t_1$ with parameter $\exists^3$ defining the characteristic function of the right-hand side of \eqref{karju}.  
From this, we use Feferman's mu to extract the values $\Theta(h)(i)$ for $i=1,\ldots , k$.  

\smallskip

Thirdly, in order to prove that the term $t_{1}$ has the desired property, we need axioms proving the totality of $\Theta$ as defined via the process in the proof of Theorem~\ref{import3}.
To this end, if $A$ is a finite set of binary sequences, we put $\Gamma(A) := \{\overline{f}h(f)\}$ where $f^{1}$ is the least binary sequence not covered by $\bigcup_{s \in A}C_s$, if such exists. Otherwise, we put $\Gamma(A) := \emptyset$. Note that $\Gamma$ is a non-monotonic inductive arithmetical operator, and we let $\Gamma^\infty$ be its closure. 

\smallskip

With this definition, $\Gamma^{\infty}$ is a well-ordered set (for the lexicographical ordering) of binary sequences, and such that the corresponding neighbourhoods cover $C$. Given $s = \overline f_\alpha h(f_\alpha) \in \Gamma^\infty$, we can recover $f_\alpha$ as the least function not covered by all $C_t$ for $t < s$ and $t \in \Gamma^{\infty}$. In this way, $\Theta(h)$ is arithmetical in $\Gamma^\infty$ uniformly in $h$.
The only `non-trivial' axiom beyond arithmetical comprehension needed to verify the correctness of this construction is an axiom of inductive definability.
\end{proof}
We finish this section with a note on the use of the intuitionistic mathematics in the formalisation of mathematics in proof assistants.  
\begin{rem}\rm
The proof assistant \emph{Nuprl} is based on Martin-L\"of type theory (\cites{loefafsteken, NUPE}).  
To expedite the laborious process of formalising mathematics, some proofs in Nuprl make use of axioms of Brouwer's intuitionistic mathematics (see e.g.\ \cite{rabideonzin}).  
The latter can have innocent looking classical consequences (like the existence of a special fan functional) which however 
have tremendous computational hardness.   
\end{rem}
\subsection{A weak version of the special fan functional}\label{weak}
We introduce the class of \emph{weak fan functionals} which are in general strictly weaker than special fan functionals.  
As will become clear in Section \ref{hakkenoverdesloot}, weak fan functionals are not just `more of the same' but occupy an important place relative to the special fan functionals.  

\smallskip

Intuitively speaking, where $\Theta(G)$ provides a finite sub-cover for the canonical cover $\cup_{g\in C}[\overline{g}G(g)]$, 
if $\Lambda(G, k)=\langle f_{0}, \dots, f_{m}\rangle$, then the associated finite sub-cover has measure at least $1-\frac{1}{2^{k}}$, i.e.\ as follows: 
\be\label{forp}\textstyle
\m\big(\cup_{i\leq m}[\overline{f_{i}}G(f_{i})] \big)\geq 1-\frac{1}{2^{k}},
\ee
where $\m$ is the usual coin-toss measure on $2^{\N}$.
It is straightforward, but cumbersome, to formally express \eqref{forp} in our formal language.
\bdefi[Weak fan functionals]\label{wdodier}
$\WFF(\Lambda)$ is as follows for $\Lambda^{(2\times 0)\di 1^{*}}$:
\be\textstyle\label{stylez}
(\forall G^{2}, k^{0})\big(\m\big( \cup_{g\in \Lambda(G, k)} [\overline{g}G(g)]\big) \geq 1-\frac{1}{2^{k}}\big).
\ee
Any functional $\Lambda$ satisfying $\WFF(\Lambda)$ is referred to as a \emph{weak fan functional}.
\edefi
Weak fan functionals are not `literally' realisers of theorems from the literature, but these functionals do capture the core complexity of several theorems concerning measure-theoretic approximations, like the Vitali Covering Theorem (\cite{vitaliorg}). This is investigated further in \cite{dagsamVI}.
As it happens, weak fan functionals also arise from \emph{nonstandard compactness}, as discussed in Sections \ref{pampson}.  

\smallskip

In light of the above definitions, there is a (trivial) term of G\"odel's $T$ computing a weak fan functional in terms of a special one.  
We also have the following theorem.
\begin{thm}\label{import22}
There is no functional $\Lambda^{3}$ satisfying $\WCF(\Lambda)$ which is computable in $\exists^{2}$ \(or any type two functional\).   
\end{thm}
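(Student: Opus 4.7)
The plan is to adapt the Gandy-selection argument in the proof of Theorem \ref{import} to the setting of $\WCF(\Lambda)$ via an elementary Kraft-style measure estimate. The guiding idea is that if the functional $g$ grows sufficiently fast with the least hyperarithmetical index of $\alpha$, then the total Lebesgue measure of a finite union of cylinders $[\overline{\alpha_i}g(\alpha_i)]$ with hyperarithmetical $\alpha_i$'s is bounded strictly below $1/2$, which contradicts the near-total covering forced by $\WCF(\Lambda)$ at $k=2$.

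Concretely, I would assume for contradiction that $\Lambda^3$ satisfies $\WCF(\Lambda)$ and is computable in $\exists^2$. Following the setup of the proof of Theorem \ref{import}, define a partial functional $h_0$ computable in $\exists^2$ and total on hyperarithmetical binary sequences by $h_0(\alpha):= e(\alpha)+3$, where $e(\alpha)$ is the Gandy-selected least index of $\alpha$ in the fixed canonical indexing of hyperarithmetical sets. Let $g_0$ be any total extension of $h_0$. The same computability-theoretic reasoning used for $\Theta$ in Theorem \ref{import} then shows that the finite sequence $\Lambda(g_0,2)(2)=\langle \alpha_1,\ldots,\alpha_m\rangle$ consists of hyperarithmetical functions.

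Since distinct hyperarithmetical reals have distinct least indices, the values $g_0(\alpha_i)=e(\alpha_i)+3$ for distinct $\alpha_i$ are all different, giving the Kraft-style bound
\[
\sum_{i} 2^{-g_0(\alpha_i)} \leq \sum_{e\geq 0}2^{-(e+3)} = \tfrac{1}{4},
\]
where the left-hand sum is taken over distinct $\alpha_i$. Now define the binary tree
\[
T := \{\sigma \in 2^{<\N} : \sigma \text{ is not an extension of any } \overline{\alpha_i}g_0(\alpha_i)\},
\]
so that $\overline{\alpha_i}g_0(\alpha_i)\notin T$ for each $i$, making the antecedent of $\WCF(\Lambda)$ satisfied. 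At every level $n$ one has
\[
L_n(T) \geq 1 - \sum_{i:\, g_0(\alpha_i)\leq n} 2^{-g_0(\alpha_i)} \geq \tfrac{3}{4} > \tfrac{1}{2},
\]
so no $n \leq \Lambda(g_0,2)(1)$ satisfies $L_n(T)\leq \tfrac{1}{2}$, contradicting $\WCF(\Lambda)$ at $k=2$.

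The main obstacle is the transfer of the hyperarithmeticity claim for $\Lambda(g_0,2)(2)$, which I would simply inherit from the analogous step in the proof of Theorem \ref{import}; the subsequent measure estimate is elementary. For the parenthetical claim about an arbitrary type-two functional $\varphi^2$, one relativises the Gandy-selection construction to any such $\varphi^2$ that computes $\exists^2$, exactly as in Corollary \ref{import2}.
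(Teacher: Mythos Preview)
Your proposal is correct and follows essentially the same route as the paper. The paper's proof is a one-liner pointing back to Theorem~\ref{import} with the remark that the tree $T_0$ now only needs $\mu(T_0)>_{\R}0$; you have simply unpacked this by writing out the Kraft-style measure bound explicitly and shifting the constant from $e_0+2$ to $e(\alpha)+3$ so that the contradiction lands cleanly at $k=2$ rather than at some larger $k$, which is a cosmetic adjustment.
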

\begin{proof}
Analogous to the proof of Theorem \ref{import}.
\end{proof}
As noted above, $\WWKL$ is strictly weaker than $\WKL$, and this is reflected in the following computability result, which is a consequence of Corollary 3.14 and Theorem 3.31 of \cite{dagsamII}.
\begin{thm}\label{import24}
There exists a functional $\Lambda_{1}$ satisfying $\WCF(\Lambda_{1})$ such that no $\Theta$ satisfying $\SCF(\Theta)$ is computable in $\Lambda_{1}$ and $\exists^{2}$.
\end{thm}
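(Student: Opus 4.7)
The plan is to adapt the Gandy-selection proof of Theorem~\ref{import} to the additional oracle $\Lambda_{1}$. Everything hinges on exhibiting a particular $\Lambda_{1}$ that is \emph{closed on the hyperarithmetical}: on every hyperarithmetical input $g^{2}$, the value $\Lambda_{1}(g,k)$ is hyperarithmetical in $g$, each path $\alpha_{i}$ in the finite sequence $\Lambda_{1}(g,k)(2)$ is itself a hyperarithmetical function, and moreover $\Lambda_{1}(g,k)$ depends only on the values of $g$ at hyperarithmetical arguments. This is the instance of the weak fan functional `closed on the hyperarithmetical' announced in Section~\ref{prim}.

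For the construction of $\Lambda_{1}$, I would adapt the transfinite recursion from the proof of Theorem~\ref{import3}: at each countable stage $\beta$ pick the least binary sequence $f_{\beta}$ not yet contained in $\bigcup_{\gamma<\beta}C_{\overline{f_{\gamma}}g(f_{\gamma})}$, and halt \emph{as soon as} the union of cylinders so far has measure at least $1-\tfrac{1}{k}$. The weakened measure-$1-\tfrac{1}{k}$ stopping criterion---as opposed to the full cover demanded by $\SCF$---is where the extra flexibility lives, and is what should allow the construction on hyperarithmetical $g$ to stay hyperarithmetical, avoiding the $\Delta^{1}_{2}$ phenomenon from Theorem~\ref{import3}. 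Extracting the output sequence $\langle f_{\alpha_{0}},\dots,f_{\alpha_{n}}\rangle$ and setting $\Lambda_{1}(g,k)(1):=\max_{i\le n}g(f_{\alpha_{i}})$ makes the verification of $\WCF(\Lambda_{1})$ a standard measure-counting argument: any binary tree $T$ avoiding every $\overline{f_{\alpha_{i}}}g(f_{\alpha_{i}})$ lies in a set of measure $\leq\tfrac{1}{k}$, so $L_{N}(T)\leq\tfrac{1}{k}$ at $N=\Lambda_{1}(g,k)(1)$. Calibrating the construction so that the output is genuinely hyperarithmetical in hyperarithmetical $g$---and is only sensitive to the hyperarithmetical values of $g$---is the main technical obstacle; I expect this to require a careful choice of how to enumerate the $f_{\beta}$'s, using the measure-$\tfrac{1}{k}$ slack to guarantee termination at a hyperarithmetical stage.

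With such a $\Lambda_{1}$ in hand, assume toward a contradiction that some $\Theta$ satisfying $\SCF(\Theta)$ is Kleene S1--S9 computable in $\Lambda_{1}$ and $\exists^{2}$. Mimicking Theorem~\ref{import}, define via Gandy selection the partial $h_{0}^{2}$ computable in $\exists^{2}$ and total on the hyperarithmetical functions by $h_{0}(\alpha):=e_{0}(\alpha)+2$, with $e_{0}(\alpha)$ the `least' (minimal rank, then minimal value) hyperarithmetical index of $\alpha$; let $g_{0}$ be any total extension of $h_{0}$. The computation of $\Theta(g_{0})$ queries $g_{0}$ only on hyperarithmetical arguments---its direct queries by the Gandy-selection property of $\exists^{2}$-computations, its indirect queries through $\Lambda_{1}$ by the closure property above---so $\Theta(g_{0})$ coincides with the same computation applied to any total extension of $h_{0}$, and is hyperarithmetical. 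In particular $\Theta(g_{0})(2)=\langle\alpha_{1},\dots,\alpha_{k}\rangle$ is a finite list of hyperarithmetical paths, whose cylinders satisfy $\sum_{i\le k}\mu(C_{\overline{\alpha_{i}}g_{0}(\alpha_{i})})\leq\sum_{e\geq 0}2^{-e-2}=\tfrac{1}{2}$ by the choice of $h_{0}$, so this finite union fails to cover $2^{\N}$, contradicting $\SCF(\Theta)$ on the witness tree extracted from the uncovered paths. As flagged, the substantive difficulty is the construction of $\Lambda_{1}$; the contradiction itself is a direct transfer of the argument for Theorem~\ref{import}.
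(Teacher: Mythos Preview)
Your construction of $\Lambda_{1}$ via ``leftmost-not-covered, stop at measure $1-\tfrac1k$'' is essentially the paper's $\Lambda_{0}$ from Section~\ref{kood}, and the paper explicitly notes (end of Section~\ref{kood}) that only an \emph{intensional} variant of $\Lambda_{0}$ has the closure property you need: in the extensional Kleene S1--S9 setting, $\Lambda_{0}+\exists^{2}$ can compute non-hyperarithmetical functions. The difficulty is exactly the one you flag but do not resolve. In the computation $\{e\}(\Lambda_{1},\exists^{2},g_{0})$, the oracle $\Lambda_{1}$ is applied to functionals $F=\lambda f.\{e'\}(\Lambda_{1},\exists^{2},g_{0},f)$ that are \emph{not} a priori hyperarithmetical, nor do they a priori agree with any hyperarithmetical functional on hyperarithmetical inputs. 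Your claim that $\Lambda_{1}$ ``depends only on the values of $g$ at hyperarithmetical arguments'' is established (via Sacks) only for $g$ whose restriction to the hyperarithmetical is itself hyperarithmetical; applying it to such $F$ is circular, since that is precisely what the induction is meant to establish. The measure-$\tfrac1k$ slack does not help here: for a bad $F$ your transfinite recursion may simply run past $\omega_{1}^{\textup{CK}}$ before the stopping criterion is met, at which point non-hyperarithmetical $f_{\beta}$'s enter the output.

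The paper's construction (Appendix) is genuinely different and sidesteps this circularity. Rather than building the cover from $F$ by a deterministic recursion, one fixes in advance an $\exists^{2}$-computable sequence $\alpha\mapsto (f)_{\alpha}$ of candidate covers, chosen (via the Sacks--Tanaka basis theorem applied to a measure-one set) so that each $(f)_{\alpha}$ suffices for almost every partial hyperarithmetical $F_{x}$. One then sets $\Lambda_{1}(F)$ to be the first $(f)_{\alpha}$ that suffices for $F$, with $\Lambda_{0}(F)$ as fallback. The ``essential property'' is then proved by a measure argument over the function inputs: for \emph{almost all} $\vec g$, the computation $\{e\}(\Lambda_{1},\exists^{2},\vec b,\vec g)$ never invokes the fallback, hence is really a computation in $\exists^{2}$ and the partial, $\exists^{2}$-computable first case. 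For computations with no function input this gives hyperarithmetical output directly. The contradiction is then obtained not via your $g_{0}$ (an arbitrary total extension, not itself computable in $\exists^{2}$) but via the functional $F$ of Theorem~\ref{refine}, which \emph{is} computable in $\exists^{2}$: any $\Theta$ computable in $\Lambda_{1},\exists^{2}$ would make $\Theta(F)(2)$ a finite hyperarithmetical list whose neighbourhoods cover $C$, contradicting Theorem~\ref{refine}.
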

Finally, $\Lambda$-functionals relate to $\WWKL$ in the same way $\Theta$-functionals do to $\WKL$.  
\begin{thm}\label{proto38}
$\RCA_{0}^{\omega}+(\exists \Lambda)\WCF(\Lambda)$ is conservative over $\RCA_{0}^{2}+\WWKL$.  
\end{thm}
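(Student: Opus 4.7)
The plan is to mimic the strategy used for Theorem \ref{proto} and Corollary \ref{corhoeker}, namely to show that the $\ECF$-image of $(\exists \Lambda)\WCF(\Lambda)$ is provably equivalent to $\WWKL$ over $\RCA_{0}^{2}$; the conservation statement then follows from the usual conservativity properties of $\ECF$, invoked just as in the proof of Theorem \ref{proto}. First, I would unfold the $\ECF$-translation: the type-three quantifier $(\exists \Lambda^{3})$ becomes existence of a type-one associate $\alpha^{1}$, while the type-two input $g^{2}$ to $\Lambda$ gets restricted to continuous functionals on Cantor space, represented by associates. The translated statement therefore asserts existence of a continuous operation assigning to each associate $\beta$ for continuous $g^{2}$ and each $k^{0}$ a pair satisfying the $\WCF$-specification.

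For the forward direction, I would assume $\WWKL$ and construct such an $\alpha$ explicitly, in analogy with the standard proof that the $\ECF$-image of $\MUC(\Omega)$ holds under $\WKL$ (\cite{troelstra1}*{2.6.6}). Given $\beta$ coding $g^{2}$ and a numeral $k$, form the binary tree $T^{g}_{n}$ consisting of those $\sigma$ of length at most $n$ on which $\beta$ has not yet resolved $g$ by reading $\sigma$. Continuity of $g$ guarantees that $T^{g}_{n}$ has no infinite path, so the measure-theoretic contrapositive of $\WWKL$ (i.e.\ $\WFAN$) supplies an $N$ with $\mu(T^{g}_{N}) \leq \tfrac{1}{k}$. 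From this $N$ I would read off the required finite sequence $\Lambda(g,k)(2)$ by enumerating the extensions of length $N$ whose $g$-value has been decided by $\beta$, together with the numerical bound $\Lambda(g,k)(1)$ obtained from the maxima of those decided values.

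For the reverse direction, from a hypothetical associate $\alpha$ realising the $\ECF$-image of $\WCF(\Lambda)$ I would derive $\WWKL$ by contraposition. Given a binary tree $T$ with $\mu(T) > \tfrac{1}{k}$ and no path, define a continuous $g^{2}$ by taking $g(f)$ to be the least $n$ with $\overline{f}n \notin T$; this is total precisely because $T$ has no path, and it is clearly continuous (hence represented by an associate computable from $T$). Feeding $g$ and $k$ into $\alpha$ returns a finite sequence $\langle f_{1},\dots,f_{m}\rangle = \Lambda(g,k)(2)$ and a bound $\Lambda(g,k)(1)$, and by construction of $g$ the antecedent of $\WCF(\Lambda)$ holds automatically; the conclusion then yields $L_{n}(T) \leq \tfrac{1}{k}$ for some $n \leq \Lambda(g,k)(1)$, contradicting $\mu(T) > \tfrac{1}{k}$.

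The main obstacle I anticipate is the forward step: one must confirm that the convergence $\mu(T^{g}_{n})\to 0$ is effectivisable inside $\RCA_{0}^{2}+\WWKL$ at the rate required, rather than needing a strictly stronger principle. This parallels the subtlety in the proof of Theorem \ref{proto}, where the $\ECF$-image of $\MUC(\Omega)$ was shown equivalent to $\WKL$ exactly; here one must carefully track the measure bound so that the modulus extracted matches the standard effective forms of $\WWKL$ in the literature (\cite{yussie,sayo,yuppie,simpson2}), and in particular so that no disguised use of $\WKL$ slips into the construction.
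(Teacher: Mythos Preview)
Your overall plan---apply the $\ECF$-interpretation and verify that the translate of $(\exists \Lambda)\WCF(\Lambda)$ follows from $\WFAN$---is exactly the paper's argument (stated there in a single sentence with reference to Corollary~\ref{corhoeker}). For the bare conservativity claim only this forward direction is needed; your reverse direction is correct but not required.

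There is, however, a genuine gap in your forward construction. The tree of sequences on which the associate $\beta$ has not yet resolved $g$ is the wrong tree: knowing that at level $N$ at most a $\tfrac{1}{k}$-fraction of nodes are undecided says nothing about the \emph{values} $g$ takes on the decided nodes, and it is those values that determine the size of each neighbourhood $C_{\overline{\alpha_\sigma}g(\alpha_\sigma)}$. For a concrete failure take $g(f)=2f(0)+1$, decided after one bit, so your $N$ is $1$; your $w$ is $\{00\dots,100\dots\}$ with $g$-values $1,3$ and your bound $\Lambda(g,3)(1)=3$. The tree $T$ consisting of all strings beginning with $1$ minus those extending $100$ satisfies the antecedent ($\langle 0\rangle\notin T$ and $100\notin T$) yet $L_3(T)=\tfrac{3}{8}>\tfrac{1}{3}$. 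The remedy is to use instead the tree
\[
S:=\{\sigma:(\forall\tau\subseteq\sigma)(\beta\text{ undecided on }\tau\ \vee\ \text{decided value}>|\sigma|)\},
\]
which is still pathless, so $\WFAN$ yields $N$ with $L_N(S)\le\tfrac{1}{k}$; now every $\sigma\notin S$ of length $N$ has $g(\alpha_\sigma)\le N$, hence $\overline{\alpha_\sigma}g(\alpha_\sigma)$ is an initial segment of $\sigma$, and the bound $L_N(T)\le\tfrac{1}{k}$ follows directly. So the obstacle is not that $\WWKL$ might be too weak---it suffices---but that your chosen tree fails to couple the modulus of continuity of $g$ with the size of its output.
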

\begin{proof}
Similar to the proof of Corollary \ref{corhoeker}, one verifies that the $\ECF$-interpretation of $(\exists \Lambda^{3})\WCF(\Lambda)$ follows from $\WWKL$.  
\end{proof}

\section{From computability theory to Nonstandard Analysis}\label{clearflow}
In this section, we use (non)-computability results (some established above) to obtain (negative and postive) results in Nonstandard Analysis.  
By way of a preliminary result and some illustration, we first consider some well-known negative computability theoretic results in Section~\ref{forgo} and derive some negative results in Nonstandard Analysis.  
The main non-implications in Nonstandard Analysis are proved in Section~\ref{fargp}; the computability theoretic results from Section \ref{tokie} are used in an essential way.  
Our other conceptual result is that the RM of Nonstandard Analysis is \emph{fundamentally different} from `usual' RM, in that the 
nonstandard counterparts of the Big Five systems behave quite differently from the originals.  
\subsection{Computability theory and Nonstandard Analysis}\label{forgo}
We show how to translate well-known negative results from higher-order computability theory to negative results in Nonstandard Analysis.  
The former negative results are:
\begin{enumerate}
\item[(a)] There is no \emph{computable} functional that outputs a modulus-of-continuity on input a continuous functional on Baire space.  
\item[(b)] There is no \emph{computable} functional that outputs a modulus-of-uniform-continuity on Cantor space on input a continuous functional on Baire space.  
\end{enumerate}
We now show how these translate to negative results in Nonstandard Analysis.  In particular, the negative results in items (a) and (b) are translated to proofs that 
certain systems of Nonstandard Analysis cannot prove the equivalence of `normal' and `nonstandard' continuity.  

\smallskip
\noindent
First of all, we consider the \emph{modulus-of-continuity functional} $\Psi$ as follows:
\be
(\forall Y^{2}\in C, f^{1}, g^{1})(\overline{f}\Psi(Y, f)=\overline{g}\Psi(Y, f)\notag \di Y(f)=Y(g)). \label{lukl2}\tag{$\textsf{\textup{MPC}}(\Psi)$}
\ee
From $\Psi$ as in $\MPC(\Psi)$, one can define a \emph{discontinuous} type two functional (see \cite{exu} and \cite{beeson1}*{Theorem 19.1}). 
By \cite{kohlenbach2}*{Prop.\ 3.7} and \cite{kohlenbach3}*{\S3}, a discontinuous type two functional can be used to obtain $(\mu^{2})$ inside $\RCAo$.    

\smallskip

By the previous, there is no computable modulus-of-continuity functional.  
As a consequence `normal' continuity \eqref{ncont} does not imply `nonstandard' continuity
\be\label{nscont}
(\forall^{\st} f^{1})(\forall g^{1})(f\approx_{1}g\di Y(f)=_{0}Y(g))
\ee
without extra nonstandard axioms, by the following theorem.  
\begin{thm}\label{simplo}
Let $\varphi$ be internal and such that $\P+\varphi$ is consistent.  
The system $\P+\varphi$ cannot prove that
\be\label{hurg}
(\forall^{\st}Y^{2}\in C)[(\forall^{\st} f^{1})(\forall g^{1})(f\approx_{1}g\di Y(f)=_{0}Y(g))], 
\ee
i.e.\ that all $\epsilon$-$\delta$ continuous functionals are nonstandard continuous \(on Baire space\).  
\end{thm}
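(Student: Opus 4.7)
The plan is to argue by contradiction, using Theorem~\ref{consresultcor} to turn a putative proof of \eqref{hurg} into the existence of a closed, primitive-recursive term $\Psi$ satisfying $\MPC(\Psi)$, which by the chain of observations in the paragraph just before the theorem collapses into the existence of $\mu^{2}$ via a closed term of $\textsf{E-PA}^{\omega*}$ — something incompatible with the consistency of $\P + \varphi$.

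First I would put \eqref{hurg} in the normal form demanded by Theorem~\ref{consresultcor}. Expanding $f \approx_{1} g$ as $(\forall^{\st} n)(\overline{f}n = \overline{g}n)$ and moving the standard quantifier across the implication, the body of \eqref{hurg} becomes $(\forall^{\st} f)(\forall g)(\exists^{\st} n)[\overline{f}n = \overline{g}n \to Y(f) = Y(g)]$; this inner formula is monotone in $n$, since a larger $n$ only strengthens the antecedent while leaving the consequent untouched. Applying the contrapositive of idealisation $\textsf{I}$ to the $(\forall g)(\exists^{\st} n)$-block with $g$ as existential variable, and then taking the maximum of the resulting standard finite sequence of candidate $n$'s (using monotonicity), one obtains the normal form
\begin{equation*}
(\forall^{\st} Y^{2}, f^{1})(\exists^{\st} N^{0})(\forall g^{1})\bigl[Y \in C \to (\overline{f}N = \overline{g}N \to Y(f) = Y(g))\bigr].
\end{equation*}

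Assuming $\P + \varphi \vdash \eqref{hurg}$, Theorem~\ref{consresultcor} now yields a closed term $t \in \mathcal{T}^{*}$ such that, setting $\Psi(Y,f) := \max_{i<|t(Y,f)|} t(Y,f)(i)$, one has $\textsf{E-PA}^{\omega*} + \varphi \vdash \MPC(\Psi)$ with $\Psi$ a closed term. Invoking the references cited just before the theorem — Normann's \cite{exu} and \cite{beeson1}*{Theorem 19.1} to produce a discontinuous type-two functional from $\Psi$ inside $\RCAo$, followed by \cite{kohlenbach2}*{Prop.\ 3.7} together with \cite{kohlenbach3}*{\S3} to produce $\mu^{2}$ from that discontinuous functional inside $\RCAo$ — one arrives at a closed term $t^{*}$ of $\textsf{E-PA}^{\omega*}$ with $\textsf{E-PA}^{\omega*} + \varphi \vdash \MU(t^{*})$.

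The main obstacle is the final inconsistency step, namely showing that no closed term of $\textsf{E-PA}^{\omega*}$ can provably realise $\MU$ in any consistent extension by an internal axiom. Since $t^{*}$ corresponds to a concrete G\"odel-$T$ term, it is provably continuous in the sense of the $\ECF$-interpretation, whereas $\mu^{2}$ is manifestly discontinuous; concretely, the $\ECF$-translation used in the proof of Theorem~\ref{proto} sends $\textsf{E-PA}^{\omega*} + \varphi$ to a second-order theory over which $\ECF(\MU(t^{*}))$ is refutable. One then notes that a direct diagonalisation against the specific $t^{*}$ — constructing $f^{1}$ inside $\textsf{E-PA}^{\omega*}$ witnessing $\neg \MU(t^{*})$ using the provable modulus of continuity of $t^{*}$ as a closed term — gives $\textsf{E-PA}^{\omega*} \vdash \neg \MU(t^{*})$, contradicting the consistency of $\P + \varphi$ and completing the proof.
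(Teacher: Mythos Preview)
Your proof is correct and follows the paper's approach: bring \eqref{hurg} into normal form via idealisation, extract a closed G\"odel-$T$ term $\Psi$ with $\textsf{E-PA}^{\omega*}+\varphi \vdash \MPC(\Psi)$, and derive a contradiction from the non-existence of a computable modulus-of-continuity functional. The paper leaves that final step as a bare citation of the pre-theorem observation, whereas you chase it through to a closed term realising $\mu^{2}$ and diagonalise against its provable continuity; this is a correct (and, for arbitrary internal $\varphi$, arguably necessary) elaboration of what the paper leaves implicit.
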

\begin{proof}
Let $\varphi$ be as in the theorem and suppose $\P+\varphi$ proves \eqref{hurg}.  The latter is
\[
(\forall^{\st}Y^{2}\in C)(\forall^{\st} f^{1})(\forall g^{1})\big(  (\forall^{\st}k^{0})(\overline{f}k =_{0}\overline{g}k )\di Y(f)=_{0}Y(g)\big)
\]  
with `$\approx_{1}$' resolved.  Pushing outside the standard quantifier involving `$k$', we obtain
\[
(\forall^{\st}Y^{2}\in C)(\forall^{\st} f^{1})(\forall g^{1})(\exists^{\st}k)( \overline{f}k =_{0}\overline{g}k \di Y(f)=_{0}Y(g)).
\]  
Applying idealisation \textsf{I} while bearing in mind Remark \ref{simply}, we obtain:
\be\label{ragga}
(\forall^{\st}Y^{2}\in C)(\forall^{\st} f^{1})(\exists^{\st}N)(\forall g^{1})( \overline{f}N =_{0}\overline{g}N \di Y(f)=_{0}Y(g)).
\ee
Applying Theorem \ref{consresultcor} to `$\P+\varphi\vdash \eqref{ragga}$', we obtain a term $t$ such that 
\[
(\forall Y^{2}\in C, f^{1})(\exists N\in t(Y, f))(\forall g^{1})( \overline{f}N =_{0}\overline{g}N \di Y(f)=_{0}Y(g))
\]
is provable in $\EPA^{\omega*}+\varphi$.  Then $\Psi(Y, f):=\max_{i<|t(Y,f)|}t(Y,f)(i)$ is a computable (even part of G\"odel's ${T}$) modulus-of-continuity functional, a contradiction.
\end{proof}
Note that \eqref{hurg} is provable in $\IST$ by fixing standard $f^{1}$ in \eqref{ncont} and applying the contraposition of \emph{Transfer} to the resulting existential formula.  

\smallskip

Secondly, the fan functional $\Phi^{3}$ as in $\FF(\Phi)$ was introduced by Tait as an example of a functional not computable (Kleene S1-S9; see \cite{gandymahat} or e.g. \cite{longmann}*{\S8}), over the total continuous functionals.  The aforementioned property of the classical fan functional translates to the fact that `normal' continuity does not imply \emph{uniform} nonstandard continuity (on Cantor space), defined as follows:
\be\label{nsucont}
(\forall f^{1}, g^{1}\leq_{1}1)(f\approx_{1}g\di Y(f)=_{0}Y(g)), 
\ee
without the use of nonstandard axioms by the following theorem.  
\begin{thm}
Let $\varphi$ be internal and such that $\P+\varphi$ is consistent.  
The system $\P+\varphi$ cannot prove that
\be\label{hurg2}
(\forall^{\st}Y^{2}\in C)(\forall f^{1} ,g^{1}\leq_{1}1)(f\approx_{1}g\di Y(f)=_{0}Y(g)), 
\ee
i.e.\ $\epsilon$-$\delta$ continuous functionals are nonstandard uniformly cont.\ on Cantor space.
\end{thm}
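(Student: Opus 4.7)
The plan is to follow the template of Theorem \ref{simplo} essentially step for step, but to arrive at a fan functional (rather than a modulus-of-continuity functional) that would be defined by a term of G\"odel's $T$. The final contradiction then comes from Tait's classical non-computability result for the fan functional mentioned just before the statement, rather than from Kohlenbach-style results on discontinuous functionals.

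Assume for contradiction that $\P+\varphi$ proves \eqref{hurg2}. First, I would unfold $f\approx_{1}g$ as $(\forall^{\st}k)(\overline{f}k=\overline{g}k)$ and, since $k$ appears in the antecedent of an implication, rewrite the statement as
\[
(\forall^{\st}Y^{2}\in C)(\forall f,g\leq_{1}1)(\exists^{\st}k)\big(\overline{f}k=_{0}\overline{g}k\di Y(f)=_{0}Y(g)\big).
\]
Note that the matrix is monotone in $k$: if it holds for some $k_{0}$, it holds for every $k\geq k_{0}$, because the antecedent becomes more restrictive. Applying idealisation \textsf{I} (jointly to the pair $\langle f,g\rangle$, treated via standard coding as a single object bounded by $1$) and then taking a maximum of the resulting finite sequence as in Remark~\ref{simply}, I would pull the standard existential past the universal quantifier over $f,g$ to obtain the normal form
\[
(\forall^{\st}Y^{2}\in C)(\exists^{\st}N^{0})(\forall f,g\leq_{1}1)\big(\overline{f}N=_{0}\overline{g}N\di Y(f)=_{0}Y(g)\big).
\]
Since `$Y\in C$' is internal (it is the formula \eqref{ncont}), the bracketed conjunct can be absorbed into the matrix, keeping the formula in the form required by Theorem~\ref{consresultcor}.

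Next I would apply term extraction (Theorem~\ref{consresultcor}) to the proof in $\P+\varphi$, obtaining a closed term $t\in\mathcal{T}^{*}$ such that $\textsf{E-PA}^{\omega*}+\varphi$ proves
\[
(\forall Y^{2}\in C)(\exists N\in t(Y))(\forall f,g\leq_{1}1)\big(\overline{f}N=_{0}\overline{g}N\di Y(f)=_{0}Y(g)\big).
\]
Setting $\Phi(Y):=\max_{i<|t(Y)|}t(Y)(i)$ yields a closed term of G\"odel's $T$ (with finite-sequence types) satisfying $\FF(\Phi)$. But this contradicts Tait's theorem, cited just above the statement, that the fan functional is not Kleene-computable over the total continuous functionals, and in particular is not definable by a closed term of $\mathcal{T}^{*}$, since such terms are interpretable in that model.

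The main obstacle I expect is the joint idealisation step: unlike in Theorem~\ref{simplo}, where one only needs to pull $(\exists^{\st}k)$ past a single $(\forall g)$, here one has to extract a bound $N$ that is \emph{uniform} in both $f$ and $g$. This has to be handled carefully so that the resulting $N$ depends only on $Y$; either idealisation is applied to the coded pair $\langle f,g\rangle$ of type $1$, or it is applied twice in succession, in each case using the monotonicity of the matrix in $N$ to replace the finite sequence of witnesses produced by \textsf{I} with its maximum. Once this uniformisation is in place, the rest of the argument reduces to invoking term extraction and Tait's result, exactly paralleling Theorem~\ref{simplo}.
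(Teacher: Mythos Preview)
Your proposal is correct and follows essentially the same route as the paper: bring \eqref{hurg2} into the normal form $(\forall^{\st}Y^{2}\in C)(\exists^{\st}N^{0})(\forall f,g\leq_{1}1)(\overline{f}N=\overline{g}N\di Y(f)=Y(g))$ via idealisation, apply term extraction to obtain a term $t$, and define $\Phi(Y):=\max_{i<|t(Y)|}t(Y)(i)$, contradicting Tait's non-computability result for the fan functional. Your treatment of the joint idealisation step over $f,g$ (via coding or monotonicity and maxima, as in Remark~\ref{simply}) is in fact more explicit than the paper, which simply refers back to the proof of Theorem~\ref{simplo}.
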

\begin{proof}
Let $\varphi$ be as in the theorem and suppose $\P+\varphi$ proves \eqref{hurg2}.  Similar to the proof of Theorem \ref{simplo}, \eqref{hurg2} can be brought into the following form:  
\be\label{norma2}
(\forall^{\st}Y^{2}\in C)(\exists^{\st}N^{0})(\forall f^{1} ,g^{1}\leq_{1}1)(\overline{f}N=_{0}\overline{g}N\di Y(f)=_{0}Y(g)).
\ee
Applying Theorem \ref{consresultcor} to `$\P+\varphi\vdash \eqref{norma2}$', we obtain a term $t$ such that  
\[
(\forall Y^{2}\in C)(\exists N\in t(Y))(\forall f^{1},g^{1})( \overline{f}N =_{0}\overline{g}N \di Y(f)=_{0}Y(g))
\]
is provable in $\EPA^{\omega*}+\varphi$.  Then $\Phi(Y):=\max_{i<|t(Y)|}t(Y)(i)$ is a computable (even part of G\"odel's {$T$}) fan functional, a contradiction.
\end{proof}
Note that \eqref{hurg2} is provable in $\IST$ by concluding (inside $\ZFC$) from \eqref{ncont} that $Y^{2}\in C$ is uniformly continuous on Cantor space as follows:
\be\label{refkes}
(\exists N^{0})(\forall f^{1}, g^{1}\leq_{1}1)(\overline{f}N=\overline{g}N\di Y(f)=Y(g)).
\ee
Since $Y^{2}$ in \eqref{hurg2} is standard, we can apply the contraposition of \emph{Transfer} to \eqref{refkes} to obtain uniform nonstandard continuity as in \eqref{nsucont}.  

\smallskip

In conclusion, we have used well-known non-computability results to establish non-implications between the usual and nonstandard definitions of continuity over the system $\P$ extended with any internal sentence.  In other words, certain negative results in computability theory imply that \emph{Transfer} is essential to connect `epsilon-delta' and nonstandard continuity.

\subsection{Reverse Mathematics and Nonstandard Analysis}\label{fargp}
\subsubsection{Introduction\textup{:}\ nonstandard counterparts of the Big Five}
In section \ref{RM}, we observed that the Big Five of RM are linearly ordered as in \eqref{linord}.  
Here, we show that the \emph{nonstandard counterparts} of $\FIVE$, $\ACA_{0}$ on one hand, and of $\WKL_{0}$ and $\WWKL_{0}$ on the other hand, are however \emph{incomparable}.  
Surprisingly, we make essential use of Theorem \ref{import} to establish this result, rather than taking the `usual' model-theoretic\footnote{The fact that the full axiom \emph{Transfer} does not imply the full axiom \emph{Standard Part} is known (over various systems; see \cites{blaaskeswijsmaken, gordon2}), and is established using model-theoretic techniques.} route.  
Thus, the RM of Nonstandard Analysis is \emph{fundamentally different} from `usual' RM, in that the 
nonstandard counterparts of the Big Five systems behave quite differently from the originals. 

\smallskip

Before introducing the aforementioned `nonstandard counterparts', we should clarify what is meant by this expression. 
We stress that there is no deep philosophical meaning to be found in the words `nonstandard counterpart':  in case of $\STP$ and $\LMP$, this is just what these principles are called in the literature: see e.g.\ \cite{pimpson, keisler1, keisler2}.    
Furthermore, term extraction as in Theorem \ref{consresultcor} converts $\paai$ and $\Paai$ into resp.\ $(\mu^{2})$ and $(S^{2})$ (see \cite{sambrouw}*{\S4}), which are higher-order versions of $\ACA_{0}$ and $\FIVE$.  
Thus, the moniker `nonstandard counterpart' seems apt in this case, more so since all the aforementioned nonstandard axioms are natural fragments of the $\IST$-axioms \emph{Transfer} and \emph{Standard Part}.   

\smallskip

We now introduce the nonstandard counterparts of the aforementioned logical systems.  
Recall Nelson's system $\IST$ and the associated fragment $\P$ which were introduced in Section \ref{base}.
The system $\P$ includes Nelson's axiom \emph{Idealisation} (formulated in the language of finite types), but to guarantee a conservative extension of Peano arithmetic, Nelson's axiom \emph{Transfer} must be omitted, while \emph{Standard Part} is weakened to $\HAC_{\INT}$.  Indeed, the fragment of \emph{Transfer} for $\Pi_{1}^{0}$-formulas as follows  
\be\tag{$\paai$}
(\forall^{\st}f^{1})\big[  (\forall^{\st}n)(f(n)\ne0) \di (\forall m)(f(m)\ne0)  \big]
\ee
is the nonstandard counterpart of arithmetical comprehension as in $\ACA_{0}$.  Similar to how one `bootstraps' $\Pi_{1}^{0}$-comprehension to the latter, the system $\P_{0}+\paai$ proves $\varphi\asa \varphi^{\st}$ for any internal arithmetical formula (only involving standard parameters).  
Furthermore, the fragment\footnote{The `bootstrapping' trick for $\paai$ does not work for $\Paai$ (or $\FIVE$) as the latter is restricted to type one objects (like $g^{1}$ in $\Paai$) occurring as `call by value'. } of \emph{Transfer} for $\Pi_{1}^{1}$-formulas as follows  
\be\tag{$\Paai$}
(\forall^{\st}f^{1})\big[ (\exists g^{1})(\forall n^{0})(f(\overline{g}n)=0)\di (\exists^{\st}g^{1})(\forall n^{0})(f(\overline{g}n)=0)\big]
\ee
is the nonstandard counterpart of $\FIVE$.   
The following fragment of \emph{Standard Part} is the nonstandard counterpart of weak K\"onig's lemma (\cite{keisler1, keisler2}):
\be\tag{$\STP$}
(\forall \alpha^{1}\leq_{1}1)(\exists^{\st}\beta^{1}\leq_{1}1)(\alpha\approx_{1}\beta),
\ee  
where $\alpha\approx_{1}\beta$ is short for $(\forall^{\st}n)(\alpha(n)=_{0}\beta(n))$.  
The following fragment of \emph{Standard Part} is the nonstandard counterpart of weak weak K\"onig's lemma (\cite{pimpson}):  
\be\tag{$\LMP$}
(\forall T^{1} \leq_{1}1)\big[ \mu(T)\gg0\di (\exists^{\st} \beta^{1}\leq_{1}1)(\forall^{\st} m^{0})(\overline{\beta}m\in T) \big],
\ee
where `$\mu(T)\gg 0$' is just the formula $(\exists^{\st} k^{0})(\forall^{\st} n^{0})\Big(\frac{\{\sigma \in T: |\sigma|=n    \}}{2^{n}}\geq \frac{1}{k}\Big)$.

\subsubsection{The nonstandard counterpart of $\WKL$}\label{simts}
We study $\STP$, the nonstandard counterpart of $\WKL$.  
While $\FIVE\di \ACA_{0}\di \WKL_{0}$ by \eqref{linord},
we show in Theorem \ref{nogwel} and Corollary \ref{forqu} that the associated \emph{nonstandard counterparts} satisfy $\paai\not\di \STP$ and $\Paai\not\di \STP$ (over $\P$ and extensions).  

\smallskip

As noted above, we shall establish these non-implications in Nonstandard Analysis using Theorem \ref{import}.  
We require the following theorem which provides a normal form for $\STP$ and establishes the latter's relationship with the special fan functional.  
\begin{thm}\label{lapdog}
In $\P_{0}$, $\STP$ is equivalent to either of the following:
\begin{align}\label{frukkklk}
(\forall^{\st}G^{2})(\exists^{\st}w^{1^{*}}\leq_{1^{*}}1, k^{0})\big[(\forall T^{1}\leq_{1}1)\big( & (\forall \alpha^{1} \in w)(\overline{\alpha}G(\alpha)\not\in T)\\
&\di(\forall \beta\leq_{1}1)(\exists i\leq k)(\overline{\beta}i\not\in T)\big) \big],\notag 
\end{align}
\be\tag{N}
(\forall^{\st}G^{2})(\exists^{\st}w^{1^{*}})(\forall f^{1}\leq{1})(\exists g\in w)({f}\in [\overline{g}G(g)]). \label{coredesign}
 \ee
Furthermore, $\P_{0}$ proves $(\exists^{\st}\Theta)\SCF(\Theta)\di \STP$.
\end{thm}
\begin{proof}  
The equivalence $\STP\asa \eqref{coredesign}$ was proved in \cite{dagsamII}*{Theorem 2.6}.
First of all, $\STP$ is easily seen to be equivalent to 
\begin{align}\label{fanns}
(\forall T^{1}\leq_{1}1)\big[(\forall^{\st}n)(\exists \beta^{0})&(|\beta|=n \wedge \beta\in T ) \di (\exists^{\st}\alpha^{1}\leq_{1}1)(\forall^{\st}n^{0})(\overline{\alpha}n\in T)   \big],
\end{align}
and this equivalence may also be found implicitly in \cite{samGH}.  For completeness, we first prove $\STP\asa \eqref{fanns}$.
Assume $\STP$ and apply overspill to $(\forall^{\st}n)(\exists \beta^{0})(|\beta|=n \wedge \beta\in T )$ to obtain $\beta_{0}^{0}\in T$ with nonstandard length $|\beta_{0}|$.  
Now apply $\STP$ to $\beta^{1}:=\beta_{0}*00\dots$ to obtain a \emph{standard} $\alpha^{1}\leq_{1}1$ such that $\alpha\approx_{1}\beta$ and hence $(\forall^{\st}n)(\overline{\alpha}n\in T)$.  
For the reverse direction, let $f^{1}$ be a binary sequence, and define a binary tree $T_{f}$ which contains all initial segments of $f$.  
Now apply \eqref{fanns} for $T=T_{f}$ to obtain $\STP$.    

\smallskip

For \eqref{frukkklk}$\di$\eqref{fanns}, note that \eqref{frukkklk} implies for standard $g^{2}$, there is $k^{0}$ such that: 
\begin{align}\label{frukkklk2}
(\forall T^{1}\leq_{1}1)\big[(\forall^{\st}  \alpha^{1}\leq_{1}1)(\overline{\alpha}g(\alpha)\not\in T),
\di(\forall \beta\leq_{1}1)(\exists i\leq k)(\overline{\beta}i\not\in T) \big], 
\end{align}  
which in turn yields, by bringing all standard quantifiers inside again, that:
\begin{align}\label{frukkklk3}
(\forall T\leq_{1}1) \big[(\exists^{\st}g^{2})(\forall^{\st}\alpha \leq_{1}1)(\overline{\alpha}g(\alpha)\not\in T)\di(\exists^{\st}k)(\forall \beta\leq_{1}1)(\overline{\beta}k\not\in T) \big], 
\end{align}  
To obtain \eqref{fanns} from \eqref{frukkklk3}, apply $\HAC_{\INT}$ to $(\forall^{\st}\alpha^{1}\leq_{1}1)(\exists^{\st}n)(\overline{\alpha}n\not\in T)$ to obtain standard $\Psi^{1\di 0^{*}}$ such that  
$(\forall^{\st}\alpha^{1}\leq_{1}1)(\exists n\in \Psi(\alpha))(\overline{\alpha}n\not\in T)$, and defining $g(\alpha):=\max_{i<|\Psi|}\Psi(\alpha)(i)$ we obtain $g$ as in the antecedent of \eqref{frukkklk3}.  Hence, \eqref{frukkklk3} yields
\be\label{gundark}
(\forall T^{1}\leq_{1}1) \big[(\forall^{\st}\alpha^{1}\leq_{1}1)(\exists^{\st}n)(\overline{\alpha}n\not\in T)\di (\exists^{\st}k)(\forall \beta\leq_{1}1)(\overline{\beta}i\not\in T) \big], 
\ee
which is the contraposition of \eqref{fanns}, using classical logic.  For the implication $\eqref{fanns}  \di \eqref{frukkklk}$, consider the contraposition of \eqref{fanns}, i.e.\ \eqref{gundark}, and note that the latter implies \eqref{frukkklk3}.  Now push all standard quantifiers outside as follows:
\[
(\forall^{\st}g^{2})(\forall T^{1}\leq_{1}1)(\exists^{\st} ( \alpha^{1}\leq_{1}1, ~k^{0}))\big[(\overline{\alpha}g(\alpha)\not\in T)
\di(\forall \beta\leq_{1}1)(\exists i\leq k)(\overline{\beta}i\not\in T) \big], 
\]
and applying idealisation \textsf{I} yields \eqref{frukkklk}.  The equivalence involving the latter also immediately establishes the second part of the theorem.    
\end{proof}
\begin{cor}\label{conske}
The system $\P_{0}+\STP$ is conservative over $\RCA_{0}^{2}+\WKL$. 
\end{cor}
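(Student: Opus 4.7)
The plan is to reduce the claim to Corollary \ref{corhoeker} by showing that $\STP$ and the standard existence of a special fan functional are equivalent over $\P_0$. The second part of Theorem \ref{lapdog} already gives the implication $(\exists^{\st}\Theta)\SCF(\Theta)\di \STP$. For the converse, I would start from the normal form \eqref{frukkklk} furnished by Theorem~\ref{lapdog}, apply $\HAC_{\INT}$ to the standard formula $(\forall^{\st}g^{2})(\exists^{\st}(w,k))B(g,w,k)$, and invoke Remark~\ref{simply} (taking the maximum of the finite sequence of witnesses in the $k^{0}$-component and joining the $w^{1^{*}}$-components) to convert the resulting Herbrandized functional into an actual standard $\Theta$ satisfying $\SCF(\Theta)$. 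Thus $\P_{0}+\STP$ and $\P_{0}+(\exists^{\st}\Theta)\SCF(\Theta)$ prove the same internal sentences.

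Now let $\sigma$ be any formula in the language of $\RCA_{0}^{2}$ (hence internal), and suppose $\P_{0}+\STP\vdash \sigma$. By the equivalence just established, $\P_{0}\vdash (\exists^{\st}\Theta)\SCF(\Theta)\di \sigma$. Introduce a fresh function constant $\Theta^{*}$ of type $(2\di (0\times 1^{*}))$ together with the axioms $\st(\Theta^{*})$ and the internal sentence $\SCF(\Theta^{*})$; in this conservative extension of $\P_{0}$ one proves $\sigma$ by instantiating the guessed standard witness of $(\exists^{\st}\Theta)\SCF(\Theta)$ with $\Theta^{*}$.

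Next I would apply Corollary~\ref{consresultcor2} with $\Delta_{\INT}:=\{\SCF(\Theta^{*})\}$ and $\sigma$ written trivially as $(\forall^{\st} x^{0})(\exists^{\st} y^{0})\sigma$. Since $\SCF(\Theta^{*})$ is internal (no occurrences of $\st$), it fits inside $\Delta_{\INT}$, and the term-extraction step yields $\RCA_{0}^{\omega}+\SCF(\Theta^{*})\vdash \sigma$. Re-existentializing over the fresh constant $\Theta^{*}$ (which no longer occurs in $\sigma$) gives $\RCA_{0}^{\omega}+(\exists\Theta)\SCF(\Theta)\vdash \sigma$. A final appeal to Corollary~\ref{corhoeker}, which asserts that $\RCA_{0}^{\omega}+(\exists\Theta)\SCF(\Theta)$ is a conservative extension of $\RCA_{0}^{2}+\WKL$, delivers $\RCA_{0}^{2}+\WKL\vdash \sigma$, as required.

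The only delicate point is the legitimacy of introducing $\Theta^{*}$ as a new standard constant and then applying Theorem/Corollary~\ref{consresultcor2} with $\SCF(\Theta^{*})$ adjoined to $\Delta_{\INT}$: one must check that the extension is still covered by the proof of Corollary~\ref{consresultcor2}, but this is routine since $\st(\Theta^{*})$ is of the form allowed by the schema $\T^{*}_{\st}$ and $\SCF(\Theta^{*})$ is quantifier-free modulo internal quantifiers. Everything else is bookkeeping.
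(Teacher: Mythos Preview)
Your proof is correct and follows the same overall strategy as the paper---reduce to Corollary~\ref{corhoeker} via term extraction---but the paper is more direct in two ways. First, only the implication $(\exists^{\st}\Theta)\SCF(\Theta)\di\STP$ from Theorem~\ref{lapdog} is actually used; your converse (via $\HAC_{\INT}$ and concatenating the $w$-components while maximising the $k$-components) is correct but superfluous for this corollary. Second, and more to the point, the detour through a fresh standard constant $\Theta^{*}$ and the attendant worry about whether the extended system is still covered by Corollary~\ref{consresultcor2} can be avoided entirely: the paper simply rewrites $(\exists^{\st}\Theta)\SCF(\Theta)\di\sigma$ as $(\forall^{\st}\Theta)(\SCF(\Theta)\di\sigma)$, which is already in normal form (with a vacuous inner existential), and applies term extraction with empty $\Delta_{\INT}$ to obtain $\RCA_0^{\omega}\vdash(\forall\Theta)(\SCF(\Theta)\di\sigma)$ directly. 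No new constants, no modification of the base system to justify.
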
    
\begin{proof}
Let $\varphi$ be a sentence in the language of $\RCA_{0}^{2}$.  If $\P_{0}+\STP\vdash \varphi$, then $\P_{0}\vdash (\exists^{\st}\Theta)\SCF(\Theta)\di \varphi$ by the theorem.  Applying Theorem~\ref{consresultcor} to $\P_{0}\vdash (\forall^{\st}\Theta)(\SCF(\Theta)\di \varphi)$ yields 
$\RCA_{0}^{\omega}\vdash (\forall \Theta)(\SCF(\Theta)\di \varphi) $, and Corollary \ref{corhoeker} finishes the proof.  
\end{proof}
In light of the previous theorem, the `nonstandard' provenance of special fan functionals becomes clear.  
Indeed, these were actually discovered during the study of the Gandy-Hyland functional in \cite{samGH}*{\S3-4}, as discussed in Footnote \ref{hitherto}.  

\smallskip

Thirdly, we establish the aforementioned non-implications and related results.  In the case of independence results like in the following theorem, we always implicitly assume the system at hand to be  consistent.  
\begin{thm}\label{nogwel}
The system $\P+\paai$ does not prove $\STP$.  
\end{thm}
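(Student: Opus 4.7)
The plan is to derive a contradiction with the non-computability result of Theorem \ref{import} by extracting, from a hypothetical proof of $\STP$ in $\P+\paai$, a special fan functional computable in $\exists^{2}$.

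Suppose $\P+\paai\vdash\STP$. By Theorem \ref{lapdog}, $\STP$ is equivalent over $\P$ to its normal form \eqref{frukkklk}, and hence
\[
\P+\paai\vdash (\forall^{\st}g^{2})(\exists^{\st} w^{1^{*}}\leq 1,\, k^{0})\,\Phi(g,w,k),
\]
for some internal $\Phi$. To invoke term extraction, we first rewrite $\paai$ as an internal side-condition: read contrapositively and prenexed (using classical logic to move the standard existential outside), $\paai$ becomes $(\forall^{\st}f^{1})(\exists^{\st}n^{0})[(\exists m)(f(m)=0)\di f(n)=0]$; applying $\HAC_{\INT}$ yields a standard $\Psi^{1\di 0^{*}}$ supplying a finite list of candidate witnesses, and a bounded search through $\Psi(f)$ produces a standard $\mu^{2}$ satisfying $\MU(\mu)\equiv(\forall f^{1})[(\exists n)(f(n)=0)\di f(\mu(f))=0]$ (on standard inputs; the internal extension is provided by the standardness of $\mu$ together with the extensional treatment of terms). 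Thus $\P+\paai$ proves the same normal-form consequences as $\P$ enriched with a fresh standard constant $\mu^{2}$ and the internal axiom $\MU(\mu)$.

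Theorem \ref{consresultcor} now applies to
\[
\P+\MU(\mu)\vdash (\forall^{\st}g^{2})(\exists^{\st}w,k)\,\Phi(g,w,k)
\]
and produces a closed term $t$ of $\mathcal{T}^{*}$ in the parameter $\mu$ such that
\[
\textsf{E-PA}^{\omega*}+\MU(\mu)\vdash (\forall g^{2})(\exists (w,k)\in t(g,\mu))\,\Phi(g,w,k).
\]
Collapsing the finite list $t(g,\mu)$ by taking the maximum of its $k$-components and concatenating its $w$-sequences (Remark \ref{simply}) yields a single functional $\Theta$ satisfying $\SCF(\Theta)$ and definable by a term of G\"odel's $T$ in the parameter $\mu$. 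Since $\mu^{2}$ and $\exists^{2}$ are interdefinable over $\RCAo$, this exhibits a $\Theta$ computable in $\exists^{2}$, contradicting Theorem \ref{import}.

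The main technical obstacle is the middle step: replacing the external axiom $\paai$ by an internal Feferman-condition $\MU(\mu)$ for a standard witness $\mu$, so that Theorem \ref{consresultcor} genuinely applies. The delicate point is that $\HAC_{\INT}$ only yields the standard-instance version of $\MU(\mu)$ directly, and one must verify that this suffices for the ensuing term extraction; this is the standard maneuver underlying the known correspondence between $\paai$ and arithmetical comprehension $(\mu^{2})$ within the fragment-of-$\IST$ framework. Once this reduction is secured, the rest of the argument is a mechanical combination of Theorem \ref{lapdog}, Theorem \ref{consresultcor}, and Theorem \ref{import}.
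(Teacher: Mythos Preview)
Your overall strategy coincides with the paper's: assume $\P+\paai\vdash\STP$, pass to the normal form \eqref{frukkklk} of $\STP$, convert $\paai$ into the Feferman-$\mu$ condition, apply term extraction (Theorem~\ref{consresultcor}), and contradict Theorem~\ref{import}. The ingredients and the conclusion are exactly right.

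Where your write-up wobbles is the step you yourself flag. The claim that $\HAC_{\INT}$ plus bounded search yields a standard $\mu$ satisfying the \emph{full internal} $\MU(\mu)$, with the gap closed by ``standardness of $\mu$ together with the extensional treatment of terms'', is not correct as stated: standardness of $\mu$ does not promote $(\forall^{\st}f)A(f,\mu(f))$ to $(\forall f)A(f,\mu(f))$---that would be an instance of \emph{Transfer}, which is precisely what is not available. The paper sidesteps this entirely by never trying to produce such a $\mu$ inside $\P+\paai$. Instead it works with the implication $\paai\to\STP$ as a single formula provable in $\P$: writing $\paai$ as $(\forall^{\st}f)(\exists^{\st}n)A(f,n)$, one first Skolemises the antecedent to $(\forall^{\st}h^{2})(\forall^{\st}f)A(f,h(f))$, and then \emph{strengthens the antecedent} to the internal $(\forall f)A(f,h(f))$ (which only weakens the implication, hence remains provable). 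Now the whole thing is in normal form $(\forall^{\st}h,g)(\exists^{\st}w,k)[\,(\forall f)A(f,h(f))\to B(g,w,k)\,]$, and term extraction gives $t(h,g)$ with the property that \emph{if} $h$ satisfies $\MU(h)$ then $t(h,g)$ yields the special fan functional. This is the clean way to realise exactly the reduction you describe, without the questionable appeal to an ``internal extension''.
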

\begin{proof}
Suppose $\P+\paai\vdash \STP$ and note that $\paai$ is equivalent to 
\be\label{trakke}
(\forall^{\st}f^{1})(\exists^{\st}n^{0})\big[ (\exists m)f(m)=0 \di (\exists i\leq n)f(i)=0  \big],
\ee
by contraposition.  Then the implication `$\paai\di \STP$' becomes
\be\label{jaj}
(\forall^{\st}f^{1})(\exists^{\st}n^{0})A(f,n)\di (\forall^{\st}g^{2})(\exists^{\st}w^{1^{*}},k^{0})B(g,w,k)
\ee
where $B$ is the formula in square brackets in \eqref{coredesign} and where $A$ is the formula in square brackets in \eqref{trakke}.  
We may strengthen the antecedent of \eqref{jaj} as follows:
\be\label{jaj2}
(\forall^{\st} h^{2})\big[(\forall^{\st}f^{1})A(f,h(f))\di (\forall^{\st}g^{2})(\exists^{\st}w^{1^{*}},k^{0})B(g,w,k)\big], 
\ee  
In turn, we may strengthen the antecedent of \eqref{jaj2} as follows:
\be\label{jaj3}
(\forall^{\st} h^{2})\big[(\forall f^{1})A(f,h(f))\di (\forall^{\st}g^{2})(\exists^{\st}w^{1^{*}},k^{0})B(g,w,k)\big], 
\ee  
Bringing out the standard quantifiers, we obtain
\be\label{jaj4}
(\forall^{\st} h^{2}, g^{2})(\exists^{\st}w^{1^{*}},k^{0})\big[(\forall f^{1})A(f,h(f))\di B(g,w,k)\big], 
\ee  
and applying Theorem \ref{consresultcor} to `$\P\vdash \eqref{jaj4}$', we obtain a term $t$ such that         
\be\label{jaj5}
(\forall h^{2}, g^{2})(\exists w^{1^{*}}, k^{0}\in t(h,g))\big[(\forall f^{1})A(f,h(f))\di B(g,w,k)\big], 
\ee  
is provable in $\textsf{E-PA}^{\omega*}$.  Clearly, the antecedent of \eqref{jaj5} expresses that $h$ is Feferman's search functional $\mu^{2}$.  
Furthermore, it is straightforward to define $\Theta$ as in $\SCF(\Theta)$ in terms of $(\lambda g)t(h,g)$;  However, this implies that a special fan functional is computable in $\mu^{2}$ via a term from G\"odel's ${T}$.  This contradicts Corollary \ref{import2}.   
\end{proof}
In the previous proof, we observed that applying Theorem \ref{consresultcor} results in $\paai$ being converted to Feferman's mu operator, which is a kind of comprehension axiom (with a dash of choice).  
The same holds for other instances of \emph{Transfer}, like in the folllowing corollary.
\begin{cor}\label{forqu}
The system $\P+\Paai$ does not prove $\STP$.  
\end{cor}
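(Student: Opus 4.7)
The plan is to mimic the proof of Theorem \ref{nogwel}, with Feferman's $\mu^{2}$ replaced throughout by the Suslin functional $\mu_{1}^{1\to 1}$; Corollary \ref{import2} still applies since $\mu_{1}$ is of type two.

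\emph{Step 1: Normal form for $\Paai$.} First I would rewrite $\Paai$ as the normal form
\[
(\forall^{\st}f^{1})(\exists^{\st}g^{1})\,E(f,g), \qquad E(f,g)\equiv \big[(\exists h^{1})(\forall n^{0})f(\overline{h}n)=0\di (\forall n^{0})f(\overline{g}n)=0\big].
\]
This equivalence is immediate: if the antecedent of the inner implication fails then any standard $g$ trivially realises $E(f,g)$, and otherwise $\Paai$ supplies the required standard $g$. Crucially, $E$ is internal.

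\emph{Step 2: Term extraction.} Assume for contradiction $\P+\Paai\vdash\STP$. Using the normal form for $\STP$ from Theorem \ref{lapdog}, the implication reads
\[
(\forall^{\st}f^{1})(\exists^{\st}g^{1})E(f,g)\di (\forall^{\st}G^{2})(\exists^{\st}w^{1^{*}}, k^{0})\,B(G,w,k),
\]
where $B$ is the inner internal formula of \eqref{frukkklk}. As in the proof of Theorem \ref{nogwel}, I would Skolemize the antecedent with a functional $H^{1\to 1}$, then strengthen $(\forall^{\st}f^{1})$ to $(\forall f^{1})$ and pull the remaining standard quantifiers out, obtaining
\[
(\forall^{\st}H^{1\to 1}, G^{2})(\exists^{\st}w, k)\big[(\forall f^{1})E(f,H(f))\di B(G,w,k)\big].
\]
Applying Theorem \ref{consresultcor} yields a closed term $t\in \T^{*}$ such that $\EPA^{\omega*}$ proves
\[
(\forall H^{1\to 1}, G^{2})(\exists w, k\in t(H,G))\big[(\forall f^{1})E(f,H(f))\di B(G,w,k)\big].
\]

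\emph{Step 3: Contradiction with Corollary \ref{import2}.} The internal antecedent $(\forall f^{1})E(f,H(f))$ says precisely that $H$ realises the Suslin functional $\mu_{1}$. Extracting the maximum and the concatenation from $t(H,G)$ as in Remark \ref{simply} therefore produces, uniformly in $G$, a $\Theta^{3}$ satisfying $\SCF(\Theta)$ that is computable from $\mu_{1}$ (indeed via a term of G\"odel's $T$). Since $\mu_{1}$ is of type two, this contradicts Corollary \ref{import2}.

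The main obstacle I anticipate is purely bookkeeping: checking that the strengthening from $(\forall^{\st}f)$ to $(\forall f)$ preserves the implication and that the resulting formula is in the format required by Theorem \ref{consresultcor}. Both reduce to the observation that $E$ is internal, so no genuinely new difficulty arises beyond what was handled in Theorem \ref{nogwel}.
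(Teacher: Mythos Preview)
Your proposal is correct and follows exactly the same approach as the paper: rewrite $\Paai$ in the normal form $(\forall^{\st}f)(\exists^{\st}g)E(f,g)$, run the term-extraction argument of Theorem~\ref{nogwel} with $\mu_{1}$ in place of $\mu^{2}$, and invoke Corollary~\ref{import2} since $\mu_{1}$ is of type two. The paper's proof is simply a two-line sketch of precisely this, so your write-up is just a more detailed version of the intended argument.
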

\begin{proof}
Follows from Corolllary \ref{import2} in the same way as the theorem. 
Indeed, $\Paai$ has the following normal form:
\[
(\forall^{\st}f^{1})(\exists^{\st}g^{1})\big[ (\exists g^{1})(\forall x^{0})(f(\overline{g}n)=0)\di (\forall x^{0})(f(\overline{g}n)=0)\big], 
\]
and hence applying Theorem~\ref{consresultcor} to `$\P+\Paai\vdash \STP$' yields, in the same way as in the theorem, a term of G\"odel's $T$ converting $\mu_{1}$ to a special fan functional.    
\end{proof}
Similarly, Corollary \ref{import2} yields that \emph{Transfer} limited to $\Pi_{k}^{1}$-formulas cannot imply $\STP$.  
Indeed, the `comprehension functional' for $\Pi_{k}^{1}$-formulas has type two, and hence does not compute any special fan functional by Corollary \ref{import2}. 
Similarly, we can obtain the non-implication `$\P+\Paai+\varphi\not\vdash\STP$' for $\varphi$ \textbf{any} internal sentence (provable in $\ZFC$ and such that the former system is consistent).  Finally, the same holds for certain \emph{external} sentences, like $\WKL^{\st}$ and $\ATR^{\st}$, as long as they follow from $\Paai$ (or \emph{Transfer} limited to $\Pi_{k}^{1}$-formulas).

\smallskip

Finally, we derive $\STP$ using the following versions of \emph{Transfer}:
\be\tag{$\SOT$}
(\forall^{\st} Y^{2})\big[ (\exists f^{1})(Y(f)=0)\di  (\exists^{\st} f^{1})(Y(f)=0)  \big],
\ee
\be\tag{$\TOT$}
(\forall^{\st} Z^{3})\big[ (\exists Y^{2})(Z(Y)=0)\di  (\exists^{\st} Y^{2})(Z(Y)=0)  \big].
\ee
Recall the axiom $\X$ from Theorem \ref{mooi}; we obtain the following theorem.
\begin{thm}
The system $\P+\X+\TOT$ proves $\STP$.  
\end{thm}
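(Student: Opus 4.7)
The plan is to derive $\STP$ from its normal form \eqref{frukkklk} in Theorem~\ref{lapdog}: for every standard $g^{2}$, exhibit standard $w^{1^{*}}\leq_{1^{*}}1$ and $k^{0}$ satisfying the internal formula $\Phi(g,w,k)$ appearing in the brackets of \eqref{frukkklk}. A preliminary observation is that, despite its $\Pi^{1}_{1}$ appearance, $\Phi(g,w,k)$ is equivalent to the decidable combinatorial statement that every binary sequence $\sigma$ of length $k$ admits some $\alpha\in w$ with $g(\alpha)\leq k$ and $\overline{\sigma}g(\alpha)=\overline{\alpha}g(\alpha)$; hence $\Phi(g,w,k)$ is primitive recursive in $g,w,k$.

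Fix a standard $g^{2}$. Using a standard primitive recursive coding, I would encode pairs $(w,k)$ as type-$2$ functionals $H^{2}$, and define the characteristic type-$3$ functional $Z_{g}$ by $Z_{g}(H)=0$ iff $H$ decodes to some $(w,k)$ satisfying $\Phi(g,w,k)$. By the decidability of $\Phi$ established above, the functional $Z_{g}$ is obtained by applying a closed term of G\"odel's $T$ to the standard parameter $g$, and is therefore standard by the axiom schema $\T^{*}_{\st}$ of Definition~\ref{debs}. Meanwhile, Theorem~\ref{mooi}, carried out internally in $\EPA^{\omega*}+\X$, supplies a special fan functional $\Theta$ satisfying $\SCF(\Theta)$; then $\Theta(g)$ provides a witness $(w_{0},k_{0})$ with $\Phi(g,w_{0},k_{0})$, and the code $H_{0}$ of this pair satisfies $Z_{g}(H_{0})=0$.

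Applying $\TOT$ to the standard $Z_{g}^{3}$ now yields $(\exists^{\st}H^{2})(Z_{g}(H)=0)$, and decoding any such standard $H$ produces standard $(w,k)$ with $\Phi(g,w,k)$. Since the standard $g^{2}$ was arbitrary, this establishes \eqref{frukkklk}, and Theorem~\ref{lapdog} then delivers $\STP$. The main technical obstacle I anticipate is the decidable reformulation of $\Phi$ that makes $Z_{g}^{3}$ visibly standard; once this is in place, the remainder is a routine application of $\TOT$ to a problem whose classical internal solvability is handed to us by Theorem~\ref{mooi} together with $\X$.
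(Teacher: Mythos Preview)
Your route differs from the paper's. The paper first secures a single \emph{standard} special fan functional: since Theorem~\ref{mooi} proves \eqref{pate} in $\EPA^{\omega*}+\X$, it suffices to produce standard $\mu^{2},\xi^{3}$ satisfying $\MU(\mu)\wedge\SC(\xi)$; the paper derives these from the chain $\TOT\Rightarrow\SOT\Rightarrow\paai$ and related implications, whence $t(\mu,\xi)$ is a standard $\Theta$ by Definition~\ref{debs}, and Theorem~\ref{lapdog} delivers $\STP$. You instead work pointwise in the standard $g^{2}$: rather than building a standard $\Theta$, you encode the witness predicate $\Phi(g,\cdot,\cdot)$ as a standard $Z_{g}^{3}$ and invoke $\TOT$ once to extract a standard $(w,k)$. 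Your combinatorial reformulation of $\Phi(g,w,k)$ (namely that the neighbourhoods $C_{\overline{\alpha}g(\alpha)}$ with $\alpha\in w$ and $g(\alpha)\leq k$ cover all length-$k$ binary nodes) is correct and is exactly what makes $Z_{g}$ a closed term in the standard parameter $g$. A pleasant by-product is that you only ever ask $\TOT$ for type-$2$ witnesses, which is literally what it supplies.

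There is, however, a gap. You write that Theorem~\ref{mooi}, ``carried out internally in $\EPA^{\omega*}+\X$, supplies a special fan functional $\Theta$''. It does not: Theorem~\ref{mooi} proves only the \emph{conditional} \eqref{pate}, and you still owe the internal premises $(\mu^{2})$ and $(\exists^{3})$---or, what you actually need, the internal statement $(\exists w^{1^{*}},k^{0})\Phi(g,w,k)$---before you may assert $(\exists H^{2})(Z_{g}(H)=0)$ and fire $\TOT$. This is precisely the work the paper does by first obtaining standard (hence in particular existing) $\mu,\xi$. In your setting the natural fix is to argue directly from $\X$: the non-monotone inductive operator $\Gamma$ from the proof of Theorem~\ref{mooi} (with parameter $g$) has, by $\X$, a closure $\Gamma^{\infty}$ covering Cantor space, from which a finite subcover is extracted by arithmetical means; this yields $(\exists w,k)\Phi(g,w,k)$ internally without ever needing a global $\Theta$, $\mu$, or $\xi$.
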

\begin{proof}
By Theorem \ref{mooi}, \eqref{pate} is also provable in $\P+\X$.  For standard $\mu^{2}$ and $\exists^3$, the term $t$ provides standard output by Definition \ref{debs}, i.e.\ $\P+\X$ proves
\begin{align}\label{pate2}
(\forall^{\st} \mu^{2}, \xi^{3})&\big[ [\MU(\mu) \wedge \SC(\xi) ] \di (\exists^{\st}\Theta) \SCF(\Theta)\big].  
\end{align}
The theorem now follows from Theorem \ref{lapdog} and $\TOT\di \SOT \di \paai$, $\SOT\di (\exists^{\st}\mu^{2})\MU(\mu)$ and $\TOT\di (\exists^{\st}\xi^{2})\SOC(\xi)$, which are readily proved.   
\end{proof}
Finally, we discuss the connection between standardness and computability.  
\begin{rem}[Standardness and computability]\label{bestio}\rm
The previous proof hinges on the basic axioms of $\P$ from Definition \ref{debs}, 
which imply that the standard functionals in $\P$ are closed under `computability via a term from G\"odel's $T$'.  
It is then a natural question whether the standard functionals (resp.\ functions) in $\P$ are closed under (resp.\ Turing) computability?  
As it turns, out, the answer depends on the presence of \emph{Transfer}: 
in case of Turing computability, one readily proves that $\paai$ is equivalent to the aforementioned closure property, while one seems to require prohibitively strong fragments of \emph{Transfer} to guarantee this property for functionals of higher type.  
Thus, `computability via a term from G\"odel's $T$' produces results in $\P$ (and vice versa by Theorem \ref{consresultcor}), but `S1-S9 computability' only seems to produce results in extremely strong extensions of $\P$.  
\end{rem}
The previous remark explains why we insisted on obtaining Theorem \ref{mooi}, and the term from G\"odels $T$ therein in particular.  
In conclusion, we have shown that the computability theoretic results from Section \ref{tokie} give rise to (non-)implications in the RM of Nonstandard Analysis.  
In particular, quite strong fragments of \emph{Transfer} do not imply the weak version of \emph{Standard Part} as in $\STP$.  As a bonus, these results imply that the RM of Nonstandard Analysis is quite different from `vanilla' RM, as will be further explored in the following sections.   

\subsubsection{The nonstandard counterpart of $\WWKL$}\label{pampson}
We study $\LMP$, the nonstandard counterpart of $\WWKL$.  
While $\FIVE\di \ACA_{0}\di \WWKL_{0}$ by \eqref{linord},
we show in Theorem \ref{nogwelke} that the associated \emph{nonstandard counterparts} satisfy $\paai\not\di \LMP$ and $\Paai\not\di \LMP$, all over the system $\P$.  

\smallskip
As noted above, we shall establish these non-implications in Nonstandard Analysis using Theorem \ref{import}.  
We require the following theorem which provides a normal form for $\LMP$ and establishes the latter's relationship with the weak fan functional. 
\begin{thm}\label{lapdoc}
In $\P_{0}$, the principle $\LMP$ is equivalent to either of the following:
\begin{align}\label{w2}\textstyle
(\forall^{\st}G^{2},k^{0})(\exists^{\st}w^{1^{*}}&\leq_{1^{*}}1, n^{0})\\
&\textstyle(\forall T\leq_{1}1)\big[(\forall \alpha\in w)(\overline{\alpha}G(\alpha)\not\in T)\di \frac{|\{\sigma \in T: |\sigma|=n    \}|}{2^{n}}\leq\frac1k\big].\notag
\end{align}  
\be\label{tinkle}\textstyle
(\forall^{\st} G^{2}, k^{0})(\exists^{\st} w^{1^{*}})\big(\m\big( \cup_{g\in w} [\overline{g}G(g)]\big) \geq 1-\frac{1}{2^{k}}\big).
\ee
Furthermore, $\P_{0}$ proves $(\exists^{\st} \Lambda)\WCF(\Lambda)\di \LMP$.  
\end{thm}
\begin{proof}
Analogous to the proof of Theorem \ref{lapdog}. 
\end{proof}
A system is called \emph{robust} (see \cite{montahue}*{p.\ 432}) in Reverse Mathematics if it is equivalent to small perturbations of itself.  
It is an easy exercise to verify that $\STP \asa \LMP'$, where the latter is $\LMP$ with `$\mu(T)>_{\R}0$' rather than $\mu(T)\gg 0$.  On the other hand, $\STP$ is equivalent to \eqref{fanns} with 
the `st' in the antecedent removed.  Hence, $\STP$ seems to be robust, while $\LMP$ is not.  
Nonetheless, we have the following version of Corollary \ref{forqu} for $\LMP$.  
\begin{thm}\label{nogwelke}
The system $\P+\Paai$ does not prove $\LMP$.  
\end{thm}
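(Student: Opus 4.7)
The plan is to mirror the proof of Theorem \ref{nogwel} (and its variant Corollary \ref{forqu}), swapping the normal form for $\STP$ provided by Theorem \ref{lapdog} with the analogous normal form for $\LMP$ given by Theorem \ref{lapdoc}, and replacing the appeal to Corollary \ref{import2} by the noncomputability result for the weak fan functional, Theorem \ref{import22}. Concretely, assume for contradiction that $\P + \Paai \vdash \LMP$. Recall from the proof of Corollary \ref{forqu} that $\Paai$ admits the normal form
\[
(\forall^{\st}f^{1})(\exists^{\st}g^{1})\, A(f,g),
\]
with $A(f,g)$ the internal matrix $[(\exists h^{1})(\forall n^{0})(f(\overline{h}n)=0) \di (\forall n^{0})(f(\overline{g}n)=0)]$, while by Theorem \ref{lapdoc}, $\LMP$ is equivalent over $\P_{0}$ to
\[
(\forall^{\st}g^{2},k^{0})(\exists^{\st}w^{1^{*}}\leq_{1^{*}}1,\ n^{0})\, B(g,k,w,n),
\]
where $B$ is the internal matrix displayed in \eqref{w2}.

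Next, following the antecedent-strengthening bookkeeping from the proof of Theorem \ref{nogwel}, I would replace the existential witness in the normal form of $\Paai$ by a standard functional $\Psi^{1\di 1}$, strengthen the resulting $(\forall^{\st}f^{1})A(f,\Psi(f))$ to the purely internal statement $(\forall f^{1})A(f,\Psi(f))$, and then pull all standard quantifiers outside, rewriting the assumed implication $\Paai \di \LMP$ as the normal form
\[
(\forall^{\st}\Psi^{1\di 1},\,g^{2},\,k^{0})(\exists^{\st}w,n)\big[\,(\forall f^{1})A(f,\Psi(f)) \di B(g,k,w,n)\,\big].
\]
Applying Corollary \ref{consresultcor2} to this then yields a closed term $t$ in $\mathcal{T}^{*}$ such that $\textsf{E-PA}^{\omega*}$ proves
\[
(\forall \Psi, g, k)(\exists (w,n)\in t(\Psi,g,k))\big[\,(\forall f)A(f,\Psi(f)) \di B(g,k,w,n)\,\big].
\]

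Finally, the antecedent $(\forall f)A(f,\Psi(f))$ is precisely the defining clause of $\mu_{1}$. Instantiating $\Psi := \mu_{1}$ and, in the spirit of Remark \ref{simply}, collapsing the finite list of candidate outputs $t(\mu_{1},g,k)$ into a single witness (by concatenating the $w$-parts and taking the maximum of the $n$-parts), one obtains a functional $\Lambda$ with $\WCF(\Lambda)$ computable in $\mu_{1}$ by a term of G\"odel's $T$. Since $\mu_{1}$ is a type two functional, this contradicts Theorem \ref{import22} and finishes the proof.

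The only nontrivial step, as in the prior theorems of this subsection, is the normal-form massaging: one must check that the antecedent $(\forall^{\st}f)(\exists^{\st}g)A(f,g)$ can legitimately be replaced in the implication by its uniform version $(\forall^{\st}\Psi)(\forall f)A(f,\Psi(f))$, which is the point at which the transition from Nonstandard Analysis to classical computability occurs. This is however routine in $\P_{0}$ thanks to $\HAC_{\INT}$ and the passage \eqref{jaj}$\di$\eqref{jaj2}$\di$\eqref{jaj3}$\di$\eqref{jaj4} already carried out in Theorem \ref{nogwel}; the remainder of the argument is a mechanical adaptation with $\Theta$, $\SCF$, and Corollary \ref{import2} replaced throughout by $\Lambda$, $\WCF$, and Theorem \ref{import22}.
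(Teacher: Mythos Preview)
Your proposal is correct and matches the paper's approach exactly: the paper's proof reads simply ``Analogous to the proof of Theorem~\ref{nogwel} by Theorem~\ref{lapdoc},'' and you have faithfully unpacked this, correctly swapping in the normal form \eqref{w2} for $\LMP$, the normal form of $\Paai$ from Corollary~\ref{forqu}, and appealing to Theorem~\ref{import22} in place of Corollary~\ref{import2}. One cosmetic slip: since the theorem concerns $\P$ rather than $\P_{0}$, the term-extraction step should cite Theorem~\ref{consresultcor} rather than Corollary~\ref{consresultcor2}.
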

\begin{proof}
Analogous to the proof of Theorem \ref{nogwel} by Theorem \ref{lapdoc}.
\end{proof}
The following Theorem establishes the nonstandard version of the non-implication $\WWKL\not\di \WKL$, which was first proved in \cite{yussie}.  
 \begin{thm}
The system $\P_{0}+\LMP$ does not prove $\STP$.  
\end{thm}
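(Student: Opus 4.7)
The plan is to adapt the strategy of Theorem \ref{nogwel} and Corollary \ref{forqu}, but to replace the obstruction coming from Corollary \ref{import2} with the finer non-computability result Theorem \ref{import24}, which provides a specific $\Lambda_1$ witnessing $\WCF$ such that no $\Theta$ satisfying $\SCF(\Theta)$ is computable in $\Lambda_1$ and $\exists^2$. The key observation is that a term of G\"odel's $T$ is always computable in $\exists^2$, so any term-extraction argument will land inside the scope of Theorem \ref{import24}.

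Suppose for contradiction that $\P_0 + \LMP \vdash \STP$. By Theorem \ref{lapdoc}, $\LMP$ is equivalent in $\P_0$ to the normal form \eqref{w2}, which I abbreviate as $(\forall^{\st} g^2, k^0)(\exists^{\st} w^{1^*}, n^0)\, C(g,k,w,n)$ with $C$ internal. By Theorem \ref{lapdog}, $\STP$ is equivalent to \eqref{frukkklk}, which I abbreviate as $(\forall^{\st} G^2)(\exists^{\st} W^{1^*}, K^0)\, B(G,W,K)$ with $B$ internal. The assumed implication thus becomes
\[
(\forall^{\st} g,k)(\exists^{\st} w,n)\, C(g,k,w,n) \;\di\; (\forall^{\st} G)(\exists^{\st} W,K)\, B(G,W,K),
\]
provable in $\P_0$. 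Next I would Herbrandize the antecedent by introducing a functional $\Lambda$ of type $2 \di (1 \times 1^*)$ so that $(\forall^{\st} g, k)\, C(g,k, \Lambda(g,k)(2), \Lambda(g,k)(1))$ expresses exactly $\WCF(\Lambda)$, and strengthen the antecedent accordingly. Pulling all standard quantifiers outside as in Theorem \ref{nogwel} yields
\[
(\forall^{\st} \Lambda, G)(\exists^{\st} W, K)\bigl[\, \WCF(\Lambda) \di B(G, W, K)\, \bigr],
\]
a normal form of the required shape for Corollary \ref{consresultcor2}.

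Applying Corollary \ref{consresultcor2} then extracts a term $t$ of G\"odel's $T$ such that $\RCA_0^\omega$ proves
\[
(\forall \Lambda, G)(\exists W, K \in t(\Lambda, G))\bigl[\, \WCF(\Lambda) \di B(G, W, K)\, \bigr].
\]
Taking maxima over the finite sequence output in the spirit of Remark \ref{simply}, this yields a term $s$ of G\"odel's $T$ such that whenever $\WCF(\Lambda)$ holds, the functional $\Theta(G) := s(\Lambda, G)$ satisfies $\SCF(\Theta)$. In particular, instantiating with $\Lambda := \Lambda_1$ from Theorem \ref{import24} produces a special fan functional computable in $\Lambda_1$ via a term of G\"odel's $T$, hence computable in $\Lambda_1$ and $\exists^2$. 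This directly contradicts Theorem \ref{import24}.

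The main obstacle I anticipate is the bookkeeping in the Herbrandization step: one must check that $\WCF(\Lambda)$ can be packaged as an internal formula (or reduced to one by monotonicity and the convention of Remark \ref{simply}) so that \textsf{I} applies and the standard quantifiers can be shuffled out cleanly into a single normal form. The extra parameter $k^0$ inside $\LMP$, which does not appear in $\STP$, means $\Lambda$ must be taken of functional type in $(g,k)$ rather than just $g$, and one must verify this does not break the pairing that lets $t(\Lambda, G)$ genuinely compute $\Theta(G)$ uniformly in $G$. Beyond this, the argument is structurally parallel to the proofs of Theorem \ref{nogwel} and Corollary \ref{forqu}.
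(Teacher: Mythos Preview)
Your proposal is correct and follows the strategy the paper explicitly mentions as an alternative at the end of its own proof. The term-extraction step is the same in both arguments: from $\P_0+\LMP\vdash\STP$ one obtains a term $t$ of G\"odel's $T$ with $\RCAo\vdash(\forall\Lambda)(\WCF(\Lambda)\di\SCF(t(\Lambda)))$. The difference lies in how the contradiction is derived. You instantiate with the particular $\Lambda_1$ of Theorem~\ref{import24} and invoke that no $\Theta$ is computable in $\Lambda_1$ and $\exists^2$. The paper's primary route is more elementary: from $(\forall\Lambda)(\WCF(\Lambda)\di\SCF(t(\Lambda)))$ it concludes that $\RCAo+(\exists\Lambda)\WCF(\Lambda)$ proves $(\exists\Theta)\SCF(\Theta)$, hence $\WKL$; this contradicts Corollary~\ref{proto38}, which says $\RCAo+(\exists\Lambda)\WCF(\Lambda)$ is conservative over $\RCA_0^2+\WWKL$, together with the classical fact $\WWKL\not\to\WKL$. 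Your approach works but leans on a substantially harder result (Theorem~\ref{import24} is only proved in the Appendix and \cite{dagsamII}); the paper's conservation argument avoids this. Your worry about the extra parameter $k$ is unfounded: $\Lambda$ in Definition~\ref{fadier} already has type $2\to(1\times 1^*)$ with input $(g,k)$, so no new packaging is needed.
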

\begin{proof}
We proceed similar to Theorem \ref{nogwel}.   Suppose $\P_{0}+\LMP\vdash \STP$;  in the same way as for the aforementioned theorem,
we obtain some term $t$ such that $\RCAo$ proves $(\forall \Lambda)(\WCF(\Lambda)\di \SCF(t(\Lambda)))$.  
In particular $\RCAo+(\exists \Lambda)\WCF(\Lambda)$ proves $(\exists \Theta)\SCF(\Theta)$.
Since $(\exists \Theta)\SCF(\Theta)\di \WKL$ over $\RCAo$, we have that $\RCAo+(\exists \Lambda)\WCF(\Lambda)$ proves $\WKL$, contradicting Corollary \ref{proto38}.  
We could obtain a similar contradiction from Theorem \ref{import24}.  
\end{proof}
The following theorem generalises the previous result.
\begin{thm}\label{firstamen}
The system $\P+\paai+\LMP$ does not prove $\STP$.
\end{thm}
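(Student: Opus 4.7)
The plan is to combine the strategies used for Theorem~\ref{nogwel} (which extracts Feferman's $\mu^{2}$ from $\paai$) and for the preceding theorem (which extracts a weak fan functional $\Lambda$ from $\LMP$). I will assume for contradiction that $\P+\paai+\LMP\vdash \STP$, and the key inputs I shall use are: the familiar $(\forall^{\st} f)(\exists^{\st} n) A(f,n)$ normal form \eqref{trakke} of $\paai$, the normal form \eqref{w2} for $\LMP$ from Theorem~\ref{lapdoc}, and the $\SCF$-flavoured normal form \eqref{frukkklk} for $\STP$ from Theorem~\ref{lapdog}.

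First, I would rewrite the conjoined implication $(\paai\wedge\LMP)\di \STP$ with all three principles in their normal forms. Then, following the template of the proof of Theorem~\ref{nogwel}, I would strengthen the antecedent by replacing the ``$(\exists^{\st})$'' witnesses coming from $\paai$ by a single standard $h^{2}$ satisfying $\MU(h)$, and the ``$(\exists^{\st})$'' witnesses coming from $\LMP$ by a single standard $\Lambda^{3}$ satisfying $\WCF(\Lambda)$. Pulling the remaining standard quantifiers to the front yields a normal form of shape
\[
(\forall^{\st} h^{2}, \Lambda^{3}, g^{2})(\exists^{\st} w^{1^{*}}, k^{0})\big[[\MU(h)\wedge \WCF(\Lambda)]\di B(g,w,k)\big],
\]
where $B(g,w,k)$ is the internal bracketed part of \eqref{frukkklk}.

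Next, I would invoke Corollary~\ref{consresultcor2} (term extraction) to obtain a closed term $t$ of $\T^{*}$ such that $\RCAo$ proves: whenever $\MU(h)$ and $\WCF(\Lambda)$ hold, $t(h,\Lambda,g)$ provides finite-sequence witnesses $w,k$ making $B(g,w,k)$ true. As in Remark~\ref{simply}, taking the maximum over the finite-sequence component then yields a special fan functional $\Theta$ given by a term of G\"odel's $T$ in the two parameters $\mu^{2}$ and $\Lambda$. The contradiction will then come from Theorem~\ref{import24}: for the $\Lambda_{1}$ produced there, no $\Theta$ satisfying $\SCF(\Theta)$ is Kleene S1-S9 computable in $\Lambda_{1}$ and $\exists^{2}$, whereas computation by a term of $\T^{*}$ is a particular form of Kleene S1-S9 computation and $\mu^{2}$ is S1-S9 equivalent to $\exists^{2}$.

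The main obstacle I anticipate is the bookkeeping of the combined quantifier structure: I have to ensure that the antecedents coming from $\paai$ and from $\LMP$ both become \emph{parameters} of the extracted term rather than internal side-conditions that would obstruct the final computability-theoretic step. This is exactly why the strengthenings by $h^{2}$ with $\MU(h)$ and by $\Lambda^{3}$ with $\WCF(\Lambda)$ must be performed before pulling the standard quantifiers outside; otherwise idealisation and Theorem~\ref{consresultcor} cannot be applied in the uniform way that delivers a single $\T^{*}$-term $t(h,\Lambda,g)$ computing a $\Theta$ from $\mu^{2}$ and an arbitrary weak fan functional, which is what is needed to hit the non-computability witness $\Lambda_{1}$ of Theorem~\ref{import24}.
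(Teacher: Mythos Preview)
Your proposal is correct and follows essentially the same approach as the paper: assume the implication, pass to normal forms, strengthen the $\paai$ and $\LMP$ antecedents to functional witnesses $\mu^{2}$ and $\Lambda^{3}$, apply term extraction to obtain a G\"odel-$T$ term computing a special fan functional from $\mu^{2}$ and any weak fan functional, and derive a contradiction with Theorem~\ref{import24}. The paper's own proof is a one-line reference to this exact template (``follows from Theorem~\ref{import24} in the same way as Theorem~\ref{nogwel} follows from Corollary~\ref{import2}''), and you have simply spelled out the details.
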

\begin{proof}
Follows from Theorem \ref{import24} in the same way as Theorem \ref{nogwel} follows from Corollary \ref{import2}.
In particular, suppose $\P+\paai+\LMP$ does prove $\STP$ and note that following the proof of Theorem \ref{nogwel}, we obtain a term $t$ of G\"odel' $T$ computing the special fan functional in terms of $\exists^{2}$ and a weak fan functional.  However, this contradicts Theorem \ref{import24}.  
An alternative proof is given in Corollary \ref{dagdag} below.   
\end{proof}
The following corollary, a weak version of Theorem \ref{import24}, is now straightforward.
 \begin{cor}
 Let $\varphi$ in the language of $\textsf{\textup{E-PA}}^{\omega*}$ be such that the latter plus $\varphi$ is consistent.  
For any term $t$ of G\"odel's $T$, $\textsf{\textup{E-PA}}^{\omega*}+\varphi$ does not prove 
\[
(\forall \Lambda^{3}, \mu^{2})\big([\WCF(\Lambda)\wedge \MU(\mu)]\di \SCF(t(\Lambda))\big).
\]  
\end{cor}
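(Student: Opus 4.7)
The plan is to argue by contradiction using Theorem \ref{import24}, in the same spirit as the proof of Theorem \ref{firstamen}. Suppose for contradiction that some internal sentence $\varphi$ is consistent with $\textsf{\textup{E-PA}}^{\omega*}$ and some term $t$ of G\"odel's $T$ satisfy
\[
\textsf{\textup{E-PA}}^{\omega*}+\varphi \vdash (\forall \Lambda^{3},\mu^{2})\big([\WCF(\Lambda) \wedge \MU(\mu)] \di \SCF(t(\Lambda))\big).
\]

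First, I would work in the full set-theoretic type structure (the intended standard model of $\textsf{\textup{E-PA}}^{\omega*}$), where by Theorem \ref{import24} we can fix a specific $\Lambda_{1}^{3}$ satisfying $\WCF(\Lambda_{1})$ such that no $\Theta$ satisfying $\SCF(\Theta)$ is Kleene S1-S9 computable in $\Lambda_{1}$ and $\exists^{2}$. Let $\mu^{2}$ be a standard witness of $\MU$, inter-computable with $\exists^{2}$.

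Second, the provable implication, instantiated at these $\Lambda_{1}$ and $\mu$, would yield that $t(\Lambda_{1})$ satisfies $\SCF$. Since $t$ is a term of G\"odel's $T$ whose free variables are among $\Lambda,\mu$, the functional $t(\Lambda_{1})$ (with $\mu$ plugged in as a parameter) is Kleene S1-S9 computable from $\Lambda_{1}$ and $\mu$, and hence from $\Lambda_{1}$ and $\exists^{2}$. This directly contradicts the non-computability clause of Theorem \ref{import24}, yielding the desired contradiction.

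The main obstacle is purely administrative: the presence of the auxiliary axiom $\varphi$. Mere consistency with $\textsf{\textup{E-PA}}^{\omega*}$ does not automatically guarantee that $\varphi$ holds in the full type structure, so the instantiation in the first step is not immediate. I would handle this either by reducing to the case $\varphi\equiv\top$ (which already suffices for the intended application in Theorem \ref{firstamen}), or by observing that the construction of $\Lambda_{1}$ from Theorem \ref{import24} is sufficiently absolute to transfer to any model of $\textsf{\textup{E-PA}}^{\omega*}+\varphi$ furnished by the consistency hypothesis. All remaining ingredients, namely the embedding of closed G\"odel-$T$ terms into S1-S9 computability and the equivalence $\mu\equiv\exists^{2}$, are used exactly as in the proofs of Theorems \ref{nogwel} and \ref{firstamen}.
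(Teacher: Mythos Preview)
Your approach matches the paper's: the corollary is introduced as ``a weak version of Theorem~\ref{import24}'' and declared ``straightforward'', meaning one instantiates in the full set-theoretic type structure with the $\Lambda_1$ of that theorem and derives exactly the contradiction you describe. Your hesitation about the side hypothesis $\varphi$ is legitimate and is not addressed by the paper either; for the instantiation step to go through one really needs $\varphi$ to hold in the full type structure (compare the remark following Corollary~\ref{forqu}, where the proviso ``provable in $\ZFC$'' is made explicit), and your fallback to $\varphi\equiv\top$ already suffices for all the surrounding applications. Your alternative suggestion of transferring the construction of $\Lambda_1$ to an arbitrary model of $\textsf{E-PA}^{\omega*}+\varphi$ is far less clear, since both the existence of $\Lambda_1$ and the accompanying non-computability claim rely on substantial classical descriptive set theory carried out in $\ZFC$.
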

We will sharpen the previous corollary in Section \ref{hakkenoverdesloot} via a detailed analysis of the computational power of the special and weak fan functionals.
\section{A more refined analysis of weak and special fan functionals}\label{hakkenoverdesloot}
In this section, we show that (certain) weak fan functionals are indeed computationally weaker than (all) special fan functionals, as follows. 
Intuitively speaking, we show that $\Theta$-functionals always can escape a certain well-known computational class, called the \emph{hyperarithmetical} functionals, while there is a $\Lambda$-functional that does not escape this class. 
\subsection{Introduction}
In the previous sections, we have established a number of striking properties of the special and weak fan functionals and $\exists^{2}$.  
This section is devoted to a detailed analysis of the computational power of the aforementioned functionals and their combinations. For the sake of readability, we will use capital letters from the Latin alphabet to denote objects of type 2.

\smallskip

As a result of our refined analysis, certain weak fan functionals will be established as being \emph{weaker} than special ones \emph{in the following concrete way}:  there exists 
a weak fan functional which provides hyperarithmetical output for hyperarithmetical input, but no such special fan functional exists.  
These results are interesting in their own right, but are also the key to the results from Section \ref{weako}.

\smallskip

We recall the agreed-upon meaning of `computable' (Kleene S1-S9) and metatheory ($\ZFC$) from Section \ref{tokie}.  
In this section, we will rely heavily on the classical theory for the hyperarithmetical, $\Pi^1_1$, and $\Sigma^1_1$-sets, and on the computability theory of $\exists^{2}$. We do not give original references to each result we make use of, but refer to \cite{Sacks.high} for an introduction to the field.  

\smallskip

Section \ref{noteje} is devoted to the proof of Theorem \ref{refine}, which has useful corollaries.  In \cite{dagsamII} it is proved that any special fan functional $\Theta$ computes a realiser  for arithmetical transfinite recursion, which is sufficient for proving Corollary \ref{cor.5..5}. Theorem \ref{refine} was proved prior to this result from  \cite{dagsamII}, and even though some of the consequences can be proved differently, the construction in the proof of Theorem \ref{refine} may be of independent interest.
\begin{thm}\label{refine}
There is a total functional $F:2^\N \rightarrow \N$ computable in $\exists^2$ such that the set of neighbourhoods $C_{\overline{f}F(f)}$, where $f$ varies over all binary hyperarithmetical functions, is not a cover of $2^\N$.
\end{thm}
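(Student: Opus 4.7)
The plan is to build $F$ so that, on hyperarithmetical inputs, its value dominates a canonical hyperarithmetical index; a routine measure calculation then shows that the resulting neighbourhoods cannot cover Cantor space. The construction parallels the Gandy-selection argument used to prove Theorem \ref{import}, but with the extra twist that $F$ must be not just defined on hyperarithmetical inputs, but total on all of $2^\N$, and moreover computable in $\exists^2$.

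I will first fix a canonical enumeration of hyperarithmetical binary sequences so that each such $\alpha$ has a least index $e_\alpha \in \N$ in that enumeration (for instance the least $\mathcal{O}$-index producing $\alpha$, as in the proof of Theorem \ref{import}). By Gandy selection (cf.\ \cite{longmann}*{Theorem 5.4.5}), the partial map $\alpha \mapsto e_\alpha + 2$ is computable in $\exists^2$, being defined precisely on the hyperarithmetical binary sequences.

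The central step is to promote this partial functional to a total $F$ still computable in $\exists^2$, while preserving $F(\alpha) \geq e_\alpha + 2$ on hyperarithmetical inputs. For this I stratify the Kleene S1-S9 computation of the Gandy-selection procedure by sub-computations of bounded arithmetical complexity: for each $n \in \N$, the predicate ``there is a sub-computation of depth at most $n$ producing an index at most $n$'' is arithmetic in $\alpha$ uniformly in $n$, and hence can be evaluated by $\exists^2$. I then define $F(\alpha)$ to be the first witness found plus $2$ when one exists, and $F(\alpha):=0$ otherwise. A short verification shows that $F$ is total, computable in $\exists^2$, and that $F(\alpha)\geq e_\alpha+2$ for every hyperarithmetical $\alpha$.

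With $F$ in hand, the conclusion follows from a measure argument: the map $\alpha \mapsto e_\alpha$ is injective on hyperarithmetical binary sequences, so
\[
\mu\!\left(\bigcup_{\alpha \text{ hyperarith.}} C_{\overline{\alpha}F(\alpha)}\right) \;\leq\; \sum_{\alpha \text{ hyperarith.}} 2^{-F(\alpha)} \;\leq\; \sum_{e \in \N} 2^{-e-2} \;=\; \tfrac{1}{2} \;<\; 1,
\]
and consequently the neighbourhoods cannot cover $2^\N$. The main obstacle is the totalisation step: Gandy selection naturally produces only a partial $\exists^2$-computable functional, and upgrading it to a genuinely total one while preserving the size bound on hyperarithmetical inputs requires a careful stratification of the S1-S9 computation tree into arithmetically verifiable sub-computations. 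Once this stratification is in place, the measure-theoretic conclusion is essentially the same neighbourhood-counting already used in the proof of Theorem \ref{import}.
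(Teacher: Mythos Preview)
Your proposal has a genuine gap at exactly the point you flag as the main obstacle: the totalisation step. The stratification ``there is a sub-computation of depth at most $n$ producing an index at most $n$'' does not do what you need. Kleene S1--S9 computations relative to $\exists^2$ have well-founded computation trees of \emph{transfinite} ordinal height (up to $\omega_1^{\textup{CK}}$ on hyperarithmetical input), not finite height; an oracle call $\exists^2(g)$ branches over all values $g(0),g(1),\ldots$, each itself a sub-computation. So finite-depth approximations do not capture termination of the Gandy-selection procedure, and in particular your $F$ as defined would fail to satisfy $F(\alpha)\geq e_\alpha+2$ on most hyperarithmetical $\alpha$. If instead you try to decide the ``otherwise'' case directly---i.e.\ decide whether the Gandy-selection computation terminates on $\alpha$---you are deciding (essentially) whether $\alpha$ is hyperarithmetical, a $\Pi^1_1$-complete predicate, which $\exists^2$ cannot do. In short, there is no general procedure, computable in $\exists^2$, that extends a partial $\exists^2$-computable functional with domain the hyperarithmetical functions to a total one while respecting the values; this is precisely why the proof of Theorem~\ref{import} only posits \emph{some} total extension $g_0$ of $h_0$ without claiming it is computable in $\exists^2$.

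The paper sidesteps totalisation entirely. It fixes the nonempty closed $\Sigma^1_1$-set $X$ of binary extensions of the partial diagonal $\alpha(e)=\{e\}(\exists^2,e)$ (which has no hyperarithmetical members) and builds $F$ so that for every hyperarithmetical $f$ the neighbourhood $C_{\overline{f}F(f)}$ is disjoint from $X$. The construction is total from the outset: for each $e$ one checks, using the arithmetical extension $O^+$ of Kleene's $O$ and the $H$-chain machinery, whether there is \emph{evidence} of incompatibility between $f(e)$ and $\alpha(e)$; this check is arithmetical in $f$ and $\exists^2$ and always returns an answer, regardless of whether $f$ is hyperarithmetical. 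The point is that one never needs to detect hyperarithmeticity of $f$---one only needs that \emph{if} $f$ is hyperarithmetical, the evidence found is genuine. Your measure argument is fine once a suitable $F$ is in hand, but the construction of $F$ requires a different idea.
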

Recall the intuitive description of $\Theta$-functionals right below Definition~\ref{dodier} and recall that functionals computable in $\exists^{2}$ only produce hyperarithmetical functions; we have the following immediate corollary.  
\begin{cor} \label{cor.5..5}
For any $\Theta$ as in $\SCF(\Theta)$, there are more functions of type one computable in $\Theta$ and $\exists^2$ than just in $\exists^2$. 
\end{cor}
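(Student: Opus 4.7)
The plan is to apply Theorem \ref{refine} directly. Let $F^{2}$ be the functional it supplies: $F$ is computable in $\exists^{2}$, and the family of neighbourhoods $\{C_{\overline{f}F(f)} : f \in 2^{\N} \text{ hyperarithmetical}\}$ fails to cover Cantor space. Given any $\Theta$ with $\SCF(\Theta)$, feed this specific $F$ into $\Theta$; by the intuitive description of $\Theta$ recalled after Definition \ref{dodier}, the finite sequence $\Theta(F)(2) = \langle f_{0},\dots,f_{k}\rangle$ of elements of $2^{\N}$ has the property that the neighbourhoods $C_{\overline{f_{i}}F(f_{i})}$ for $i\leq k$ do cover $2^{\N}$.

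The key contrast to exploit is now immediate: since the hyperarithmetical $f$'s do \emph{not} suffice to produce a cover via $F$, at least one of the $f_{i}$ in $\Theta(F)(2)$ must fail to be hyperarithmetical. Because every function of type one computable in $\exists^{2}$ alone is hyperarithmetical (a standard fact from the computability theory of $\exists^{2}$, as referenced at the start of Section \ref{hakkenoverdesloot}), this $f_{i}$ is not computable in $\exists^{2}$. On the other hand, $f_{i}$ is trivially computable from $\Theta$ and $\exists^{2}$: apply $\Theta$ to $F$ (which is computable in $\exists^{2}$), project onto the second component, and select the $i$-th entry of the resulting finite sequence using Feferman's $\mu$ (available from $\exists^{2}$).

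The only thing to be slightly careful about is that the choice of which index $i$ yields a non-hyperarithmetical component may depend on $\Theta$; but we do not need uniformity in $\Theta$, only the existence, for the given $\Theta$, of \emph{some} $i\leq k$ with the desired property, which is guaranteed by the pigeonhole argument above. Thus $\Theta$ together with $\exists^{2}$ computes the type-one function $f_{i}$, which is not computable in $\exists^{2}$ alone, proving the corollary. The main (but already resolved) obstacle is Theorem \ref{refine} itself, whose proof is deferred to Section \ref{noteje}; once that is in hand, the present corollary is a one-step consequence.
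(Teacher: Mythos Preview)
Your argument is correct and is exactly the unpacking of what the paper leaves implicit: the paper presents the corollary as ``immediate'' from Theorem~\ref{refine} together with the standard fact that the type-one functions computable in $\exists^{2}$ are precisely the hyperarithmetical ones, and you have simply written out that one-step deduction in full. The only superfluous detail is the appeal to Feferman's $\mu$ for extracting the $i$-th entry of the finite sequence $\Theta(F)(2)$; projection from a coded finite sequence is already available without $\mu$, but this does no harm.
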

For a further discussions of Theorem \ref{refine}, we refer to Section \ref{thetatje} where we also prove that the combination of Borel’s $\Theta$ and $(\exists^{2})$ computes the Suslin functional. 

\smallskip

Finally, in Section \ref{kood}, we will construct a particular functional $\Lambda_{0}$ such that $ \WCF(\Lambda_{0})$ and which yields hyperarithmetical output for hyperarithmetical input.

\subsection{The proof of Theorem \ref{refine}}\label{noteje}
We prove Theorem \ref{refine} in Section \ref{noteje3}, but first introduce some necessary notations and preliminaries in Section \ref{noteje2}.  
\subsubsection{Notation and preliminaries}\label{noteje2}
To save space, some claims are described as `Fact'; proofs can be found in text-book level literature like \cite{Sacks.high, Rogers}.  
We make use of the following `standard' definitions which can be found in any textbook.  
\bdefi[Basic Notations]~
\begin{enumerate}
\item  Let $\phi_e$ denote the partial computable function with index $e$ as obtained from the Kleene $T$-predicate.
\item Similarly, $\phi_{e}^A$ denotes partial function number $e$ with oracle $A\subset \N$. 
\item We let ${\mathcal K}^A$ be the \emph{jump} of $A$, i.e.\ the set
$\{e : \phi_e^A(e) \!\!\downarrow\}$.  
\item 
Kleene's set `$O$' with the partial ordering `$\prec$' is the minimal $\langle O,\prec\rangle$ s.t.\
\begin{enumerate}
\item $0 \in O$ and $a \in O \Rightarrow 2^a \in O \wedge a \prec 2^a$,
\item if $\phi_e(n) \in O$ for all $n$ and $\phi_e(n) \prec \phi_e(n+1)$ for all $n$, then $3 \cdot 5^e \in O$ and $\phi_e(n) \prec 3 \cdot 5^e$ for all $n$,
\item the partial ordering $\prec$ is partial.
\end{enumerate}
\end{enumerate}
\edefi
\begin{fact} There is an arithmetical end-extension $\langle O^+,\prec^+\rangle$ of $\langle O,\prec\rangle$ that is a fixed point of the inductive definition defining $O$ and $\prec$, and such that all initial segments are totally ordered.\end{fact}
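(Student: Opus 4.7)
The plan is to build $O^{+}$ as a \emph{pseudo-hierarchy extension} of $O$: a proper arithmetical superset of $O$ that satisfies the three defining clauses as an actual fixed point rather than merely as a lower bound, and whose initial segments remain linearly ordered. The conceptual point is that although the \emph{least} simultaneous fixed point of the positive arithmetical operator defining $\langle O,\prec\rangle$ is the $\Pi^{1}_{1}$-complete set $O$, the operator admits many other fixed points, and we will produce one that is arithmetical by allowing non-well-founded notations.

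First I would recast clauses (a)--(c) as a single positive arithmetical operator $\Gamma$ acting on pairs $\langle U,R\rangle$ with $U\subseteq \N$ and $R\subseteq \N\times \N$; the fixed-point equation $\Gamma(\langle U,R\rangle)=\langle U,R\rangle$ is exactly the statement that $\langle U,R\rangle$ is closed under the three clauses with equality. Second, following a Spector/Gandy pseudo-hierarchy idea, fix an arithmetical tree $T\subseteq \N^{<\N}$ with infinite branches but no hyperarithmetical infinite branch, e.g.\ the tree of finite descending $\prec$-sequences starting at some fixed notation, coded arithmetically. Use $T$ to add putative ``limit notations'' of the form $3\cdot 5^{e}$: for each arithmetically definable sequence $\phi_{e}(0),\phi_{e}(1),\dots$ obtained by forcing a branch through $T$, declare $3\cdot 5^{e}\in O^{+}$ and insert the required $\prec^{+}$-relations above standard $O$. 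Then close off arithmetically under clause (a) (successors $2^{a}$) and clause (c) (transitivity); these closures add nothing beyond first-order material and hence preserve arithmeticity. The result is a set $O^{+}$ which is arithmetical in $T$, hence arithmetical.

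Third, I would verify the four requirements in turn. Fixed-point property follows by construction, since every new notation was only added as a witness forced by clauses (a)--(c), and the arithmetic closure step makes those clauses exact. End-extension is immediate from the injectivity of $a\mapsto 2^{a}$ and $e\mapsto 3\cdot 5^{e}$ together with the fact that predecessors of a standard $a\in O$ were never altered. Total ordering of initial segments is proved by induction on the construction: each new $3\cdot 5^{e}$ is placed above a chain inherited from an initial segment of $T$ together with a standard $O$-initial segment, and successors $2^{a}$ of chain elements remain chains. Finally, arithmeticity is built in.

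The main obstacle is the simultaneous preservation of the fixed-point equation \emph{and} arithmeticity, since naive stagewise iteration of $\Gamma$ escapes the arithmetical sets already at the $\omega$-th stage and only stabilises at $\omega_{1}^{\textup{CK}}$. The whole point of choosing an arithmetical tree $T$ with no hyperarithmetical infinite branch is that $T$ supplies ``enough'' arithmetically describable branches to make $O^{+}$ genuinely closed under clause (b), while precisely the absence of hyperarithmetical branches is what makes the resulting extension non-well-founded and thus consistent with being a fixed point strictly above the least one. The combinatorial details of this construction are standard and can be traced in \cite{Sacks.high}.
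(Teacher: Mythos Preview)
The paper states this as a textbook Fact and gives no proof, so there is no paper argument to compare against. Your instinct that pseudo-hierarchies and Harrison-type phenomena lie behind the result is correct, but the construction you sketch has genuine gaps.

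First, your use of $T$ is not coherent. You fix an arithmetical tree with infinite paths but no hyperarithmetical path, and then propose to add a limit notation $3\cdot5^{e}$ for each ``arithmetically definable sequence $\phi_{e}(0),\phi_{e}(1),\dots$ obtained by forcing a branch through $T$.'' But the paths through such a $T$ are by hypothesis \emph{not} arithmetical (not even hyperarithmetical), so they cannot serve as data in an arithmetical definition of $O^{+}$. Your proposed example of $T$---finite $\prec$-descending sequences below a fixed notation---is either well-founded (if the notation is in $O$) or already presupposes the $\prec^{+}$ you are building. And ``insert the required $\prec^{+}$-relations above standard $O$'' assumes arithmetical access to the boundary of $O$, which is $\Pi^{1}_{1}$-complete.

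Second, and more seriously, your fixed-point claim fails. You close under successor and transitivity (clauses (a) and (c)), but a fixed point must satisfy clause (b) \emph{as an equivalence}: $3\cdot 5^{e}\in O^{+}$ iff $\phi_{e}$ is total, takes values in $O^{+}$, and is $\prec^{+}$-increasing. Sprinkling in \emph{some} limit notations and then closing under (a) and (c) neither forces every eligible $e$ into $O^{+}$ nor excludes ineligible ones. Iterating the full $\Gamma$ to repair this takes $\omega_{1}^{\textup{CK}}$ steps and destroys arithmeticity---precisely the obstacle you identify but do not overcome.

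The standard route (to which the paper defers) is different in emphasis. Two observations do most of the work: (i) the condition ``$\langle U,R\rangle$ is a fixed point of $\Gamma$ with linearly ordered initial segments'' is itself arithmetical in $\langle U,R\rangle$; and (ii) since $\Gamma$ is monotone, \emph{every} fixed point automatically contains the least one, so $O\subseteq O^{+}$ and the end-extension property come for free and need not be engineered from above. One then writes down an explicit arithmetical pair satisfying (i)---e.g.\ take $\prec^{+}$ to be the r.e.\ transitive closure of the syntactic one-step relation ($a\mapsto 2^{a}$ and $\phi_{e}(n)\mapsto 3\cdot 5^{e}$), and take $O^{+}$ to consist of those $a$ all of whose $\prec^{+}$-predecessors satisfy the obvious local well-formedness conditions---and checks the fixed-point equation directly. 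No appeal to branches through ill-founded trees is needed.
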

We will let `$\{e\}(\exists^2,\vec a) = b$' mean that the computable functional with index $e$ and inputs $\exists^2$ and the number sequence $\vec a$, terminates with value $b$.
\begin{fact}
For hyperarithmetical $A^{1}$, its characteristic function is computable in $\exists^2$.
\end{fact}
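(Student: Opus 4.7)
The plan is to appeal to the classical Spector–Kleene characterisation of the hyperarithmetical sets as exactly those Turing reducible to some $H_a$ for $a \in O$, where $\{H_a\}_{a \in O}$ is the transfinite hierarchy of iterated Turing jumps along Kleene's $O$. Given this characterisation, the task reduces to showing that the assignment $a \mapsto H_a$ (viewed as an oracle) is computable in $\exists^2$ via Kleene's S1-S9, and then composing with a Turing reduction.

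The first step is to observe that $\exists^2$ computes the ordinary Turing jump. Indeed, for any $f^1$, the set $\mathcal{K}^f = \{e : \phi^f_e(e)\!\downarrow\}$ is $\Sigma^0_1(f)$, and applying $\exists^2$ to the primitive recursive predicate witnessing the Kleene $T$-predicate relativized to $f$ yields the characteristic function of $\mathcal{K}^f$ uniformly in $f$ via a short S1-S9 computation.

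The second step is to iterate this uniformly along $O$ using the S1-S9 schemes, in particular S8 (for evaluation with function arguments) and S9 (for recursion on an index). Define $H$ by the natural clauses: $H(0) := \emptyset$; at a successor notation $2^a$, put $H(2^a)$ to be (an effective join of $H(a)$ with) $\mathcal{K}^{H(a)}$, computable from $\exists^2$ and $H(a)$ by the previous paragraph; at a limit notation $3 \cdot 5^e$, put $H(3\cdot 5^e)(\langle n, m\rangle) := H(\phi_e(n))(m)$. Using S9, this recursion produces a partial computable functional in $\exists^2$ which is total on $O$ (the totality being provable by transfinite induction on the well-founded part of $\prec$).

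Finally, given a hyperarithmetical $A \subseteq \N$, fix by Spector's theorem some $a \in O$ and a Turing index $e$ with $\chi_A = \phi^{H_a}_e$. Then $\chi_A(n)$ is computed from $\exists^2$ by first evaluating $H(a)$ as above and then simulating $\phi_e$ with that oracle, the latter being again routine for $\exists^2$ (or even $\mu^2$). The only delicate point is the formalisation of the transfinite recursion along $O$ within S1-S9, but this is exactly Spector's classical treatment, whence the appropriateness of recording this as a \emph{Fact} rather than a theorem.
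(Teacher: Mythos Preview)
Your proposal is correct and is precisely the standard textbook argument. The paper does not give its own proof of this statement: it is labeled a \emph{Fact}, with the explicit remark that proofs of such facts ``can be found in text-book level literature like \cite{Sacks.high, Rogers}.'' The argument you sketch---computing the jump from $\exists^2$, iterating along $O$ via the S9 recursion scheme, and then invoking Kleene's characterisation of the hyperarithmetical sets as those Turing-below some $H_a$---is exactly the proof one finds in those references, so there is nothing to compare.
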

\begin{fact}
There is a classically computable, total function $\rho$ such that for all $e$, $\vec a$ and $b$, we have $\rho(e,\vec a , b) \in O \Leftrightarrow \{e\}(\exists^2,\vec a) = b$.
\end{fact}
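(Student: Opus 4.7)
The plan is to define $\rho$ by effective recursion on the Kleene scheme index coded by $e$, mirroring the generating clauses of the S1--S9 convergence relation $\{e\}(\exists^{2},\vec a)=b$ by the generating clauses of $O$. The function will be total and Turing computable, with its outputs lying \emph{a priori} in the arithmetical fixed point $O^+$ of the preceding fact; the whole point is to arrange things so that $\rho(e,\vec a,b)\in O$ exactly when the corresponding Kleene computation converges to $b$.

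Concretely, one case-splits on the scheme coded by $e$. For the immediate schemes (successor, constants, projections, primitive recursion, functional application in the type-zero cases), set $\rho(e,\vec a,b):=2^{0}$ when $b$ matches the syntactically prescribed output, and otherwise fix $\rho(e,\vec a,b)$ to be a number known to lie outside $O$. For the composition/substitution schemes, combine the $\rho$-values of the declared subcomputations via the $a\mapsto 2^{a}$ clause of $O$ together with a classically computable join on $O^{+}$ (which lands in $O$ iff both arguments do). For the crucial case in which $e$ codes an application of $\exists^{2}$ to a subindex $e'$, invoke the $s$-$m$-$n$ theorem to produce a computable index $e''$ such that $\phi_{e''}(n)$ carries out a classical search for a value $c$ consistent with the declared output $b$ and returns $\rho(e',\vec a,n,c)$; then set $\rho(e,\vec a,b):=3\cdot 5^{e''}$. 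The Second Recursion Theorem resolves the self-reference of $\rho$ inside these indices, and $O^{+}$ being an arithmetical fixed point guarantees that $\rho$ remains total even if some searches fail.

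Verification of the equivalence is by two inductions. The direction $\{e\}(\exists^{2},\vec a)=b\Rightarrow\rho(e,\vec a,b)\in O$ is by induction on the ordinal rank of the convergent Kleene computation: each applied S1--S9 clause is matched by the corresponding closure clause of $O$, so every recursively generated notation lies in $O$ by the induction hypothesis and the combined notation lies in $O$ by closure. The converse direction is by transfinite induction along $\langle O,\prec\rangle$ on $|\rho(e,\vec a,b)|$; here the end-extension property of $O^{+}$ ensures that every $\rho$-subvalue appearing in the recursive definition sits strictly below in $O$, so the induction hypothesis delivers convergent witnesses for every subcomputation, which the corresponding S1--S9 clause assembles into the full equation $\{e\}(\exists^{2},\vec a)=b$.

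The main obstacle is precisely the $\exists^{2}$ clause: the declared output $b$ is only the single $\{0,1\}$-bit returned by $\exists^{2}$, so $\rho$ cannot read off the values $\{e'\}(\exists^{2},\vec a,n)$ in advance and must nevertheless certify convergence at every $n$. This is the reason for pushing the search for the correct per-branch value $c$ into the computable function $\phi_{e''}$: since $O^{+}$ is arithmetical, $\rho$ stays total and classically computable regardless of whether these sub-searches actually succeed, and the resulting $3\cdot 5^{e''}$ lies in $O$ iff for each $n$ some $c$ is found with $\rho(e',\vec a,n,c)\in O$ and the collection of such $c$'s is compatible with $b$ (nonzero at every $n$ if $b=1$, zero at some $n$ if $b=0$). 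This exactly mirrors the $\omega$-branching closure clause of $O$ by the $\omega$-branching inherent in the S1--S9 rule for $\exists^{2}$, closing the argument.
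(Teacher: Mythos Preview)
The paper does not prove this statement; it is listed as a textbook fact with a reference to Sacks and Rogers. So your proposal must stand on its own.

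Your overall plan---effective recursion on the Kleene index, mirroring the S1--S9 clauses by the generating clauses of $O$, and closing the self-reference with the recursion theorem---is the right picture, and the immediate schemes and composition go through exactly as you say.

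The gap is in your treatment of S8 (application of $\exists^2$). You write that $\phi_{e''}(n)$ ``carries out a classical search for a value $c$ consistent with the declared output $b$ and returns $\rho(e',\vec a,n,c)$'', but you never specify what computable condition the search is testing. The only natural candidate---search for the least $c$ with $\rho(e',\vec a,n,c)\in O$ (or $\in O^+$)---is not computable: membership in $O$ is $\Pi^1_1$-complete and membership in $O^+$ is properly arithmetical. Your appeal to $O^+$ being arithmetical only secures that $\rho$ itself is total; it does nothing to make $\phi_{e''}(n)$ halt on the correct $c$. The obstruction is structural: for $b=1$ the S8 clause reads $(\forall n)(\exists c\neq 0)\,\rho(e',\vec a,n,c)\in O$, and the inner existential is a genuine disjunction over $\Pi^1_1$ conditions. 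The limit notation $3\cdot 5^{e''}$ encodes infinite \emph{conjunctions}, not disjunctions, so no choice of $\phi_{e''}$ will make $3\cdot 5^{e''}\in O$ equivalent to that $\forall\exists$ statement by a direct clause-by-clause match. Single-valuedness of Kleene computation does not help here, since you still have no computable handle on which $c$ is the right one.

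The standard repair abandons the per-clause mirroring at S8 and passes instead through the $\Pi^1_1$ normal form: the relation $\{e\}(\exists^2,\vec a)=b$ is a positive arithmetical inductive definition, hence $\Pi^1_1$, hence uniformly equivalent to well-foundedness of a tree recursive in $(e,\vec a,b)$; one then reduces \emph{that} tree to $O$ by the usual computable reduction of recursive well-foundedness to $O$. The $\forall n\,\exists c$ shape of the S8 clause is thereby absorbed into the arithmetical matrix of the $\Pi^1_1$ formula before $O$ enters the picture, and the difficulty you identified disappears.
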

\begin{defi}{\em Let $b \in O^+$. 
A \emph{$b$-chain} will be a set $\{H_a\}_{a \preceq^+ b}$ such that
\begin{enumerate}
\item[a)] $H_0 = \emptyset$ and if $a = 2^c$ then $H_a = {\mathcal K}^{H_c}$.
\item[b)] If $a = 3\cdot 5^e$,  then $H_a = \{\langle n,m\rangle : m \in H_{\phi_e(n)}\}$.   
\end{enumerate}}\end{defi}
\begin{fact}\label{fact4} We have the following properties of $b$-chains.
\begin{itemize}
\item[a)] If $b \in O^+$ then there is a hyperarithmetical $b$-chain if and only if $b \in O$.
\item[b)] There is a Kleene index $e_0$ such that for all $a \in O$ and $c \in \N$:
\[
\{e_0\}(\exists^2,a,c) = \left \{ \begin{array}{ccc}1&{\rm if}&c \in H_a\\0 & {\rm if}& c \not \in H_a\end{array} \right..
\]
\item[c)] The set of $b$-chains is uniformly arithmetically defined for any $b \in O^+$.
\end{itemize}
\end{fact}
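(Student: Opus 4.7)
The plan is to address the three assertions in the order (c), (b), (a), each building on the last.

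Part~(c) is routine unwinding of the definition. A candidate sequence $\{H_a\}_{a \preceq^+ b}$, coded as a single set $X$ of pairs $(a,x)$ with $H_a = \{x : (a,x) \in X\}$, is a $b$-chain exactly when: the $0$-slice of $X$ is empty; for every $2^c \preceq^+ b$, the equation $H_{2^c} = \mathcal{K}^{H_c}$ holds, which is a $\Pi^0_2$ condition in the oracle $H_c$ (check both inclusions via the Kleene $T$-predicate); and for every $3\cdot 5^e \preceq^+ b$, the identity $H_{3\cdot 5^e} = \{\langle n,m\rangle : m \in H_{\phi_e(n)}\}$ holds, which is arithmetical. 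Since $\langle O^+,\prec^+\rangle$ is arithmetical by the Fact preceding the statement, conjoining these clauses yields an arithmetical predicate uniformly in $b$.

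For part~(b), I would invoke Kleene's Recursion Theorem in the S1--S9 setting. Define a computable procedure $P$ that, on inputs $(\exists^2,a,c)$, cases on the shape of $a$: return $0$ if $a=0$; if $a = 2^d$, then on the fly query $\{e_0\}(\exists^2,d,\cdot)$ as an oracle for $H_d$ and use $\exists^2$ to decide whether $\phi_c^{H_d}(c)$ halts, returning the answer; if $a = 3\cdot 5^e$ and $c$ codes $\langle n,m\rangle$, return the value of $\{e_0\}(\exists^2,\phi_e(n),m)$. The Recursion Theorem supplies a fixed index $e_0$ realising $P$. For $a \in O$ the ordering $\prec$ is well-founded, so all recursive calls bottom out at $a=0$, and induction on $|a|_O$ shows $\{e_0\}(\exists^2,a,c)$ returns $\chi_{H_a}(c)$ for the unique $H_a$ determined by the defining clauses.

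Part~(a) has an easy backward direction: given $b \in O$, the index $e_0$ from part~(b) computes the characteristic function of each $H_a$ for $a \preceq b$ uniformly in $\exists^2$, and assembling these slices yields a hyperarithmetical set that is, by construction, a $b$-chain. The forward direction is the main obstacle, and here is where I expect the bulk of the work. The plan is to contrapose: suppose $b \in O^+ \setminus O$. The intuition is that any $b$-chain witnesses a transfinite iteration of the jump operator along the predecessors of $b$, and the ordinal length of such an iteration—if hyperarithmetically realised—yields an honest ordinal notation below $\omega_1^{\mathrm{CK}}$ for $b$, hence places $b$ in $O$. Concretely, I would argue that from a hyperarithmetical $b$-chain $X$ one can extract, arithmetically in $X$, an isomorphism between the $\prec^+$-predecessors of $b$ and a genuine initial segment of $O$, so the set $\{a \preceq^+ b : a \in O\}$ exhausts its $\prec^+$-predecessors and contains $b$ itself.

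The delicate step is verifying that the local coherence enforced by the two defining clauses of a chain suffices to rule out $b \in O^+ \setminus O$: the case $a = 2^c$ is straightforward because $H_{2^c}$ recovers $H_c$ uniformly, while the case $a = 3 \cdot 5^e$ requires tracking that every $\phi_e(n)$ appears in the chain and inherits its own chain structure. Once this bookkeeping is in place, the well-foundedness of the hyperarithmetical hierarchy forces the underlying ordinal of the chain to be a genuine computable ordinal, producing a notation for $b$ in $O$.
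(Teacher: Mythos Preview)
The paper treats this statement as a textbook fact, citing Sacks and Rogers without giving a proof, so there is no ``paper's argument'' to compare against. I will therefore comment on the soundness of your plan.

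Parts~(c), (b), and the backward direction of~(a) are handled correctly and in the standard way. The forward direction of~(a), however, has a genuine gap. Your concrete plan is to ``extract, arithmetically in $X$, an isomorphism between the $\prec^+$-predecessors of $b$ and a genuine initial segment of $O$''. This cannot work when $b \in O^+ \setminus O$: the set $\{a : a \preceq^+ b\}$ is then \emph{not} well-ordered (one shows easily, using the fixed-point clauses, that the ill-founded part has no $\prec^+$-minimum), so there is no order-isomorphism onto any subset of the well-founded structure $\langle O,\prec\rangle$. The chain $X$ encodes sets $H_a$ for all $a \preceq^+ b$, but for $a$ in the ill-founded part these are arbitrary sets satisfying the local clauses; nothing forces well-foundedness of the index set.

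The textbook argument runs differently. First, by induction on $|a|_O$ one checks that for every $a \in O$ with $a \preceq^+ b$ the slice $(X)_a$ coincides with the genuine $H_a$. Second, one invokes the standard structural fact about $O^+$ (this is where the Harrison-ordering analysis enters): if $b \in O^+ \setminus O$, then the ordinal ranks $\{|a|_O : a \in O,\ a \prec^+ b\}$ are cofinal in $\omega_1^{\mathrm{CK}}$. Combining these, a hyperarithmetical $X$ would compute $H_a$ for $a$ of arbitrarily large rank below $\omega_1^{\mathrm{CK}}$, hence compute every hyperarithmetical set, which is impossible. Your closing sentence (``well-foundedness of the hyperarithmetical hierarchy forces\ldots'') gestures toward this, but the route through an isomorphism should be replaced by the cofinality argument.
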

The above facts constitute (partly) the key steps in the proof of the Spector-Gandy theorem (\cite{gandys,spector}, see also  \cite{Sacks.high}*{p.\ 61}).
\begin{rem}[Well-orderings and the hyperarithmetical]\rm For any $b \in O$, there is exactly \emph{one} $b$-chain and the latter is definable using {\em arithmetical transfinite recursion} as formalised in $\ATR_0$. 
One technical challenge in our proof of Theorem \ref{refine} is that there are elements $c\in O^+ \setminus O$ for which there is neither a hyperarithmetical $c$-chain nor a hyperarithmetical descending sequence.  

\smallskip

On the other hand, in the proof of Theorem \ref{rejeji} we exploit the existence of such $c$ to obtain a negative result while the associated Corollary \ref{cor.alt.6.8} yields a `softer' proof of the main theorem of this section.  Still, we find the explicit construction here to be of independent interest.
\end{rem}

\subsubsection{The construction establishing Theorem \ref{refine}}\label{noteje3}
We construct the functional $F$ from Theorem \ref{refine}.  
To this end, let $\alpha^{1}$ be the following partial binary function:
\[
\alpha(e) :=
\begin{cases}
\{e\}(\exists^2,e) & \textup{ if $\{e\}(\exists^2,e) \in \{0,1\}$}\\
\textup{undefined}  &\textup{otherwise}
\end{cases}
\]
Let $X$ be the set of all total binary functions extending $\alpha$.  Hence, $X$ is a non-empty, closed  $\Sigma^1_1$-set with no\footnote{That $X$ contains no hyperarithmetical elements is proved in the same way as one proves that the Kleene-tree has no computable infinite branches, just relativised to computability in $\exists^2$.} hyperarithmetical elements. 
\begin{lem}\label{lemma1.6} If $f \in X$ and $\{e\}(\exists^2, \vec a) \in \{0,1\}$, then we can, uniformly $\mu$-recursive in $f$, find  $\{e\}(\exists^2, \vec a)$. \end{lem}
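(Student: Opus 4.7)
The plan is to use the Kleene $s$-$m$-$n$ theorem (for the S1--S9 schemes with parameter $\exists^2$) to transform an arbitrary computation $\{e\}(\exists^2,\vec a)$ into a self-application, and then read off the answer from $f$.

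More precisely, I would first apply $s$-$m$-$n$ to obtain a total primitive recursive function $s(e,\vec a)$ such that for every $e$, every tuple $\vec a$, and every $x$,
\[
\{s(e,\vec a)\}(\exists^2,x)\;=\;\{e\}(\exists^2,\vec a),
\]
that is, the index $e':=s(e,\vec a)$ ignores its argument $x$ and simulates $\{e\}(\exists^2,\vec a)$. In particular $\{e'\}(\exists^2,e')=\{e\}(\exists^2,\vec a)$.

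Now suppose $\{e\}(\exists^2,\vec a)\in\{0,1\}$. Then $\{e'\}(\exists^2,e')\in\{0,1\}$ as well, so by the very definition of the partial function $\alpha$ we have $\alpha(e')\!\downarrow$ and $\alpha(e')=\{e\}(\exists^2,\vec a)$. Since $f\in X$, the function $f$ extends $\alpha$, hence
\[
f(e')\;=\;\alpha(e')\;=\;\{e\}(\exists^2,\vec a).
\]
Therefore the algorithm, given $e$ and $\vec a$ with oracle $f$, is simply: compute $e'=s(e,\vec a)$ (this is primitive recursive), then output $f(e')$. This procedure is uniform in $f$ and in $(e,\vec a)$, as required.

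There is no real obstacle: the only subtlety is making sure the appropriate $s$-$m$-$n$ result is available in Kleene's S1--S9 framework with the oracle $\exists^2$, which is standard (see e.g.\ \cite{longmann} or \cite{Sacks.high}). Note also that we do not even need $\exists^2$ in the computation \emph{from $f$}: once $e'$ has been produced, a single query to $f$ suffices.
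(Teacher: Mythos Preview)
Your proof is correct and essentially identical to the paper's: the paper also produces a primitive recursive function $\xi(e,\vec a)$ with $\{\xi(e,\vec a)\}(\exists^2,\xi(e,\vec a)) = \{e\}(\exists^2,\vec a)$ via ``a simple index manipulation using only Kleene's S1--S7'' and then reads off the value as $f(\xi(e,\vec a))$. Your $s(e,\vec a)$ is exactly their $\xi$, and you have spelled out the $s$-$m$-$n$ step slightly more explicitly.
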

\begin{proof}
There is a primitive recursive function $\xi$ such that if $\{e\}(\exists^2,\vec a)\!\!\downarrow$ then $$\{\xi(e,\vec a)\}(\exists^2,\xi(e,\vec a )) = \{e\}(\exists^2,\vec a).$$
This is seen by a simple index manipulation using only Kleene's S1-S7.
Then $\{e\}(\exists^2,\vec a) = f(\xi(e,\vec a))$ and we are done.  
\end{proof}
We are now ready to give the proof of Theorem \ref{refine}.
 \begin{proof} 
Given  a binary $f^{1}$ we will look for two sorts of evidence: evidence that $f \in X$ and evidence of the opposite. If we, for each $e$, gather evidence for $f(e)$ being compatible with $\alpha(e)$, our construction will ensure that $f \in X$, and we may put $F(f) = 0$. This is because $f$ is not hyperarithmetical in this case. 

\smallskip

If we, for some $e$, find an indication of $f(e)$ being incompatible with $\alpha(e)$, we will give $F(f)$ a value so large that an alleged incompatibility is manifested for some $x < F(f)$. 
We will see to it that if $f$ is hyperarithmetical (something that cannot be decided, that is the underlying problem) then the alleged incompatibility is a real one.
Asking for compatibility at  $e$ is the same as asking if we have:   $$\neg(\{e\}(\exists^2,e) = 1 - f(e)).$$
This is the same as asking: is $\rho(e,e,1-f(e)) \not \in O?$
If $\rho(e,e,1-f(e)) \not \in O^+$, we have confirmation of the compatibility at $e$, so assume that $\rho(e,e,1-f(e)) \in O^+$.

\smallskip
We now employ the index $e_0$ from Fact \ref{fact4}.b) and the algorithm from Lemma \ref{lemma1.6}. From $f$, compute an alleged $\rho(e,e,1-f(e))$-chain of the form $\{H^f_a\}_{a \preceq \rho(e,e,1-f(e))}$, i.e.\ we let $H_a^f$ be the set with characteristic function $\lambda b.f(\xi(e_0,a,b))$.    Given $e$, there will be three possibilities, and $\exists^{2}$ can decide which one holds:
\begin{enumerate}
 \renewcommand{\theenumi}{\roman{enumi}}
\item $\{H^f_a\}_{a \preceq \rho(e,e,1-f(e))}$ is a proper chain.
\item $\{H^f_a\}_{a \preceq \rho(e,e,1-f(e))}$ is not a chain, and there is no least place where the inductive definition breaks down.
\item $\{H^f_a\}_{a \preceq \rho(e,e,1-f(e))}$ is not a chain, and there is a least place where the induction breaks down.
\end{enumerate}
For each of these possibilities, we will either conclude that we have a confirmation of the compatibility of $f$ with $\alpha$ at $e$, 
or we will find a value $x_e$ such that we \emph{may} let $F(f) = x_e + 1$. The point is that if $f$ is hyperarithmetical, 
then we find some $x_e$, and any choice of $x_e$ will be such that $f$ and $\alpha$ are incompatible at $x_e$.
Thus, no extension of $\overline{ f}F(f)$ will be in $X$ with this choice of $F(f)$.

\smallskip
In case of (i), if $f$ is hyperarithmetical, then the chain is hyperarithmetical; due to Fact \ref{fact4}.a), $\rho(e,e,1-f(e)) \in O$, so $\{e\}((\exists^2),e) = 1-f(e)$. In this case put $x_e = e$.

\smallskip
In case of (ii), we have spotted an arithmetical non-empty subset of the $O^+$-initial segment of $\rho(e,e,1-f(e))$ without least element.  This implies $\rho(e,e,1-f(e)) \not \in O$ and yields a confirmation of the compatibility of $f$ and $\alpha$ at $e$.

\smallskip
This leaves us with case (iii).  Let $a$ be the least element in the initial segment  of $\rho(e,e,1-f(e))$ where the chain constructed from $f$ fails to satisfy the induction. 
This means that if $H$ is the candidate for the chain at $a$ (that we arithmetically define from the corresponding initial segment of the chain), then $H \neq H^f_a$. Viewing $H$ and $H^f_a$ as characteristic functions, there will be a least $b$ such that $H(b) \neq H^f_a(b) = f(\xi(e_0,a,b))$. We let $x_e = \xi(e_0,a,b)$ in this case.

\smallskip
If, in this case, $f$ is hyperarithmetical, we must have that $a \in O$, by Fact \ref{fact4}.a), since there is a proper chain up to $a$. This implies in turn that if $H$ is the set defined above, $H$ is really $H_a$, which is computed from $\exists^2$ by
\[
H_a(b) = \{e_0\}(\exists^2,a,b) = \phi(\xi(e_0,a,b)).
\]
Thus, the least $b$ chosen as above will, in this case, give  a correct witness $x_e = \xi(e_0,a,b)$ to the fact that $f$ is incompatible with $\phi$.  

\smallskip
We can now finalise the definition of $F(f)$ as follows:
\begin{enumerate}
\item[(i)] If we, for all $e$, obtain a confirmation of the compatibility of $f(e)$ and $\alpha(e)$ as above, we let $F(f) = 0$. In this case, $f$ is not hyperarithmetical.
\item[(ii)] Otherwise, let $x$ be minimal such that there is $e$  for which we do not have a confirmation like this by the considerations above and $x = x_e$. 
We let $F(f) = x + 1$.  For hyperarithmetical $f$, $\overline{ f}F(f)$ has no extension in $X$. 
\end{enumerate}
As is easily verified, we never left the arithmetical in our constructions, so $F$ is, with good margin, computable in $\exists^2$. 
The construction ensures that $X$ is disjoint from $\{g : \overline{f}F(f) \subset g\}$ whenever $f$ is hyperarithmetical.  
\end{proof}
\subsection{Computing the Suslin functional from Borel's $\Theta$}\label{thetatje} 
In this section, we show that the Suslin functional is computable in the particular special fan functional called \emph{Borel's $\Theta$}, which was introduced in Section \ref{tokie}.

\smallskip

As to the history of this result, in a preprint version of this paper (see \cite{dagsam.I.p}), we proved that Borel's $\Theta$, when applied to the functional  $F$ constructed in the proof of Theorem \ref{refine}, yields a function with the same Turing degree as a complete $\Pi^1_1$-set; from this we concluded that the Suslin functional is computable in the functional Borel's $\Theta$. 
In \cite{dagcie18}, this fact is used to prove that the closure operator for non-monotone inductive definitions, seen as a functional of type 3, is computable in $\exists^2$ and Borel's $\Theta$. 
Later, we discovered a more transparent proof, showing \emph{directly} that Borel's $\Theta$ can decide if a total ordering is a well-ordering or not, and this argument replaces in this paper the original content of Section~\ref{thetatje} from \cite{dagsam.I.p}.  We warn the reader that due to the rewrite of Section \ref{thetatje}, the numbering in this section has been changed from \cite{dagsam.I.p}.
The numbering in the latter was used when writing e.g.\ \cite{dagsamII}.

\smallskip

First of all, we introduce a decision procedure for well-orderings relative to Borel's $\Theta$, as follows.  
Intuitively, given a total ordering $R$ of $\N$, we can consider the tree $T_R$ of sequences $\langle n_0 , \ldots , n_k\rangle$ that are strictly increasing in the ordering of $\N$ and strictly decreasing in the ordering  $R$. Then $R$ is a well-ordering if and only if $T_R$ is well founded. We may then, informally, use a transfinite top-down, left-to-right search for an infinite branch in $T_R$ in order to decide if $R$ is a well-ordering. 
This intuition can be formalised as follows. 
\begin{thm} 
Let $\Theta_0$ be Borel's $\Theta$. Let $R$ be a binary relation on $\N$. Uniformly in $R$ there is an arithmetical functional $F_R$ such that we can decide, arithmetically in $R$ and $\Theta_0(F_R)$, if $R$ is a well-ordering of $\N$ or not.\end{thm}
\begin{proof}
Since we may arithmetically decide if $R$ is a total ordering or not, we assume that it is, and rename it $<_R$. We let $<_L$ be the lexicographical ordering of $C$. When we evaluate $\Theta_0$ on $F$, we are constructing an $<_L$-increasing sequence $\{f_\gamma\}_{\gamma \leq \alpha}$ where $\alpha$ is a countable ordinal and the following holds:
\begin{itemize}
\item[(i)] The function $f_0$ is constant 0.
\item[(ii)] If $\gamma$ is a limit ordinal, then $f_\gamma = \sup\{f_\beta \mid \beta < \gamma\}$ in the sense of $<_L$.
\item[(iii)] If $\gamma + 1 \leq \alpha$, then $\bigcup\{C_{\bar f_\beta(F(f_\beta))} \mid \beta \leq \gamma\}$   is a proper initial segment of  $C$, and $f_{\gamma + 1}$ is the $<_L$-least element in the complement.
\item[(iv)] The collection $ \bigcup\{C_{\bar f_\beta(F(f_\beta))} \mid \beta \leq \alpha\}$ covers Cantor space $C$.
\end{itemize}
From the cover in item (iv), we extract a finite sub-covering from right to left. In particular, if we hit upon some $f$ such that $F(f) = 0$ in this process, this $f$ will be our $f_\alpha$, and $\Theta_0(F) = \{f_\alpha\}$. Our aim is to construct $F_R$ such that this will be the case whenever $R$ is not a well-ordering, and then $f_\alpha$ will code a $<_R$- descending sequence.

\smallskip

For $f \in C$, define $A_f := \{n \mid f(n) = 0\}$, which is enumerated (in $\N$-increasing order) as $\{m^f_k\}_{k < N^f}$, and where $N^f \in \N \cup \{\infty\}$. We now define $F_R(f)$ by cases.
\begin{enumerate}
\item If $\{m^f_k\}_{k < N^f}$ is an $<_R$-descending sequence, then there is a least $k > 0$ such that $m^f_k >_R m^f_{k-1}$. We let $F_R(f) = m^f_k + 1$.
\item If $\{m^f_k\}_{k < N^f}$ is an infinite $<_R$-descending sequence, then $F_R(f): = 0$
\item If $\{m^f_k\}_{k < N^f}$ is finite, nonempty, and $<_R$-descending, then $m^f_k$ is the largest number in this set and  $F_R(f) := m^f_k + 1$.
\item If $\{m^f_k\}_{k < N^f} = \emptyset$, i.e.\ $f$ is constant 1, then $F_R(f) := 0$.
\end{enumerate}
Now let $\{f_\gamma\}_{\gamma \leq \alpha}$ be the $<_L$-increasing sequence constructed through the evaluation of $\Theta_0(F_R)$. If for some $\gamma$ we define $F_R(f_\gamma)$ via items (2) or (4), we have that $\alpha = \gamma$ and that $\Theta_0(F_R) = \{f_\alpha\}$.

\smallskip

On the other hand, if we define $F_R(f_\gamma)$ via items (1) or (3), then consider the corresponding $m^{f_\gamma}_k$ where $F_R(f_\gamma) = m^{f_\gamma}_k + 1$. We then have that
\begin{itemize}
\item $f_{\gamma + 1}(n) = f_\gamma(n)$ for $n < m^{f_\gamma}_k$,
\item $f_{\gamma + 1}(n) = 1$ and $f_\gamma(n) = 0$ for $n = m^{f_\gamma}_k$,
\item $f_{\gamma + 1}(n) = 0$ for $n > m^{f_\gamma}_k$.
\end{itemize}
Thus the process cannot stop in any of those cases. Then the theorem follows from the following claim \eqref{DGG} for all $f\in C$, which we prove by induction on $\gamma \leq \alpha$.
\be\label{DGG}
\textup{If $\{m^f_k\}_{k < N^f}$ is an infinite $<_R$-descending sequence, then $f_\alpha \leq_L f$.}
\ee
For $\gamma = 0$, \eqref{DGG} is trivial, and for $\gamma$ a limit ordinal, the induction step is trivial.
\newline
So assume that the induction hypothesis holds for $\gamma$ and that $f$ is such that $\{m^f_k\}_{k < N^f}$ is an infinite $<_R$-descending sequence. Since $f_\gamma \leq_L f$, we cannot have that $f_\gamma$ is the constant 1. Further, if $f_\gamma$ codes an infinite descending $<_R$-sequence, then the process stops, and there is no $f_{\gamma + 1}$. So the interesting cases are the cases where either item (1) or item (3) holds in the definition of $F_{R}$.

\smallskip

If $\{m^{f_\gamma}_k\}_{k < N^{f_\gamma}}$ is not a descending sequence, let $k$ be as in the definition of $F_R(f_\gamma)$. If $f(n) > f_\gamma(n)$ for some least $n < m_k^{f_\gamma}$, we also have that $f(n) > f_{\gamma + 1}(n)$ for the same least $n$, and the induction hypothesis is preserved.

\smallskip

If $f(n)=  f_\gamma(n)$ for all $n < m_k^{f_\gamma}$, we must have that $f_\gamma(m_k^{f_\gamma}) = 0$, by the choice of $m_k^{f_\gamma}$ in this case, and that $f(m_k^{f_\gamma}) = 1$, since otherwise $f$ 
 would not even code a descending sequence, and then not an infinite one, as assumed.  Then it is clear that $f_{\gamma + 1} \leq_L f$ as well.

\smallskip

Now assume that $F_R(f_\gamma) = f_\gamma(m_k^{f_\gamma})$ due to item (3) in the definition of $F_R$. This requires that $f_\gamma(n) = 1$ for all $n > m_k^{f_\gamma}$. Then we use the universal formulation of the induction hypothesis to see that $m_k^{f_\gamma}$ must be in the well-ordered initial segment of $<_R$, since otherwise there would be an infinite descending sequence continuing the finite sequence coded by $f_\gamma$, and this sequence can again be coded by some $f'$ below $f_\gamma$ in $<_L$, contradicting the induction hypothesis.

\smallskip

But then, we must have that $f(m_k^{f_\gamma}) = 1$, since this function only can take the value 0 in the non-well-ordered part, and we can argue as in the previous case.
This ends the proof of the claim.

\smallskip

As a consequence of the claim, we see that $\Theta_0(F_R)$ will give us the leftmost infinite descending sequence, if there is one, and the constant 1 if there are none. Thus we can use $\Theta_0(F_R)$ to decide if $<_R$ is a well-ordering or not.\end{proof}
\begin{cor} The Suslin functional $\bf S$ is computable in Borel's $\Theta$. \end{cor}
\begin{proof}Given $f$, $\bf S$ decides if $(\forall g^{1})( \exists n^{0})( f(\bar gn) = 0)$ or not. This is computably equivalent to asking if the Kleene-Brouwer ordering of a certain tree is a well-ordering or not, a problem decidable by Borel's $\Theta$. \end{proof}

\subsection{Weak versus the special fan functionals}\label{kood}
We construct a particular functional $\Lambda_{0}$ satisfying $ \WCF(\Lambda_{0})$ and which produces hyperarithmetical output for hyperarithmetical input.
By Theorem \ref{refine}, the functional $\Lambda_{0}$ cannot be a $\Theta$-functional. In \cite{dagsamII} there is a stronger theorem, with a  more complex proof. 
We include the construction below partly because it is less of an ad hoc construction of a weak fan functional and partly because it illustrates how the  Sacks basis theorem is used.

\smallskip

We warn the reader that due to the rewrite of Section \ref{thetatje}, the numbering in this section has been changed from \cite{dagsam.I.p}.
The numbering in the latter was used when writing e.g.\ \cite{dagsamII}.
We first prove the following consequence of the Sacks Basis Theorem; we refer to \cite{Sacks.high}*{IV.2} for an account of the latter.
\begin{thm}\label{lem.sacks}
For every hyperarithmetical function $G^{2}$, the set $\bigcup_{f} C_{\overline{ f}G(f)}$ has measure 1, 
\emph{where $f$ ranges over the binary hyperarithmetical functions}.
\end{thm}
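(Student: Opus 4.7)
The plan is to set $U_G := \bigcup_{f \in \HYP} C_{\overline{f}G(f)}$ and $K := 2^{\N} \setminus U_G$ (where $\HYP$ denotes the set of binary hyperarithmetical functions) and to show $\mu(K)=0$ by contradiction, via the Sacks basis theorem.

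First I would record the self-referential observation that $K$ contains no hyperarithmetical real: if $g \in \HYP$ were in $K$, then choosing $f := g$ in the defining union of $U_G$ would force $g \in C_{\overline{g}G(g)} \subseteq U_G$, contradicting $g \in K$. Second, I would pin down the effective descriptive-set-theoretic profile of $K$. Fixing a Kleene-$O$ enumeration $\{f_e\}_{e \in O}$ of $\HYP$, the family of finite binary strings $\{\overline{f}G(f) : f \in \HYP\}$ is $\Sigma^1_1(G)$, so $U_G$ is effectively $\Sigma^1_1(G)$-open; by compactness of $2^{\N}$, the set $K$ is the set of infinite branches of the $\Pi^1_1(G)$-tree
\[
T_K := \{\sigma \in 2^{<\N} : \text{no finite subfamily of } \{C_{\overline{f}G(f)}\}_{f \in \HYP} \text{ covers } [\sigma]\},
\]
and in particular $K$ is a closed, $\Pi^1_1(G)$-definable subset of $2^{\N}$.

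Third, I would assume for contradiction that $\mu(K) > 0$ and invoke the Sacks basis theorem from \cite{Sacks.high}*{IV.2} in its measure-theoretic form: the closed $\Pi^1_1(G)$-set $K$ of positive measure, being effectively presented from the hyperarithmetical parameter $G$, must contain a real hyperarithmetical in $G$; since $G$ is itself hyperarithmetical, such a real is hyperarithmetical. This contradicts the first step, forcing $\mu(K)=0$ and hence $\mu(U_G)=1$.

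The main obstacle I expect is pinning down the precise formulation of Sacks' basis theorem for use in the third step. Its most familiar form targets $\Sigma^1_1$-sets of positive measure and produces $\Delta^1_1$-elements, whereas $K$ is naturally presented as $\Pi^1_1(G)$. The conversion should be routine because $U_G$ admits an explicit $\Sigma^1_1(G)$-enumerated cover by basic opens: one can work through the hyperarithmetical approximations $U_G^{\leq a} := \bigcup_{e \in O,\, e \leq_O a} C_{\overline{f_e}G(f_e)}$ for $a \in O$, apply the classical $\Sigma^1_1$-version to suitable $\Sigma^1_1$-approximations, and pass to the limit, or equivalently deduce the variant for closed sets of positive measure with a $\Sigma^1_1(G)$-open complement directly from the $\Sigma^1_1$-version. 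Executing this reduction cleanly is the technical point I would spend the most time on.
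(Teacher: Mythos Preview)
Your contradiction strategy is natural, and the self-referential observation that $K$ contains no hyperarithmetical point is correct and exactly the right starting point. The gap is in the third step: the Sacks basis theorem does not apply to $K$.

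First, the complexity bookkeeping is off. The set $S=\{\overline{f}G(f):f\in\HYP\}$ of strings is $\Pi^1_1$ (one quantifies $\exists e\,(e\in O\wedge\ldots)$, and $\Pi^1_1$ is closed under number quantifiers), so $U_G=\{g:(\exists n)\,\overline g n\in S\}$ is $\Pi^1_1$ and $K=2^\N\setminus U_G$ is $\Sigma^1_1$, not $\Pi^1_1$. More to the point, $K$ is \emph{not} hyperarithmetical, and the version of Sacks' theorem the paper cites is for hyperarithmetical sets: a $\Delta^1_1$ set of positive measure has a $\Delta^1_1$ member. There is no $\Sigma^1_1$ analogue: the set of non-hyperarithmetical reals is $\Sigma^1_1$ (since $\HYP$ is $\Pi^1_1$), has measure $1$, and contains no hyperarithmetical element. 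Your proposed approximation fix also does not close the gap: each $K_a:=2^\N\setminus U_G^{\leq a}$ is hyperarithmetical of positive measure and hence contains a hyperarithmetical $g_a$, but nothing forces $g_a\in K=\bigcap_a K_a$, and indeed by your own first observation no hyperarithmetical $g_a$ can lie in $K$.

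The paper sidesteps all of this by never leaving the hyperarithmetical world. For a given $\epsilon>0$ it picks $n$ with $m(\{f:G(f)<n\})>1-\epsilon$; for every binary string $s$ of length $n$ with $m(C_s\cap\{f:G(f)<n\})>0$, the set $C_s\cap\{f:G(f)<n\}$ is hyperarithmetical (since $G$ is) and of positive measure, so Sacks yields a hyperarithmetical $f_s\in C_s$ with $G(f_s)<n$, whence $C_s\subseteq C_{\overline{f_s}G(f_s)}\subseteq U_G$. The union of these $C_s$ already has measure $>1-\epsilon$. This is the missing idea in your argument: rather than invoking a basis theorem on the non-$\Delta^1_1$ set $K$, one manufactures finitely many hyperarithmetical witnesses via the $\Delta^1_1$ sets $\{f:G(f)<n\}$.
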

\begin{proof}
The \emph{Sacks Basis Theorem} is the following statement (\cite{Sacks.high}*{p.\ 93}):
\begin{quote}
\emph{If D is a hyperarithmetical set of functions of positive measure, then D contains a hyperarithmetical element.}
\end{quote}
Let $G^{2}$ be hyperarithmetical, let $\m$ be the standard measure on Cantor space, and let $\epsilon > 0$ be given. It suffices to prove that the set above has measure $> 1 - \epsilon$.
To this end, let $n$ be so large that $\m(\{f : G(f) < n\}) > 1-\epsilon$. Let $S_n$ be the set of sequences $s$ of length $n$ such that $C_s$ intersected with the set above has positive measure. By the basis theorem, each set $C_s$ will contain a hyperarithmetical $f$ with $G(f) < n$ whenever $s \in S_n$, and the union of these sets $C_s$ has measure $> 1 - \epsilon$.
\end{proof}

We now define, based on G\"odel's constructible universe $L$ relativized to any functional $G^{2}$, an explicit construction of a specific weak (and a special) fan functional. 
\bdefi[The functionals $\Lambda_{0}$ and $\Theta_{0}$]
We let $L_\alpha[G]$ be level $\alpha$ in the constructible universe relativized to $G^{2}$, where we have added a symbol for the functional $G$ to the language of set theory.
\begin{enumerate}
\item In order to ``compute" $\Lambda_{0}(G,k)$, first find the least ordinal $\alpha$ such that $\m\big(\bigcup_{f \in L_\alpha[G]}C_{\overline{f}G(f)} \big)>1- \frac{1}{k}$, 
and then use the $G$-definable well-ordering of $L_\alpha[G]$ to select a finite list of $f$'s doing the job.
\item In order to ``compute" $\Theta_{0}(G)$, continue  the process above until we have a covering of Cantor space.
\end{enumerate}
We have to prove that this process will go on until we have a covering of Cantor space, by proving that unless we have a covering at stage $\alpha$, there is an element of $L_{\alpha + 1}[G]$ not covered by the open set $O_\alpha$ considered at stage $\alpha$. This is trivial, since $L_\alpha[G] \in L_{\alpha+1}[G]$, and then the leftmost function not covered by $O_\alpha$ is definable, and thus an element of $L_{\alpha + 1}[G]$.
\edefi
The definition of $\Theta_{0}$ constitutes (in a technical sense) the optimal way of computing a special fan functional, as will be explored in future research.  
\begin{cor}\label{essje} 
Let $\Lambda_{0}$ be as constructed above, and let $G$ be a total, hyperarithmetical function of type 2. Then $\Lambda_{0}(G,k)$ is a finite list of hyperarithmetical functions.
Indeed, there is a partial $\Lambda^- \subseteq \Lambda_0$ that is computable in $\exists^2$ and that terminates on all total $G$ computable in $\exists^2$.
\end{cor}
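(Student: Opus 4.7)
The plan is to derive the hyperarithmeticity claim directly from Theorem~\ref{lem.sacks} together with standard facts about the constructible hierarchy, and then to realise $\Lambda^-$ as an effective search through Kleene's $O$ which is guaranteed to succeed on hyperarithmetical~$G$.

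For the first claim I would fix a total hyperarithmetical $G^{2}$ and $k\in\N$. Theorem~\ref{lem.sacks} yields a union of measure~$1$ of neighbourhoods $C_{\overline f G(f)}$ with $f$ ranging over the hyperarithmetical binary functions, so by countable additivity some \emph{finite} subfamily of this already has measure exceeding the threshold appearing in the definition of $\Lambda_0$. Since $G\in L_{\omega_1^{CK}}$ we have $\omega_1^{CK,G}=\omega_1^{CK}$, and every hyperarithmetical-in-$G$ function appears in $L_\beta[G]$ for some $\beta<\omega_1^{CK}$. Hence the least ordinal $\alpha$ used by $\Lambda_0(G,k)$ satisfies $\alpha<\omega_1^{CK}$, and the canonical finite list extracted from $L_\alpha[G]$ via the $G$-definable well-ordering consists of hyperarithmetical functions, as required.

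For the second claim, I would define $\Lambda^-$ by the following algorithm relative to $\exists^{2}$ and $G$: enumerate notations $a\in O$, uniformly generate a coding of $L_{|a|}[G]$ together with its $G$-definable well-ordering, and check arithmetically whether this level already contains a finite subfamily whose neighbourhoods have measure exceeding the threshold; at the first $a$ for which no strictly smaller notation in $O$ works, return the canonical such subfamily (together with its associated witness length). Every intermediate step --- coding $L_{|a|}[G]$ along $a$, evaluating the measure of a finite union of basic clopen sets, comparing notations along Kleene's well-ordering --- is arithmetical in $G$ and therefore available to $\exists^{2}$, so $\Lambda^-$ is computable in $\exists^{2}$. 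On any hyperarithmetical $G$, part~(1) supplies an ordinal $\alpha<\omega_1^{CK}$ with a notation in $O$, so the search terminates; since both $\Lambda^-$ and $\Lambda_0$ return the cover selected by the $G$-definable well-ordering at this same least $\alpha$, their outputs agree and $\Lambda^-\subseteq\Lambda_0$.

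The main technical obstacle is arranging the search so that it picks out the least \emph{ordinal} $\alpha$ rather than simply the first notation in $O$ at which the cover property is discovered in the enumeration --- distinct notations may denote the same ordinal and an ordinal of small rank can admit only notations of large Kleene index. This amounts to a $\Pi^{1}_{1}$-in-$G$ selection and should go through by a Gandy-style argument exploiting that, for each candidate notation $a$, the set of $b\in O$ with $|b|<|a|$ is hyperarithmetical in $a$, so $\exists^{2}$ can verify minimality of the ordinal rank along the well-ordering of $O$ below~$a$.
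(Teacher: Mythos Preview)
Your proposal is correct and follows the paper's approach: Theorem~\ref{lem.sacks} ensures the search for $\Lambda_0(G,k)$ terminates at a computable ordinal (so the output is hyperarithmetical), and Gandy selection makes this search computable in $\exists^2$. The paper's proof is a terse three sentences invoking exactly these two ingredients, while you have correctly unpacked the Gandy-selection step and the ordinal-minimality issue in more detail.
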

\begin{proof} 
If $G^{2}$ is hyperarithmetical and $\alpha$ is a computable ordinal, $L_\alpha \subseteq L_\alpha[G] \subseteq L_{\omega_1^{CK}}.$ 
By Theorem \ref{lem.sacks} the search for a value of $\Lambda_{0}(G,k)$ will end at a computable ordinal, and the output is hyperarithmetical. 
By Gandy selection, the process evaluating $\Lambda_0(G)$ is computable in $\exists^2$ to the extent it terminates below $\omega_1^{\textup{CK}}$.
\end{proof}

\section{Explosions and non-explosions}\label{weako}
An `explosion' refers to two logical principles (or functionals) that are relatively weak in isolation, but much stronger when combined.  
We show that $\STP$ gives rise to an explosion when combined with $\paai$, while $\LMP$ is shown to yield no such explosion.   
These results are partly based on explosions (resp.\ non-explosions) involving $\Theta$-functionals (resp.\ $\Lambda$-functionals) from the previous section.
We also study the relation of $\Theta$-functionals to other explosive functionals. 

\subsection{Introduction}
We proved in Section \ref{thetatje} that Borel's $\Theta$ computes the Suslin functional in combination with $\exists^{2}$.  
By contrast, we proved in Section \ref{kood} that there is a $\Lambda$-functional that produces hyperarithmetical output for hyperarithmetical input (i.e.\ computable in $\exists^{2}$).  
Thus, $\Theta$-functionals seem to be relative strong, while $\Lambda$-functionals seem to be (or can be) rather weak. 
Based on the connection between Nonstandard Analysis and computability theory, the aforementioned results suggest that $\paai+\STP$ and $\paai+\LMP$ are resp.\ quite strong and relatively weak, all compared to say $\exists^{2}$.  

\smallskip

This hunch turns out to be correct: we show in this section that $\paai+\STP$ implies $\ATR$ relative to `\st' while $\paai+\LMP$ does not.  In other words, $\STP$ is explosive when combined with $\paai$, while $\LMP$ is not; note however that $\STP$ and $\LMP$ (and $\WKL$ and $\WWKL$) are `quite close', as discussed in Remark~\ref{triplex}.  
 Furthermore, Corollary~\ref{frigjr} provides a (positive) answer to Hirschfeldt's question (see \cite{montahue}*{\S6.1}) concerning equivalences in RM which require a stronger base theory.  
 
 \smallskip
 
Finally, we discuss the connection between special fan functionals and Kohlenbach's generalisations of weak K\"onig's lemma in Section \ref{kokkon}.  These results show that special fan functionals can be viewed as a version of the classical fan functional \emph{with nonstandard continuity} instead of the epsilon-delta variety.  

\subsection{Transfinite recursion and nonstandard compactness I}
We prove the main \emph{negative} result of this section, namely that $\P_{0}+\paai+\LMP$ does not prove $\ATR_{0}^{\st}$.
Regarding definitions, $\ATR_{0}$ is $\ACA_{0}$ plus the second-order schema:
\be\tag{$\ATR_{\theta}$}
(\forall X^{1})\big[\WO(X)\di (\exists Y^{1})H_{\theta}(X, Y) \big], 
\ee
for any arithmetical $\theta$, and where $\WO(X)$ expresses that $X$ is a countable well-ordering and $H_{\theta}(X, Y)$ expresses that $Y$ is the result from iterating $\theta$ along $X$.  
More details and related results may be found in \cite{simpson2}*{V.2}.

\smallskip
Secondly, to gain some intuitions regarding $\paai$ and $\ATR_{0}$, we  list a few facts which are merely the nonstandard analogues of well-known results, and thus readily proved.  
For instance, an early theorem of higher-order computability theory going back to Kleene (see \cite{longmann}*{Theorem 5.4.1} or \cites{Sacks.high, Rogers}) states that the functions computable in $\exists^{2}$ are exactly the $\Delta_{1}^{1}$ (or \emph{hyperarithmetical}) functions.  The nonstandard counterpart of $\exists^{2}$ (actually the equivalent $\mu^{2}$) is $\paai$ and we thus expect that $\P+\paai$ can prove comprehension for $\Delta_{1}^{1}$-sets (relative to `st').  
This suspicion turns out to be correct, as follows. 
\begin{thm}\label{umdidi} 
The system $\P_{0}+\paai$ proves $\big(\Delta_{1}^{1}\textup{\textsf{-CA}}\big)^{\st}$, i.e.\ we have for all standard $f^{1}, g^{1}$ that
\begin{align}
(\forall^{\st}n^{0})\big[(\exists^{\st}k^{1})&(\forall^{\st}m^{0})(f(\overline{k}m,n)=0) \asa(\forall^{\st}l^{1})(\exists ^{\st}r^{0})(g(\overline{l}r,n)\ne0) \big]\label{noorjega2}\\
& \di (\exists^{\st}h^{1})(\forall^{\st}n)\big[ (\exists^{\st}k^{1})(\forall^{\st}m^{0})(f(\overline{k}m,n)=0) \asa h(n)=0   \big].\notag
\end{align}
\end{thm}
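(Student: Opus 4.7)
The plan is to emulate, inside $\P_{0}+\paai$, Kleene's classical result that $\Delta^{1}_{1}$-sets are exactly those computable in $\exists^{2}$. Denote the two sides of the equivalence in \eqref{noorjega2} by $\Phi_{1}^{\st}(n)$ and $\Phi_{2}^{\st}(n)$. First, classical case analysis on the hypothesis gives, for each standard $n$, a standard pair $(k^{1},l^{1})$ such that
\begin{equation*}
(\forall^{\st}m^{0})(f(\overline{k}m,n)=0)\vee(\forall^{\st}r^{0})(g(\overline{l}r,n)=0):
\end{equation*}
in the case $\Phi_{1}^{\st}(n)$ pick $k$ from the witness and set $l:=\lambda x.0$, and dually in the case $\neg\Phi_{2}^{\st}(n)$. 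Since $f,g$ and all occurring $k,l,n$ are standard, $\paai$ removes the inner $\forall^{\st}$ quantifiers, leaving an internal matrix; applying $\HAC_{\INT}$ then yields a standard $F^{0\di(1\times 1)^{*}}$ such that for every standard $n$ some $(k,l)\in F(n)$ realizes
\begin{equation*}
(\forall m)(f(\overline{k}m,n)=0) \vee (\forall r)(g(\overline{l}r,n)=0).
\end{equation*}

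Second, I would extract from $\paai$ a standard ``Feferman'' functional $\xi^{2}$ deciding $\Sigma^{0}_{1}$-statements at standard parameters: the consequence $(\forall^{\st}p^{1})(\exists^{\st}j^{0})[p(j)=0 \vee (\forall m)(p(m)\neq 0)]$ of $\paai$ combined with $\HAC_{\INT}$ yields a standard $N^{1\di 0^{*}}$, and setting $\xi(p):=0$ when $(\exists j\in N(p))(p(j)=0)$ and $\xi(p):=1$ otherwise gives $(\forall^{\st}p^{1})[\xi(p)=0 \leftrightarrow (\exists j)(p(j)=0)]$. I would then define $h^{1}$ by $h(n):=0$ iff some $i<|F(n)|$ with first coordinate $k_{i}$ of $F(n)(i)$ satisfies $\xi(p_{k_{i},n})=1$, where $p_{k,n}(m):=1$ if $f(\overline{k}m,n)=0$ and $p_{k,n}(m):=0$ otherwise; set $h(n):=1$ in the remaining case. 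The recipe is primitive recursive in the standard data $F,\xi,f$, so $h$ is standard by the closure axioms in $\T^{*}_{\st}$.

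Verification on standard $n$ is by cases. If $\Phi_{1}^{\st}(n)$ holds then so does $\Phi_{2}^{\st}(n)$, so for the pair $(k_{i},l_{i})\in F(n)$ realizing the disjunction, the standardness of $l_{i}$ together with $\Phi_{2}^{\st}(n)$ rules out the second disjunct and forces the first; $\xi$ then correctly reports $\xi(p_{k_{i},n})=1$ on its standard input and $h(n)=0$. Conversely, if $\neg\Phi_{1}^{\st}(n)$ then the first disjunct fails for every pair in $F(n)$, because a standard $k_{i}$ realizing it would, via $\paai$, witness $\Phi_{1}^{\st}(n)$; hence $\xi$ detects a counterexample for every $i$ and $h(n)=1$. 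The principal obstacle throughout is this standardness book-keeping: one must consistently invoke $\T^{*}_{\st}$ to propagate standardness from $F$ and $n$ to the coordinates of $F(n)$, and thence to every input eventually fed to $\xi$, so that $\xi$'s correctness on standard inputs can be legitimately applied in the definition of $h$.
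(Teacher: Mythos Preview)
Your proof is correct and follows essentially the same approach as the paper's sketch: both extract a standard Feferman-type functional from $\paai$ via $\HAC_{\INT}$ (your $\xi$ is the paper's $(\mu^{2})^{\st}$), and both apply $\HAC_{\INT}$ to the ``reverse implication'' of the hypothesis---your explicit case analysis yielding the disjunction $A(k,n)\vee\neg B(l,n)$ is exactly that implication unpacked. The only difference is the order of steps and the level of detail; your careful standardness bookkeeping fills in what the paper's sketch leaves implicit.
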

\begin{proof}
We only provide a sketch of the proof.  
First of all, we can obtain $(\mu^{2})^{\st}$ from $\paai$ by applying $\HAC_{\INT}$ to \eqref{trakke}.  
Now use this standard version of Feferman's mu to remove the type zero quantifiers (with variables $m^{0}, r^{0}$) in the equivalence from the antecedent of \eqref{noorjega2}.  
Consider the reverse implication of the resulting formula and apply $\HAC_{\INT}$.  The resulting functional, combined with $(\mu^{2})^{\st}$, now readily yields the function $h$ from the consequent of \eqref{noorjega2}.  
\end{proof}
Thirdly, $\paai$ does not really provide anything beyond the hyperarithmetical, which is suggested by the following result.  
\begin{thm}\label{weasy}
Assuming it is consistent, $\P+\paai$ does not prove $\ATR^{\st}_{0}$.
\end{thm}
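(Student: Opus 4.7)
The plan is to follow the template established in the proof of Theorem~\ref{nogwel}: assume for contradiction $\P+\paai\vdash \ATR_0^{\st}$, convert this implication into a normal form, apply the term extraction corollary (Corollary~\ref{consresultcor2}), and derive a contradiction with the (non)computability of $\ATR_0$-realizers from $\mu^2$ alone.

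First, I would replace $\paai$ by its normal form \eqref{trakke}. Exactly as in the passage from \eqref{jaj} through \eqref{jaj4}, one strengthens the hypothesis $\paai\to \ATR_0^{\st}$ to obtain
\[
\P \vdash (\forall^{\st}\mu^{2})\big[ \MU(\mu) \to \ATR_0^{\st}\big],
\]
where $\MU(\mu)$ is the internal normal form for Feferman's search operator. In particular, for each fixed arithmetical $\theta$, $\P$ proves $(\forall^{\st}\mu^{2})[\MU(\mu) \to \ATR_\theta^{\st}]$.

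Second, I would bring $\ATR_\theta^{\st}$ into a normal form of the shape $(\forall^{\st}\underline{x})(\exists^{\st}\underline{y})\varphi(\underline{x},\underline{y},\underline{a})$ with $\varphi$ internal. The subtlety here is that the antecedent $\WO^{\st}(X)$ is externally $\Pi^1_1$, so the direct analysis used in Theorem~\ref{lapdog} does not apply verbatim. The cleanest route is to work with the equivalent formulation of $\ATR_0$ as $\Sigma^{1}_{1}$-separation (see \cite{simpson2}*{V.5}), whose ``relative to st'' version reads
\[
(\forall^{\st}f^{1},g^{1})\big[\neg(\exists n,m)(f(n)=0\wedge g(m)=0) \to (\exists^{\st}Z^{1})(\ldots)\big],
\]
with internal matrix after handling the disjointness premise via its Herbrandization. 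Applying $\HAC_\INT$ and idealization as in Theorem~\ref{lapdog} produces the required normal form.

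Third, Corollary~\ref{consresultcor2} then yields a term $t$ of G\"odel's $T$ such that $\RCAo$ proves
\[
(\forall \mu^{2})\big[\MU(\mu)\to t(\mu,\cdot)\text{ realizes }\Sigma^{1}_{1}\text{-separation}\big].
\]
However, any functional defined by a term of G\"odel's $T$ in the parameter $\mu^{2}$ (equivalently $\exists^{2}$) is hyperarithmetical in its arguments, by Kleene's classical result referenced in the paragraph preceding Theorem~\ref{umdidi}. Consequently the $\omega$-model $\HYP$ of hyperarithmetical sets would satisfy $\Sigma^{1}_{1}$-separation, contradicting the classical fact that $\HYP$ is a model of $\Delta^{1}_{1}\text{-}\textup{\textsf{CA}}_{0}$ but \emph{not} of $\ATR_{0}$ (e.g.\ $\HYP$ is not a $\beta$-model; cf.\ \cite{simpson2}*{VIII.3}).

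The main obstacle will be the second step: carefully extracting a normal form for $\ATR_\theta^{\st}$ despite the $\Pi^{1}_{1}$-character of $\WO$. Reformulating $\ATR_0$ via $\Sigma^{1}_{1}$-separation circumvents this cleanly, but one must verify that the standard-vs-internal interplay in this reformulation survives the idealization/$\HAC_\INT$ massaging used to extract terms. Once this is in place, the contradiction via hyperarithmetical unrealizability of $\ATR_0$ is immediate and, pleasingly, parallels the role played by Corollary~\ref{import2} in the proof of Theorem~\ref{nogwel}.
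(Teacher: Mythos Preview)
Your high-level strategy---normal form, term extraction, then a contradiction with the weakness of $\exists^2$---matches the paper's. But your step~2 takes a detour that is both unnecessary and technically problematic, and you miss the simple trick the paper actually uses.

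The paper works directly with $\ATR_{\theta_0}$ for the single arithmetical $\theta_0$ encoding the Turing jump. The key observation you missed is that one does \emph{not} need to find a normal form for the external formula $[\WO(X)]^{\st}$ at all. Using $\paai$ (and the fact that $H_\theta$ is arithmetical), one has $[H_{\theta_0}(X,Y)]^{\st}\leftrightarrow H_{\theta_0}(X,Y)$ and $\WO(X)\to[\WO(X)]^{\st}$ for standard $X,Y$. Hence the assumed $(\forall^{\st}X)\big[[\WO(X)]^{\st}\to(\exists^{\st}Y)[H_{\theta_0}(X,Y)]^{\st}\big]$ yields the \emph{weaker} implication $(\forall^{\st}X)\big[\WO(X)\to(\exists^{\st}Y)H_{\theta_0}(X,Y)\big]$, whose matrix is now \emph{fully internal}. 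This is already a normal form; term extraction immediately gives a term $t$ such that $\EPA^{\omega*}+(\mu^2)$ proves the $\Pi_2^1$-sentence $(\forall X)[\WO(X)\to(\exists Y)H_{\theta_0}(X,Y)]$. The contradiction is then obtained via the $\Pi_2^1$-conservativity of $\EPA^{\omega}+\QFAC^{1,0}+(\mu^2)$ over $\ACA_0$ (\cite{yamayamaharehare}*{Theorem~2.2}), since this sentence entails the existence of the $\omega$-th Turing jump; the $\HYP$ argument you sketch is mentioned only as an alternative.

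Your route through $[\Sigma_1^1\mSEP]^{\st}$ runs into genuine trouble. First, the displayed disjointness premise $\neg(\exists n,m)(f(n)=0\wedge g(m)=0)$ is that of $\Sigma_0^1$-separation, not $\Sigma_1^1$; the actual $\Sigma_1^1$ disjointness is $(\forall n)\neg(\varphi_1(n)\wedge\varphi_2(n))$ with $\varphi_i$ genuinely $\Sigma_1^1$. Second, the separation \emph{conclusion} $(\exists^{\st}Z)(\forall^{\st}n)(\varphi_1^{\st}(n)\to n\in Z\wedge\varphi_2^{\st}(n)\to n\notin Z)$ contains externally-$\Sigma_1^1$ subformulas inside the $(\exists^{\st}Z)(\forall^{\st}n)$ block, so it does not normalize by the routine $\HAC_{\INT}$/idealisation manipulations you invoke; one is forced back to the same internalisation-via-$\paai$ trick anyway. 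Third, even if you succeeded, the resulting internal sentence is not $\Pi_2^1$ (the separation conclusion is $\Sigma_2^1$), so the conservation step would need a different justification. The paper's direct approach sidesteps all of this in two lines.
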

\begin{proof}
Suppose $\P+\paai$ does prove $\ATR^{\st}_{0}$.  We shall focus on the latter for the special case $\theta_{0}(n, Y)$ expressing that $n^{0}$ is an element of the Turing jump of $Y^{1}$.     
Hence, $\P+\paai$ proves
\be\label{prilleke}
(\forall^{\st}X^{1})\big[[\WO(X)]^{\st}\di (\exists^{\st} Y^{1})[H_{\theta_{0}}(X, Y)]^{\st}  \big].
\ee
As noted in \cite{simpson2}*{V.2.2}, $H_{\theta}$ is arithmetical if $\theta$ is.  Hence, $[H_{\theta_{0}}(X, Y)]^{\st} \asa H_{\theta_{0}}(X, Y)$ for standard $X, Y$ thanks to $\paai$.  
Similarly, $\WO(X)\di [\WO(X)]^{\st}$ for standard $X$ using $\paai$, and \eqref{prilleke} thus implies
\be\label{prilleke2}
(\forall^{\st}X^{1})\big[\WO(X)\di (\exists^{\st} Y^{1})H_{\theta_{0}}(X, Y)  \big], 
\ee
where the only `st' inside the square brackets is with the $Y$-quantifier.  
Clearly, \eqref{prilleke2} has a normal form and applying Theorem \ref{consresultcor} to $\P\vdash [\paai\di \eqref{prilleke2}]$, we obtain a term $t$ such that $\textsf{E-PA}^{\omega*}$ proves 
\be\label{dirkske2}
(\forall \mu^{2})\Big[\MU(\mu)\di (\forall X^{1})\big[\WO(X)\di (\exists  Y^{1}\in t(X, \mu))H_{\theta_{0}}(X, Y)  \big]\Big].  
\ee
We now derive a contradiction from \eqref{dirkske2}:  By the latter, $\EPA^{\omega*}+(\mu^{2})$ proves
\be\label{ieteninks}
(\forall X^{1})\big[\WO(X)\di (\exists  Y^{1})H_{\theta_{0}}(X, Y)  \big],
\ee
which is equivalent to a $\Pi_{2}^{1}$-formula since $\WO(X)$ is $\Pi_{1}^{1}$ and the consequent of \eqref{ieteninks} is $\Sigma_{1}^{1}$.   
However, the conservation result in \cite{yamayamaharehare}*{Theorem 2.2} implies that $\ACA_{0}$ and $\textsf{E-PA}^{\omega}+ \QFAC^{1,0}+{(\mu^{2})}$ prove the same $\Pi^{1}_{2}$-formulas.
But \eqref{ieteninks} implies the existence of the $\omega$-th Turing jump, which is not provable in $\ACA_{0}$ by \cite{simpson2}*{I.11.2}, a contradiction.  Alternatively, since $\textsf{HYP}$, the model consisting of all hyperarithmetical sets (see e.g.\ \cite{simpson2}*{V} for details on this model), is a model of $\ACA_{0}$, \eqref{ieteninks} holds in $\textsf{HYP}$, which is impossible as shown in the proof of \cite{simpson2}*{V.2.6}.    
\end{proof}
Clearly, the previous proof also goes through for any $\Pi_{2}^{1}$-formula not provable in $\ACA_{0}$ (instead of $\ATR_{0}$).  
Next, we prove one of the main theorems of this section.  
\begin{thm}\label{rejeji}
Given its consistency, $\P+\paai+\LMP$ cannot prove $\ATR_{0}^{\st}$.
\end{thm}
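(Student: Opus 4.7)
The plan is to mimic the proof of Theorem \ref{weasy} but with $\LMP$ absorbed through its normal form from Theorem \ref{lapdoc} and neutralised by the instance $\Lambda_{1}$ furnished by Theorem \ref{import24}. Suppose for contradiction that $\P+\paai+\LMP\vdash\ATR_{0}^{\st}$. Specialising to $\theta_{0}(n,Y) \equiv$ ``$n$ lies in the Turing jump of $Y$'' and using $\paai$ to strip the inner ``st'' from the arithmetical formulas $\WO(X)$ and $H_{\theta_{0}}(X,Y)$ for standard parameters, exactly as in the derivation of \eqref{prilleke2}, one obtains the normal form $(\forall^{\st}X^{1})(\exists^{\st}Y^{1})[\WO(X)\di H_{\theta_{0}}(X,Y)]$ as a consequence of $\P+\paai+\LMP$.

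Next I would put the whole implication into normal form. The axiom $\paai$ has normal form $(\forall^{\st}f^{1})(\exists^{\st}n^{0})A(f,n)$ with $A$ internal, and Theorem \ref{lapdoc} gives $\LMP$ the normal form $(\forall^{\st}g^{2},k^{0})(\exists^{\st}w^{1^{*}},n^{0})B(g,k,w,n)$, with $B$ internal and such that any witnessing functional $H^{2\di 1^{*}\times 0}$ satisfies $\WCF(H)$. Following the antecedent-strengthening manoeuvre used in the proof of Theorem \ref{nogwel} (posit universal witnessing functionals $h$ and $H$, replace $\forall^{\st}$ by $\forall$ in the internal parts, then pull the remaining standard quantifiers out), the statement $\P+\paai+\LMP\vdash \ATR_{\theta_{0}}^{\st}$ rewrites as a provable implication in $\P$ in normal form. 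Applying Corollary \ref{consresultcor2} then yields a term $t$ of G\"odel's $T$ (with the finite-sequence extension) such that $\EPA^{\omega*}$ proves
\[
(\forall \mu^{2},\Lambda^{3},X^{1})\bigl[\MU(\mu)\wedge\WCF(\Lambda)\wedge\WO(X)\di (\exists Y\in t(\mu,\Lambda,X))\,H_{\theta_{0}}(X,Y)\bigr].
\]

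Now I instantiate $\mu=\exists^{2}$ and $\Lambda=\Lambda_{1}$, the specific weak fan functional from Theorem \ref{import24} (and the construction promised in Section \ref{appendixvandenieuwmoer}), which by design preserves the class of hyperarithmetical functions when combined with $\exists^{2}$. Since $t$ is a term of G\"odel's $T$, each call made during its evaluation on hyperarithmetical $X$ lies in the closure of $\{\exists^{2},\Lambda_{1},X\}$ under computations realised by such terms, hence remains hyperarithmetical. Consequently the finite sequence $t(\exists^{2},\Lambda_{1},X)$ is hyperarithmetical, and therefore the model $\HYP$ of all hyperarithmetical sets satisfies $(\forall X^{1})[\WO(X)\di (\exists Y^{1})H_{\theta_{0}}(X,Y)]$. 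This contradicts \cite{simpson2}*{V.2.6}, which shows that $\HYP$ is not a model of $\ATR_{0}$ already for the Turing-jump instance of $\ATR_{\theta}$.

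The main obstacle I anticipate is the bookkeeping for the normal-form strengthening: one must check that the witnessing functional $H$ extracted for the normal form of $\LMP$ coincides (up to the minor reshuffling discussed in Remark \ref{simply}) with a genuine $\Lambda$ satisfying $\WCF(\Lambda)$, so that the hypothesis of the extracted term is exactly $\MU(\mu)\wedge\WCF(\Lambda)$ and nothing stronger. Once this is verified, the computability content of Theorem \ref{import24} does the rest of the work, and no genuinely new combinatorial argument beyond the $\HYP$-transfer used in Theorem \ref{weasy} is needed.
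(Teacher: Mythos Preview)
There is a genuine gap in your argument, and it is precisely the point at which the paper's proof departs from that of Theorem \ref{weasy}. Your extracted formula reads
\[
(\forall \mu^{2},\Lambda^{3},X^{1})\bigl[\MU(\mu)\wedge\WCF(\Lambda)\wedge\WO(X)\di (\exists Y\in t(\mu,\Lambda,X))\,H_{\theta_{0}}(X,Y)\bigr],
\]
with the \emph{full} internal predicate $\WO(X)$ in the antecedent. When you instantiate $\Lambda=\Lambda_{1}$ and take $X$ hyperarithmetical, you can only conclude: for hyperarithmetical $X$ that are \emph{genuine} well-orderings, there is a hyperarithmetical $Y$ with $H_{\theta_{0}}(X,Y)$. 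This is not a contradiction; it is in fact true, since a hyperarithmetical well-ordering has computable order type, and the iterated jump along it is hyperarithmetical. Your claimed conclusion that $\HYP\models(\forall X)[\WO(X)\di(\exists Y)H_{\theta_{0}}(X,Y)]$ does not follow, because in $\HYP$ the antecedent $\WO(X)$ is interpreted as ``no \emph{hyperarithmetical} descending chain'', which is strictly weaker than the real-world $\WO(X)$ appearing in your extracted implication. For a pseudo-well-ordering $X_{1}$ one has $\HYP\models\WO(X_{1})$ but $\neg\WO(X_{1})$ in reality, so your formula yields nothing for exactly the $X$ you need.

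The paper's fix is to unfold $\WO(X)$ as $(\forall g^{1})\WO(g,X)$ with $\WO(g,X)$ arithmetical, and to bring the standard universal quantifier on $g$ outside \emph{before} term extraction. One then obtains two terms $i,o$ with the weaker antecedent $(\forall g\in i(X,\mu,\Lambda))\WO(g,X)$. Instantiating with $\Lambda_{1}$ and a computable pseudo-well-ordering $X_{1}$, the list $i(X_{1},\mu,\Lambda_{1})$ is a finite sequence of hyperarithmetical functions, none of which can be a descending chain through $X_{1}$ (by definition of pseudo-well-ordering), so the antecedent \emph{is} satisfied while the consequent fails since no hyperarithmetical $Y$ satisfies $H_{\theta_{0}}(X_{1},Y)$. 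That Skolemisation of $\WO$ is the missing idea in your proposal. (As a minor point, the property of $\Lambda_{1}$ you need is Theorem \ref{import25}, not Theorem \ref{import24}.)
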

\begin{proof}
First of all, we sketch an interesting aspect of well-orderings relating to the model $\textsf{HYP}$.  
As shown in \cite{simpson2}*{VIII}, $\textsf{HYP}$ is not a model of $\ATR_{0}$.  In particular, $\theta_{0}$ from the proof of Theorem \ref{weasy} satisfies (see \cite{simpson2}*{V.2.6}): 
\be\label{hypje}
\textsf{\textup{HYP}}\models (\exists X_{0}^{1})\big[\WO(X)\wedge (\forall  Y^{1})\neg H_{\theta_{0}}(X, Y)  \big].
\ee
It is important to note that $X_{0}^{1}$ from \eqref{hypje} is not necessarily a well-ordering: As studied in \cite{harry}, there exist (Turing computable) \emph{pseudo-well-orderings} which have no \emph{hyperarithmetical} infinite descending sequences but which nonetheless do have \emph{non-hyperarithmetical} infinite descending sequences.  In colloquial terms, the model $\textsf{HYP}$ ` thinks' that a pseudo-well-ordering is a well-ordering, while it is not.  

\smallskip
Secondly, to accommodate the previous observation regarding these pseudo-well-oderings, a slight tweak is needed to the proof of Theorem \ref{weasy}, as follows: Let $\WO(g, X)$ be the (arithmetical) formula expressing that $g^{1}$ is not an infinite descending sequence through $X$, i.e.\ $(\forall g^{1})\WO(g, X)$ is just the familiar $\WO(X)$.  Using $\paai$, we observe that $[\WO(X)]^{\st}$ follows from $(\forall^{\st}g^{1})\WO(g,X^{1})$ for standard $X$ (and is actually equivalent). 
Now suppose $\P+\paai+\LMP$ does prove $\ATR^{\st}$ and obtain, like in the previous proof, that
\be\label{prilleke22}
(\forall^{\st}X^{1})\big[(\forall^{\st}g^{1})\WO(g,X)\di (\exists^{\st} Y^{1})H_{\theta_{0}}(X, Y)  \big].
\ee
Now bring outside all standard quantifiers in \eqref{prilleke22} to obtain the following:
\be\label{prilleke23}
(\forall^{\st}X^{1})(\exists^{\st}g^{1}, Y^{1})\big[\WO(g,X)\di H_{\theta_{0}}(X, Y)  \big].
\ee
Applying Theorem \ref{consresultcor} to `$\P+\LMP+\paai\vdash \eqref{prilleke23} $', we obtain terms $i, o$ such that \textsf{E-PA}$^{\omega*}$ 
(and hence also any extension, like $\ZFC$) proves that:
\begin{align}\label{dirkske3}
(\forall \mu^{2}, \Lambda^{3}, X^{1})&\big[[\MU(\mu)\wedge \WCF(\Lambda)] \\
&\di \big[(\forall g\in i(X, \mu, \Lambda))\WO(g,X)\di (\exists  Y^{1}\in o(X, \mu, \Lambda))H_{\theta_{0}}(X, Y)  \big].  \notag
\end{align}
Now, by Theorem \ref{import24}, there exists (provable in $\ZFC$) an instance $\Lambda_{1}$ of the weak fan functional which from a functional computable in $\exists^{2}$ produces hyperarithmetical functions in a uniform way (computable in $\exists^2$).  
Furthermore, the functions computable in $\exists^{2}$ (and thus Feferman's mu) are the \emph{hyperarithmetical} ones. 

\smallskip
Finally, fix some Turing computable pseudo-well-ordering $X_{1}$ (as introduced in the first part of this proof).  By the choice of inputs, $i(X_{1}, \mu, \Lambda_{1})$ and $o(X_{1}, \mu, \Lambda_{1})$ from \eqref{dirkske3} are both finite sequences of hyperarithmetical functions.  Hence, the correct $Y^{1}\in o(X_{1}, \mu, \Lambda_{1})$ from \eqref{dirkske3} is hyperarithmetical, while the antecedent $(\forall g\in i(X_{1}, \mu, \Lambda_{1}))\WO(g,X_{1})$ of \eqref{dirkske3} holds by the assumption that $X_{1}$ has no infinite descending sequences which are also hyperarithmetical.  
However, by \cite{simpson2}*{V.2.6 and VIII.3.23}, there is no hyperarithmetical $Y$ such that $H_{\theta_{0}}(X_1, Y)$.  
Hence, \eqref{dirkske3} yields a contradiction, 
 thanks to the existence of Turing computable pseudo-well-orderings and the weak fan functional $\Lambda_{1}$ from Theorem~\ref{import24}.
\end{proof}
Clearly, the previous proof also goes through for other sentences (than $\ATR$) false in the model $\HYP$.  As a result, the system from the theorem is consistent if $\ACA_{0}$ is, a rather mild assumption in the grand scheme of things.  
While $\WKL_{0}$ and $\WWKL_{0}$ are `rather close' in the sense of logical strength, we next prove that $\paai+\STP$ behaves 
very differently in that it does imply $\ATR_{0}^{\st}$.  

\subsection{Transfinite recursion and nonstandard compactness II}\label{clopen}
We prove the main positive result of this section, namely we obtain $\ATR_{0}^{\st}$ from $\paai+\STP$.  
This result should be contrasted with $\paai+\LMP$ and Theorem~\ref{rejeji} from the previous section, 
\begin{thm}\label{mikeh}
The system $\P_{0}+\paai+\STP$ proves $\ATR_{0}^{\st}$.
\end{thm}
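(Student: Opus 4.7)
The plan is to derive $\ATR^{\st}$ by combining the standard $\Delta_{1}^{1}$-comprehension given by $\paai$ (Theorem~\ref{umdidi}) with a standard realiser for the special fan functional extracted from $\STP$ (Theorem~\ref{lapdog}). Fix standard $X^{1}$ with $[\WO(X)]^{\st}$ and standard arithmetical $\theta$; the goal is to produce standard $Y^{1}$ with $H_{\theta}(X,Y)$.

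I would first apply $\HAC_{\INT}$, in the monotonised form of Remark~\ref{simply}, to the normal form \eqref{frukkklk} of $\STP$ and to the normal form \eqref{trakke} of $\paai$, yielding standard functionals $\Theta$ satisfying $\SCF(\Theta)$ and $\mu^{2}$ satisfying $\MU(\mu)$. Relative to $\mu^{2}$ and the standard $X$, I then build (internally, along the lines of the construction of $F$ in Theorem~\ref{refine}) a standard functional $F_{X}^{2}$ engineered so that any finite sequence $\langle f_{1},\dots,f_{k}\rangle$ of binary sequences with $\bigcup_{i\leq k}C_{\overline{f_{i}}F_{X}(f_{i})}=2^{\N}$ necessarily contains sufficient information to recover the iteration of the Turing jump along $X$ (which suffices for $\ATR_{0}$). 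Applying the extracted $\Theta$ to $F_{X}$ then yields a standard finite list $\Theta(F_{X})(2)$ covering $2^{\N}$, from which $\mu^{2}$ extracts a standard $Y$ with $H_{\theta}(X,Y)$; finally, $\paai$ converts this to $[H_{\theta}(X,Y)]^{\st}$ since $H_{\theta}$ is arithmetical and both $X,Y$ are standard.

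The main obstacle is to carry out the construction of $F_{X}$ so that \emph{every} $\Theta$ satisfying $\SCF(\Theta)$ forces the desired conclusion, not merely the particular $\Theta$ built in the proof of Theorem~\ref{import3}. Indeed, the conjecture at the end of Section~\ref{thetatje} cautions that it is not known whether every $\Theta$ computes the hyper-jump in combination with $\exists^{2}$. The robustness required here should come from designing $F_{X}$ so that its `hyperarithmetical non-cover' set $X_{F_{X}}\subseteq 2^{\N}$, analogous to the set $X$ produced in Theorem~\ref{refine}, is nonempty and carries the entire iteration along its leftmost element; any finite cover of $2^{\N}$ by $F_{X}$-neighbourhoods must then intrude on $X_{F_{X}}$, exposing the iteration regardless of \emph{how} the cover was produced, thereby making the argument insensitive to the particular choice of $\Theta$.
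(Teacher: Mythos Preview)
Your proposal has a genuine gap at precisely the point you flag as the ``main obstacle'': you never construct $F_X$, and the property you demand of it --- that \emph{every} finite $F_X$-cover of $2^{\N}$ exposes the full iteration along $X$ --- is essentially the open conjecture stated at the end of Section~\ref{thetatje} (that every $\Theta$ together with $\exists^2$ computes the hyper-jump). The paper does not know how to prove this directly either; the analysis in Section~\ref{thetatje} only handles the \emph{particular} $\Theta$ from the proof of Theorem~\ref{import3}. So your argument is circular in spirit: it assumes as an ingredient something that the paper only obtains as a \emph{consequence} of Theorem~\ref{mikeh} (namely Corollaries~\ref{fokkers} and~\ref{cor.alt.6.8}). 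There is also a formalisation worry: the construction of $F$ in Theorem~\ref{refine} and its analysis in Section~\ref{thetatje} use Gandy selection and transfinite arguments below $\omega_1^{\textup{CK}}$ that are carried out in $\ZFC$, not in $\P_0+\paai+\STP$, and you give no indication how to internalise them. Finally, $[\WO(X)]^{\st}$ only rules out \emph{standard} descending sequences, so internally $X$ may be a pseudo-well-ordering; your sketch does not explain why the iteration you hope to recover should exist at all in that case.

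The paper's proof is entirely different and sidesteps all of this. It does not build any $F_X$ or invoke $\Theta$ at all; instead it proves $[\Sigma_1^1\mSEP]^{\st}$ directly inside $\P_0+\paai+\STP$ (using that $\ATR_0\leftrightarrow\Sigma_1^1\mSEP$ over $\RCA_0$). The key move is to apply \emph{Idealisation} to obtain a nonstandard finite sequence $w^{1^*}$ that contains \emph{every} standard function and on which the disjunction in the separation hypothesis holds uniformly; one then \emph{defines} a (nonstandard) separating set $Z_0$ by a bounded search through $w$, applies $\STP$ to obtain a standard $Z\approx_1 Z_0$, and uses $\paai$ only to verify that $Z$ separates. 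The universality over $\Theta$ that you were trying to engineer by hand then falls out automatically from term extraction (Corollary~\ref{fokkers}).
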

\begin{proof}
As shown in \cite{simpson2}*{V.5.1}, $\RCA_{0}$ proves that $\ATR_{0}$ is equivalent to $\Sigma_{1}^{1}\mSEP$; the latter is defined as: For $\varphi_{1}, \varphi_{2}\in \Sigma_{1}^{1}$ not involving the variable $Z^{1}$, we have
\be\label{heffer}
(\forall n^{0})(\neg\varphi_{1}(n)\vee \neg\varphi_{2}(n))\di (\exists Z^{1})(\forall n^{0})\big(\varphi_{1}(n)\di n\in Z\wedge \varphi_{2}(n)\di n\not\in Z \big).
\ee
We shall prove $[\Sigma_{1}^{1}\mSEP]^{\st}$ in $\P_{0}+\paai+\STP$.  Since $\P_{0}$ proves the axioms of $\RCA_{0}$ relative to `st', we therefore obtain $\ATR_{0}^{\st}$.  
Now let $\varphi_{i}(n)$ be short for the formula $(\exists g^{1}_{i})(\forall x_{i}^{0})(f_{i}(\overline{g_{i}}x_{i}, n)=0)$ and fix standard $f_{i}^{1}$ for $i=1,2$.  Then assume $\big[(\forall n^{0})(\neg\varphi_{1}(n)\vee \neg\varphi_{2}(n))\big]^{\st}$, which is the formula
\[
(\forall^{\st}n^{0})
\big[ 
(\forall^{\st} g^{1}_{1})(\exists^{\st} x_{1}^{0})(f_{1}(\overline{g_{1}}x_{1}, n)\ne 0) 
\vee  
(\forall^{\st} g^{1}_{2})(\exists^{\st} x_{2}^{0})(f_{2}(\overline{g_{2}}x_{2}, n)\ne 0)
\big].
\]
For fixed nonstandard $N^{0}$, the previous formula implies (without using $\paai$):
\be\label{lahiel}
(\forall^{\st}n^{0}, g_{1}^{1}, g_{2}^{1})
\big[ 
(\exists  x_{1}^{0}\leq N)(f_{1}(\overline{g_{1}}x_{1}, n)\ne 0) 
\vee  
(\exists  x_{2}^{0}\leq N)(f_{2}(\overline{g_{2}}x_{2}, n)\ne 0)
\big].
\ee
Let $A_{i}(n, g_{i})$ be the (equivalent to quantifier-free) formula $(\exists  x_{i}^{0}\leq N)(f_{i}(\overline{g_{i}}x_{i}, n)\ne 0) $ and let $A(n, g_{1}, g_{2})$ be the formula $A_{1}(n, g_{1})\vee A_{2}(n, g_{2})$, i.e.\ the formula in square brackets in \eqref{lahiel}.  By assumption, $(\forall^{\st}n^{0}, g_{1}^{1}, g_{2}^{1})A(n,g_{1},g_{2})$.  
Now consider:
\begin{align}\label{ideaal}
(\forall^{\st} v^{1^{*}}, x^{0^{*}})(\exists & w^{1^{*}}, y^{0^{*}})(\forall g^{1} \in v, n^{0}\in x)\\
&\big[ g\in w \wedge n\in y \wedge (\forall h_{1}, h_{2}\in w, m\in y)A(m,h_{1}, h_{2}) \big],\notag
\end{align}
which holds by taking $w=v$, $y=x$.  Applying \emph{Idealisation} \textsf{I} to \eqref{ideaal}, we obtain  
\be\label{forgik}
(\exists w^{1^{*}}, y^{0^{*}})(\forall^{\st} g^{1}, n^{0})\big[ g\in w \wedge n\in y \wedge (\forall h_{1}, h_{2}\in w, m\in y)A(m,h_{1}, h_{2}) \big], 
\ee
which -intuitively speaking- provides two sequences $w, y$ (of nonstandard length) encompassing all standard functions and standard numbers and such that all of its elements satisfy $A$.  
In particular, one can view \eqref{forgik} as obtained by applying overspill to \eqref{lahiel} while making sure all standard functions are in $w$.  

\smallskip
Next, define the set $Z_{0}^{1}$ (actually a binary sequence) as follows:  $n\in Z_{0}\asa (\exists g_{1}\in w)\neg A_{1}(n,g)$, where $w$ is the sequence from \eqref{forgik}.  
Note that the right-hand side of the equivalence is actually `$(\exists i^{0}<|w|)\neg A_{1}(n, w(i))$', i.e.\ $Z_{0}$ is definable in $\P_{0}$.    

\smallskip
Let $Z^{1}$ be a standard set such that $Z_{0}\approx_{1} Z$  as provided by $\STP$.  Furthermore, $\paai$ establishes the following implications (for standard $n$):
\begin{align*}
(\exists^{\st} g^{1}_{1})(\forall^{\st} x_{1}^{0})(f_{1}(\overline{g_{1}}x_{1}, n)=0) 
&\di (\exists^{\st} g^{1}_{1})(\forall  x_{1}^{0}\leq N)(f_{1}(\overline{g_{1}}x_{1}, n)=0)\\
&\di (\exists g^{1}_{1}\in w)(\forall  x_{1}^{0}\leq N)(f_{1}(\overline{g_{1}}x_{1}, n)=0)\\
&\di (\exists g^{1}_{1}\in w)\neg A_{1}(n,g_{1})\di n\in Z_{0}\di n\in Z.
\end{align*}
Note that $\paai$ is (only) necessary to establish the first implication.  Now, since $y$ from \eqref{forgik} contains all standard numbers, the second conjunct of \eqref{forgik} implies (by definition)
that for standard $m$ (by the definition of $A$): 
\be\label{fronxi}
(\forall h_{1}\in w)A_{1}(m,h_{1})\vee (\forall h_{2}\in w)A_{2}(m,h_{2}).
\ee
Similarly, consider the following series of implications (for standard $n$):
\begin{align}
(\exists^{\st} g^{1}_{2})(\forall^{\st} x_{2}^{0})(f_{2}(\overline{g_{2}}x_{2}, n)=0) 
&\di (\exists^{\st} g^{1}_{2})(\forall  x_{2}^{0}\leq N)(f_{2}(\overline{g_{2}}x_{2}, n)=0)\notag\\
&\di (\exists g^{1}_{2}\in w)(\forall  x_{2}^{0}\leq N)(f_{2}(\overline{g_{2}}x_{2}, n)=0)\notag\\
&\di (\exists g^{1}_{2}\in w)\neg A_{2}(n,g_{2})\label{hoi2}\\
&\di (\forall g^{1}_{1}\in w) A_{1}(n,g_{1})\label{hoi1}
\di n\not\in Z_{0}\di n\not\in Z.
\end{align}
Note that $\paai$ is (only) necessary to establish the first implication, while \eqref{hoi1} follows from \eqref{hoi2} by \eqref{fronxi}.  
Thus, we observe that $Z$ is as required for $\Sigma_{1}^{1}$-comprehension \eqref{heffer} relative to `st', and we are done.  
\end{proof}
Note that the previous proof makes essential use of $\STP$ to obtain $Z$ from $Z_{0}$ as $w$ from \eqref{forgik} is nonstandard, i.e.\ $\WKL^{\st}$ does not suffice.  
Furthermore, the previous proof seems to go through in the constructive system $\H$ from \cite{brie}, as well as in $\P_{0}$ without the axiom of extensionality \eqref{EXT}.   
We also note that the particular use of \emph{Idealisation} to obtain \eqref{forgik} from \eqref{ideaal} is inspired by \cite{benno2}.
We now discuss some more interesting aspects of the previous proof.
\begin{rem}[The power of Nonstandard Analysis]\rm
Comparing the previous proof to that of $\Sigma_{1}^{1}\mSEP$ in \cite{simpson2}*{V.5}, the proof in Nonstandard Analysis is much shorter and \emph{conceptually much simpler}.  
This may be explained as follows:
It is often said that `one can search through the naturals, but not through the reals (or Baire space)'.   The previous proof showcases a powerful feature of Nonstandard Analysis:  Thanks to the sequence $w$ from \eqref{forgik}, we \emph{can} search through the \emph{standard} reals (standard functions of Baire space) in a specific sense.  
Thanks to this `search' feature of Nonstandard Analysis, the previous proof is very similar\footnote{To prove that $\STP$ implies $[\Sigma_{1}^{0}\mSEP]^{\st}$, apply overflow (which is an instance of \emph{Idealisation}) to $[(\forall n^{0})(\neg\varphi_{1}(n)\vee \neg\varphi_{2}(n))]^{\st}$ for $\varphi_{i}(n)\equiv (\exists n^{0}_{i})(f_{i}(n, n_{i})=0)$, 
and define the set $Z_{0}$ by $n\in Z_{0}\asa (\exists n^{0}_{1}\leq N_{0})f_{1}(n, n_{1})=0$ where $N_{0}$ is the number obtained by overflow.  Applying $\STP$ to $Z_{0}$ finishes the proof.\label{klukkerke}} to that $\STP$ implies $[\Sigma_{1}^{0}\mSEP]^{\st}$ as in Footnote \ref{klukkerke}.  Hence, the similarities between $\WKL$ and $\ATR_{0}$, from \cite{simpson2}*{I.11.7}, also exist in Nonstandard Analysis.  Finally, we point out that by \cite{simpson2}*{V.5.1}, a single application of $\Sigma_{1}^{1}\mSEP$ provides the set $Y$ from $\ATR_{0}$.        
\end{rem}
We now discuss a number of interesting corollaries.  
\begin{cor}\label{fokkers}
There are terms $i, o$ of G\"odel's $T$ such that $\EPRA^{\omega*}$ proves 
\begin{align}\label{fogi}
(\forall \mu^{2}, \Theta^{3})&\big[[\MU(\mu)\wedge \SCF(\Theta)] \\
&\di (\forall X^{1})\big[(\forall g\in i(X, \mu, \Theta))\WO(g,X)\di (\exists  Y^{1}\in o(X, \mu, \Theta))H_{\theta_{0}}(X, Y)  \big].  \notag
\end{align}
where $\theta_{0}(n, Z)$ expresses that $n^{0}$ is a member of the Turing jump of $Z^{1}$.   
\end{cor}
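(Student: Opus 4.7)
The plan is to derive \eqref{fogi} as a direct term-extraction consequence of Theorem~\ref{mikeh}, following the template laid down in the proof of Theorem~\ref{rejeji}. First, I would specialize the conclusion $\ATR_0^{\st}$ of Theorem~\ref{mikeh} to the arithmetical instance $\theta_0$ (Turing jump). Since $H_{\theta_0}$ is arithmetical (cf.\ \cite{simpson2}*{V.2.2}), $\paai$ yields $[H_{\theta_0}(X,Y)]^{\st}\asa H_{\theta_0}(X,Y)$ for standard $X,Y$, and likewise, using the same trick as in the proof of Theorem~\ref{rejeji}, $[\WO(X)]^{\st}\asa (\forall^{\st}g^{1})\WO(g,X)$ for standard $X$, where $\WO(g,X)$ is the arithmetical statement that $g$ is not an infinite descending sequence through $X$. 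Hence $\P_0+\paai+\STP$ proves
\begin{equation*}
(\forall^{\st}X^{1})\big[(\forall^{\st}g^{1})\WO(g,X)\di (\exists^{\st}Y^{1})H_{\theta_0}(X,Y)\big].
\end{equation*}

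Next, I would bring out the standard quantifiers to obtain the normal form
\begin{equation*}
(\forall^{\st}X^{1})(\exists^{\st}g^{1},Y^{1})\big[\WO(g,X)\di H_{\theta_0}(X,Y)\big].
\end{equation*}
Now the whole point is that $\paai$ and $\STP$ are themselves of the shape to which term extraction applies: $\paai$ has normal form \eqref{trakke} whose witness is exactly Feferman's $\mu^{2}$, and by Theorem~\ref{lapdog}, $\STP$ is equivalent to \eqref{frukkklk}, whose witness is a special fan functional $\Theta$. Thus applying Corollary~\ref{consresultcor2} to the above provable normal form — exactly as done in the proof of Theorem~\ref{nogwel} to `absorb' $\paai$ into $\mu^{2}$, and as done in the proof of Theorem~\ref{rejeji} to `absorb' $\LMP$ into $\Lambda$, but now absorbing $\STP$ into $\Theta$ — yields terms $i,o$ of G\"odel's $T$ such that $\EPRA^{\omega*}$ proves
\begin{equation*}
(\forall \mu^{2},\Theta^{3})\big[[\MU(\mu)\wedge\SCF(\Theta)]\di(\forall X^{1})\big[(\forall g\in i(X,\mu,\Theta))\WO(g,X)\di(\exists Y^{1}\in o(X,\mu,\Theta))H_{\theta_0}(X,Y)\big]\big],
\end{equation*}
which is \eqref{fogi}.

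The proof is essentially a mechanical application of the term-extraction machinery to an already-proved theorem, so no genuinely new ideas are required. The mildly delicate step will be the bookkeeping for the normal forms: one must verify that the proof of Theorem~\ref{mikeh} is indeed carried out at the level of $\P_0$ (so that the weaker base theory $\EPRA^{\omega*}$ suffices under Corollary~\ref{consresultcor2}), and that the uses of $\HAC_{\INT}$ and idealisation \textsf{I} in that proof (in particular at \eqref{ideaal}--\eqref{forgik}) do not introduce parameters outside of $X,\mu,\Theta$. Both points are routine: the construction in Theorem~\ref{mikeh} uses $\paai$ only to witness hyperarithmetical comprehension (absorbed into $\mu$) and $\STP$ only once, on the canonical set $Z_0$ (absorbed into $\Theta$). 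The outcome is the desired corollary, which in turn will be used in Section~\ref{hakkenoverdesloot} to give the alternative proof of Theorem~\ref{firstamen} via Theorem~\ref{import24}.
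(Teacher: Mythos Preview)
Your proposal is correct and follows essentially the same approach as the paper, which simply states that the corollary is ``immediate following the proof of Theorem~\ref{rejeji}''; you have spelled out exactly those details (specialising to $\theta_0$, passing to the normal form \eqref{prilleke23}, and applying term extraction with $\paai$ absorbed into $\mu$ and $\STP$ absorbed into $\Theta$ via Theorem~\ref{lapdog}).
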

\begin{proof}
Immediate following the proof of Theorem \ref{rejeji}.
\end{proof}
The following corollary has the advantage that it `directly' establishes that $\paai\not\di \STP$, but the disadvantage is that it does not generalise to $\Paai$.  
\begin{cor}\label{dagdag}
The system $\EPRA^{\omega*}+(\mu^{2})+(\exists \Theta)\SCF(\Theta)$ proves $\ATR_{0}$.\\
Assuming the system is consistent, $\P+\paai+\LMP$ cannot prove $\STP$.  
\end{cor}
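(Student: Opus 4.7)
The plan is to deduce both claims cleanly from results already on the table, namely Corollary \ref{fokkers} for the first part and the combination of Theorems \ref{mikeh} and \ref{rejeji} for the second part. There is no substantive new mathematics to do; the statement is, in effect, the ``payoff'' of Section \ref{clopen} once one notes what the term-extracted realisers actually give.

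For the first assertion, I would start with the terms $i,o$ of G\"odel's $T$ produced in \eqref{fogi} of Corollary \ref{fokkers}. Fix any $\mu^{2},\Theta^{3}$ with $\MU(\mu)$ and $\SCF(\Theta)$, and fix any $X^{1}$ with $\WO(X)$ (in the ordinary $\Pi^{1}_{1}$ sense). Because genuinely no infinite descending sequence through $X$ exists, the antecedent $(\forall g\in i(X,\mu,\Theta))\WO(g,X)$ of \eqref{fogi} holds vacuously, so \eqref{fogi} immediately yields $Y\in o(X,\mu,\Theta)$ satisfying $H_{\theta_{0}}(X,Y)$. Hence $\EPRA^{\omega*}+(\mu^{2})+(\exists\Theta)\SCF(\Theta)$ proves that the Turing jump can be iterated along every countable well-ordering. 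Since $(\mu^{2})$ delivers $\ACA_{0}$ over $\EPRA^{\omega*}$, and since $\RCA_{0}$ already proves the equivalence between the Turing-jump-iteration principle and the full arithmetical-$\theta$ schema defining $\ATR_{0}$ (cf. \cite{simpson2}*{V.5.1 and VIII.3}), the conclusion $\ATR_{0}$ follows.

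For the second assertion I would argue by contradiction: suppose $\P+\paai+\LMP\vdash \STP$. Then, as $\P\supseteq\P_{0}$, we have $\P+\paai+\LMP\vdash \paai+\STP$ working over $\P_{0}$, so Theorem \ref{mikeh} yields $\P+\paai+\LMP\vdash \ATR_{0}^{\st}$. This directly contradicts Theorem \ref{rejeji}, which asserts, under the hypothesised consistency, that $\P+\paai+\LMP\not\vdash \ATR_{0}^{\st}$. Hence the assumption fails, proving the non-implication.

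The only point that requires any care is the passage, in the first part, from the iteration of the Turing jump along arbitrary well-orderings to the full $\ATR_{0}$ schema for arbitrary arithmetical $\theta$; but this is entirely standard (second-order) Reverse Mathematics and goes through in $\ACA_{0}$, which is available since $(\mu^{2})$ is present. Beyond that bookkeeping, there is no real obstacle: the hard work (the extraction of $i,o$ in Corollary \ref{fokkers} and the non-derivability result in Theorem \ref{rejeji}) has already been completed, and the consistency proviso in the second statement is only needed to avoid the trivial case of an inconsistent theory proving everything.
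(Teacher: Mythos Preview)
Your proposal is correct and follows essentially the same route as the paper's proof: the paper simply says the first part is ``immediate from \eqref{fogi}'' and derives the second part by combining Theorem~\ref{mikeh} with Theorem~\ref{rejeji}, exactly as you do. You merely spell out two details the paper leaves implicit, namely why the antecedent of \eqref{fogi} is automatically satisfied when $\WO(X)$ holds outright, and how one passes from iteration of the Turing jump along well-orderings to the full $\ATR_{0}$ schema inside $\ACA_{0}$; both are unproblematic.
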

\begin{proof}
The first part is immediate from \eqref{fogi}.  For the second, part, if $\P+\paai+\LMP$ could prove $\STP$, then it would also prove $\ATR^{\st}$ by the theorem, but this impossible by Theorem \ref{rejeji}.    
\end{proof}
The following corollary proves results analogous to Theorem \ref{refine}; the latter is proved using computability theory while the former follows from Nonstandard Analysis.  
Both approaches have pros and cons: Theorem \ref{refine} requires a tricky construction which however does give rise to additional information, namely a $\Theta$-functional in which the hyper-jump is computable.  
The approach using Nonstandard Analysis avoids the tricky construction needed in the computability theoretic approach, but does not tell us anything about the hyper-jump.
\begin{cor}\label{cor.alt.6.8}
Let $\Theta$ be such that $\SCF(\Theta)$.
There is $G^{2}$ computable in $\exists^{2}$ such that $\Theta(G)$ is not hyperarithmetical.  
\end{cor}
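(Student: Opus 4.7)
The strategy is to argue by contradiction, using Corollary~\ref{fokkers} combined with the \emph{pseudo-well-ordering} trick already deployed in the proof of Theorem~\ref{rejeji}. Suppose, towards a contradiction, that $\Theta$ satisfies $\SCF(\Theta)$ yet $\Theta(g)$ is hyperarithmetical for every $g^{2}$ that is computable in $\exists^{2}$. Since the type-one functions computable in $\exists^{2}$ are exactly the hyperarithmetical ones, and since $\mu^{2}$ trivially preserves this class, a structural induction on terms of G\"odel's $T$ built from parameters among $\mu^{2}$, $\Theta$, and hyperarithmetical data shows that every such term evaluates to a hyperarithmetical object: variables and constants are controlled by hypothesis; primitive recursion and application preserve Kleene-computability in $\exists^{2}$; and at every application of $\Theta$ to a type-$2$ subterm, the induction hypothesis presents that subterm as computable in $\exists^{2}$, so our standing assumption on $\Theta$ applies.

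Now invoke Corollary~\ref{fokkers}: there are terms $i,o$ of G\"odel's $T$ such that $\EPRA^{\omega*}$ proves, for every $X^{1}$,
\begin{equation*}
(\forall g\in i(X,\mu^{2},\Theta))\,\WO(g,X)\;\di\;(\exists Y^{1}\in o(X,\mu^{2},\Theta))\,H_{\theta_{0}}(X,Y),
\end{equation*}
where $\theta_{0}(n,Z)$ expresses that $n$ is in the Turing jump of $Z$. Fix a Turing computable pseudo-well-ordering $X_{1}$ as in \cite{harry} and the proof of Theorem~\ref{rejeji}: $X_{1}$ admits no hyperarithmetical infinite descending sequence, yet (by \cite{simpson2}*{V.2.6 and VIII.3.23}) no hyperarithmetical $Y$ satisfies $H_{\theta_{0}}(X_{1},Y)$. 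By the preservation claim of the previous paragraph, both $i(X_{1},\mu^{2},\Theta)$ and $o(X_{1},\mu^{2},\Theta)$ are finite sequences of hyperarithmetical functions; so the antecedent $(\forall g\in i(X_{1},\mu^{2},\Theta))\WO(g,X_{1})$ holds vacuously, and the consequent then demands a hyperarithmetical $Y$ realising the jump along $X_{1}$, contradicting the choice of $X_{1}$.

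\textbf{Main obstacle.} The delicate step is making the inductive preservation claim fully rigorous. Because $\Theta$ has type $3$, the functionals fed to $\Theta$ during the evaluation of $i$ or $o$ are themselves type-$2$ subterms constructed from $X_{1},\mu^{2},\Theta$; one must verify that each such intermediate functional is Kleene-computable in $\exists^{2}$ so that our hypothesis on $\Theta$ is actually applicable. This follows from the fact that G\"odel's $T$, even with the higher-type parameters $\mu^{2}$ and $\Theta$, is built via explicit primitive recursion, lambda-abstraction, and application, so that Kleene indices relative to $\exists^{2}$ can be manufactured uniformly from the syntax; granting this, the induction goes through routinely and the contradiction above is secured.
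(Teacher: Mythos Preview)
Your overall strategy matches the paper's: assume $\Theta$ sends every $\exists^2$-computable input to a hyperarithmetical output, invoke Corollary~\ref{fokkers}, feed in a Turing computable pseudo-well-ordering $X_1$, and derive a contradiction. The delicate point, as you correctly isolate, is why $i(X_1,\mu,\Theta)$ and $o(X_1,\mu,\Theta)$ should be hyperarithmetical.

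Your structural induction, however, does not go through as stated. The contradiction hypothesis is \emph{non-uniform}: for each $g$ computable in $\exists^2$ the value $\Theta(g)$ is hyperarithmetical, but nothing says the restriction $g\mapsto\Theta(g)$ is partially computable in $\exists^2$. Hence a type-$2$ subterm such as $s:=\lambda f.\,\Theta(\lambda h.\,f(0)+h(0))(1)$ need not be $\exists^2$-computable: each individual value $s(f)$ is a natural number, yet the function $n\mapsto\Theta(\lambda h.\,n+h(0))(1)$ may well fail to be hyperarithmetical. As soon as one intermediate type-$2$ subterm escapes $\exists^2$-computability, the hypothesis on $\Theta$ no longer applies at the next nested call, and the induction collapses. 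Your claim that ``Kleene indices relative to $\exists^{2}$ can be manufactured uniformly from the syntax'' would require an $\exists^2$-index for $\Theta$ itself, which by Theorem~\ref{import} does not exist.

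The paper closes exactly this gap with a preliminary reduction via Gandy selection. From the assumption that every hyperarithmetical $g$ admits \emph{some} hyperarithmetical finite cover (namely $\Theta(g)(2)$), one searches, uniformly computably in $\exists^2$, for such a cover. This yields a new functional $\Theta_2$ with $\SCF(\Theta_2)$ whose restriction to hyperarithmetical inputs \emph{is} partially computable in $\exists^2$. Running \eqref{fogi} with $\Theta_2$ in place of $\Theta$, the entire evaluation of $i(X_1,\mu,\Theta_2)$ and $o(X_1,\mu,\Theta_2)$ stays within what is computable in $\exists^2$, hence hyperarithmetical, and your pseudo-well-ordering argument then finishes verbatim. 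The Gandy-selection step is the one genuinely missing idea in your proposal.
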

\begin{proof}
Suppose $\Theta_{1}$ satisfying $\SCF(\Theta_{1})$ is such that $\Theta_{1}(g)$ is hyperarithmetical for all $g^{2}$ computable in $\exists^{2}$.  
Without loss of generality we may assume that $\Theta_1$, restricted to the hyperarithmetical functions of type 2, is partially computable in $\exists^2$, by the following argument:
by assumption, for every hyperarithmetical $g^{2}$ there is hyperarithmetical $\langle f_1 , \ldots , f_k\rangle$ that yields an open covering of Cantor space via $g$. By Gandy selection, we may search for one such sequence uniformly computable in $\exists^2$. We may then construct $\Theta_2$ agreeing with $\Theta_1$ on non-hyperarithmetical inputs, and with the result of this search on hyperarithmetical input. We have $\SCF(\Theta_2)$ and $\Theta_2$ satisfies our extra assumption.

\smallskip
Applying \eqref{fogi} for a pseudo-well-ordering $X_{1}$ (as discussed in the proof of Theorem \ref{rejeji}), we obtain a contradiction as in the proof of Theorem \ref{rejeji}.  
Indeed, in this case, $i(X_{1}, \mu, \Theta_{1})$ and $o(X_{1}, \mu, \Theta_{1})$ are finite sequences of hyperarithmetical functions, and hence $(\forall g\in i(X, \mu, \Theta))\WO(g,X_{1})$ 
holds as $X_{1}$ is a pseudo-well-ordering.  But there is no hyperarithmetical $Y$ such that $H_{\theta_{0}}(X, Y) $, as discussed in the proof of Theorem \ref{rejeji}, i.e.\ \eqref{fogi} implies a contradiction.
\end{proof}
The previous corollary also follows from Theorem \ref{refine} and its corollary, but the previous proof is interesting in its own right. 

\smallskip

The following corollary strengthens the above results slightly.  Let $\con(S)$ be the usual $\Pi_{1}^{0}$-sentence expressing the consistency of the system $S$ (see e.g.\ \cite{simpson2}*{II.8.2}).
\begin{cor}\label{konk}
The systems $\P+\paai+\STP$ and  $\EPA^{\omega*}+(\mu^{2})+(\exists \Theta^{3})\SCF(\Theta)$ prove the consistency of $\ATR_{0}$, i.e.\ $\con(\ATR_{0})$.
\end{cor}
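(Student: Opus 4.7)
The plan is to strengthen the results behind Theorem \ref{mikeh} and Corollary \ref{dagdag} by exploiting the stronger induction available in both systems, then invoke the standard proof-theoretic fact that $\ATR$ (with full induction) proves $\con(\ATR_{0})$.

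For the higher-order system $\EPA^{\omega*}+(\mu^{2})+(\exists \Theta^{3})\SCF(\Theta)$, Corollary \ref{dagdag} delivers $\ATR_{0}$. Moreover, the base theory $\EPA^{\omega*}$ carries the induction axiom for every formula of the finite-type language, including formulas referring to the higher-order parameters $\mu$ and $\Theta$. Restricting this to second-order formulas (with $\mu$ and $\Theta$ absorbed as parameters) yields induction for every $L_{2}$-formula, i.e.\ the full scheme $\ATR$. Since $\ATR$ proves $\con(\ATR_{0})$ (by formalising an $\omega$-model of $\ATR_{0}$, a construction that needs the unrestricted induction), we obtain $\con(\ATR_{0})$ in this system.

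For the nonstandard system $\P+\paai+\STP$, Theorem \ref{mikeh} yields $[\ATR_{0}]^{\st}$, while $\P$ contains the external induction schema $\textsf{IA}^{\st}$, providing full induction for arbitrary (possibly external) formulas over the standard universe. Combining these gives $[\ATR]^{\st}$, and hence $[\con(\ATR_{0})]^{\st}$. Since $\con(\ATR_{0})$ is a $\Pi_{1}^{0}$-statement in a single number variable, applying $\paai$ to $(\forall^{\st}n)\neg\textup{Proof}_{\ATR_{0}}(n,\bot)$ yields $(\forall n)\neg\textup{Proof}_{\ATR_{0}}(n,\bot)$, which is precisely $\con(\ATR_{0})$. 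A second route is via Theorem \ref{consresultcor}: since $\con(\ATR_{0})$ is (essentially) quantifier-free, any nonstandard proof of $[\con(\ATR_{0})]^{\st}$ extracts directly to a higher-order proof of $\con(\ATR_{0})$, and the first assertion then reduces to the second.

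The main obstacle is justifying the step $\ATR\vdash \con(\ATR_{0})$. The cleanest route is to construct, inside $\ATR$ with the aid of full induction, a countable coded $\omega$-model of $\ATR_{0}$: for instance, the closure of the arithmetic universe under the $\ATR_{0}$-construction iterated along sufficiently long well-orderings, assembled using the full induction principle. Such a model witnesses $\con(\ATR_{0})$ via the arithmetised completeness theorem, which is already available in $\RCA_{0}$. Once this proof-theoretic lemma is in place, both halves of the corollary fall out cleanly from the preceding results.
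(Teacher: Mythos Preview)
Your proposal is correct and follows essentially the same strategy as the paper: obtain $\ATR_{0}$ (relative to `st' in the nonstandard system, outright in the higher-order one), add the extra induction available in the base theory, and invoke the fact that this extension proves $\con(\ATR_{0})$; then pass from $[\con(\ATR_{0})]^{\st}$ to $\con(\ATR_{0})$ via $\paai$ or term extraction.

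Two minor differences in packaging are worth noting. First, the paper pins down the proof-theoretic lemma precisely as $\ATR_{0}+\Sigma_{1}^{1}\text{-}\textsf{IND}\vdash\con(\ATR_{0})$ (Simpson IX.4.7), rather than the stronger $\ATR\vdash\con(\ATR_{0})$ you use; since $\textsf{IA}^{\st}$ certainly delivers $[\Sigma_{1}^{1}\text{-}\textsf{IND}]^{\st}$ and $\EPA^{\omega*}$ has full induction, both versions go through, but citing the sharper result dissolves the ``main obstacle'' you flag. Second, the paper treats the nonstandard system first and then obtains the higher-order claim in one stroke by term extraction (turning $\paai$ into $\mu^{2}$ and $\STP$ into $\Theta$), whereas you argue the higher-order case directly from Corollary \ref{dagdag} plus induction in $\EPA^{\omega*}$. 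Your route is perfectly sound and arguably more transparent for the higher-order half; the paper's route is shorter because it reuses the nonstandard argument verbatim.
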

\begin{proof}
By definition, $\P$ includes external induction \textsf{IA}$^{\st}$, and hence $[\Sigma_{1}^{1}\textsf{-IND}]^{\st}$. 
By the theorem, $\P+\paai+\STP$ proves $[\ATR_{0}+\Sigma_{1}^{1}\textsf{-IND}]^{\st}$.  However, \cite{simpson2}*{IX.4.7} states that $\ATR_{0}+\Sigma_{1}^{1}\textsf{-IND}$ proves $\con(\ATR_{0})$.  Since consistency statements are $\Pi_{1}^{0}$ and since $\P$ proves the axioms of $\RCA_{0}$ relative to `st', we observe that $\P+\paai+\STP\vdash \con(\ATR_{0})$.  Applying term extraction yields the corollary.  
\end{proof}
Next, we discuss the `explosion' in our above results.  
\begin{rem}[Explosion of strength]\rm\label{triplex}
As shown above, the difference in strength between $\paai+\STP$ and $\paai+\LMP$ is significant, and the same holds for $\exists^{2}$ when combined with resp.\ $\Theta$-functionals and $\Lambda_{1}$.  
Now, $\STP$ and $\Theta$-functionals are based on $\WKL$, while $\LMP$ and $\Lambda_{1}$ are based on $\WWKL$.   However, to the best of our knowledge, there is no natural principle between $\WKL$ and $\WWKL$:  there is no principle between the latter two in the RM zoo (\cite{damirzoo}), and even in the highly fine-grained structure of the Weihrauch degrees, there is currently no known natural problem between $\WWKL$ and $\WKL$, as communicated to us by Vasco Brattka.  Thus, one can say that $\WWKL$ and $\WKL$ are `very close', but we nonetheless have a dramatic shift in strength for the associated $\paai+\STP$ and $\paai+\LMP$, and the same holds for $\exists^{2}$ when combined with resp.\ $\Theta$-functionals and $\Lambda_{1}$.  
\end{rem}
Finally, we obtain an interesting result in RM as follows: a small number of equivalences in RM are known to require a base theory \emph{stronger} than $\RCA_{0}$, and Hirschfeldt has asked whether there are more such equivalences (see \cite{montahue}*{\S6.1}). 

\smallskip 

We provide such an example based on our above results.
To this end, let $\Sigma_{1}^{1}\mSEP_{\ns}$ be \eqref{heffer}$^{\st}$ for $\varphi_{i}(n)\equiv(\exists g^{1}_{i})(\forall x_{i}^{0})(f_{i}(\overline{g_{i}}x_{i}, n)=0)$ and \emph{any} $f_{i}\leq_{1}1$. 
Thus, $\Sigma_{1}^{1}\mSEP_{\ns}$ is essentially just $[\Sigma_{1}^{1}\mSEP]^{\st}$ with the leading `st' in `$(\forall^{\st}f_{1}, f_{2}\leq_{1}1)$' removed.  Recall that $\STP$ is just $\WKL^{\st}$ with the leading `st' in `$(\forall^{\st}T\leq_{1}1)$' removed as in \eqref{fanns}.  The following is a corollary to Theorem \ref{mikeh}.
\begin{cor}\label{weird}
The system $\P_{0}+\paai$ proves $\STP\asa\Sigma_{1}^{1}\mSEP_{\ns}$, while $\P_{0}$ cannot prove $\STP\di \Sigma_{1}^{1}\mSEP_{\ns}$.  
\end{cor}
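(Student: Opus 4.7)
The equivalence in the first claim splits into two implications. For the easier direction $\Sigma_{1}^{1}\mSEP_{\ns}\to\STP$, given $f\leq_{1}1$ I would take $f_{1}(s,n)$ to be $0$ iff $f(n)=1$ and $f_{2}(s,n)$ to be $0$ iff $f(n)=0$, both ignoring the sequence-argument $s$. Then for standard $n$ the formulas $\varphi_{i}^{\st}(n)$ collapse to $f(n)=1$ and $f(n)=0$ respectively; the antecedent of $\Sigma_{1}^{1}\mSEP_{\ns}$ holds trivially since $f$ is binary, and the standard separating set $Z$ supplied by the consequent satisfies $\chi_{Z}\approx_{1}f$, yielding $\STP$. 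This direction needs neither $\paai$ nor any of the apparatus of Theorem~\ref{mikeh}.

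For the forward direction $\STP\to\Sigma_{1}^{1}\mSEP_{\ns}$ inside $\P_{0}+\paai$, the main obstacle is that Theorem~\ref{mikeh} is proved for \emph{standard} parameters $f_{1},f_{2}$: the decisive uses of $\paai$ there are applied to functions of the shape $\lambda x.\,f_{i}(\overline{g_{i}}x,n)$ with $g_{i},n$ standard, and such functions are standard only when $f_{i}$ is. I would bridge the gap by applying $\STP$ to each $f_{i}$ to obtain a standard binary $\tilde{f}_{i}\approx_{1}f_{i}$. Since $\overline{g_{i}}x_{i}$ is a standard numerical code whenever $g_{i},x_{i}$ are standard and $\tilde{f}_{i}$ agrees with $f_{i}$ on all standard inputs, the formula $\varphi_{i}^{\st}(n)$ for standard $n$ is unchanged when $f_{i}$ is replaced by $\tilde{f}_{i}$. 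Applying Theorem~\ref{mikeh} to the standard parameters $\tilde{f}_{1},\tilde{f}_{2}$ then furnishes a standard separating set $Z$, which automatically separates the original $\varphi_{i}^{\st}$'s.

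For the non-provability claim, I would argue by contradiction via term extraction. Suppose $\P_{0}\vdash\STP\to\Sigma_{1}^{1}\mSEP_{\ns}$; specialising to standard $f_{1},f_{2}$ yields $\P_{0}+\STP\vdash[\Sigma_{1}^{1}\mSEP]^{\st}$. Converting $\STP$ into the normal form governed by $\SCF(\Theta)$ via Theorem~\ref{lapdog} and normalising the resulting implication using the idealisation/$\HAC_{\INT}$ manoeuvres from the proofs of Theorems~\ref{nogwel} and~\ref{rejeji}, I would apply Corollary~\ref{consresultcor2} to extract a closed term $t$ of G\"odel's $T$ such that $\RCAo$ proves: for every $\Theta$ with $\SCF(\Theta)$, the output $t(\Theta)$ realises the second-order principle $\Sigma_{1}^{1}\mSEP$. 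Consequently $\RCAo+(\exists\Theta)\SCF(\Theta)\vdash\Sigma_{1}^{1}\mSEP$, and by the conservativity of Corollary~\ref{corhoeker} already $\RCA_{0}^{2}+\WKL$ proves $\Sigma_{1}^{1}\mSEP$. But the latter is equivalent to $\ATR_{0}$ over $\RCA_{0}$ by \cite{simpson2}*{V.5.1} and so strictly stronger than $\WKL_{0}$, a contradiction. The delicate step is the normal-form bookkeeping needed to apply Corollary~\ref{consresultcor2}, since neither side of the implication is a priori in the prototype shape $(\forall^{\st}x)(\exists^{\st}y)\varphi$.
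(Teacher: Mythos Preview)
Your proposal is correct and, for the equivalence, takes exactly the paper's route: apply $\STP$ to the (possibly nonstandard) $f_{i}$ to obtain standard $\tilde f_{i}\approx_{1}f_{i}$ and invoke the proof of Theorem~\ref{mikeh}; conversely, feed the trivial $\Sigma_{1}^{1}$-formulas coming from a binary $f$ into $\Sigma_{1}^{1}\mSEP_{\ns}$ to recover $\STP$. The paper phrases both directions the same way.

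For the non-provability clause the two arguments agree in substance but differ in packaging. The paper simply invokes Corollary~\ref{conske}: $\P_{0}+\STP$ is conservative over $\WKL_{0}$, while $\ATR_{0}$ is not, so $\P_{0}+\STP$ cannot prove $\Sigma_{1}^{1}\mSEP_{\ns}$ (whose standard instance yields $[\ATR_{0}]^{\st}$). You instead unfold this into an explicit term-extraction argument. That is fine, but the ``delicate step'' you flag---bringing $(\exists^{\st}\Theta)\SCF(\Theta)\to[\Sigma_{1}^{1}\mSEP]^{\st}$ into the shape $(\forall^{\st}x)(\exists^{\st}y)\psi$---is genuinely awkward, because the antecedent of $\Sigma_{1}^{1}\mSEP$ is $\Pi_{1}^{1}$ and Skolemising it in hypothesis position is messy. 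A cleaner way to close \emph{your} argument is to first pass to a $\Pi_{1}^{0}$ consequence of $[\ATR_{0}]^{\st}$, e.g.\ $[\con(\WKL_{0})]^{\st}$; this is already a normal form $(\forall^{\st}n)\varphi_{0}(n)$, so one application of Corollary~\ref{consresultcor2} plus Corollary~\ref{corhoeker} yields $\WKL_{0}\vdash\con(\WKL_{0})$, contradicting G\"odel. This removes the bookkeeping you worry about while staying inside your framework.
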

\begin{proof}
Regarding the first part, the forward implication follows from Theorem \ref{mikeh} if $[\Sigma_{1}^{1}\mSEP]^{\st}\di \Sigma_{1}^{1}\mSEP_{\ns}$.  
The latter implication follows by taking $f_{1}, f_{2}\leq_{1}1$ as in $\Sigma_{1}^{1}\mSEP_{\ns}$ and applying  $\STP$ to obtain \emph{standard} $f_{1}', f_{2}'$ such that $f_{1}'\approx_{1} f_{1}$ and $ f_{2}'\approx f_{2}$.  
Since $\Sigma_{1}^{1}\mSEP$ is a statement of second-order arithmetic, $f_{1}, f_{2}$ only occur as `$f_{1}(n)$' and `$f_{2}(n)$', and we may thus replace $f_{1}',f_{2}'$ by $f_{1},f_{2}$ in $[\Sigma_{1}^{1}\mSEP]^{\st}$, yielding the desired implication.  The reverse implication follows from applying $\Sigma_{1}^{1}\mSEP_{\ns}$  for $\varphi_{1}(n)\equiv (f(n)=0) $ and $\varphi_{2}(n)\equiv (f(n)=1)$ for given  $f\leq_{1}1$: The resulting standard $Z^{1}$ is such that $(\forall^{\st}n^{0})(f(n)=0\asa n\in Z)$, and the characteristic function of $Z$ yields the desired standard $g\leq_{1}1$ such that $f\approx_{1}g$.    
The second part follows from the fact that $\P_{0}+\STP$ is conservative over $\WKL_{0}$ and $\ATR_{0}$ is not.
\end{proof}
Corollary \ref{weird} could be dismissed as a curiosity, but Corollary \ref{frigjr} constitutes a challenge to the `Big Five' picture.  
We need a `trivially uniform' version of $\ATR_{0}$:
\be\tag{$\UATR$}
(\exists \Phi^{1\di 1})(\forall X^{1}, f^{1})\big[\WO(X)\di H_{f}(X, \Phi(X,f)) \big], 
\ee
where $H_{f}(X, Y)$ is just $H_{\theta}(X, Y)$ with $\theta(n, Z)$ defined as $(\exists m^{0})(f(n,m, \overline{Z}m)=0)$.
\begin{cor}\label{frigjr}
$\RCAo+(\exists \Theta)\SCF(\Theta)$ proves $(\mu^{2})\asa \UATR$; $\RCAo+\WKL$ doesn't.  
\end{cor}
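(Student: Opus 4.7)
The plan is to split the equivalence into two implications and then handle the negative claim.

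For the forward direction $(\mu^{2})\di\UATR$, the strategy is to generalise Corollary~\ref{fokkers} from the fixed arithmetical predicate $\theta_{0}$ (the Turing-jump predicate) to an arbitrary $f$-parametrised arithmetical predicate. Inspecting the proof of Theorem~\ref{mikeh}, the construction of the separating set $Z$ from standard $f_{1},f_{2}$ never uses any specific feature of $\theta_{0}$; the same idealisation-plus-$\STP$ argument goes through for $\theta(n,Z)\equiv(\exists m^{0})(f(n,m,\overline{Z}m)=0)$, uniformly in the parameter $f$. Applying Theorem~\ref{consresultcor} to the resulting internal statement yields terms $i,o$ of G\"odel's $T$ such that $\EPRA^{\omega*}$ proves
\[
[\MU(\mu)\wedge\SCF(\Theta)]\di\big[\WO(X)\di(\exists Y^{1}\in o(X,f,\mu,\Theta))H_{f}(X,Y)\big].
\]
Since $\WO(X)$ trivially implies $(\forall g\in i(X,f,\mu,\Theta))\WO(g,X)$, and since $\mu^{2}$ can search the finite list $o(X,f,\mu,\Theta)$ for the arithmetically decidable member $Y$ satisfying $H_{f}(X,Y)$, we obtain a single functional $\Phi(X,f)$ witnessing $\UATR$ inside $\RCAo+(\mu^{2})+(\exists\Theta)\SCF(\Theta)$.

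For the reverse direction $\UATR\di(\mu^{2})$, let $\Phi$ be as in $\UATR$. Given $g^{1}$, set $f_{g}(n,m,s):=g(m)$, which is primitive recursive in $g$ and hence available in $\RCAo$. The induced predicate $\theta_{f_{g}}(n,Z)\equiv(\exists m^{0})(g(m)=0)$ depends on neither $n$ nor $Z$. Taking $X_{0}$ to be the one-point well-ordering on $\{0\}$, $\WO(X_{0})$ is trivial and $\Phi(X_{0},f_{g})$ is some $Y^{1}$ satisfying $H_{f_{g}}(X_{0},Y)$, so $Y(0)=0\asa(\exists m)(g(m)=0)$. The map $g\mapsto Y(0)$ is computable in $\Phi$ and supplies $\exists^{2}$; the equivalence $\exists^{2}\asa(\mu^{2})$ over $\RCAo$ is \cite{kooltje}*{\S3}. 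Note that this direction uses neither $\WKL$ nor $\Theta$.

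For the negative part, suppose toward contradiction that $\RCAo+\WKL\vdash(\mu^{2})\di\UATR$. Since $(\mu^{2})$ already implies $\WKL$ over $\RCAo$, this gives $\RCAo+(\mu^{2})\vdash\UATR$, and specialising the witnessing $\Phi$ recovers each second-order instance of the schema $\ATR_{\theta}$, so $\RCAo+(\mu^{2})\vdash\ATR_{0}$. By the conservation result of \cite{yamayamaharehare}*{Theorem~2.2}, $\RCAo+(\mu^{2})$ is conservative over $\ACA_{0}$ for $\Pi_{2}^{1}$-sentences, whence $\ACA_{0}\vdash\ATR_{0}$, contradicting \cite{simpson2}*{V.2.6}. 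The main obstacle in the whole argument is the generalisation step at the outset of the forward direction: one must verify that the $\Sigma_{1}^{1}$-separation argument driving Theorem~\ref{mikeh} (the combination of $\STP$, Idealisation, and $\paai$) remains uniform in the parameter $f$ defining $\theta$, so that a single application of term extraction delivers $i,o$ handling every $f$ at once. All remaining steps are routine manipulation of the outputs via $\mu^{2}$.
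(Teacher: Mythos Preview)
Your proposal is correct and follows essentially the same route as the paper's proof: the forward direction via (a parametrised version of) Corollary~\ref{fokkers} together with $\mu^{2}$ selecting the correct $Y$ from the finite list $o$, the reverse direction being immediate, and the negative part via the $\Pi^{1}_{2}$-conservation of $\RCAo+(\mu^{2})$ over $\ACA_{0}$. You are in fact more careful than the paper on two points: you make explicit the required uniformity in the parameter $f$ (the paper simply cites Corollary~\ref{fokkers}, which is stated only for $\theta_{0}$, and tacitly assumes the generalisation), and you spell out a concrete witness for the reverse implication where the paper says only ``immediate''.
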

\begin{proof}
The reverse implication is immediate.  
The non-implication is immediate as $\RCAo+(\mu^{2})$ is $\Pi_{2}^{1}$-conservative over $\ACA_{0}$ (\cite{yamayamaharehare}*{Theorem 2.2}) while $\RCAo+\UATR$ is not.  
The forward implication follows from Corollary \ref{fokkers}.  
Note that since $H_{\theta}(X, Y)$ is arithmetical if $\theta$ is, $\mu^{2}$ can select the correct $Y$ in \eqref{fogi}.  
\end{proof}
As noted in Section \ref{pampson}, $\STP$ seems to be \emph{robust}, i.e.\ equivalent to small perturbations of itself.  
The same of course holds for variations of $\Theta$, which suggests that the equivalence in the previous corollary is not a trick, but a robust result.  

\subsection{Generalisations of weak K\"onig's lemma}\label{kokkon}
We study the connection between $\Theta$-functionals and the functional $\kappa^{3}$ defined below, where the latter is based on Kohlenbach's axioms $\Phi_{n}$-$\WKL$ and $\Pi_{n}^{1,b}$-$\textsf{CA}$ from \cite{kohlenbach4}*{\S5-6}.  
Our motivation for this study is that both $\Theta$ and $\kappa$ give rise to conservative extensions of $\WKL_{0}$ \emph{in isolation} but become strong \emph{when combined} with $\mu^{2}$.   
We show that $\kappa$ computes $\Theta$-functionals but not vice versa, and that $\Theta$-functionals can be viewed as a version of the \emph{classical} fan functional with the role of `epsilon-delta' continuity replaced by nonstandard continuity.

\smallskip
First of all, we introduce $\kappa^{3}$, a higher-order version of Kohlenbach's $\Pi_{n}^{1,b}$-$\textsf{CA}$ from \cite{kohlenbach4}*{\S5-6}.  We first sketch the results regarding $\kappa^{3}$ while proofs are provided below.    
\be\label{KOT}\tag{$\kappa^3$}
(\exists \kappa^{2\di 1})(\forall Y^{2})\big[ (\exists f^{1}\leq_{1}1)(Y(f)=0) \di  Y(\kappa(Y))=0\big].
\ee
Two basic facts regarding $\kappa$ are that over the full type structure, this functional defines a choice operator for non-empty subsets of Cantor space, and we therefore cannot prove the existence of any instance of $\kappa$ in \textsf{ZF}.
\begin{rem}[Continuity, $\kappa$, and $\exists^{3}$]\label{misfit}\rm
Note that $\exists^3$ can decide any formula involving type zero and one quantifiers, i.e.\ one derives second-order arithmetic using the former.  
However, straightforward modifications to $(\exists^3)$ can bring down the strength considerably:  Consider $(\exists f\leq_{1}1)(\varphi(f)=0)$ and note that if $N^{0}$ is a modulus of uniform continuity on Cantor space for $\varphi$, we only need to test $2^{N}$ many\footnote{In particular, we only need to test if $\varphi(\sigma*00\dots)=0$ for all binary $\sigma^{0^{*}}$ such that $|\sigma|=N$.} sequences to verify if $(\exists f\leq_{1}1)(\varphi(f)=0)$ or not.  Now, $\MUC(\Omega)$ from Section \ref{indie} provides such a modulus, and it is thus obvious to compute (via a term of G\"odel's $T$) $\kappa$ from $\Omega^{3}$ as in $\MUC(\Omega)$.  
By Theorem \ref{proto}, $\RCAo+\WKL+(\kappa^{3})$ is conservative over $\RCA_{0}^{2}+\WKL$, which is much weaker than $(\exists^3)$.  
However, the combination of $\exists^{2}$ and $\kappa^{3}$ computes $\exists^{3}$, as shown\footnote{The proof amounts to the observation that $\N^\N$ is recursively homeomorphic to a $\Pi^0_2$-subset of Cantor space. Since this set is computable in $\exists^{2}$, any oracle call to $\exists^{3}$ can be rewritten to an equivalent oracle call to $\kappa^{3}$, in a uniform way.} by Kohlenbach in a private communication.  
\end{rem}
Secondly, we prove the following theorem to establish the claims from Remark~\ref{misfit}.    
We first show that $\NUC$ implies $\WT$, a weak fragment of \emph{Transfer}.  
\begin{thm}\label{hirko}
The system $\P+\NUC$ proves $\STP$ and also the following:
\be\tag{\textsf{\textup{WT}}}
(\forall^{\st} Y^{2})\big[ (\exists f^{1}\leq_{1}1)(Y(f)=0)\di(\exists^{\st} f^{1}\leq_{1}1)(Y(f)=0)  \big]
\ee
\end{thm}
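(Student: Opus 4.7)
The plan is to prove $\STP$ first and then read off $\WT$ as a one-line consequence of $\STP$ together with $\NUC$. My route to $\STP$ passes through a standard uniform-continuity modulus on Cantor space: I would extract from $\NUC$ a standard $\Omega^{2}$ such that $(\forall^{\st}Y^{2})(\forall f,g\leq_{1}1)[\overline{f}\Omega(Y)=\overline{g}\Omega(Y)\di Y(f)=Y(g)]$, and then plug $\Omega$ into the construction from the proof of Theorem~\ref{kinkel} to reach the normal form \eqref{frukkklk} of $\STP$ supplied by Theorem~\ref{lapdog}.

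To extract $\Omega$, I would fix standard $Y^{2}$ and unfold $\NUC$ to $(\forall f,g\leq_{1}1)[(\forall^{\st}n)(\overline{f}n=\overline{g}n)\di Y(f)=Y(g)]$. The key subclaim is that some standard $N$ gives the uniform implication $(\forall f,g\leq_{1}1)[\overline{f}N=\overline{g}N\di Y(f)=Y(g)]$. I would prove this by contradiction: otherwise $(\forall^{\st}N)(\exists f,g\leq_{1}1)[\overline{f}N=\overline{g}N\wedge Y(f)\neq Y(g)]$; since $Y$ is held fixed the matrix is internal, so overspill (Theorem~\ref{doppi}) produces a nonstandard $N_{0}$ and binary $f_{0},g_{0}$ with $\overline{f_{0}}N_{0}=\overline{g_{0}}N_{0}$ and $Y(f_{0})\neq Y(g_{0})$. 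But $N_{0}$ nonstandard forces $f_{0}\approx_{1}g_{0}$, which by $\NUC$ on standard $Y$ yields $Y(f_{0})=Y(g_{0})$, a contradiction. Since the witness $N$ only depends on standard $Y$ and the matrix is monotone in $N$, applying $\HAC_{\INT}$ via the trick in Remark~\ref{simply} delivers a standard $\Omega^{2}$ with the desired property.

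With $\Omega$ in hand, for each standard $g^{2}$ I would set $k:=\max\{g(\sigma\ast00\dots):|\sigma|=\Omega(g),\;\sigma\leq_{0^{*}}1\}$ and let $w$ be the list of all $\tau\ast00\dots$ with $|\tau|=k$ and $\tau\leq_{0^{*}}1$; both are standard because $g$ and $\Omega$ are. The verification around equation \eqref{difffff} from the proof of Theorem~\ref{kinkel} then shows that $(w,k)$ witnesses \eqref{frukkklk} at $g$; since standard $g$ was arbitrary, \eqref{frukkklk} holds in full, and Theorem~\ref{lapdog} yields $\STP$. For $\WT$, I would fix standard $Y^{2}$ with some $f_{0}\leq_{1}1$ satisfying $Y(f_{0})=0$, apply $\STP$ to obtain standard $g\leq_{1}1$ with $f_{0}\approx_{1}g$, and invoke $\NUC$ on the standard $Y$ to conclude $Y(g)=Y(f_{0})=0$, supplying the standard witness. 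The main obstacle is the overspill step in the middle paragraph: one must check that the matrix being overspilt is genuinely internal, which is precisely why the argument must hold $Y$ fixed rather than quantify over it inside the matrix.
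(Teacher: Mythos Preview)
Your proof is correct and follows essentially the same route as the paper: extract a standard uniform-continuity modulus $\Omega$ from $\NUC$ (the paper does this by pulling the standard quantifier out and applying idealisation~\textsf{I} with Remark~\ref{simply} rather than your overspill-and-contradiction, but the two arguments are contrapositives of one another), feed it into the construction of Theorem~\ref{kinkel} to verify the normal form~\eqref{frukkklk}, and then obtain $\WT$ from $\STP+\NUC$ exactly as you do. One minor typo: your $\Omega$ takes type-$2$ input, so it should be $\Omega^{3}$, not $\Omega^{2}$.
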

\begin{proof}
The implication $\NUC\di \STP$ easily follows from the equivalence between $\STP$ and the normal form \eqref{coredesign} as follows:  After resolving `$\approx_{1}$', $\NUC$ implies that
\be\label{teen}
(\forall^{\st}g^{2})(\forall f^{1}, h^{1}\leq_{1}1)\big[(\forall^{\st}k)(\overline{f}k=_{0}\overline{h}k)\di g(f)=_{0}g(h)\big], 
\ee        
and bringing outside the standard universal quantifier in \eqref{teen}, we obtain 
\be\label{teenyj}
(\forall^{\st}g^{2})(\forall f^{1}, h^{1}\leq_{1}1)(\exists^{\st}k)\big[\overline{f}k=_{0}\overline{h}k\di g(f)=_{0}g(h)\big], 
\ee    
Applying idealisation as in Remark \ref{simply}, we obtain:
\be\label{froggi}
(\forall^{\st}g^{2})(\exists^{\st}k)(\forall f^{1}, h^{1}\leq_{1}1)\big[\overline{f}k=_{0}\overline{h}k\di g(f)=_{0}g(h)\big], 
\ee
and $\HAC_{\INT}$ yields (recalling again Remark \ref{simply}) standard $\Omega^{3}$ such that 
\[
(\forall^{\st}g^{2})(\forall f^{1}, h^{1}\leq_{1}1)\big[\overline{f}\Omega(g)=_{0}\overline{h}\Omega(g)\di g(f)=_{0}g(h)\big].  
\]  
Then any standard $g^{2}$ has an upper bound $\max_{|\sigma|=\Omega(g)\wedge (\forall i<|\sigma|)(\sigma(i)\leq 1)} g(\sigma*00\dots)$, and $w^{1^{*}}$ from \eqref{coredesign} is easily defined in terms of this upper bound in exactly the same way as for $\Theta(g)$ in the proof of Theorem \ref{kinkel}.  

\smallskip

For $\NUC\di \WT$, fix standard $Y^{2}$ and let $f_{0}\leq_{1}1$ be such that $Y(f_{0})=0$.  Applying $\STP$ yields standard $g_{0}\leq_{1}1$ such that $g_{0}\approx_{1}f_{0}$.      
By the uniform nonstandard continuity of $Y^{2}$, we have $0=Y(f_{0})=Y(g_{0})$, and $\WT$ follows.  
\end{proof}
Let $\MU_{2}(\kappa)$ be $(\kappa^{3})$ with the leading existential quantifier dropped.  
\begin{cor}
From `$\P\vdash \NUC\di \WT$' a term $t$ can be extracted such that 
\be\label{kloooke}
\EPA^{\omega*}\vdash(\forall \Omega^{3})\big[\MUC(\Omega)\di \MU_{2}(t(\Omega))  \big].  
\ee
\end{cor}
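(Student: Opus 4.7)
The plan is to apply term extraction (Theorem \ref{consresultcor}) to the hypothesis $\P\vdash \NUC\di \WT$ once both sides are brought into normal form. First I would note that, by the argument already used in the proof of Theorem \ref{hirko}, $\NUC$ is equivalent to $(\forall^{\st}Y^{2})(\exists^{\st}N^{0})A(Y,N)$ with $A(Y,N)$ the internal formula $(\forall f,h\leq_{1}1)(\overline{f}N=\overline{h}N\di Y(f)=Y(h))$, which is monotone in $N$. Applying $\HAC_{\INT}$ as in Remark \ref{simply} then provides a standard witness $\Omega^{3}$; since $\MUC(\Omega)$ plainly implies its restriction to standard $Y$, we obtain $\P\vdash(\exists^{\st}\Omega^{3})\MUC(\Omega)\di\NUC$.

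Second, I would bring $\WT$ into the normal form $(\forall^{\st}Y^{2})(\exists^{\st}f^{1})B(Y,f)$, where $B(Y,f)$ is the internal formula $(\exists g\leq_{1}1)(Y(g)=0)\di (f\leq_{1}1\wedge Y(f)=0)$; this is obtained by pulling the standard existential outside the main implication, since any $f$ vacuously witnesses $B(Y,f)$ when $(\exists g\leq_{1}1)(Y(g)=0)$ fails. Composing these two steps with the hypothesis and pulling all standard quantifiers in front, $\P$ proves
\[
(\forall^{\st}\Omega^{3},Y^{2})(\exists^{\st}f^{1})\big[\MUC(\Omega)\di B(Y,f)\big].
\]

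Third, I would apply Theorem \ref{consresultcor} to this sentence, extracting a closed term $s\in\T^{*}$ such that $\EPA^{\omega*}$ proves
\[
(\forall \Omega^{3},Y^{2})(\exists f\in s(\Omega,Y))\big[\MUC(\Omega)\di B(Y,f)\big].
\]
Finally, I would define $t(\Omega)(Y)$ as the first $f$ in the finite list $s(\Omega,Y)$ with $Y(f)=0$, or a fixed default such as $\lambda n.0$ otherwise; this search is expressible by a closed term of G\"odel's $T$ given access to $Y^{2}$. Under $\MUC(\Omega)$ together with $(\exists g\leq_{1}1)(Y(g)=0)$, the extracted list is guaranteed by construction to contain some $f$ with $f\leq_{1}1\wedge Y(f)=0$, hence with $Y(f)=0$, so the search succeeds and delivers $\MU_{2}(t(\Omega))$.

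The only potentially delicate step is the bookkeeping involved in verifying that the normal-form manipulations (especially the passage from $\MUC(\Omega)^{\st}$ to full $\MUC(\Omega)$ in the antecedent, and the pulling of the standard existential through the implication inside $\WT$) are admissible inside $\P$; once this is granted, the rest of the argument is an essentially mechanical application of the term-extraction machinery, with no further creative input required.
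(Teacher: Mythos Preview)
Your proof is correct and follows essentially the same route as the paper's. The paper notes that $\NUC$ has the normal form \eqref{froggi} and $\WT$ the displayed normal form, then invokes the technique of Theorem~\ref{nogwel}; your explicit passage through $(\exists^{\st}\Omega)\MUC(\Omega)\di\NUC$ and the subsequent strengthening of the antecedent to the internal $\MUC(\Omega)$ is exactly what that technique amounts to here, and your final clean-up step (searching the extracted finite list for a zero of $Y$) spells out what the paper leaves implicit.
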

\begin{proof}
Note that $\NUC$ is equivalent to the normal form \eqref{froggi} while $\WT$ implies
\[
(\forall^{\st} Y^{2})(\exists^{\st} g^{1}\leq_{1}1)\big[ (\exists f^{1}\leq_{1}1)(Y(f)=0)\di(Y(g)=0)  \big].
\]
In the same way as in e.g.\ the proof of Theorem \ref{nogwel} we obtain \eqref{kloooke}.
\end{proof}
\begin{rem}\label{kito}\rm
Following the proof of the theorem, it is straightforward to define a term of G\"odel's $T$ computing the restriction of $\kappa^{3}$ to continuous functionals in terms of the classical fan functional $\Phi$ as in $\FF(\Phi)$ (and vice versa).  
\end{rem}
Thirdly, we show that the functional $\kappa$ computes a special fan functional, but not vice versa.  The former result is not such a surprise since $\kappa^{3}$ and $\exists^{2}$ together compute $\exists^{3}$ (see Remark \ref{misfit}), which in turn computes $\Theta$-functionals by Theorem~\ref{import3}.  
\begin{thm}
Any functional $\kappa$ such that $\MU_{2}(\kappa)$ computes $\Theta$ such that $\SCF(\Theta)$. 
There is no $\kappa$ as in $\MU_{2}(\kappa)$ computable in $\Theta$ such that $\SCF(\Theta)$. 
\end{thm}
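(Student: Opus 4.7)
My plan: For the first assertion that $\kappa$ computes some $\Theta$ satisfying $\SCF(\Theta)$, I propose a direct coding argument. Given $g^{2}$, fix a primitive-recursive coding of pairs $(N^{0},\langle f_{1}^{1},\ldots,f_{n}^{1}\rangle)$ consisting of a natural number and a finite sequence of binary functions, as type-one binary sequences. Define the auxiliary functional $Y_{g}^{2}$ by $Y_{g}(h)=0$ iff $h$ decodes to such a pair with $N \geq \max_{i} g(f_{i})$ and every binary string $\tau$ of length $N$ extends $\overline{f_{i}}g(f_{i})$ for some $i \leq n$. Only $2^{N}$ many $\tau$ need be inspected and prefix-extension against the $\overline{f_{i}}g(f_{i})$ is decidable, so the predicate is primitive recursive in $g$. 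Classical compactness of $2^{\N}$ in $\ZFC$ furnishes a finite subcover of $\{C_{\overline{f}g(f)} : f \leq_{1} 1\}$, hence $Y_{g}$ has a zero; then $\kappa(Y_{g})$ is defined, and decoding it yields a valid $\Theta(g)$ from which $\SCF(\Theta)$ follows.

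For the second assertion, I plan to argue by reduction together with a complexity bound. Suppose for contradiction that $\kappa$ satisfying $\MU_{2}(\kappa)$ is S1-S9 computable in some $\Theta$ satisfying $\SCF(\Theta)$. Then the oracle pair $(\Theta,\exists^{2})$ computes $(\kappa,\exists^{2})$, and by Remark~\ref{misfit} the latter computes $\exists^{3}$. The contradiction is to exhibit an upper bound on the strength of $(\Theta,\exists^{2})$ lying strictly below $\exists^{3}$: for the specific $\Theta$ produced via the non-monotonic inductive definition in the proofs of Theorem~\ref{import3} and Theorem~\ref{mooi}, the graph of $\Theta$ is $\Delta_{2}^{1}$ in $\exists^{3}$-parameters, and as shown in Section~\ref{thetatje} this $\Theta$ combined with $\exists^{2}$ reaches only the strength of the Suslin functional (i.e.\ $\Pi_{1}^{1}$-\textsf{CA}), strictly below the full second-order arithmetic furnished by $\exists^{3}$.

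The main obstacle is extending this upper bound uniformly to every $\Theta$ satisfying $\SCF(\Theta)$, rather than merely to a particular constructed instance. The approach I expect to work is to note that every S1-S9 query $\Theta(F)$ returns only a finite list of binary sequences, so oracle calls to $\Theta$ inject only finite information per query; the ensuing computations should remain within a bounded (hyperarithmetical-in-$\exists^{2}$-and-Suslin) universe regardless of which $\Theta$ was selected. Making this precise when the type-two functionals $F$ fed into $\Theta$ themselves depend on the (potentially discontinuous) input $Y$ to $\kappa$ is the delicate technical core of the argument, and I anticipate that it is here that the bulk of the proof's difficulty lies.
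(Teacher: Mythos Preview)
For the first assertion your approach works and differs from the paper's. You use $\kappa$ as a one-shot search: encode candidate covers $(N,\langle f_1,\dots,f_n\rangle)$ as binary sequences, set $Y_g(h)=0$ iff $h$ decodes to a valid cover, and apply $\kappa$. The paper instead defines $\Theta$ by Kleene recursion: if $F(\kappa(F))=0$ the singleton $\{\kappa(F)\}$ already covers $C$ (its neighbourhood has length zero); otherwise $F>0$ on $C$, and one recurses on $F_i(f):=F(i*f)-1$ for $i=0,1$ and takes the union, with termination from compactness. Your approach avoids the recursion-theoretic machinery at the price of a coding of finite tuples of binary sequences into $2^\N$; either route is fine.

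For the second assertion your argument has a genuine gap. You read Section~\ref{thetatje} as bounding the power of $\Theta+\exists^2$ from above, but it does the opposite: it shows that a particular $\Theta$ together with $\exists^2$ \emph{computes} the hyper-jump, a lower bound. No upper bound on what $\Theta+\exists^2$ computes is established there, the $\Delta^1_2$ complexity of the graph does not yield one, and your extension to arbitrary $\Theta$ via ``finite information per query'' is unfounded as stated---an arbitrary $\Theta$ may return highly non-hyperarithmetical data on trivial input. Moreover, bounding the type-one functions computable from $\Theta+\exists^2$ would not by itself rule out computing the type-three object $\exists^3$. The paper's argument is a one-line structural observation: $\Theta$ and $\exists^2$ are \emph{countably based}, whereas $\exists^3$ is not; since $(\kappa,\exists^2)$ computes $\exists^3$ by Remark~\ref{misfit}, if some $\Theta$ computed $\kappa$ then $\Theta+\exists^2$ would compute $\exists^3$, a contradiction.
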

\begin{proof}
For the first part, 
if $F(\kappa(F)) = 0$, we put $\Theta(F): =\{\kappa(F)\}$.
Otherwise, define $F_0(f) = F(0*f) - 1$ and $ F_1(f) = F(1*f) - 1$ and put $\Theta(F) = \Theta(F_0) \cup \Theta(F_1)$.
By the recursion theorem for Kleene (S1-S9) computability, this definition makes sense. In order to prove that it defines a total function, we need to know that for every $F$ there is an $n$ such that each binary sequence $s$ of length $n$ has at least  one extension $ f_s$ such that $F(f_s) < n$. This is a consequence of the compactness of Cantor space, and follows from $\WKL$.

\smallskip

For the second part, we note that the combination of a $\Theta$-functional with $\exists^{2}$ does not compute $\exists^{3}$, as the former are countably based, and the latter is not.  
Hence, if $\Theta^{3}$ satisfying $\SFF(\Theta)$ were to compute $\kappa^{3}$, then the combination $\Theta^{3}$ plus $\exists^{2}$ would compute the combination $\kappa^{3}$ plus $\exists^{2}$, which computes $\exists^{3}$ by Remark \ref{misfit}, yielding a contradiction.    
\end{proof}
Fourth, inspired by Remark \ref{kito}, we consider $\CC_{\ns}$ which is the modification of $\WT$ to \textbf{all} \emph{nonstandard} continuous functionals.   
Indeed, let `$Y\in C_{\ns}$' be the formula in square brackets in \eqref{hurg} restricted to binary sequences, i.e.\ expressing that $Y^{2}$ is nonstandard continuous on Cantor space.
\be\tag{$\CC_{\ns}$}
(\forall Y^{2}\in C_{\ns})\big[ (\exists f\leq_{1}1)Y(f)=0\di (\exists^{\st} g\leq_{1}1)Y(g)=0]. 
\big]
\ee
As noted above, $\WT$ is an instance of \emph{Transfer} and the move from $\WT$ to $\CC_{\ns}$ may seem like a strange one:  
one of the main `beginner mistakes' in Nonstandard Analysis is the \emph{illegal Transfer rule} (\cite{wownelly}*{p.\ 1166}) which is the incorrect application of \emph{Transfer} to formulas \emph{involving nonstandard parameters}; this often leads to contradiction.  
Despite $\CC_{\ns}$ seemingly being in violation of the \emph{illegal Transfer rule}, the former does not yield contradiction, but an old friend.  Furthermore, the condition `$Y\in C_{\ns}$' turns out to be essential, and maximal in a concrete sense.  
\begin{thm}
The system $\P$ proves $\STP\asa \CC_{\ns}$.    The system $\P_{0}+\paai$ proves that $\WT$ with the leading `st' dropped is inconsistent.  
\end{thm}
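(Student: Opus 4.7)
The plan splits into the equivalence of (1) and the inconsistency of (2); I work throughout in $\P$ (resp.\ $\P_0+\paai$ for (2)).

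\emph{Part (1).} The forward direction is immediate: given $Y\in C_{\ns}$ and $f\leq_1 1$ with $Y(f)=0$, $\STP$ produces a standard $g\leq_1 1$ with $g\approx_1 f$, and instantiating the $C_{\ns}$-clause at this standard $g$ yields $Y(g)=Y(f)=0$. For the reverse I would argue by contradiction: supposing $f\leq_1 1$ witnesses $\neg\STP$, so $(\forall^{\st}g\leq_1 1)(\exists^{\st}n)(g(n)\neq f(n))$, apply $\HAC_{\INT}$ to the internal formula $g(n)\neq f(n)$ and, via Theorem~\ref{consresultcor}, take the resulting $F^{1\to 0^{*}}$ as a closed term of $\T^{*}$ satisfying $(\forall^{\st}g\leq_1 1)(\exists n\in F(g))(g(n)\neq f(n))$. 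Define the internal functional $Y(h):=1$ if $(\exists n\in F(h))(h(n)\neq f(n))$, else $Y(h):=0$; then $Y(f)=0$ and $Y(g)=1$ for every standard $g\leq_1 1$. For $Y\in C_{\ns}$: a closed $\T^{*}$-term is, by induction on its build-up, nonstandard continuous at standard inputs, so $F(g)=F(h)$ whenever $h$ is standard and $g\approx_1 h$; combined with $g(n)=h(n)$ at each standard $n\in F(h)$, this gives $Y(g)=Y(h)$. $\CC_{\ns}$ now extracts a standard $g$ with $Y(g)=0$, contradicting $Y(g)=1$ on standard inputs.

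\emph{Part (2).} Apply the idealization axiom $\textsf{I}$ to the internal sentence
\[
(\forall^{\st,\textup{fin}}z^{1^{*}})(\exists f\leq_1 1,\;M^{0})(\forall g\in z)(\exists n\leq M)(f(n)\neq g(n)),
\]
which is trivially verifiable on any standard finite $z=\langle g_1,\ldots,g_k\rangle$ by taking $f(i):=1-g_i(i)$ for $i<k$, $f(i):=0$ otherwise, and $M:=k$. Idealization supplies $f_0\leq_1 1$ and $M_0^{0}$ with $(\forall^{\st}g\leq_1 1)(\exists n\leq M_0)(f_0(n)\neq g(n))$; instantiating this at the standard $g:=f_0$ forces $f_0$ nonstandard, while instantiating at the standard $g:=\overline{f_0}M_0*00\ldots$ (which is standard if $M_0$ is) forces $M_0$ nonstandard as well. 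Define the primitive-recursive internal functional $Y(h):=1$ if $(\exists n\leq M_0)(h(n)\neq f_0(n))$, else $0$; then $Y(f_0)=0$ and $Y(g)=1$ for every standard $g\leq_1 1$. Applying $\WT$ with its leading `$\st$' dropped to this $Y$ yields a standard $g$ with $Y(g)=0$, the desired contradiction (so $\paai$ in fact plays no role).

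\emph{Main obstacle.} The subtle point is the verification of $Y\in C_{\ns}$ in the reverse direction of (1): standardness of the $F$ produced by $\HAC_{\INT}$ does not by itself guarantee $F(g)=F(h)$ for $g\approx_1 h$ with standard $h$. The workaround, baked into $\P$ via the term-extraction theorem, is to replace $F$ by a closed term of G\"odel's $\T^{*}$, whose nonstandard continuity at standard inputs is then provable by induction on its construction.
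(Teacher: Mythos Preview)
Your forward direction of Part~(1) and your Part~(2) are fine. In fact your Part~(2) is a genuinely different and slightly stronger argument than the paper's: the paper defines $Y_0(f):=0$ iff $f(N)=0\wedge(\forall i<N)(f(i)\neq 0)$ for nonstandard $N$ and then \emph{needs} $\paai$ to rule out a standard zero (to pass from $(\exists m)(f_0(m)=0)$ to a standard such $m$), whereas your idealisation-plus-diagonalisation construction produces $f_0,M_0$ for which the nonexistence of a standard zero is immediate.  So your parenthetical ``$\paai$ plays no role'' is correct and worth keeping.

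The reverse direction of Part~(1), however, has a real gap.  Your plan is to apply $\HAC_{\INT}$ and then ``via Theorem~\ref{consresultcor}'' replace the resulting standard $F$ by a closed term of $\mathcal T^{*}$.  This is a category error: Theorem~\ref{consresultcor} is a \emph{meta}-theorem about proofs in $\P$, not an axiom you may invoke inside a proof in $\P$.  Moreover the premise you would need to feed into term extraction, namely $(\forall^{\st}g)(\exists^{\st}n)(g(n)\neq f(n))$, contains the nonstandard parameter $f$ and is not a theorem of $\P$, so there is nothing to extract a closed term from.  What $\HAC_{\INT}$ actually gives you is merely a \emph{standard} $F$, and (as you yourself note) standardness of $F$ does not imply $F(g)=F(h)$ for $g\approx_1 h$ with $h$ standard; indeed Theorem~\ref{simplo} shows exactly that nonstandard pointwise continuity of standard functionals is \emph{not} available in $\P$.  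Your claimed induction on the build-up of a closed term is therefore never reached.

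The paper's argument avoids all of this.  Given $f_0\leq_1 1$ with no standard $\approx_1$-neighbour, one fixes a nonstandard $N$ and sets $Z(h):=0$ if $\overline{f_0}N=\overline{h}N$ and $Z(h):=1$ otherwise.  Then $Z\in C_{\ns}$: for standard $h$ there is a standard $k$ with $h(k)\neq f_0(k)$, hence $\overline{h}N\neq\overline{f_0}N$, and for any $g\approx_1 h$ one has $g(k)=h(k)\neq f_0(k)$ as well, so $Z(h)=Z(g)=1$.  Since $Z(f_0)=0$ but $Z$ has no standard zero, $\CC_{\ns}$ fails, which is the desired contradiction.  The point is that the nonstandard bound $N$ does the work that you tried to extract from a (non-existent) closed term.
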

\begin{proof}
The forward implication is immediate by applying $\STP$ to the antecedent of $\CC_{\ns}$ and using the nonstandard continuity of $Y$.  
For the reverse direction, assume $\CC_{\ns}$ and suppose there is $f_{0}\leq_{1}1$ such that $(\forall^{\st}g\leq_{1}1)(f_{0}\not\approx_{1} g)$.  
Now fix some $Y^{2}\in C_{\ns}$ and nonstandard $N^{0}$, and define the functional $Z^{2}$ as follows: 
$Z(f):=0 \textup{ if } \overline{f_{0}}N=_{0}\overline{f}N$ and  $Z(f):=Y(f)+1$ $ \textup{otherwise}$.
By definition, $Z\in C_{\ns}$ has (many) zeros, but no standard one.  This contradiction yields $\CC_{\ns}\di \STP$.  

\smallskip
For the final part, consider the nonstandard functional $Y_{0}^{2}$, defined as $Y_{0}(f):=0$ if $f(N)=0\wedge (\forall i<N)(f(i)\ne0)$, and $1$ otherwise, for nonstandard $N^{0}$.  Clearly, there are many $g_{0}$ such that $Y_{0}(g_{0})=0$, but if $Y_{0}(f_{0})=0$ for \emph{standard} $f_{0}$, then $\paai$ implies that the latter is $00...$, a contradiction.   
\end{proof}
The previous nonstandard proof also gives rise to a relative computability result.  
To this end, for $\Xi^{2\di (1^{*}\times 1^{*})}\leq 1$,  let $\MU_{3}(\Xi)$ be the following formula:
\[
(\forall G^{2}, Y^{2})\big[ \PC(G, Y, \Xi(G)(1)) \di [(\exists h\leq_{1}1)(Y(h)=0)  \di (\exists h\in \Xi(G)(2))Y(h)=0]  \big], 
\]
and where $\PC(G^{2}, Y^{2}, Z^{1^{*}})$ is the formula expressing `partial continuity' as follows:
\be\label{PC}
(\forall f^{1}\in Z)(\forall g^{1}\leq_{1}1)(\overline{f}G(f)=\overline{g}G(f)\di Y(f)=Y(g)).
\ee
The following corollary establishes another nice result, namely that weakening `$Y^{2}\in C$' in the definition of the classical fan functional (see Definition~\ref{homaar}) to `partial continuity' as in \eqref{PC}, leads to a special fan functional.  
\begin{cor}
From `$\P\vdash [\STP\asa \CC_{\ns}]$', terms $t, s$ can be extracted such that 
\be\label{kloooke2}
\EPA^{\omega*}\vdash(\forall \Theta^{3})\big[\SCF(\Omega)\di \MU_{3}(t(\Theta))\big] \wedge (\forall \Xi^{3})\big[ \MU_{3}(\Xi)\di \SCF(s(\Xi)) \big].  
\ee
\end{cor}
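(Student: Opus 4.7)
The plan is to apply term extraction (Theorem~\ref{consresultcor}) to each direction of the equivalence $\STP\asa \CC_{\ns}$ established in the preceding theorem. Two preparatory steps are needed: both sides must be cast in normal form $(\forall^{\st}x)(\exists^{\st}y)\varphi$ with $\varphi$ internal, and the specifications $\SCF$ and $\MU_{3}$ must be seen to arise from these normal forms via their existential witnesses. For $\STP$ the work is already done by Theorem~\ref{lapdog}, which provides \eqref{frukkklk} and, in particular, the implication $(\exists^{\st}\Theta)\SCF(\Theta)\di \STP$.

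The first technical step is to establish the analogue of Theorem~\ref{lapdog} for $\CC_{\ns}$, namely that $\CC_{\ns}$ is equivalent over $\P_{0}$ to the normal form
\[
(\forall^{\st}G^{2})(\exists^{\st}w_{1}^{1^{*}}, w_{2}^{1^{*}}\leq 1)(\forall Y^{2})\big[\PC(G,Y,w_{1})\di [(\exists h\leq_{1}1)Y(h)=0\di (\exists h\in w_{2})Y(h)=0]\big],
\]
and that in particular $(\exists^{\st}\Xi)\MU_{3}(\Xi)\di \CC_{\ns}$. The derivation mirrors that of Theorem~\ref{lapdog}: starting from $\CC_{\ns}$ restricted to standard $Y$, expand $Y\in C_{\ns}$ via $\approx_{1}$, apply idealisation $\textsf{I}$ to extract a standard modulus of continuity, invoke $\HAC_{\INT}$ (cf.\ Remark~\ref{simply}) to collect it into $G^{2}$, contrapose, and apply $\textsf{I}$ once more to bring the $\forall Y$ inside the internal part while assembling the finite sequences $w_{1}$ of candidate continuity points and $w_{2}$ of candidate zeros.

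With both normal forms in hand, we follow the method of the proof of Theorem~\ref{nogwel}. The assumption $\P\vdash \STP\di \CC_{\ns}$ becomes, after substitution and the standard manoeuvre of Skolemising $\STP$'s witness into a single standard $\Theta$ (which turns the antecedent into $\SCF(\Theta)$), a normal form of the shape $(\forall^{\st}\Theta,G)(\exists^{\st}w_{1},w_{2})[\SCF(\Theta)\di (\text{inner formula of }\MU_{3})]$. Theorem~\ref{consresultcor} then extracts a term $t_{0}$ yielding a finite list containing the correct $(w_{1},w_{2})$; concatenating or taking maxima as in Remark~\ref{simply} converts $t_{0}$ into $t$ with $(\forall \Theta)[\SCF(\Theta)\di \MU_{3}(t(\Theta))]$. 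The term $s$ is obtained symmetrically from the direction $\CC_{\ns}\di \STP$.

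The main obstacle is securing that the inner universal quantifier $(\forall Y^{2})$ in the normal form of $\CC_{\ns}$ ranges over \emph{all} $Y$, not just standard ones, since $\MU_{3}(\Xi)$ asserts its condition for every $Y^{2}$. This parallels the role of idealisation in deriving \eqref{frukkklk} from \eqref{fanns} in Theorem~\ref{lapdog}, and the analogue here requires careful tracking of which standard quantifiers can be passed across both the implication and the partial continuity predicate $\PC$.
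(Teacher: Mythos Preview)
Your approach is correct and matches the paper's (implicit) template, namely the method of the analogous corollary extracting \eqref{kloooke} from $\NUC\di\WT$: put both sides into normal form (for $\STP$ this is already \eqref{frukkklk} via Theorem~\ref{lapdog}; for $\CC_{\ns}$ one derives exactly the displayed $(\forall^{\st}G)(\exists^{\st}w_1,w_2)(\forall Y)[\ldots]$), then run the proof of Theorem~\ref{nogwel} to extract $t$ and $s$.

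One small clarification: you write ``starting from $\CC_{\ns}$ restricted to standard $Y$'', but the leading quantifier in $\CC_{\ns}$ already ranges over \emph{all} $Y^{2}$, and---as your final paragraph correctly stresses---this is exactly what is needed to obtain the internal $(\forall Y^{2})$ in $\MU_{3}$. The derivation of the normal form does not pass through a standard-$Y$ version; rather, one rewrites $Y\in C_{\ns}$ (using idealisation and then $\HAC_{\INT}$ with the possibly nonstandard $Y$ as an \emph{internal parameter}) as $(\exists^{\st}G^{2})(\forall^{\st}f\leq_{1}1)(\forall g\leq_{1}1)[\overline{f}G(f)=\overline{g}G(f)\di Y(f)=Y(g)]$, pushes the standard quantifiers over $G$, $f$, and the standard zero $h'$ outward to reach $(\forall^{\st}G)(\forall Y)(\exists^{\st}f,h')\varphi$ with $\varphi$ internal, and then applies the contraposition of $\textsf{I}$ to the $(\forall Y)(\exists^{\st}\cdot)$-block. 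The resulting finite list of pairs splits into $w_{1},w_{2}$, and a routine classical manipulation turns $(\exists f\in w_{1},h'\in w_{2})[\ldots]$ into the $\PC$-form displayed.
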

As shown in \cite{sayo}, $\WWKL_{0}$ is equivalent to the statement that every \emph{bounded} continuous functional on the unit interval is Riemann integrable.  
We suspect that adding a boundedness condition to `$Y^{2}\in C_{\ns}$' yields an equivalence to $\LMP$.

\smallskip
Finally, we discuss the differences between $\kappa$ and $\Theta$-functionals in more detail. 
\begin{enumerate}
 \renewcommand{\theenumi}{\roman{enumi}}
\item In contrast to Kohlenbach's axioms and $\kappa$, $\STP$ and $\Theta$-functionals are not obviously instances of comprehension.   In other words, the latter are (more) `mathematical' in nature, while the former are `logical' in nature, especially in light of the intuitive interpretation just below Definition \ref{dodier}.\label{kulk}
\item 
As noted above, instances of \emph{Transfer} translate to a kind of comprehension axiom (with a dash of choice).  
However, the step from $\WT$ to $\CC_{\ns}$ seems to violate Nelson's \emph{illegal Transfer rule}, i.e.\ $\CC_{\ns}$ (and hence $\STP$) seems orthogonal to \emph{Transfer}.  One thus expects that the functionals resulting from $\STP$ are similarly orthogonal to comprehension.\label{soma}  
\end{enumerate}
Mathematical naturalness as in item \eqref{kulk} is important and worth pointing out, as it is essential to e.g.\ the Big Five phenomenon of RM, 
and the latter program is after all a main topic of this paper.   
The quest for `mathematically natural' theorems not provable in major logical systems (like the Paris-Harrington theorem and Peano arithmetic as can be found in \cite{barwise}*{D8.\S1}) should also be mentioned.  

\smallskip
It is more difficult to explain item \eqref{soma}:  The latter stems from the idea that while \emph{Transfer} corresponds (gives rise to/is translated to) to comprehension axioms with a dash of the axiom of choice, $\STP$ is \emph{fundamentally different} from \emph{Transfer}, but we do not know how to make this intuition concrete.

\section{Summary and Future Research}\label{kloi}
\subsection{Future research}
We discuss some open questions and future research.
Regarding Nonstandard Analysis, we have the following questions.  
\begin{enumerate}
 \renewcommand{\theenumi}{\roman{enumi}}
\item The axiom $\STP$ is equivalent to \eqref{fanns}, which is just $\WKL^{\st}$ for \emph{all} binary trees; the same holds for $\WWKL^{\st}$ and $\LMP$.  Most theorems from the \emph{RM zoo} (\cite{damirzoo}) can be similarly modified, but which resulting theorems have a normal form and have interesting properties?  What about $\RT_{2}^{2}$, $\ADS$ or $\textsf{EM}$ from \cite{dslice}, or $\textsf{RWKL}, \textsf{RWWKL}$ from\footnote{The authors of \cite{welcome} note that $\RWKL$ is \emph{robust}, and the same seems to hold for its nonstandard counterpart.  In particular, the robustness properties of $\STP$ and $\LMP$ discussed in Section \ref{pampson} also hold for the associated `Ramsey-type' versions.} \cite{stoptheflood, welcome}?
\item Are there any interesting principles between  $\STP$ and $\LMP$? 
\item What is the role of principles `close to' $\WWKL$, including (nonstandard versions of) \textsf{POS} from \cite{kjoske} or $n$-$\WWKL$ from \cite{avi1337}?
\item Are there other `explosions' in Nonstandard Analysis?
\end{enumerate}
Topics related to the above items have been studied in \cite{samcie18, bennosam}.  

\smallskip

Regarding computability theory, the following questions were formulated in \cite{dagsam.I.p}, and later solved in \cites{dagsamII, dagsamIII, dagcie18}.  
\begin{enumerate}
 \renewcommand{\theenumi}{\roman{enumi}}
 \setcounter{enumi}{4}
\item Do the classes of instances of $\Lambda$ and $\Theta$ have minimal objects in the sense of Kleene-degrees or other kinds of degrees of complexity?
\item Is the hyper-jump computable from $\exists^2$ and (any given instance of) $\Theta$?
\item Is $\Theta$ definable from $\Lambda$ and the hyper-jump?
\item Is Gandy's Super-jump (\cite{supergandy}) computable in a natural $\Theta$-functional and $\exists^2$?
\end{enumerate}
Regarding computability theory and Nonstandard Analysis, we have the following:  
\begin{enumerate}
 \renewcommand{\theenumi}{\roman{enumi}}
 \setcounter{enumi}{8}
\item We have observed that computability via a term of G\"odel's $T$ arises from proofs in $\P$ and vice versa.  
Is there a natural formulation of S1-S9 computability in Nonstandard Analysis?\label{lesje}
\end{enumerate}
Item \eqref{lesje} should be viewed in light of Remark \ref{bestio}.  However, it stands to reason that the problems mentioned in the latter can be solved by declaring more general type constructors (than the recursor constants) standard in an extension of $\P$.

\subsection{Summary of results}
Figure \ref{clap} below summarises our results.   
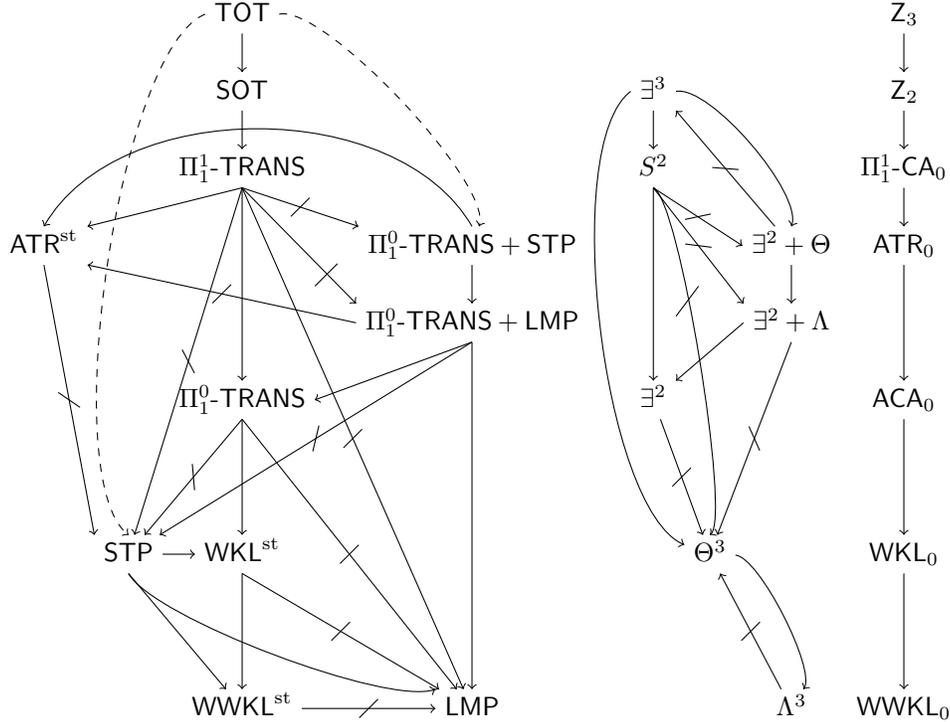
\begin{figure}[h!]
\begin{tikzpicture}[description/.style={fill=white,inner sep=2pt}]
\matrix (m) [matrix of math nodes, row sep=1.5em,
column sep=0.3em, text height=1.5ex, text depth=0.25ex]
{ ~&~&\textsf{TOT}&~&~&~&~&~&~&\textsf{Z}_{3}\\
 &~ & \textsf{SOT} &~ & ~ &~& \exists^3&~&~&\textsf{Z}_{2}\\
~& ~&\Paai &~ & ~ & ~ &S^{2}&~&~&\FIVE\\
\ATR^{\st}&~&~ &~& \paai+\STP&~& ~&&\exists^{2}+\Theta&\ATR_{0}\\ 
&&&& \paai+\LMP&&&&\exists^{2}+\Lambda&~\\
&&\paai&&&~& \exists^{2}&~& ~&\ACA_{0}\\
~ &~&~&~& ~~ &~&~&~& ~& \\
&\STP& \WKL^{\st} &~&&~&&\Theta^{3}& &\WKL_{0} \\
~ &&&& ~ \\
&& \WWKL^{\st} && \LMP&&~&&\Lambda^{3}&\WWKL_{0}\\};

\path[-]  (m-1-10) edge[->] (m-2-10);
\path[-]  (m-2-10) edge[->] (m-3-10);
\path[-]  (m-3-10) edge[->] (m-4-10);
\path[-]  (m-4-10) edge[->] (m-6-10);
\path[-]  (m-6-10) edge[->] (m-8-10);
\path[-]  (m-8-10) edge[->] (m-10-10);

  \draw[->] (m-5-9.south) -- node[ sloped, strike out,draw,-]{} (m-8-8);

\draw[->] (m-3-7.south)  -- node[sloped, strike out,draw,-]{} (m-4-9.west);
\draw[->] (m-3-7.south)  -- node[sloped, strike out,draw,-]{} (m-5-9.north west);
\path[-]  (m-4-9) edge[->] (m-5-9.north);
\path[-]  (m-6-7.north east) edge[<-] (m-5-9.west);
\path[-]  (m-2-7) edge[->] (m-3-7);

\path[-]  (m-3-7) edge[->] (m-6-7);
  \draw[->] (m-6-7) -- node[ strike out,draw,-]{} (m-8-8);
		\path[-]  (m-3-7.south) edge[->, bend right=300,looseness=0.2] (m-8-8.north);
\path[-]  (m-2-7.west) edge[->, bend left=300,looseness=0.5] (m-8-8);
\path[-]  (m-8-8) edge[->, bend right=300,looseness=0.4] (m-10-9);
  \draw[->] (m-10-9) -- node[ strike out,draw,-]{} (m-8-8);
 
  \path[-]  (m-2-7.east) edge[->, bend right=300,looseness=0.5] (m-4-9.north);
    \draw[->] (m-4-9) -- node[sloped, strike out,draw,-]{} (m-2-7.south east);


\path[-]  (m-1-3) edge[->] (m-2-3);
\path[-]  (m-2-3) edge[->] (m-3-3);
\path[-]  (m-1-3.west) 
		edge[->, dashed, bend left=300,looseness=0.45] (m-8-2.north)
		 (m-6-3)   edge[<- ](m-3-3); 
	\path[-](m-6-3.south)	 edge[->] (m-8-3);
		      
  \draw[->] (m-6-3.south) -- node[ strike out,draw,-]{} (m-10-5);
    \draw[->] (m-6-3.south) -- node[sloped, strike out,draw,-]{} (m-8-2);
  \draw[->] (m-3-3.south) -- node[ strike out,draw,-]{} (m-10-5);
    \draw[->] (m-3-3.south) -- node[]{} (m-5-5.north west);
        \draw[->] (m-3-3.south) -- node[]{} (m-4-1.north east);
           \draw[<-] (m-8-2.north west) -- node[sloped, strike out,draw,-]{}  (m-4-1.south);
           
    \draw[->] (m-3-3.south) -- node[sloped, strike out,draw,-]{} (m-8-2);
\path[-]	(m-8-2) edge[->] (m-8-3);
  \draw[->] (m-8-3.south) -- node[ strike out,draw,-]{} (m-10-5);
\path[-]	(m-8-3.south) edge[->] (m-10-3);
\path[-]	(m-8-2.south) edge[->] (m-10-3);
  \draw[->] (m-10-3) -- node[sloped, strike out,draw,-]{} (m-10-5);
\path[-]  (m-8-2.south) edge[->, bend right=400,looseness=0.4] (m-10-5.north west);
  \draw[->] (m-5-5.south) -- node[sloped, strike out,draw,-]{} (m-8-2);
    \path[-]  (m-5-5.south) edge[->] (m-6-3.east);
       \path[-] (m-5-5) edge[->] (m-10-5);
       \path[-]  (m-1-3.east) 
		edge[->, dashed, bend right=300,looseness=0.4] (m-4-5);
		
	\path[-](m-5-5.north) edge[<-] (m-4-5.south);
	       \path[-]  (m-4-5.north) edge[->, bend left=300,looseness=0.9] (m-4-1.north);
		\draw (2.6,0.6) -- (2.9,1);
			\draw (-2.2,1) -- (-1.9,1.3);
		    \draw[->] (m-3-3.south) -- node[ strike out,draw,-]{} (m-4-5.north west);
		        \draw[<-] (m-4-1.south east) -- node[ strike out,draw,-]{} (m-5-5.west);
\end{tikzpicture}
\caption{Summary of results}
\label{clap}
\end{figure}~\\
By way of a legend, in the right column are the linearly ordered `Big Five, with above them full second-order arithmetic $\textsf{Z}_{2}$ and below them the system $\WWKL_{0}\equiv \RCA_{0}+\WWKL$.  In the middle column, we classify the functionals studied in this paper as follows: $\RCAo$ plus the existence of the pictured functional is (at least or exactly) at the level of the corresponding system on the right; (struck out) arrows denote (non)computability.  
By `$\Psi$ computes $\Phi$' we mean that all instances of $\Psi$ can compute (in a uniform way) an instance of $\Phi$.
By contrast `$\Psi$ does not compute $\Phi$' means that there is an instance of $\Psi$ that cannot compute any instance of $\Phi$.

\smallskip   

In the left column, we classify the nonstandard axioms studied in this paper as follows: $\P_{0}$ plus the pictured nonstandard axioms is (at least or exactly) at the level of the corresponding system on the right; (struck out) arrows denote (non)implication over $\P_{0}$.  The dashed arrows imply implication over $\P_{0}+\X$.    

\smallskip

Our results suggest that the RM of Nonstandard Analysis is much more `wild' than the `standard' counterpart from \cite{simpson2}:  For instance, the nonstandard counterparts of the Big Five systems and $\WWKL_{0}$ are not even linearly ordered.  Similarly, the higher-order framework is much more `wild' than the second-order counterpart from \cite{simpson2}: For instance, $\Theta$ and $\Lambda$-functionals are natural variations of the usual fan functional with rather extreme computational hardness compared to their first-order strength.  The difference in strength when adding $\paai$ to $\STP$ and $\LMP$, or equivalently: when combining $\exists^{2}$ with $\Theta$-functionals and $\Lambda_{1}$, is another example of `wild' behaviour.

\smallskip
On a historical note, our results in the RM of Nonstandard Analysis should be viewed in light of the following 1966 anecdote by Friedman regarding Robinson.  
\begin{quote}
I remember sitting in Gerald Sacks' office at MIT and telling him
about this [version of Nonstandard Analysis based on PA] and the conservative extension proof. He was
interested, and spoke to A.\ Robinson about it, Sacks told me that A.\
Robinson was disappointed that it was a conservative extension. (\cite{HFFOM})
\end{quote}
In light of the previous quote, we believe Robinson would have enjoyed learning about the `new' mathematical object that is 
the special fan functional originating from Nonstandard Analysis.  As it happens, many (if not most) theorems of second-order arithmetic 
can be modified to yield similar `special' functionals with exotic computational properties.    
Thus, Figure \ref{clap} raises many questions, both in computability theory and Nonstandard Analysis, discussed in the previous section.

\begin{ack}\rm
Our research was supported by FWO Flanders, the John Templeton Foundation, the Alexander von Humboldt Foundation, LMU Munich (via their Excellence Initiative), the University of Oslo, and the Japan Society for the Promotion of Science.  
The authors express their gratitude towards these institutions. 
The authors thank Ulrich Kohlenbach, Vasco Brattka, and Anil Nerode for their valuable advice.  
Lastly, we are greatly indebted to the referees of this paper for their many helpful suggestions that have greatly improved this paper.  
\end{ack}

\begin{bibdiv}
\begin{biblist}
\bib{NUPE}{article}{
  title={Nuprl},
  journal={Main website},
  date={2019},
  note={\url {http://www.nuprl.org/}},
}

\bib{avi2}{article}{
  author={Avigad, Jeremy},
  author={Feferman, Solomon},
  title={G\"odel's functional \(``Dialectica''\) interpretation},
  conference={ title={Handbook of proof theory}, },
  book={ series={Stud. Logic Found. Math.}, volume={137}, },
  date={1998},
  pages={337--405},
}

\bib{avi1337}{article}{
  author={Avigad, Jeremy},
  author={Dean, Edward},
  author={Rute, Jason},
  title={Algorithmic randomness, reverse mathematics, and the dominated convergence theorem},
  journal={Ann. Pure Appl. Logic},
  volume={163},
  date={2012},
  pages={1854--1864},
}

\bib{barwise}{book}{
  title={Handbook of mathematical logic},
  note={Edited by Jon Barwise; Studies in Logic and the Foundations of Mathematics, Vol. 90},
  publisher={North-Holland Publishing Co.},
  place={Amsterdam},
  date={1977},
  pages={xi+1165},
}

\bib{beeson1}{book}{
  author={Beeson, Michael J.},
  title={Foundations of constructive mathematics},
  series={Ergebnisse der Mathematik und ihrer Grenzgebiete},
  volume={6},
  note={Metamathematical studies},
  publisher={Springer},
  date={1985},
  pages={xxiii+466},
}

\bib{brie}{article}{
  author={van den Berg, Benno},
  author={Briseid, Eyvind},
  author={Safarik, Pavol},
  title={A functional interpretation for nonstandard arithmetic},
  journal={Ann. Pure Appl. Logic},
  volume={163},
  date={2012},
  number={12},
  pages={1962--1994},
}

\bib{bennosam}{article}{
  author={van den Berg, Benno},
  author={Sanders, Sam},
  title={Reverse mathematics and parameter-free transfer},
  journal={Ann. Pure Appl. Logic},
  volume={170},
  date={2019},
  number={3},
  pages={273--296},
}

\bib{uhberger}{article}{
  author={Berger, Ulrich},
  title={Uniform Heyting arithmetic},
  journal={Ann. Pure Appl. Logic},
  volume={133},
  date={2005},
  pages={125--148},
}

\bib{welcome}{article}{
  author={Bienvenu, Laurent},
  author={Patey, Ludovic},
  author={Shafer, Paul},
  title={On the logical strengths of partial solutions to mathematical problems},
  journal={Trans. London Math. Soc.},
  volume={4},
  date={2017},
  number={1},
  pages={30--71},
}

\bib{blaaskeswijsmaken}{article}{
  author={Blass, Andreas},
  title={End extensions, conservative extensions, and the Rudin-Frol\'\i k ordering},
  journal={Trans. Amer. Math. Soc.},
  volume={225},
  date={1977},
  pages={325--340},
}

\bib{brouw}{book}{
  author={Brouwer, L. E. J.},
  title={Collected works. Vol. 1},
  note={Philosophy and foundations of mathematics; Edited by A. Heyting},
  publisher={North-Holland Publishing Co.},
  place={Amsterdam},
  date={1975},
  pages={xv+628},
}

\bib{opborrelen}{article}{
  author={Borel, Emile},
  title={Sur quelques points de la th\'eorie des fonctions},
  journal={Ann. Sci. \'Ecole Norm. Sup. (3)},
  volume={12},
  date={1895},
  pages={9--55},
}

\bib{cousin1}{article}{
  author={Cousin, Pierre},
  title={Sur les fonctions de $n$ variables complexes},
  journal={Acta Math.},
  volume={19},
  date={1895},
  pages={1--61},
}

\bib{damirzoo}{misc}{
  author={Dzhafarov, Damir D.},
  title={Reverse Mathematics Zoo},
  note={\url {http://rmzoo.uconn.edu/}},
}

\bib{exu}{article}{
  author={Escard{\'o}, Mart{\'{\i }}n},
  author={Xu, Chuangjie},
  title={The Inconsistency of a Brouwerian Continuity Principle with the Curry-Howard Interpretation},
  pages={153--164},
  series={Leibniz International Proceedings in Informatics (LIPIcs)},
  year={2015},
  volume={38},
}

\bib{fega}{article}{
  author={Ferreira, Fernando},
  author={Gaspar, Jaime},
  title={Nonstandardness and the bounded functional interpretation},
  journal={Ann. Pure Appl. Logic},
  volume={166},
  date={2015},
  number={6},
  pages={701--712},
}

\bib{stoptheflood}{article}{
  author={Flood, Stephen},
  title={Reverse mathematics and a Ramsey-type K\"onig's lemma},
  journal={J. Symbolic Logic},
  volume={77},
  date={2012},
  number={4},
  pages={1272--1280},
}

\bib{fried}{article}{
  author={Friedman, Harvey},
  title={Some systems of second order arithmetic and their use},
  conference={ title={Proceedings of the International Congress of Mathematicians (Vancouver, B.\ C., 1974), Vol.\ 1}, },
  book={ },
  date={1975},
  pages={235--242},
}

\bib{fried2}{article}{
  author={Friedman, Harvey},
  title={ Systems of second order arithmetic with restricted induction, I \& II (Abstracts) },
  journal={Journal of Symbolic Logic},
  volume={41},
  date={1976},
  pages={557--559},
}

\bib{HFFOM}{article}{
  author={Friedman, Harvey},
  title={Computational Nonstandard Analysis},
  journal={FOM mailing list},
  date={Sept.\ 1st, 2015},
  note={\url {https://www.cs.nyu.edu/pipermail/fom/2015-September/018984.html}},
}

\bib{gandys}{article}{
  author={Gandy, Robin},
  title={Proof of Mostowski's conjecture},
  language={English, with Russian summary},
  journal={Bull. Acad. Polon. Sci. S\'{e}r. Sci. Math. Astronom. Phys.},
  volume={8},
  date={1960},
}

\bib{supergandy}{article}{
  author={Gandy, Robin},
  title={General recursive functionals of finite type and hierarchies of functions},
  journal={Ann. Fac. Sci. Univ. Clermont-Ferrand No.},
  volume={35},
  date={1967},
  pages={5--24},
}

\bib{gandymahat}{article}{
  author={Gandy, Robin},
  author={Hyland, Martin},
  title={Computable and recursively countable functions of higher type},
  conference={ },
  book={ publisher={North-Holland}, },
  date={1977},
  pages={407--438. Studies in Logic and Found. Math 87},
}

\bib{gordon2}{article}{
  author={Gordon, Evgeni\v {\i } I.},
  title={Relatively standard elements in Nelson's internal set theory},
  journal={Siberian Mathematical Journal},
  volume={30},
  number={1},
  pages={68--73},
}

\bib{benno2}{article}{
  author={Hadzihasanovic, Amar},
  author={van den Berg, Benno},
  title={Nonstandard functional interpretations and categorical models},
  journal={Notre Dame J. Form. Log.},
  volume={58},
  date={2017},
  number={3},
  pages={343--380},
}

\bib{harry}{article}{
  author={Harrison, J.},
  title={Recursive pseudo-well-orderings},
  journal={Trans. Amer. Math. Soc.},
  volume={131},
  date={1968},
  pages={526--543},
}

\bib{dslice}{book}{
  author={Hirschfeldt, Denis R.},
  title={Slicing the truth},
  series={Lecture Notes Series. Institute for Mathematical Sciences. National University of Singapore},
  volume={28},
  publisher={World Scientific Publishing},
  date={2015},
  pages={xvi+214},
}

\bib{loeb1}{book}{
  author={Hurd, Albert E.},
  author={Loeb, Peter A.},
  title={An introduction to nonstandard real analysis},
  series={Pure and Applied Mathematics},
  volume={118},
  publisher={Academic Press Inc.},
  place={Orlando, FL},
  date={1985},
  pages={xii+232},
}

\bib{ishi1}{article}{
  author={Ishihara, Hajime},
  title={Reverse mathematics in Bishop's constructive mathematics},
  year={2006},
  journal={Philosophia Scientiae (Cahier Sp\'ecial)},
  volume={6},
  pages={43-59},
}

\bib{keisler1}{article}{
  author={Keisler, H. Jerome},
  title={Nonstandard arithmetic and reverse mathematics},
  journal={Bull. Symb.\ Logic},
  volume={12},
  date={2006},
  pages={100--125},
}

\bib{keisler2}{article}{
  author={Keisler, H. Jerome},
  title={Nonstandard arithmetic and recursive comprehension},
  journal={Ann. Pure Appl. Logic},
  volume={161},
  date={2010},
  number={8},
  pages={1047--1062},
}

\bib{kjoske}{article}{
  author={Kjos-Hanssen, Bj\o rn},
  author={Miller, Joseph S.},
  author={Solomon, Reed},
  title={Lowness notions, measure and domination},
  journal={J. Lond. Math. Soc. (2)},
  volume={85},
  date={2012},
  number={3},
  pages={869--888},
}

\bib{kleene1}{article}{
  author={Kleene, Stephen C.},
  title={Recursive functionals and quantifiers of finite types. I},
  journal={Trans. Amer. Math. Soc.},
  volume={91},
  date={1959},
  pages={1--52},
}

\bib{KV}{book}{
  author={Kleene, Stephen C.},
  author={Vesley, Richard Eugene},
  title={The foundations of intuitionistic mathematics, especially in relation to recursive functions},
  publisher={North--Holland},
  date={1965},
  pages={viii+206},
}

\bib{kleine}{book}{
  author={Kleiner, Israel},
  title={Excursions in the history of mathematics},
  publisher={Birkh\"auser/Springer, New York},
  date={2012},
}

\bib{kohlenbach3}{book}{
  author={Kohlenbach, Ulrich},
  title={Applied proof theory: proof interpretations and their use in mathematics},
  series={Springer Monographs in Mathematics},
  publisher={Springer-Verlag},
  place={Berlin},
  date={2008},
  pages={xx+532},
}

\bib{kohlenbach2}{article}{
  author={Kohlenbach, Ulrich},
  title={Higher order reverse mathematics},
  conference={ title={Reverse mathematics 2001}, },
  book={ series={Lect. Notes Log.}, volume={21}, publisher={ASL}, },
  date={2005},
  pages={281--295},
}

\bib{kohlenbach4}{article}{
  author={Kohlenbach, Ulrich},
  title={Foundational and mathematical uses of higher types},
  conference={ title={Reflections on the foundations of mathematics (Stanford, CA, 1998)}, },
  book={ series={Lect. Notes Log.}, volume={15}, publisher={ASL}, },
  date={2002},
  pages={92--116},
}

\bib{kooltje}{article}{
  author={Kohlenbach, Ulrich},
  title={On uniform weak K\"onig's lemma},
  note={Commemorative Symposium Dedicated to Anne S. Troelstra (Noordwijkerhout, 1999)},
  journal={Ann. Pure Appl. Logic},
  volume={114},
  date={2002},
  number={1-3},
  pages={103--116},
}

\bib{blindeloef}{article}{
  author={Lindel\"of, Ernst},
  title={Sur Quelques Points De La Th\'eorie Des Ensembles},
  journal={Comptes Rendus},
  date={1903},
  pages={697--700},
}

\bib{longmann}{book}{
  author={Longley, John},
  author={Normann, Dag},
  title={Higher-order Computability},
  year={2015},
  publisher={Springer},
  series={Theory and Applications of Computability},
}

\bib{loefafsteken}{article}{
  author={Martin-L{\"o}f, Per},
  title={An intuitionistic theory of types: predicative part},
  book={ publisher={North-Holland}, place={Amsterdam}, },
  date={1975},
  pages={73--118. Studies in Logic and the Foundations of Mathematics, Vol. 80},
}

\bib{montahue}{article}{
  author={Montalb{\'a}n, Antonio},
  title={Open questions in reverse mathematics},
  journal={Bull. Symbolic Logic},
  volume={17},
  date={2011},
  number={3},
  pages={431--454},
}

\bib{mummy}{article}{
  author={Mummert, Carl},
  author={Simpson, Stephen G.},
  title={Reverse mathematics and $\Pi _2^1$ comprehension},
  journal={Bull. Symbolic Logic},
  volume={11},
  date={2005},
  number={4},
  pages={526--533},
}

\bib{wownelly}{article}{
  author={Nelson, Edward},
  title={Internal set theory: a new approach to nonstandard analysis},
  journal={Bull. Amer. Math. Soc.},
  volume={83},
  date={1977},
  number={6},
  pages={1165--1198},
}

\bib{noortje}{book}{
  author={Normann, Dag},
  title={Recursion on the countable functionals},
  series={Lecture Notes in Mathematics},
  volume={811},
  publisher={Springer, Berlin},
  date={1980},
  pages={viii+191},
}

\bib{dagsam.I.p}{article}{
  author={Normann, Dag},
  author={Sanders, Sam},
  title={Nonstandard Analysis, Computability Theory, and their connections},
  journal={Preprint v2; arXiv: \url {https://arxiv.org/abs/1702.06556}},
  date={2017},
}

\bib{dagcie18}{article}{
  author={Normann, Dag},
  title={Functionals of Type 3 as Realisers of Classical Theorems in Analysis},
  year={2018},
  journal={Proceedings of CiE18, Lecture Notes in Computer Science 10936},
  pages={318--327},
}

\bib{dagsamII}{article}{
  author={Normann, Dag},
  author={Sanders, Sam},
  title={Computability theory, Nonstandard Analysis, and metastability},
  year={2019},
  journal={Annals of Pure and Applied Logic, doi: \url {10.1016/j.apal.2019.05.007}},
}

\bib{dagsamIII}{article}{
  author={Normann, Dag},
  author={Sanders, Sam},
  title={On the mathematical and foundational significance of the uncountable},
  journal={J. Math. Log.},
  volume={19},
  date={2019},
  number={1},
  pages={pp.\ 40},
}

\bib{dagsamVI}{article}{
  author={Normann, Dag},
  author={Sanders, Sam},
  title={Representations in measure theory},
  journal={Submitted, arXiv: \url {https://arxiv.org/abs/1902.02756}},
  date={2019},
}

\bib{rabideonzin}{inproceedings}{
  author={Rahli, Vincent},
  author={Bickford, Mark},
  title={A Nominal Exploration of Intuitionism},
  booktitle={Proceedings of the 5th ACM SIGPLAN Conference on Certified Programs and Proofs},
  series={CPP 2016},
  year={2016},
  pages={130--141},
}

\bib{robinson1}{book}{
  author={Robinson, Abraham},
  title={Non-standard analysis},
  publisher={North-Holland},
  place={Amsterdam},
  date={1966},
  pages={xi+293},
}

\bib{Rogers}{book}{
  author={Rogers, Hartley},
  title={Theory of recursive functions and effective computability},
  publisher={MIT Press},
  date={1987},
  pages={xxii+482},
}

\bib{yamayamaharehare}{article}{
  author={Sakamoto, Nobuyuki},
  author={Yamazaki, Takeshi},
  title={Uniform versions of some axioms of second order arithmetic},
  journal={MLQ Math. Log. Q.},
  volume={50},
  date={2004},
  number={6},
  pages={587--593},
}

\bib{Sacks.high}{book}{
  author={Sacks, Gerald E.},
  title={Higher recursion theory},
  series={Perspectives in Mathematical Logic},
  publisher={Springer},
  date={1990},
  pages={xvi+344},
}

\bib{sayo}{article}{
  author={Sanders, Sam},
  author={Yokoyama, Keita},
  title={The {D}irac delta function in two settings of {R}everse {M}athematics},
  year={2012},
  journal={Archive for Mathematical Logic},
  volume={51},
  number={1},
  pages={99-121},
}

\bib{samGH}{article}{
  author={Sanders, Sam},
  title={The Gandy-Hyland functional and a computational aspect of nonstandard analysis},
  journal={Computability},
  volume={7},
  date={2018},
  number={1},
  pages={7--43},
}

\bib{samcie18}{article}{
  author={Sanders, Sam},
  title={Some nonstandard equivalences in reverse mathematics},
  conference={ title={Sailing routes in the world of computation}, },
  book={ series={Lecture Notes in Comput. Sci.}, volume={10936}, publisher={Springer}, },
  date={2018},
  pages={365--375},
}

\bib{samFLO}{article}{
  author={Sanders, Sam},
  title={Metastability and higher-order computability},
  year={2018},
  journal={Proceedings of LFCS18, Lecture Notes in Computer Science 10703, Springer},
}

\bib{sambon}{article}{
  author={Sanders, Sam},
  title={The unreasonable effectiveness of Nonstandard Analysis},
  year={2015},
  journal={Submitted, Available from: \url {http://arxiv.org/abs/1508.07434}},
}

\bib{sambon3}{article}{
  author={Sanders, Sam},
  title={From Nonstandard Analysis to various flavours of computability theory},
  year={2017},
  journal={Proceedings of TAMC17, Lecture Notes in Computer Science, Springer},
}

\bib{sambrouw}{article}{
  author={Sanders, Sam},
  title={To be or not to be constructive},
  journal={{Indagationes Mathematicae} and the Brouwer volume \emph {L.E.J. Brouwer, fifty years later}},
  date={2018},
  note={\url {https://doi.org/10.1016/j.indag.2017.05.005}},
  pages={pp.\ 69},
}

\bib{samzooII}{article}{
  author={Sanders, Sam},
  title={Refining the taming of the reverse mathematics zoo},
  journal={Notre Dame J. Form. Log.},
  volume={59},
  date={2018},
  number={4},
  pages={579--597},
}

\bib{simpson1}{collection}{
  title={Reverse mathematics 2001},
  series={Lecture Notes in Logic},
  volume={21},
  editor={Simpson, Stephen G.},
  publisher={ASL},
  place={La Jolla, CA},
  date={2005},
  pages={x+401},
}

\bib{simpson2}{book}{
  author={Simpson, Stephen G.},
  title={Subsystems of second order arithmetic},
  series={Perspectives in Logic},
  edition={2},
  publisher={CUP},
  date={2009},
  pages={xvi+444},
}

\bib{pimpson}{article}{
  author={Simpson, Stephen G.},
  author={Yokoyama, Keita},
  title={A nonstandard counterpart of \textsf {\textup {WWKL}}},
  journal={Notre Dame J. Form. Log.},
  volume={52},
  date={2011},
  number={3},
  pages={229--243},
}

\bib{spector}{article}{
  author={Spector, C.},
  title={Hyperarithmetical quantifiers},
  journal={Fund. Math.},
  volume={48},
  date={1959/60},
  pages={313--320},
}

\bib{stillebron}{book}{
  author={Stillwell, J.},
  title={Reverse mathematics, proofs from the inside out},
  pages={xiii + 182},
  year={2018},
  publisher={Princeton Univ.\ Press},
}

\bib{troelstra1}{book}{
  author={Troelstra, Anne Sjerp},
  title={Metamathematical investigation of intuitionistic arithmetic and analysis},
  note={Lecture Notes in Mathematics, Vol.\ 344},
  publisher={Springer Berlin},
  date={1973},
  pages={xv+485},
}

\bib{troeleke1}{book}{
  author={Troelstra, Anne Sjerp},
  author={van Dalen, Dirk},
  title={Constructivism in mathematics. Vol. I},
  series={Studies in Logic and the Foundations of Mathematics},
  volume={121},
  publisher={North-Holland},
  date={1988},
  pages={xx+342+XIV},
}

\bib{vitaliorg}{article}{
  author={Vitali, Guiseppe},
  title={Sui gruppi di punti e sulle funzioni di variabili reali.},
  journal={Atti della Accademia delle Scienze di Torino, vol XLIII},
  date={1907},
  number={4},
  pages={229--247},
}

\bib{yussie}{article}{
  author={Yu, Xiaokang},
  author={Simpson, Stephen G.},
  title={Measure theory and weak K\"onig's lemma},
  journal={Arch. Math. Logic},
  volume={30},
  date={1990},
  number={3},
  pages={171--180},
}

\end{biblist}
\end{bibdiv}

\end{document}